\documentclass[11pt,a4paper,reqno]{amsart} 
  
\usepackage{amsmath,amsfonts,amsthm,amsopn,amssymb,cite,mathrsfs}
\usepackage{cancel,marginnote}

\usepackage{color,enumitem,graphicx}
\usepackage[colorlinks=true,urlcolor=blue,
citecolor=red,linkcolor=blue,linktocpage,pdfpagelabels]{hyperref}
\usepackage[english]{babel}

\usepackage[left=2.8cm,right=2.8cm,top=2.8cm,bottom=2.8cm]{geometry}

\makeindex

\numberwithin{equation}{section}
\newtheorem{theorem}{Theorem}[section]
\newtheorem{lemma}{Lemma}[section]

\newtheorem{remark}{Remark}[section]

\newtheorem{definition}{Definition}[section]

\usepackage[hyperpageref]{backref}

\title[A class of degenerate elliptic problems involving the Grushin operator]
{A class of degenerate elliptic problems involving the Grushin operator: critical mass and ground states}

\subjclass[2010]{35J10; 35J20}
\keywords{Grushin operator; critical mass; ground state solution}

\begin{document}
\maketitle

\centerline{\scshape Aleks Jevnikar}
\medskip
{\footnotesize
\centerline{Department of Mathematics, Computer Science and Physics, University of Udine}
\centerline{Via del le Scienze
206, Udine, 33100, Italy}
\centerline{aleks.jevnikar@uniud.it}}

\centerline{\scshape Senli Liu}
\medskip
{\footnotesize
\centerline{School of Mathematics and Statistics, Hunan University of Science and Technology}
\centerline{Xiangtan, Hunan 411201, China}
\centerline{liusl@hnust.edu.cn}}

\centerline{\scshape Yu Su}
\medskip
{\footnotesize
\centerline{School of Mathematics and Big Data, Anhui University of Science and Technology}
\centerline{Huainan, Anhui 232001, China}
\centerline{yusumath@aust.edu.cn}
}

\begin{abstract}
We study the following degenerate elliptic equation with combined nonlinearities and prescribed mass
\begin{equation*}
\begin{aligned}
\begin{cases}
-\Delta_{\gamma}u+\omega u+|u|^{p-2}u=|u|^{q-2}u
\ \
\mbox{in }\mathbb{R}^{N},
\\
\int_{\mathbb{R}^{N}}
|u|^{2}
\mathrm{d}z=c,
\end{cases}
\end{aligned}
\end{equation*}
where $\Delta_{\gamma}$ is the Grushin operator and $\omega\in\mathbb{R}$ appears as an unknown Lagrange multiplier. We characterize the critical mass threshold and the energy ground state solutions. Firstly, we prove the existence of optimizers for the Gagliardo-Nirenberg inequality involving the Grushin operator together with estimates for the related best constant. 
Secondly, we characterize the critical mass by the best constant of Gagliardo-Nirenberg inequality
and prove the existence of $x$-radial energy ground state solutions.
Finally, we describe the relation between the $x$-radial action and energy ground state solutions, and the optimizers of Gagliardo-Nirenberg inequality.
The strategy developed here is based on the interplay between the mass-preserving scaling and a sharp Gagliardo-Nirenberg inequality. It therefore suggests possible extensions to other prescribed-mass problems with competing homogeneous terms and threshold-mass phenomena.
\end{abstract}

\section{Introduction and main results}

In this paper, we consider the following equation with combined nonlinearities
\begin{equation}\label{G}
\begin{aligned}
-\Delta_{\gamma}u+\omega u+\alpha|u|^{p-2}u=|u|^{q-2}u
\ \
\mbox{ain }\mathbb{R}^{N},
\end{aligned}
\tag{$\mathcal{G}_{\gamma}$}
\end{equation}
where $\omega,\alpha\in\mathbb{R}$, $\gamma\geqslant0$ and $z=(x,y)\in \mathbb{R}^{k}\times \mathbb{R}^{l}=\mathbb{R}^N$ with $k,l\geqslant1$ and $N=k+l$. The exponents $p$ and $q$ satisfy 
\begin{equation} \label{H}
\begin{aligned}
&\mathrm{either}~~
2<p<q<2+\frac{4}{N_{\gamma}},\\
&\mathrm{or}~~
2+\frac{4}{N_{\gamma}}<q<p<2_{\gamma}^{*},
\end{aligned} \tag{H}
\end{equation}
where $N_{\gamma}=k+(1+\gamma)l$ is the so-called homogeneous dimension attached to the Grushin operator.
The exponent $2+\frac{4}{N_{\gamma}}$  is the mass critical exponent. In this paper, the term critical mass refers to the threshold value of the prescribed mass, see the below discussion.
The exponent $2_{\gamma}^{*}$  associated to $N_{\gamma}$ is
\begin{equation*}
\begin{aligned}
\begin{cases}
2_{\gamma}^{*}=\infty,&k=l=1\mathrm{~and~} \gamma=0,\\
2_{\gamma}^{*}=\frac{2N_{\gamma}}{N_{\gamma}-2},&\mathrm{others}.
\end{cases}
\end{aligned}
\end{equation*}

Grushin \cite{Grushin1,Grushin2} and Baouendi \cite{Baouendi1967} introduced the Grushin operator $\Delta_{\gamma}$ as follows
\begin{equation*}
\begin{aligned}
\Delta_{\gamma}v:=\Delta_xv+(1+\gamma)^{2}|x|^{2\gamma}\Delta_yv,
\ \
(x,y)\in \mathbb{R}^{k}\times \mathbb{R}^{l},
\ \
\gamma>0,
\end{aligned}
\end{equation*} 
where $\Delta_x$ and $\Delta_y$ are the Laplace operators in the variables
$x$ and $y$, respectively.
Furthermore, if $\gamma=0$, the operator $\Delta_{0}$ is the well-known Laplace operator. When $\gamma=1$, $k=1$ and
$l=2i$ ($i\in \mathbb{N}$),
the operator $\Delta_{\gamma}$ is the Heisenberg group Laplace operator \cite{Monti2006Duke}.
For $k\geqslant1$ and $\gamma>0$,
the operator $\Delta_{\gamma}$ is elliptic for $x\in\mathbb{R}^{k}$, $|x|\not=0$ and degenerates on the manifold $\{0\}\times\mathbb{R}^{l}$ \cite{Beckner2001PAMS,Monti2006CPDE,Monti-Morbidelli2008PAMS}.
For more related works,
we refer the readers to \cite{Egorov1975RMS,Hormander1967Acta,Kogoj-Sonner2016CVEE,Laptev-Ruzhansky-Yessirkegenov2019MS}.

The solutions of equation \eqref{G} with a prescribed mass $c>0$ are the critical points of the energy functional
\begin{equation*}
\begin{aligned}
E_{\gamma}(u):=
\frac{1}{2}
\int_{\mathbb{R}^{N}}
|\nabla_\gamma u|^{2}
\mathrm{d}z
+
\frac{\alpha}{p}
\int_{\mathbb{R}^{N}}
|u|^p
\mathrm{d}x
-
\frac{1}{q}
\int_{\mathbb{R}^{N}}
|u|^{q}
\mathrm{d}x,
\end{aligned}
\end{equation*}
constrained on the $L^{2}$-sphere of $H_\gamma^{1,2}(\mathbb{R}^{N})$ (the weighted Sobolev space, see the beginning of the next section)  described by
\begin{equation*}
\begin{aligned}
S_{c}:=\left\{u\in H_\gamma^{1,2}(\mathbb{R}^{N})
:
\int_{\mathbb{R}^{N}}|u|^{2}\mathrm{d}z=c
\right\}.
\end{aligned}
\end{equation*}
So, by a solution of \eqref{G} we mean a couple $(u,\omega)\in H_\gamma^{1,2}(\mathbb{R}^{N})\times \mathbb{R}$, where $\omega$ is the Lagrange multiplier associated to the critical point $u$ on $S_{c}$.

\begin{definition}
A function $u\in S_{c}$ is called an {\it energy ground state solution} (energy GSS for short) if
\begin{equation*}
\begin{aligned}
E_{\gamma}(u)=m_{c,\gamma},
\end{aligned}
\end{equation*}
where ground state energy is
\begin{equation*}
\begin{aligned}
m_{c,\gamma}:=\inf_{v\in S_{c}}E_{\gamma}(v).
\end{aligned}
\end{equation*}
\end{definition}

For $\gamma=0$ and $\alpha=0$, Cazenave-Lions \cite{Cazenave-Lions1982CMP} studied the existence and orbital stability of energy GSS to equation \eqref{G} with $q\in(2,2+\frac{4}{N})$ via the concentration compactness principle. Weinstein \cite{Weinstein1983CMP} proved the existence and non-existence results to equation \eqref{G} for $q=2+\frac{4}{N}$. 

After the seminal works by Tao-Visan-Zhang \cite{Tao-Visan-Zhang2007CPDE} on the nonlinear Schr\"{o}dinger equation with combined nonlinearities, the interest on this type of equation rapidly increased, see \cite{Bellazzini-Forcella-Georgiev2023Pisa,FuGY-ZhuSH2026JMPA,Jeanjean-Jendrej-Le-ViscigliaSS2022JMPA,MiaoCX-XuGX-ZhaoLF2013CMP,MiaoCX-ZhaoTF-ZhengJQ2017CVPDE}. 
We recall some known results.

For $\gamma=0$ and $\alpha<0$, Soave \cite{Soave2020JDE} investigated the existence and stability of energy GSS to equation \eqref{G} with  $2<p\leqslant2+\frac{4}{N}\leqslant q<2_{0}^{*}:=\frac{2N}{N-2}$. 
Subsequently, Soave \cite{Soave2020JFA} extended the results in \cite{Soave2020JDE} to the Sobolev critical case for $q=2_{0}^{*}$. 
For the existence and multiplicity results to equation \eqref{G} with $\gamma=0$, we refer to \cite{KangJC-TangCL2026CVPDE,WeiJC-WuYZ2020JMPA,WeiJC-WuYZ2022JFA,WeiJC-WuYZ2023Edinburgh}. 

For $\gamma=0$ and $\alpha>0$, Killip-Oh-Pocovnicu-Visan 
\cite{Killip-Oh-Pocovnicu-Visan2017ARMA} proved that equation \eqref{G} admits a energy GSS. 
Bellazzini-Forcella-Georgiev
\cite{Bellazzini-Forcella-Georgiev2026arXiv} obtained the existence and non-existence results for the energy GSS of equation \eqref{G} with $2<p<q<2+\frac{4}{N}$.
Lewin-Nodari \cite{Lewin-Nodari2020CVPDE} studied the existence of positive and radial decreasing energy GSS of equation \eqref{G}. 
Moreover, 
Jeanjean-Lu
\cite{Jeanjean-LuSS2022CVPDE} investigated the relation between energy GSS and action GSS. 
For more related works,
we refer the readers to \cite{Jeanjean-LuSS2022MMMAS}.
We summarize several known results in the following table. Here and in the sequel, $c_{0}$ and $c_{\gamma}$ are called the critical mass. 
\begin{center}
\renewcommand\arraystretch{1.5}
\begin{tabular}[h]{c|@{\hskip 1em}c@{\hskip 1em}|@{\hskip 1em}c@{\hskip 1em}|c}
\hline
$p,q$& Ground state energy&Attainability of $m_{c,0}$&Ref\\
\hline
$2<p<q<2+\frac{4}{N}$&$m_{c,0}=0$ if $c<c_{0}$&No&\cite{Jeanjean-LuSS2022CVPDE}\\
&$m_{c,0}=0$ if $c=c_{0}$&Yes& \cite{Bellazzini-Forcella-Georgiev2026arXiv}\\
&$m_{c,0}<0$ if $c>c_{0}$&Yes&\cite{Jeanjean-LuSS2022CVPDE}\\
\hline
$2<p<2+\frac{4}{N}<q<\frac{2N}{N-2}$&$m_{c,0}<0$ &Yes& \cite{Lewin-Nodari2020CVPDE}\\
\hline
$2+\frac{4}{N}=q<p<\frac{2N}{N-2}$&$m_{c,0}=0$ if $c<{c}_{0}$&No& \cite{Lewin-Nodari2020CVPDE}\\
&$m_{c,0}=0$ if $c={c}_{0}$&No&\cite{Lewin-Nodari2020CVPDE}\\
&$m_{c,0}<0$ if $c>{c}_{0}$&Yes&\cite{Lewin-Nodari2020CVPDE}\\
\hline
$2+\frac{4}{N}<q<p<\frac{2N}{N-2}$&$m_{c,0}>0$ if $c<{c}_{0}$&Yes& \cite{Lewin-Nodari2020CVPDE}\\
&$m_{c,0}=0$ if $c={c}_{0}$&Yes&\cite{Lewin-Nodari2020CVPDE}\\
&$m_{c,0}<0$ if $c>{c}_{0}$&Yes&\cite{Lewin-Nodari2020CVPDE}\\
\hline
\end{tabular}

\smallskip
Table 1. The case $\gamma=0$ and $\alpha >0$.
\end{center}

\begin{definition}
A function $u\in \tilde{S}_{c}$ is called a {\it $x$-radial energy GSS} if
\begin{equation*}
\begin{aligned}
E_{\gamma}(u)=\tilde{m}_{c,\gamma}:=\inf_{v\in \tilde{S}_{c}}E_{\gamma}(v),
\end{aligned}
\end{equation*}
where 
\begin{equation*}
\begin{aligned}
\tilde{S}_{c}:=\left\{u\in H^{1,2}_{\gamma,rad_{x}}(\mathbb{R}^{N})
:
\int_{\mathbb{R}^{N}}|u|^{2}\mathrm{d}z=c
\right\}.
\end{aligned}
\end{equation*}
\end{definition}

\begin{remark}
When $\gamma=0$,
we know that the energy GSS is radially via rearrangement inequality.
Then one has
$m_{c,0}=\tilde{m}_{c,0}$.
However,
for $\gamma>0$,
we just know $m_{c,\gamma}\leqslant\tilde{m}_{c,\gamma}$,
since the rearrangement arguments are still not available, see \cite{Giang-Tri-Tuan2026JMAA}.
\end{remark}

For $\gamma>0$ and $\alpha>0$, Yu-Yang-Tang \cite{YuSB-YangC-TangCL2026JGA} studied the existence of $x$-radial energy GSS of equation \eqref{G}.
We summarize their results in the following table.
\begin{center}
\renewcommand\arraystretch{1.5}
\begin{tabular}[h]{c|@{\hskip 1em}c@{\hskip 1em}|@{\hskip 1em}c@{\hskip 1em}|c}
\hline
$p,q$& Ground state energy&Attainability of $\tilde{m}_{c,\gamma}$&Ref\\
\hline
$2<p<q<2+\frac{4}{N_{\gamma}}$&$\tilde{m}_{c,\gamma}=0$ if $c<c_{\gamma}$&No&\cite{YuSB-YangC-TangCL2026JGA}\\
&$\tilde{m}_{c,\gamma}=0$ if $c=c_{\gamma}$&Not addressed& \\
&$\tilde{m}_{c,\gamma}<0$ if $c>c_{\gamma}$&Yes&\\
\hline
$2+\frac{4}{N_{\gamma}}<q<p<2_{\gamma}^{*}$&$\tilde{m}_{c,\gamma}=0$ if $c<{c}_{\gamma}$&No& \cite{YuSB-YangC-TangCL2026JGA}\\
&$\tilde{m}_{c,\gamma}=0$ if $c={c}_{\gamma}$&Not addressed&\\
&$\tilde{m}_{c,\gamma}<0$ if $c>{c}_{\gamma}$&Yes&\\
\hline
\end{tabular}

\smallskip
Table 2. The case $\gamma>0$ and $\alpha >0$.
\end{center}

The critical mass effects not only  the existence of energy GSS but also the global existence, stability and scattering results.
The energy GSS with critical mass also plays a key role.
For example,
when $\gamma=0$, $\alpha=0$ and $q=2+\frac{4}{N}$,
Weinstein \cite[Theorem A]{Weinstein1983CMP} showed the global existence result with the help of the critical mass and the energy GSS with critical mass.
When $\gamma=0$, $\alpha>0$ and $2<p<q<2+\frac{4}{N}$,
Bellazzini-Forcella-Georgiev
\cite[Theorem 4]{Bellazzini-Forcella-Georgiev2026arXiv} considerded the small data scattering by using critical mass.
When $\gamma=0$, $\alpha>0$ and $2+\frac{4}{N}<q<p=\frac{2N}{N-2}$,
Killip-Oh-Pocovnicu-Visan \cite[Theorem 1.3]{Killip-Oh-Pocovnicu-Visan2017ARMA} proved the scattering result by virtue of the critical mass and the energy GSS with critical mass.
Hence,
it is important to characterize the critical mass and the energy GSS with critical mass.

For $2+\frac{4}{N}<q<p<\frac{2N}{N-2}$,
Lewin-Nodari \cite[Page 39]{Lewin-Nodari2020CVPDE} characterized the critical mass  by the best constant of Gagliardo-Nirenberg inequality as follows
\begin{equation*}
\begin{aligned}
c_{0}=\left[
\frac{p}{q}
\frac{(2-q)N+4}{(2-p)N+4}
\right]
^{\frac{(2-p)N+4}{2(p-q)}}
\left[
\frac{2}{p}
\frac{N(q-p)}{(2-q)N+4}
\right]
^{\frac{-N}{2}}
K_{N,p,q}
^{-\frac{(2-p)N+4}{2(q-p)}},
\end{aligned}
\end{equation*}
where $K_{N,p,q}$ is the best constant of Gagliardo-Nirenberg inequality
\begin{equation}\label{1.1}
\begin{aligned}
&
K_{N,p,q}^{-1}
\int_{\mathbb{R}^{N}}
|u|^{q}
\mathrm{d}z\\
\leqslant&
\left(
\int_{\mathbb{R}^{N}}
|u|^{2}
\mathrm{d}z
\right)
^{\frac{2(q-p)}{2N+4-pN}}
\left(
\int_{\mathbb{R}^{N}}
|u|^{p}
\mathrm{d}z
\right)
^{\frac{2N+4-qN}{2N+4-pN}}
\left(
\int_{\mathbb{R}^{N}}
|\nabla u|^{2}
\mathrm{d}z
\right)
^{\frac{N(q-p)}{2N+4-pN}}.
\end{aligned}
\end{equation}
We explain now our motivations and our results. We first observe that Bellazzini-Forcella-Georgiev
\cite[Theorems 2 and 3]{Bellazzini-Forcella-Georgiev2026arXiv} and Yu-Yang-Tang \cite[Theorem 1.2]{YuSB-YangC-TangCL2026JGA}  showed the existence of a critical mass but with no characterization. Thus, a natural question is whether one can get qualitative and quantitative information about the critical mass. This is one of our aims.

On the other hand, Lewin-Nodari \cite{Lewin-Nodari2020CVPDE} established the existence, uniqueness, non-degeneracy of energy GSS, again with no further characterization.
Yu-Yang-Tang \cite[Theorem 1.2]{YuSB-YangC-TangCL2026JGA} showed the existence of $x$-radial energy GSS,
and the relation between $x$-radial action GSS and $x$-radial energy GSS in \cite[Corollary 1.4]{YuSB-YangC-TangCL2026JGA}.
Bellazzini-Forcella-Georgiev
\cite[Theorems 2 and 3]{Bellazzini-Forcella-Georgiev2026arXiv} are concenerned with the existence of energy GSS.
Our second aim is therefore to obtain qualitative and quantitative information about $x$-radial energy GSS with critical mass.
To the best of our knowledge, there are no  previous results in this direction. 

\subsection{Main results}
In order to characterize the critical mass and the $x$-radial energy GSS with critical mass, we start with establishing the attainability of the following Gagliardo-Nirenberg inequality and estimating its best constant.
\begin{theorem}
\label{Theorem1.1}
Let $N=k+l$,
$k,l\geqslant 1$,
$\gamma\geqslant0$
and $p,q$ satisfy \eqref{H}.
Then there exists $C_{N_{\gamma},p,q}>0$ such that,
for all $u\in H^{1,2}_{\gamma,rad_{x}}(\mathbb{R}^{N})$,
\begin{equation}\label{1.2}
\begin{aligned}
&C_{N_{\gamma},p,q}^{-1}
\int_{\mathbb{R}^{N}}
|u|^{q}
\mathrm{d}z\\
\leqslant&
\left(
\int_{\mathbb{R}^{N}}
|u|^{2}
\mathrm{d}z
\right)
^{\frac{2(q-p)}{2N_{\gamma}+4-pN_{\gamma}}}
\left(
\int_{\mathbb{R}^{N}}
|u|^{p}
\mathrm{d}z
\right)
^{\frac{2N_{\gamma}+4-qN_{\gamma}}{2N_{\gamma}+4-pN_{\gamma}}}
\left(
\int_{\mathbb{R}^{N}}
|\nabla_{\gamma}u|^{2}
\mathrm{d}z
\right)
^{\frac{N_{\gamma}(q-p)}{2N_{\gamma}+4-pN_{\gamma}}}.
\end{aligned}
\end{equation}
Moreover,
there exists $Q\in H^{1,2}_{\gamma,rad_{x}}(\mathbb{R}^{N})$ such that
\begin{equation*}
\begin{aligned}
W_{N_{\gamma},p,q}(Q)=C_{N_{\gamma},p,q}^{-1}~\mathrm{and}~
\int_{\mathbb{R}^{N}}
|\nabla_{\gamma}Q|^{2}
\mathrm{d}z
=
\int_{\mathbb{R}^{N}}|Q|^{p}\mathrm{d}z=1,
\end{aligned}
\end{equation*}
where $W_{N_{\gamma},p,q}: H^{1,2}_{\gamma,rad_{x}}(\mathbb{R}^{N}) \to \mathbb{R}^{+}$ is the Weinstein type functional defined by
\begin{equation*}
\begin{aligned}
W_{N_{\gamma},p,q}(u)
=
\frac{
\left(
\int_{\mathbb{R}^{N}}
|u|^{2}
\mathrm{d}z
\right)
^{\frac{2(q-p)}{2N_{\gamma}+4-pN_{\gamma}}}
\left(
\int_{\mathbb{R}^{N}}
|u|^{p}
\mathrm{d}z
\right)
^{\frac{2N_{\gamma}+4-qN_{\gamma}}{2N_{\gamma}+4-pN_{\gamma}}}
\left(
\int_{\mathbb{R}^{N}}
|\nabla_{\gamma}u|^{2}
\mathrm{d}z
\right)
^{\frac{N_{\gamma}(q-p)}{2N_{\gamma}+4-pN_{\gamma}}}
}{
\int_{\mathbb{R}^{N}}
|u|^{q}
\mathrm{d}z
}.
\end{aligned}
\end{equation*}
In particular, the best constant of inequality \eqref{1.2} is achieved by $Q$.
Furthermore, the best constant $C_{N_{\gamma},p,q}$ satisfies
\begin{equation*}
\begin{aligned}
&\left(\frac{p}{q}\right)^{\frac{N}{2}}
\left[
\frac{2^{1+\frac{N}{2}}(k+\gamma l)}
{kN_{\gamma}p^{\frac{N}{2}}\pi^{\frac{N}{N_{\gamma}}}}
\left(
\frac{k}
{(1+\gamma)^{2}(k+\gamma l)
\left(\frac{k}{2}+\gamma\right)^{\gamma}}
\right)^{\frac{l}{N_{\gamma}}}
\right]^{\frac{N_{\gamma}(q-p)}{2N_{\gamma}+4-pN_{\gamma}}}
\\
\leqslant&
C_{N_{\gamma},p,q}\\
<&\left(\frac{N_{\gamma}+2}{N_{\gamma}}\right)
^{\frac{N_{\gamma}(q-p)}{2N_{\gamma}+4-pN_{\gamma}}}
\left(\frac{k+\gamma l}{k}\right)
^{\frac{k(q-p)}{2N_{\gamma}+4-pN_{\gamma}}}
\left[
\frac{(N_{\gamma}+2)(1+\gamma)
\Gamma\left(\frac{(1+\gamma)l}{2}\right)}
{2^{N+1}\pi^{\frac{N}{2}}
\Gamma\left(\frac{l}{2}\right)
\Gamma\left(\frac{N_{\gamma}+2}{2}\right)}
\right]^{\frac{2(q-p)}{2N_{\gamma}+4-pN_{\gamma}}}
\\&\times
\left[
\frac{2\gamma+1}{(1+\gamma)^{\gamma+3}}
\left(\frac{4(k+\gamma l)}{k^{2}l}\right)^{\gamma}
\right]^{\frac{l(q-p)}{2N_{\gamma}+4-pN_{\gamma}}},
\end{aligned}
\end{equation*}
where $\Gamma$ denotes the Euler Gamma function.
\end{theorem}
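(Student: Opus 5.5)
The plan has three parts: prove the inequality by interpolation, find a minimizer of $W_{N_\gamma,p,q}$ by compactness in the $x$-radial class, and bound the best constant from both sides.

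\emph{The inequality.} Write $\theta:=\frac{N_\gamma(q-p)}{2N_\gamma+4-pN_\gamma}$. I would first derive \eqref{1.2} from two ingredients.
\begin{itemize}
\item The Grushin--Sobolev (or, when $2_\gamma^*=\infty$, Gagliardo--Nirenberg) inequality $\|u\|_{r}\le C\|u\|_2^{1-\beta}\|\nabla_\gamma u\|_2^{\beta}$ with $\beta=N_\gamma(\frac12-\frac1r)$, which holds on $H^{1,2}_\gamma(\mathbb{R}^N)$.
\item H\"older interpolation of $\|u\|_q$ between $\|u\|_p$ and $\|u\|_r$, with $r$ placed on the appropriate side. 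In the case $p<q<2+4/N_\gamma$ I take $r$ beyond $q$. In the case $q<p$ the Lebesgue interpolation has to be done between $L^2$ and $L^p$, and the gradient enters only through the mass-supercritical scaling.
\end{itemize}
The exponents are then forced by scaling invariance. Two scalings act on $W_{N_\gamma,p,q}$:
\begin{itemize}
\item the amplitude scaling $u\mapsto \mu u$;
\item the anisotropic dilation $u_\lambda(x,y)=u(\lambda x,\lambda^{1+\gamma}y)$, under which $\int|u_\lambda|^s=\lambda^{-N_\gamma}\int|u|^s$ and $\int|\nabla_\gamma u_\lambda|^2=\lambda^{2-N_\gamma}\int|\nabla_\gamma u|^2$.
\end{itemize}
Checking homogeneity shows $W_{N_\gamma,p,q}$ is invariant under both. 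Hence $C^{-1}_{N_\gamma,p,q}:=\inf W_{N_\gamma,p,q}>0$, and any minimizing sequence can be normalized to $\|\nabla_\gamma u_n\|_2=\|u_n\|_p=1$.

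\emph{Existence of the optimizer $Q$.} This is the step I expect to be the main obstacle. The difficulty is that $x$-radial symmetry gives no compactness in the $y$-direction, so I would use Lions' concentration-compactness adapted to the Grushin setting. Take a minimizing sequence normalized as above.
\begin{enumerate}
\item The sequence is bounded in $L^2$. In case $p<q<2+4/N_\gamma$, the exponent of $\|u\|_2$ in the numerator is positive and $\int|u_n|^q$ is bounded via interpolation, which bounds $\|u_n\|_2$. In the other case I normalize $\|u\|_2$ instead.
\item Since $\int|u_n|^q$ stays bounded below, vanishing is excluded by a Lions-type lemma on $H^{1,2}_\gamma$: if $\sup_{z}\int_{B_1(z)}|u_n|^2\to0$ then $u_n\to0$ in $L^s$ for $2<s<2^*_\gamma$.
\item Because $u_n$ is radial in $x$, the translations needed are only in $y$. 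Translations $y\mapsto y+y_n$ preserve $\nabla_\gamma$ (the coefficient $|x|^{2\gamma}$ does not depend on $y$), so after translating I get $u_n(\cdot,\cdot+y_n)\rightharpoonup Q\neq0$.
\item To conclude, I apply the Brezis--Lieb splitting to each of the four integrals in $W_{N_\gamma,p,q}$. Combined with the strict superadditivity coming from the homogeneity exponents (their sum in the numerator exceeds $1$, as in Weinstein's argument), this shows $Q$ is a minimizer. Rescaling $Q$ with the two scalings then gives $\|\nabla_\gamma Q\|_2=\|Q\|_p=1$.
\end{enumerate}

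\emph{The two bounds on $C_{N_\gamma,p,q}$.} The lower bound comes from evaluating $W_{N_\gamma,p,q}$ on an explicit test function. The natural choice is an anisotropic Gaussian $e^{-a|x|^2-b|y|^2}$, or a function of the homogeneous gauge $|x|^{2(1+\gamma)}+(1+\gamma)^2|y|^2$. Its norms are Gamma-function integrals, and optimizing in the free parameter produces the factors $(\frac k2+\gamma)^\gamma$ and $\pi^{N/N_\gamma}$ that appear in the stated bound.

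The upper bound on $C_{N_\gamma,p,q}$, equivalently a lower bound on $\inf W_{N_\gamma,p,q}$, would come from making the interpolation chain in the first step explicit. For the $L^\infty$/Nash-type ingredient I would use a weighted Hardy--Sobolev estimate, in which the constants $\Gamma(\frac{(1+\gamma)l}{2})/\Gamma(\frac l2)$ come from the volume of gauge balls. The strict inequality holds because equality would force the extremal of a non-sharp intermediate inequality (Young or H\"older) to coincide with $Q$, which is impossible.
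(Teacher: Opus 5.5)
\textbf{The gap is the explicit upper bound on $C_{N_\gamma,p,q}$.} The statement does not ask for just some upper bound. Its right-hand side is exactly $B^{\theta}$, where $\theta=\frac{N_\gamma(q-p)}{2N_\gamma+4-pN_\gamma}$ and $B$ is the explicit bound \eqref{2.1} for the mass-critical constant $C_{N_\gamma,2+\frac4{N_\gamma}}$. Your chains cannot reach this. Interpolating with $r\neq 2+\frac4{N_\gamma}$, or (for $q<p$) between $L^2$ and $L^p$ and then converting the excess power of $\|u\|_p$ by Gagliardo--Nirenberg, gives \eqref{1.2} at best with constants $C_{N_\gamma,r}$ or $C_{N_\gamma,p}$, which are not powers of the mass-critical constant. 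A ``weighted Hardy--Sobolev estimate with gauge-ball volumes'' would give some other constant, with no reason to lie below the stated one. In particular it would not produce the factors $2\gamma+1$, $(1+\gamma)^{\gamma+3}$, $\bigl(\frac{4(k+\gamma l)}{k^{2}l}\bigr)^{\gamma}$, $\bigl(\frac{k+\gamma l}{k}\bigr)^{k/N_\gamma}$ and $\Gamma\bigl(\frac{N_\gamma+2}{2}\bigr)$, each of which comes from a specific step of one Fourier-analytic proof.

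Two ingredients are missing.
\begin{enumerate}
\item[(a)] In \emph{both} ranges, apply H\"{o}lder between $L^p$ and $L^{2+\frac4{N_\gamma}}$. This is the only intermediate exponent whose H\"{o}lder exponents are exactly $\frac{2N_\gamma+4-qN_\gamma}{2N_\gamma+4-pN_\gamma}$ and $\theta$. One application of the mass-critical inequality then gives $C_{N_\gamma,p,q}\leqslant C_{N_\gamma,2+\frac4{N_\gamma}}^{\theta}$. The inequality is strict because H\"{o}lder is strict for $Q\neq0$ and $Q$ attains $C_{N_\gamma,p,q}$.
\item[(b)] The explicit bound of Lemma~\ref{Lemma2.3}, which comes from Fourier analysis. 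The Fourier transform in $y$ turns $(1+\gamma)^2|x|^{2\gamma}|\nabla_y u|^2$ into the potential $(1+\gamma)^2|x|^{2\gamma}|\eta|^2$. One splits $|\nabla_x w|^2$ with weights $\frac{\gamma l}{k+\gamma l}$ and $\frac{k}{k+\gamma l}$. One then uses the uncertainty bound $\bigl(\int|\nabla\phi|^2\bigr)^{\gamma}\int|x|^{2\gamma}|\phi|^2\geqslant\frac{k^{2\gamma}}{4^{\gamma}(2\gamma+1)}$ for $\|\phi\|_{L^2}=1$; its one-dimensional reduction is where $2\gamma+1$ enters. This bounds the anharmonic oscillator from below. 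After the Fourier transform in $x$ it gives $\|\nabla_\gamma u\|_2^2\geqslant\int(a|\xi|^2+b|\eta|^{\frac{2}{1+\gamma}})|\hat u|^2$ with explicit $a,b$. The bound is completed by four further steps: Hausdorff--Young into $L^{\frac{2(N_\gamma+2)}{N_\gamma+4}}$; H\"{o}lder with weight $t+a|\xi|^2+b|\eta|^{\frac{2}{1+\gamma}}$; an exact Gamma-function evaluation of $\int(t+a|\xi|^2+b|\eta|^{\frac{2}{1+\gamma}})^{-\frac{N_\gamma+2}{2}}$; and optimization in $t$. In particular $\Gamma\bigl(\frac{(1+\gamma)l}{2}\bigr)/\Gamma\bigl(\frac l2\bigr)$ comes from $\int_{\mathbb{R}^l}e^{-sb|\eta|^{2/(1+\gamma)}}\,d\eta$ on the frequency side, not from gauge balls.
\end{enumerate}

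\textbf{Existence of $Q$.} Your concluding step differs from the paper's and is legitimate. In the variables $(\|u\|_2^2,\|u\|_p^p,\|\nabla_\gamma u\|_2^2)$, the numerator of $W_{N_\gamma,p,q}$ is a monomial with positive exponents summing to $1+\frac{2(q-p)}{2N_\gamma+4-pN_\gamma}>1$, so it is superadditive. Br\'{e}zis--Lieb splitting of all four integrals then gives $W_{N_\gamma,p,q}(Q)\leqslant C_{N_\gamma,p,q}^{-1}$ for any normalization; plain superadditivity suffices. The paper instead fixes $\|\nabla_\gamma u_n\|_2=\|u_n\|_p=1$, so that Young's inequality with exponents summing to $1$ makes the $L^p$/gradient part additive. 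The positive $L^2$ exponent then forces $\|Q\|_2=\lim\|u_n\|_2$, which also yields strong convergence.

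However, you assert rather than prove what the paper treats as the real work (Lemma~\ref{Lemma3.1}):
\begin{enumerate}
\item[(i)] The $L^2$ bound for $q<p$. Your ``normalize $\|u\|_2$ instead'' is underspecified, since only two of the three quantities can be fixed. The paper uses $\|u\|_q^q\leqslant\|u\|_2^{\frac{2(p-q)}{p-2}}\|u\|_p^{\frac{p(q-2)}{p-2}}$ together with $\frac{2(q-p)}{2N_\gamma+4-pN_\gamma}>\frac{p-q}{p-2}$, which is equivalent to $p<2^*_\gamma$.
\item[(ii)] $\liminf\|u_n\|_q>0$. This comes from Lemma~\ref{Lemma2.2} via a lower bound on $\|u_n\|_2$.
\end{enumerate}

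\textbf{Lower bound on $C_{N_\gamma,p,q}$.} This is the paper's Gaussian argument, with one correction. The factor $(\frac k2+\gamma)^{\gamma}$ is not produced by optimizing the free parameter. It comes from bounding $\int_{\mathbb{R}^k}|x|^{2\gamma}e^{-|x|^2}\,dx=\pi^{\frac k2}\Gamma(\frac k2+\gamma)/\Gamma(\frac k2)$ above by $\pi^{\frac k2}(\frac k2+\gamma)^{\gamma}$. Optimizing in $t$ then gives $t_1$.
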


\begin{remark}
For $\gamma=0$,
it is easy to prove the existence of optimizer for Gagliardo-Nirenberg inequality \eqref{1.2} by  rearrangement inequalities and inequality \eqref{1.2} is reduced to \eqref{1.1}.
However,
for $\gamma>0$, the situation is completely different since we can not apply rearrangement arguments.
\end{remark}

\begin{remark}
The estimates for $C_{N_{\gamma},p,q}$ seem to be new also for the Euclidean space  case $\gamma=0$.
\end{remark}

In order to get the optimizer of inequality \eqref{1.2}, the idea is to scale the minimizing sequence in a proper way.
Usually,
in order to get the boundedness of minimizing sequence and rule out the vanishing of it,
one scales as
\begin{equation*}
\begin{aligned}
\|u_{n}\|_{D^{1,2}_{\gamma}(\mathbb{R}^{N})}
=
\|u\|_{L^{2}(\mathbb{R}^{N})}
=1.
\end{aligned}
\end{equation*}
Consider
the powers of  the $L^{2}$ and $D^{1,2}_{\gamma}$-terms in the Weinstein type functional $W_{N_{\gamma},p,q}$ and observe that
\begin{equation*}
\begin{aligned}
\frac{2(q-p)}{2N_{\gamma}+4-pN_{\gamma}}+\frac{N_{\gamma}(q-p)}{2N_{\gamma}+4-pN_{\gamma}}\geqslant1
\Leftrightarrow
\begin{cases}
&\mathrm{either}~~
2+\frac{2p}{N_{\gamma}+2}\leqslant q<2+\frac{4}{N_{\gamma}},\\
&\mathrm{or}~~
2+\frac{4}{N_{\gamma}}<q\leqslant2+\frac{2p}{N_{\gamma}+2},
\end{cases}
\end{aligned}
\end{equation*}
and
\begin{equation*}
\begin{aligned}
\frac{2(q-p)}{2N_{\gamma}+4-pN_{\gamma}}+\frac{N_{\gamma}(q-p)}{2N_{\gamma}+4-pN_{\gamma}}<1
\Leftrightarrow
\begin{cases}
&\mathrm{either}~~
2<p<q<2+\frac{2p}{N_{\gamma}+2},\\
&\mathrm{or}~~
2+\frac{2p}{N_{\gamma}+2}<q<p.
\end{cases}
\end{aligned}
\end{equation*}
In this scaling, when 
$\frac{2(q-p)}{2N_{\gamma}+4-pN_{\gamma}}+\frac{N_{\gamma}(q-p)}{2N_{\gamma}+4-pN_{\gamma}}\geqslant1$, it is easy to rule out the dichotomy of the minimizing sequence. This is not the case when we are in the complementary situation.

Instead, observing the powers of  the $L^{p}$ and $D^{1,2}_{\gamma}$-terms we notice that
\begin{equation*}
\begin{aligned}
\frac{2N_{\gamma}+4-qN_{\gamma}}{2N_{\gamma}+4-pN_{\gamma}}
+
\frac{N_{\gamma}(q-p)}{2N_{\gamma}+4-pN_{\gamma}}
=1.
\end{aligned}
\end{equation*}
The idea is then to scale accordingly:
\begin{equation*}
\begin{aligned}
\|u_{n}\|_{D^{1,2}_{\gamma}(\mathbb{R}^{N})}
=
\|u\|_{L^{p}(\mathbb{R}^{N})}
=1.
\end{aligned}
\end{equation*}
The dichotomy can be then easily ruled out. It seems that this kind of normalization has not previously been used for the mixed-power Grushin functional considered here.
On the other hand,
in this scaling,
it is hard to get boundedness of minimizing sequence and rule out the vanishing of it.
We apply the Gagliardo-Nirenberg inequality to handle these aspects,
see Lemma \ref{Lemma3.1}.

Before discussing the existence and non-existence of $x$-radial energy GSS, 
we discuss conditions for $\tilde{m}_{c,\gamma}<0$ and $\tilde{m}_{c,\gamma}=0$. We obtain here a quantitative refinement of the results in \cite{YuSB-YangC-TangCL2026JGA}.
\begin{theorem}\label{Theorem1.2}
Let $N=k+l$,
$k,l\geqslant 1$,
$\gamma\geqslant0$,
$\alpha=1$
and $p,q$ satisfy \eqref{H}. Let
\begin{equation*}
\begin{aligned}
c_{\gamma}:=
C_{N_{\gamma},p,q}
^{-\frac{4-N_{\gamma}(p-2)}{2(q-p)}}
\left[
\frac{p}{q}
\frac{4-N_{\gamma}(q-2)}{4-N_{\gamma}(p-2)}
\right]
^{\frac{4-N_{\gamma}(p-2)}{2(p-q)}}
\left[
\frac{2}{p}
\frac{N_{\gamma}(q-p)}{4-N_{\gamma}(q-2)}
\right]
^{\frac{-N_{\gamma}}{2}}.
\end{aligned}
\end{equation*}
Then, it holds
$$
\tilde{m}_{c,\gamma}<0 \mbox{ if and only if } c>c_{\gamma}
$$
and 
$$
\tilde{m}_{c,\gamma}=0 \mbox{ if and only if } c\leqslant c_{\gamma}.
$$
\end{theorem}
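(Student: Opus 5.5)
The plan is to combine a mass-preserving scaling, which shows $\tilde m_{c,\gamma}\leqslant 0$ for every $c$, with the sharp inequality \eqref{1.2} and a weighted AM--GM step, which decides the sign of $\tilde m_{c,\gamma}$ relative to $c_{\gamma}$.

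\textbf{Step 1: $\tilde m_{c,\gamma}\leqslant 0$ for every $c>0$.} For $u\in\tilde S_c$ and $t>0$ I set
\[
u_t(x,y)=t^{N_\gamma/2}u(tx,t^{1+\gamma}y).
\]
This dilation preserves $x$-radiality and the $L^2$ norm, and
\[
\int|\nabla_\gamma u_t|^2=t^2\int|\nabla_\gamma u|^2,\qquad \int|u_t|^r=t^{N_\gamma(r-2)/2}\int|u|^r .
\]
Since $p,q>2$, every term of $E_\gamma(u_t)$ tends to $0$ as $t\to0$, so $\tilde m_{c,\gamma}\leqslant0$.

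\textbf{Step 2: a lower bound for $E_\gamma$ on $\tilde S_c$.} Write $A=\int|\nabla_\gamma u|^2$ and $B=\int|u|^p$, and set
\[
a=\tfrac{2(q-p)}{2N_\gamma+4-pN_\gamma},\qquad b=\tfrac{2N_\gamma+4-qN_\gamma}{2N_\gamma+4-pN_\gamma},\qquad d=\tfrac{N_\gamma(q-p)}{2N_\gamma+4-pN_\gamma}.
\]
I first check that $a,b,d>0$ in both cases of \eqref{H}; the signs of numerator and denominator agree in each case. Theorem~\ref{Theorem1.1} then gives
\[
E_\gamma(u)\geqslant \frac A2+\frac Bp-\frac{C_{N_\gamma,p,q}}{q}\,c^{a}B^{b}A^{d}.
\]
Because $b+d=1$, weighted AM--GM yields
\[
\frac A2+\frac Bp=d\,\frac{A}{2d}+b\,\frac{B}{pb}\geqslant \Big(\frac{A}{2d}\Big)^d\Big(\frac{B}{pb}\Big)^b,
\]
with equality exactly when $\frac{A}{2d}=\frac{B}{pb}$. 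Hence
\[
E_\gamma(u)\geqslant\Big[(2d)^{-d}(pb)^{-b}-\tfrac{C_{N_\gamma,p,q}}{q}c^{a}\Big]A^dB^b .
\]
I then verify by direct algebra that the bracket vanishes precisely at $c=c_\gamma$ as defined in the statement; the exponent $-\frac{4-N_\gamma(p-2)}{2(q-p)}$ is $-1/a$. The bracket is therefore $\geqslant0$ iff $c\leqslant c_\gamma$. So for $c\leqslant c_\gamma$ we get $E_\gamma\geqslant0$ on $\tilde S_c$, and with Step 1 this gives $\tilde m_{c,\gamma}=0$.

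\textbf{Step 3: $\tilde m_{c,\gamma}<0$ when $c>c_\gamma$.} Take the optimizer $Q$ from Theorem~\ref{Theorem1.1} and consider $u=\mu Q(\lambda x,\lambda^{1+\gamma}y)$. The Weinstein functional $W_{N_\gamma,p,q}$ is invariant under both the amplitude and the dilation parameter, so $u$ is still an optimizer and equality holds in \eqref{1.2}. The two free parameters $(\mu,\lambda)$ are used to impose $\|u\|_2^2=c$ together with the AM--GM equality condition $\frac{A}{2d}=\frac{B}{pb}$. Then both inequalities in Step 2 are equalities, so $E_\gamma(u)$ equals the bracket times $A^dB^b$. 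For $c>c_\gamma$ the bracket is negative, hence $\tilde m_{c,\gamma}<0$.

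Steps 2 and 3 together give both equivalences: $\tilde m_{c,\gamma}<0$ iff $c>c_\gamma$, and $\tilde m_{c,\gamma}=0$ iff $c\leqslant c_\gamma$.

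\textbf{Main obstacle.} The delicate point is Step 3: showing that the map $(\mu,\lambda)\mapsto(\text{mass},\,A/B)$ is onto $(0,\infty)^2$. The mass scales like $\mu^2\lambda^{-N_\gamma}$, and $A/B$ scales like $\mu^{2-p}\lambda^{2+N_\gamma}$. Taking logarithms, this is a linear system in $(\log\mu,\log\lambda)$ with determinant $\pm\big(2(2+N_\gamma)-(p-2)N_\gamma\big)=\pm\big(2N_\gamma+4-pN_\gamma\big)$. This is nonzero because $p\neq 2+\frac4{N_\gamma}$ under \eqref{H}, so the system is solvable. The remaining bookkeeping is matching the constant in $c_\gamma$ with $(2d)^{-d}(pb)^{-b}$, which I expect to be routine.
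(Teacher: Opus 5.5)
Your proof is correct. It uses the same ingredients as the paper (the mass-preserving dilation, the sharp inequality \eqref{1.2}, and the optimizer $Q$), but it handles the key reduction differently.

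\textbf{How the paper argues.} The paper first proves a criterion, Lemma~\ref{Lemma4.1}. It minimizes $t\mapsto E_\gamma(u_t)$ explicitly along the mass-preserving dilation. In the first range of \eqref{H} it normalizes by $h_u(t)=E_\gamma(u_t)/t^2$, and in the second by $g_u(t)=E_\gamma(u_t)/t^{N_\gamma(p-2)/2}$. It concludes that $\tilde m_{c,\gamma}<0$ iff some $u\in\tilde S_c$ violates a scale-invariant inequality linking $\|u\|_{L^p}$, $\|u\|_{D^{1,2}_\gamma}$ and $\|u\|_{L^q}$. Then \eqref{1.2} rules out such a $u$ when $c\leqslant c_\gamma$ (Lemma~\ref{Lemma4.2}), and the normalized optimizer $Q_c$ provides one when $c>c_\gamma$ (Lemma~\ref{Lemma4.4}).

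\textbf{What your route buys.} Your weighted AM--GM step with $b+d=1$ replaces that fibre minimization. Its equality condition $A/(2d)=B/(pb)$ is exactly the critical-point condition of the fibre map, so the underlying content is the same. The gains are these:
\begin{itemize}
\item both ranges of \eqref{H} are treated at once;
\item the identity $c_\gamma^{a}=q(2d)^{-d}(pb)^{-b}/C_{N_\gamma,p,q}$ makes the constant transparent (I checked it agrees with the stated $c_\gamma$);
\item the pointwise bound $E_\gamma(u)\geqslant\kappa(c)A^dB^b$, with $\kappa(c)>0$ for $c<c_\gamma$, immediately shows that $\tilde m_{c,\gamma}=0$ is not attained for $c<c_\gamma$, which the paper proves separately in Lemma~\ref{Lemma5.3}.
\end{itemize}

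\textbf{A slip in your last paragraph.} For $u=\mu Q(\lambda x,\lambda^{1+\gamma}y)$ you have $A/B=\mu^{2-p}\lambda^{2}A_Q/B_Q$, not $\mu^{2-p}\lambda^{2+N_\gamma}$, because both $A$ and $B$ carry the factor $\lambda^{-N_\gamma}$. Also, $2(2+N_\gamma)-(p-2)N_\gamma$ equals $4+4N_\gamma-pN_\gamma$, not $2N_\gamma+4-pN_\gamma$.

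The two errors cancel. The correct linear system has rows $(2,-N_\gamma)$ and $(2-p,2)$, and its determinant is $4-N_\gamma(p-2)=2N_\gamma+4-pN_\gamma\neq0$ under \eqref{H}. So your conclusion stands. With the exponent you wrote, the determinant would vanish at $p=4+4/N_\gamma$, which can be an admissible exponent in the second range of \eqref{H}.

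In fact one parameter suffices. Applying the dilation of your Step 1 to $\sqrt{c}\,Q/\|Q\|_{L^2(\mathbb{R}^N)}$ multiplies $A/B$ by $t^{(4-N_\gamma(p-2))/2}$, which sweeps out all of $(0,\infty)$.
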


\smallskip

\begin{remark}
By Theorem \ref{Theorem1.1} we can estimate $c_{\gamma}$ as follows:
\begin{equation*}
\begin{aligned}
c_{\gamma}>\left(C_{N_{\gamma},2+\frac{4}{N_{\gamma}}}\right)
^{-\frac{N_{\gamma}}{2}}
\left[
\frac{p}{q}
\frac{4-N_{\gamma}(q-2)}{4-N_{\gamma}(p-2)}
\right]
^{\frac{4-N_{\gamma}(p-2)}{2(p-q)}}
\left[
\frac{2}{p}
\frac{N_{\gamma}(q-p)}{4-N_{\gamma}(q-2)}
\right]
^{\frac{-N_{\gamma}}{2}},
\end{aligned}
\end{equation*}
and
\begin{equation*}
\begin{aligned}
c_{\gamma}
\leqslant&
\left(\frac{p}{q}\right)^{-\frac{4-N_{\gamma}(p-2)}{2(q-p)}\frac{N}{2}}
\left[
\frac{2^{1+\frac{N}{2}}(k+\gamma l)}
{kN_{\gamma}p^{\frac{N}{2}}\pi^{\frac{N}{N_{\gamma}}}}
\left(
\frac{k}
{(1+\gamma)^{2}(k+\gamma l)
\left(\frac{k}{2}+\gamma\right)^{\gamma}}
\right)^{\frac{l}{N_{\gamma}}}
\right]^{-\frac{N_{\gamma}}{2}}\\
&\times
\left[
\frac{p}{q}
\frac{4-N_{\gamma}(q-2)}{4-N_{\gamma}(p-2)}
\right]
^{\frac{4-N_{\gamma}(p-2)}{2(p-q)}}
\left[
\frac{2}{p}
\frac{N_{\gamma}(q-p)}{4-N_{\gamma}(q-2)}
\right]
^{\frac{-N_{\gamma}}{2}}.
\end{aligned}
\end{equation*}
\end{remark}

Next, with the help of Theorem \ref{Theorem1.2},
we show the existence and non-existence of $x$-radial energy GSS, giving an alternative quantitative proof of the results in \cite{YuSB-YangC-TangCL2026JGA}, extending them to the case $c=c_{\gamma}$.
\begin{theorem}\label{Theorem1.3}
Let $N=k+l$,
$k,l\geqslant 1$,
$\gamma\geqslant0$,
$\alpha=1$
and $p,q$ satisfy \eqref{H}.
Then, for $c\in(0,c_{\gamma})$ there are no $x$-radial energy GSS, while for $c\in[c_{\gamma},\infty)$ there exists a $x$-radial energy GSS $u_{c}\in \tilde{S}_{c}$ such that $\tilde{m}_{c,\gamma}=E_{\gamma}(u_{c})$.

Furthermore,
the corresponding Lagrange multiplier is such that $\omega>0$.
If $2+\frac{4}{N_{\gamma}}<q<p<2_{\gamma}^{*}$,
then we also know that
$\omega<\omega_{\sharp}:=
\frac{2(p-q)}{q(p-2)}
\left[
\frac{p(q-2)}{q(p-2)}
\right]
^{\frac{q-2}{p-q}}$.
\end{theorem}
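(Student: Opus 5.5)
The plan is to use Theorem \ref{Theorem1.2} for the existence and non-existence ranges, and the Pohozaev-type identity for the sign information on $\omega$.

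\textbf{Non-existence for $c<c_\gamma$.} Here $\tilde m_{c,\gamma}=0$ by Theorem \ref{Theorem1.2}. Suppose $u\in\tilde S_c$ with $E_\gamma(u)=0$. Take the mass-preserving dilation $u_t(x,y)=t^{N_\gamma/2}u(tx,t^{1+\gamma}y)$, which stays in $\tilde S_c$, and also the amplitude-mass rescaling $v=\theta u$ with $\theta>1$ close to $1$. Then $v\in \tilde S_{\theta^2c}$ and $\theta^2 c<c_\gamma$ when $\theta$ is close to $1$. We have
\[
E_\gamma(\theta u)=\frac{\theta^2}{2}\|\nabla_\gamma u\|_2^2+\frac{\theta^p}{p}\|u\|_p^p-\frac{\theta^q}{q}\|u\|_q^q .
\]
Combining this with $E_\gamma(u)=0$ and optimizing over the dilation $u_t$ (which is what produces the formula for $c_\gamma$ via Theorem \ref{Theorem1.1}), we aim to show that $\tilde m_{\theta^2 c,\gamma}<0$. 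A cleaner route is to show that $E_\gamma(u)=0$ for some $u\in\tilde S_c$ forces $W_{N_\gamma,p,q}(u)\le$ the threshold value, hence $c\ge c_\gamma$. Concretely, minimize $t\mapsto E_\gamma(u_t)$, which is a sum of three powers of $t$. The statement $\min_t E_\gamma(u_t)\ge 0$, with equality attained at $t=1$, is equivalent to an inequality on $W_{N_\gamma,p,q}(u)$ and $\|u\|_2$. Because $\tilde m=0$, we have $E_\gamma(u_t)\ge0$ for all $t$, so $t=1$ is a critical point. From the two equations $E_\gamma(u)=0$ and $\frac{d}{dt}E_\gamma(u_t)|_{t=1}=0$ we express $\|\nabla_\gamma u\|^2$ and $\|u\|_p^p$ in terms of $\|u\|_q^q$. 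Plugging these into $W_{N_\gamma,p,q}(u)\ge C_{N_\gamma,p,q}^{-1}$ then gives $c\ge c_\gamma$, which is a contradiction.

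\textbf{Existence for $c>c_\gamma$ and $c=c_\gamma$.} For $c=c_\gamma$ we take $Q$ from Theorem \ref{Theorem1.1} and use the two-parameter family $\lambda Q(\mu x,\mu^{1+\gamma}y)$, with $\lambda,\mu$ chosen so that the two relations above hold and the mass equals $c_\gamma$. Equality in the Gagliardo--Nirenberg inequality then yields $E_\gamma=0=\tilde m_{c_\gamma,\gamma}$. Solvability of this system in $\lambda,\mu$ is exactly how $c_\gamma$ was defined, so the main computational step is verifying consistency. For $c>c_\gamma$ we have $\tilde m<0$, and we argue by concentration-compactness on an $x$-radial minimizing sequence. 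Boundedness follows from Gagliardo--Nirenberg, since the $p$-term controls the $q$-term in either regime of \eqref{H}. Vanishing is excluded because $\tilde m<0$ forces $\|u_n\|_q\not\to0$; we translate in $y$ only, which preserves $x$-radiality, and use the compact-type embedding for $x$-radial functions. Dichotomy is excluded by strict subadditivity $\tilde m_{c}<\tilde m_{a}+\tilde m_{c-a}$, which follows from the strict decrease of $c\mapsto \tilde m_c/c$ where $\tilde m<0$ (the scaling $\theta u$). We expect this compactness step, in particular the excluding of vanishing under only $y$-translations in the Grushin setting, to be the main obstacle. Since the paper already has Lemma \ref{Lemma3.1}-type tools, we would mimic the proof of Theorem \ref{Theorem1.1}.

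\textbf{Lagrange multiplier.} Testing the equation with $u$ gives
\[
\|\nabla_\gamma u\|^2+\omega c+\|u\|_p^p=\|u\|_q^q .
\]
The Pohozaev identity from dilations gives
\[
\|\nabla_\gamma u\|^2=\frac{N_\gamma(q-2)}{2q}\|u\|_q^q-\frac{N_\gamma(p-2)}{2p}\|u\|_p^p .
\]
Together with $E_\gamma(u)=\tilde m\le0$, eliminating terms expresses $\omega c$ as a positive combination of $\|u\|_p^p$ and $-\tilde m$, so $\omega>0$. In the case $2+4/N_\gamma<q<p$, the bound $\omega<\omega_\sharp$ should follow by noting that $\omega$ must make $s\mapsto \frac{\omega}{2}s^2+\frac1p s^p-\frac1q s^q$ negative somewhere. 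Otherwise, a Pohozaev/Nehari combination of the integrand would have a sign contradicting the identities. Here $\omega_\sharp$ is exactly the threshold at which $\min_{s>0}\big(\frac{\omega}{2}+\frac{s^{p-2}}{p}-\frac{s^{q-2}}{q}\big)=0$, which a short calculus computation confirms.
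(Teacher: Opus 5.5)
Your overall architecture matches the paper's (Theorem~\ref{Theorem1.2} for the sign of $\tilde m_{c,\gamma}$, concentration--compactness with $y$-translations for $c>c_\gamma$, Nehari/Pohozaev for $\omega$). You treat $c<c_\gamma$ and $c=c_\gamma$ differently, and both of those routes are valid.

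For $c<c_\gamma$ the paper (Lemma~\ref{Lemma5.3}) uses the mass-increasing dilation $u(s^{-1/N_\gamma}x,s^{-(1+\gamma)/N_\gamma}y)$. It multiplies the $L^2$, $L^p$, $L^q$ terms by $s$ but the gradient term only by $s^{1-2/N_\gamma}$. This pushes a zero-energy minimizer to negative energy at mass $c_\gamma$, contradicting $\tilde m_{c_\gamma,\gamma}=0$. You instead use that $t=1$ minimizes $t\mapsto E_\gamma(u_t)$. Together with $E_\gamma(u)=0$ this forces
\[
\|u\|_{L^p}^p=\frac{p[4-N_\gamma(q-2)]}{q[4-N_\gamma(p-2)]}\|u\|_{L^q}^q,\qquad
\|\nabla_\gamma u\|_{L^2}^2=\frac{2N_\gamma(q-p)}{q[4-N_\gamma(p-2)]}\|u\|_{L^q}^q,
\]
so $W_{N_\gamma,p,q}(u)=c^{\frac{2(q-p)}{4-N_\gamma(p-2)}}K$ with a constant $K$ satisfying $(C_{N_\gamma,p,q}K)^{-\frac{4-N_\gamma(p-2)}{2(q-p)}}=c_\gamma$. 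Since this exponent is positive in both ranges of \eqref{H}, $W_{N_\gamma,p,q}(u)\geqslant C_{N_\gamma,p,q}^{-1}$ gives $c\geqslant c_\gamma$. This is correct and is the computation of Lemma~\ref{Lemma7.1}. It costs more algebra than the paper's scaling, but it gives Theorem~\ref{Theorem1.5}(i) in both ranges for free.

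For $c=c_\gamma$ the paper passes to the limit $c_n\downarrow c_\gamma$ with a second compactness argument (Lemma~\ref{Lemma5.5}). Your $\lambda Q(\mu x,\mu^{1+\gamma}y)$ is the construction of Theorem~\ref{Theorem1.4} and is more direct. Strictly, it is the two relations above that give $E_\gamma=0$; equality in \eqref{1.2} then pins the mass at $c_\gamma$.

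Two steps, however, fail as written when $2+\frac{4}{N_\gamma}<q<p<2_\gamma^{*}$.

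First, the sign of $\omega$. The Nehari identity, \eqref{5.6} and $E_\gamma(u)=\tilde m_{c,\gamma}$ determine $\omega c$ uniquely in terms of $\|u\|_{L^p}^p$ and $\tilde m_{c,\gamma}$:
\[
\omega c=\frac{4(q-p)}{p[4-N_\gamma(q-2)]}\|u\|_{L^p}^p+\frac{8-2(N_\gamma-2)(q-2)}{4-N_\gamma(q-2)}\bigl(-\tilde m_{c,\gamma}\bigr).
\]
In this range $4-N_\gamma(q-2)<0$ while $8-2(N_\gamma-2)(q-2)>0$ (since $q<2_\gamma^{*}$). So the coefficient of $-\tilde m_{c,\gamma}$ is negative, and nothing follows for $c>c_\gamma$. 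Eliminate $\|u\|_{L^p}^p$ instead: $\omega c=\frac{2}{N_\gamma}\|\nabla_\gamma u\|_{L^2}^2-2\tilde m_{c,\gamma}>0$, as in the paper.

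Second, the monotonicity of $\tilde m_{c,\gamma}/c$ via $\theta u$. We have
\[
\theta^{-2}E_\gamma(\theta u)-E_\gamma(u)=\frac{\theta^{p-2}-1}{p}\|u\|_{L^p}^p-\frac{\theta^{q-2}-1}{q}\|u\|_{L^q}^q .
\]
For $q<p$ this is positive for $\theta$ near $1$ whenever $\frac{1}{p}\|u\|_{L^p}^p$ is slightly below $\frac{1}{q}\|u\|_{L^q}^q$ and $\|\nabla_\gamma u\|_{L^2}^2$ is small, which is compatible with $E_\gamma(u)<0$. So amplitude scaling does not give $\tilde m_{\theta^2c,\gamma}\leqslant\theta^2\tilde m_{c,\gamma}$ in this range. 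It does work for $p<q$, where $E_\gamma(u)<0$ forces $\frac{1}{q}\|u\|_{L^q}^q>\frac{1}{p}\|u\|_{L^p}^p$. Use the dilation of Lemma~\ref{Lemma5.2} instead: $E_\gamma(\bar u)=sE_\gamma(u)-\frac{1}{2}\bigl(s-s^{1-2/N_\gamma}\bigr)\|\nabla_\gamma u\|_{L^2}^2$ in both ranges and regardless of signs. The gap is uniform along minimizing sequences, because $\tilde m_{c,\gamma}<0$ keeps $\|\nabla_\gamma u_n\|_{L^2}$ away from $0$ by Lemma~\ref{Lemma2.2}. Relatedly, for $p<q$ boundedness comes from the mass-subcritical bound \eqref{5.1}, not from the $p$-term.

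For $\omega<\omega_\sharp$, your pointwise formulation is the paper's H\"older--Young argument: $G(s)=\frac{\omega}{2}s^{2}+\frac{1}{p}s^{p}-\frac{1}{q}s^{q}\geqslant0$ whenever $\omega\geqslant\omega_\sharp$. Pohozaev gives $\frac{N_\gamma-2}{2}\|\nabla_\gamma u\|_{L^2}^2=-N_\gamma\int_{\mathbb{R}^N}G(|u|)\,\mathrm{d}z$, which is contradictory by itself only if $N_\gamma>2$. The case $k=l=1$, $\gamma=0$ is allowed here, since then $2_\gamma^{*}=\infty$. There you also need $G>0$ on $(0,\infty)$ when $\omega>\omega_\sharp$. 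When $\omega=\omega_\sharp$ you need that $|u|\in H^1$ cannot take only the values $0$ and the positive zero of $G$. The paper's write-up also skips this case.
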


\smallskip 

\begin{remark}
For $\gamma=0$,
the $x$-radial energy GSS is radial via rearrangements,
and it is also an energy GSS.
In this case,
Theorem \ref{Theorem1.3} covers the results of \cite[Theorems 2 and 3]{Bellazzini-Forcella-Georgiev2026arXiv} and \cite[Theorem 6]{Lewin-Nodari2020CVPDE}, while for $\gamma>0$ extends \cite[Theorem 1.2]{YuSB-YangC-TangCL2026JGA}.
\end{remark}

\begin{definition}
We say that $u_{\omega}$ is an action ground state solution (action GSS for short) of equation \eqref{G}, if $u_{\omega}$ achieves
\begin{equation*}
\begin{aligned}
e_{\omega}:=
\inf\big\{
I_{\omega}(u):I'_{\omega}(u)=0,u\in H^{1,2}_{\gamma}(\mathbb{R}^N)\backslash\{0\}
\big\},
\end{aligned}
\end{equation*}
where the action functional is
\begin{equation*}
\begin{aligned}
I_{\omega}(u):=
\frac{1}{2}
\int_{\mathbb{R}^{N}}
|\nabla_{\gamma}u|^{2}
\mathrm{d}z
+
\frac{\omega}{2}
\int_{\mathbb{R}^{N}}
|u|^{2}
\mathrm{d}z
+
\frac{\alpha}{p}
\int_{\mathbb{R}^{N}}
|u|^p
\mathrm{d}z
-
\frac{1}{q}
\int_{\mathbb{R}^{N}}
|u|^q
\mathrm{d}z.
\end{aligned}
\end{equation*}
\end{definition}

\begin{definition}
We say $u_{\omega}$ is a $x$-radial action GSS of equation \eqref{G}, if $u_{\omega}$ achieves
\begin{equation*}
\begin{aligned}
\tilde{e}_{\omega}:=
\inf\big\{
I_{\omega}(u):I'_{\omega}(u)=0,u\in H^{1,2}_{\gamma,rad_{x}}(\mathbb{R}^N)\backslash\{0\}
\big\}.
\end{aligned}
\end{equation*}
\end{definition}

\begin{remark}
For $\gamma>0$,
Alves-de Holanda \cite{Alves-de-Holanda2021} established the existence of $x$-radial action GSS for equation \eqref{G} with Berestycki-Lions type condition.
For more related results, we refer to \cite{Abatangelo-Ferrero-Luzzini2025JDE,Alves-Gandal-Loiudice-Tyagi2024JGA,Bisci-Malanchini-Secchi2025PA,Loiudice2009NA,Kogoj-Lanconelli2012NA}.

Again, when $\gamma=0$,
one has  
$e_{\omega}=\tilde{e}_{\omega}$ and $x$-radial action GSS is the action GSS.
However,
for $\gamma>0$,
we just know $e_{\omega}\leqslant\tilde{e}_{\omega}$.
\end{remark}

In Theorem \ref{Theorem1.3},
we have a critical mass $c_{\gamma}$ and  $u_{c_{\gamma}}$ ($x$-radial energy GSS with critical mass).
In the following  
we discuss in more details the relation between the critical mass $c_{\gamma}$ and the best constant and the optimizers of inequality \eqref{1.2}.

\begin{theorem}\label{Theorem1.4}
Let $N=k+l$,
$k,l\geqslant 1$,
$\gamma\geqslant0$,
$\alpha=1$
and $p,q$ satisfy \eqref{H}.

Let $u_{c}(x,y)=\lambda Q(\mu x, \mu^{1+\gamma} y)$ with
\begin{equation*}
\begin{aligned}
\begin{cases}
\lambda=
\left[
\frac{p}{q}
\frac{4-N_{\gamma}(q-2)}{4-N_{\gamma}(p-2)}
\int_{\mathbb{R}^{N}}
|Q|^{q}
\mathrm{d}z
\right]^{\frac{1}{p-q}},\\
\mu
=
\left[
\frac{2}{p}
\frac{N_{\gamma}(q-p)}{4-N_{\gamma}(q-2)}
\right]^{\frac{1}{2}}
\left[
\frac{p}{q}
\frac{4-N_{\gamma}(q-2)}{4-N_{\gamma}(p-2)}
\int_{\mathbb{R}^{N}}
|Q|^{q}
\mathrm{d}z
\right]
^{\frac{p-2}{2(p-q)}},
\end{cases}
\end{aligned}
\end{equation*}
where $Q$ is the optimizer with $C_{N_{\gamma},p,q}^{-1}$ in Theorem \ref{Theorem1.1}.
Then the following results hold true.

\begin{enumerate}
\item [(i)]
$u_{c}$ is an optimizer with $C_{N_{\gamma},p,q}^{-1}$;

\item [(ii)]
$u_{c}$ is a $x$-radial energy GSS for critical mass $c_{\gamma}$ with the Lagrange multiplier $\omega_{*}$,
where
\begin{equation*}
\begin{aligned}
\omega_{*}=
\frac{4(q-p)}{p[4-N_{\gamma}(q-2)]}
\left[
\frac{p}{q}
\frac{4-N_{\gamma}(q-2)}{4-N_{\gamma}(p-2)}
\int_{\mathbb{R}^{N}}
|Q|^{q}
\mathrm{d}z
\right]
^{\frac{p-2}{p-q}}
\frac{1}
{\int_{\mathbb{R}^{N}}
|Q|^{2}
\mathrm{d}z};
\end{aligned}
\end{equation*}

\item [(iii)]
For $2<p<q<2+\frac{4}{N_{\gamma}}$,
$u_{c}$ is a $x$-radial action GSS with $\omega_{*}$.
\end{enumerate}
\end{theorem}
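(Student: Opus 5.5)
The plan is to use the anisotropic scaling $u\mapsto \lambda u(\mu x,\mu^{1+\gamma}y)$ to place $Q$ on the critical mass sphere, and then read off (ii) and (iii) from the Euler--Lagrange equation of $W_{N_\gamma,p,q}$. Write $A=\int|\nabla_\gamma u|^2$, $B=\int|u|^p$, $D=\int|u|^q$, $M=\int|u|^2$, and abbreviate the exponents in \eqref{1.2} as $a=\frac{2(q-p)}{2N_\gamma+4-pN_\gamma}$, $b=\frac{2N_\gamma+4-qN_\gamma}{2N_\gamma+4-pN_\gamma}$, $d=\frac{N_\gamma(q-p)}{2N_\gamma+4-pN_\gamma}$.

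\textbf{Step 1 (part (i)).} The Jacobian of $(x,y)\mapsto(\mu x,\mu^{1+\gamma}y)$ is $\mu^{N_\gamma}$. Since $|x|^{2\gamma}$ scales like $\mu^{-2\gamma}$, one gets $|\nabla_\gamma u|^2\mapsto \lambda^2\mu^{2}$ times the pulled-back integrand. Hence
\[
M\mapsto\lambda^2\mu^{-N_\gamma}M,\quad A\mapsto\lambda^2\mu^{2-N_\gamma}A,\quad B\mapsto\lambda^p\mu^{-N_\gamma}B,\quad D\mapsto\lambda^q\mu^{-N_\gamma}D.
\]
A direct exponent count then shows $W_{N_\gamma,p,q}$ is invariant under this two-parameter family: $2a+pb+2d=q$, and $-N_\gamma(a+b)+(2-N_\gamma)d=-N_\gamma$. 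The scaling also preserves $x$-radial symmetry. Therefore $u_c$ attains $C^{-1}_{N_\gamma,p,q}$, which proves (i).

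\textbf{Step 2 (mass and zero energy).} Using $A(Q)=B(Q)=1$, compute $M(u_c)$, $A(u_c)$, $B(u_c)$, $D(u_c)$ explicitly. The specific $\lambda,\mu$ are designed so that two identities hold:
\begin{itemize}
\item the Pohozaev/virial identity $A=\frac{N_\gamma(p-2)}{2p}B-\frac{N_\gamma(q-2)}{2q}D$, which is the derivative of $E_\gamma$ along the mass-preserving dilation $t^{N_\gamma/2}u(tx,t^{1+\gamma}y)$ at $t=1$;
\item $E_\gamma(u_c)=0$.
\end{itemize}
Solving these two linear relations expresses $A$ and $D$ in terms of $B$. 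Substituting into the equality case of \eqref{1.2}, $C^{-1}D=M^aB^bA^d$, should give exactly $M(u_c)=c_\gamma$ as defined in Theorem~\ref{Theorem1.2}. This is the main bookkeeping obstacle. I would verify it by checking that the proof of Theorem~\ref{Theorem1.2} produces $c_\gamma$ precisely by optimizing $E_\gamma$ along scalings under the equality case of \eqref{1.2}; $\lambda$ and $\mu$ are the optimal scaling parameters in that computation.

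\textbf{Step 3 (part (ii)).} By Theorem~\ref{Theorem1.2}, $\tilde m_{c_\gamma,\gamma}=0=E_\gamma(u_c)$, so $u_c$ is an $x$-radial energy GSS. To obtain the multiplier, differentiate $\log W_{N_\gamma,p,q}$ at the optimizer $u_c$ (within the $x$-radial class; the principle of symmetric criticality applies because the functional is invariant under rotations in $x$). This gives
\[
\frac{2a}{M}u+\frac{pb}{B}|u|^{p-2}u-\frac{2d}{A}\Delta_\gamma u-\frac{q}{D}|u|^{q-2}u=0.
\]
Multiplying by $A/(2d)$ and using the relations from Step 2 to identify the coefficient ratios with $1$, we get $-\Delta_\gamma u_c+\omega_*u_c+|u_c|^{p-2}u_c=|u_c|^{q-2}u_c$ with $\omega_*=\frac{aA}{dM}=\frac{2A}{N_\gamma M}$. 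Evaluating this at $u_c$ should match the stated formula for $\omega_*$.

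\textbf{Step 4 (part (iii)), for $2<p<q<2+\frac4{N_\gamma}$.} Suppose $v$ is any nontrivial $x$-radial critical point of $I_{\omega_*}$. Then $v$ satisfies the Nehari identity and the Pohozaev identity. Combining them, $I_{\omega_*}(v)$ can be written as a positive multiple of $\frac{\omega_*}{2}M(v)+E_\gamma(v)$-type quantities, i.e. $I_{\omega_*}(v)=E_\gamma(v)+\frac{\omega_*}{2}M(v)$. Since $E_\gamma(v)\ge\tilde m_{M(v),\gamma}$, which is $0$ if $M(v)\le c_\gamma$, we need a lower bound for large mass. The plan is to show that for any $x$-radial solution with multiplier $\omega_*$, $E_\gamma(v)+\frac{\omega_*}{2}M(v)\ge \frac{\omega_*}{2}c_\gamma$. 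For this, use the Pohozaev and Nehari identities to express $I_{\omega_*}(v)$ as a combination of $A(v)$ and $B(v)$ alone. Then apply \eqref{1.2} together with the two identities to show $M(v)\ge c_\gamma$, and that $I_{\omega_*}(v)\ge I_{\omega_*}(u_c)$, with the one-variable function in the scaling parameter minimized at the parameters of $u_c$. The subcritical range $q<2+\frac4{N_\gamma}$ is what gives the correct signs in these identities. This sign/monotonicity argument is the step I expect to be the most delicate.
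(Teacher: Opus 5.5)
Your parts (i) and (ii) follow the paper's route (Lemmas~\ref{Lemma6.1}--\ref{Lemma6.4}):
\begin{itemize}
\item invariance of $W_{N_\gamma,p,q}$ under $u\mapsto\lambda u(\mu x,\mu^{1+\gamma}y)$;
\item the ratios $A/B=\lambda^{2-p}\mu^{2}$ and $A/D=\lambda^{2-q}\mu^{2}/\int_{\mathbb{R}^N}|Q|^q\mathrm{d}z$ at $u_c$ (using $A(Q)=B(Q)=1$), hence $E_\gamma(u_c)=0$;
\item the mass $c_\gamma$ from the equality case of \eqref{1.2};
\item $\omega_*=\frac{2A}{N_\gamma M}$ from the Euler--Lagrange equation.
\end{itemize}
Your appeal to symmetric criticality fills a point the paper passes over. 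One correction: the virial identity is $A=\frac{N_\gamma(q-2)}{2q}D-\frac{N_\gamma(p-2)}{2p}B$. With your sign, combined with $E_\gamma(u_c)=0$, it would force $A<0$ when $p<q$.

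The genuine gap is in (iii), and it has three parts.

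\textbf{The reduction points the wrong way.} If $M(v)<c_\gamma$, then $E_\gamma(v)\geqslant0$ only gives $I_{\omega_*}(v)\geqslant\frac{\omega_*}{2}M(v)<\frac{\omega_*}{2}c_\gamma$. If $M(v)>c_\gamma$, then $E_\gamma(v)$ may be negative. So neither regime is controlled by $\tilde m_{M(v),\gamma}$; and even $M(v)\geqslant c_\gamma$ would not suffice.

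\textbf{The claimed step ``\eqref{1.2} plus the two identities give $M(v)\geqslant c_\gamma$'' fails.} Put $r=D/B$. The virial identity gives $A/B=\frac{N_\gamma(q-2)}{2q}r-\frac{N_\gamma(p-2)}{2p}$. Dividing \eqref{1.2} by $B$ then yields only
\begin{equation*}
M\geqslant C_{N_\gamma,p,q}^{-1/a}\,r^{1/a}(A/B)^{-N_\gamma/2},
\end{equation*}
with your $a$. Let $\kappa=\frac{q[4-N_\gamma(p-2)]}{p[4-N_\gamma(q-2)]}$ be the ratio $D/B$ at $u_c$. This function of $r$ equals $c_\gamma$ at $r=\kappa$, but it is increasing on $\big(\frac{p-2}{q-2}\kappa,\infty\big)$. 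Hence it lies strictly below $c_\gamma$ just to the left of $\kappa$. Taking $B$ at that bound produces tuples that satisfy Nehari, Pohozaev (with $\omega_*$) and \eqref{1.2} but have $M<c_\gamma$. No manipulation of those ingredients can give $M\geqslant c_\gamma$.

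\textbf{The missing idea is Lemma~\ref{Lemma2.6}.} For $v^t(z)=t^2v(t^{p-2}x,t^{(p-2)(1+\gamma)}y)$ one has
\begin{equation*}
I_\omega(v)-I_\omega(v^t)=\frac{1-t^{\theta}}{\theta}J(v)+\frac{f_1(t)\,\omega}{2\theta}\int_{\mathbb{R}^N}|v|^2\mathrm{d}z+\frac{f_2(t)}{q\theta}\int_{\mathbb{R}^N}|v|^q\mathrm{d}z,\qquad \theta=2p-N_\gamma(p-2).
\end{equation*}
For a solution, $J(v)=2N(v)-(p-2)P(v)=0$. In the range $2<p<q<2+\frac{4}{N_\gamma}$ one has $f_1,f_2\geqslant0$ with minimum at $t=1$, and $\omega_*>0$. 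So $t\mapsto I_{\omega_*}(v^t)$ is maximized at $t=1$. Since $\|v^t\|_{L^2}^2=t^{4-N_\gamma(p-2)}\|v\|_{L^2}^2$ with a positive exponent, choose $t_*$ with $\|v^{t_*}\|_{L^2}^2=c_\gamma$. Then
\begin{equation*}
I_{\omega_*}(v)\geqslant I_{\omega_*}(v^{t_*})=E_\gamma(v^{t_*})+\frac{\omega_*}{2}c_\gamma\geqslant\frac{\omega_*}{2}c_\gamma=I_{\omega_*}(u_c),
\end{equation*}
using Theorem~\ref{Theorem1.2} for the middle inequality. This holds for every nontrivial $x$-radial critical point $v$, whatever its mass.

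If you prefer a purely algebraic route, you must minimize $I_{\omega_*}(v)=\frac{1}{N_\gamma}A(v)$ itself (a consequence of $P(v)=0$), not $M$, over $r$ subject to \eqref{1.2}. You would then still have to prove that this one-variable function attains its minimum exactly at $r=\kappa$.
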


\begin{remark}
In Theorem \ref{Theorem1.4},
we not only construct a $x$-radial energy GSS with critical mass $c_{\gamma}$,
but also show it is an optimizer with $C_{N_{\gamma},p,q}^{-1}$.
\end{remark}

Actually, we have that all $x$-radial energy GSS with critical mass $c_{\gamma}$ are optimizers of \eqref{1.2}.
\begin{theorem}\label{Theorem1.5}
Let $N=k+l$,
$k,l\geqslant 1$,
$\gamma\geqslant0$,
$\alpha=1$
and $p,q$ satisfy \eqref{H}.
Let $U\in H^{1,2}_{\gamma,rad_{x}}(\mathbb{R}^{N})$ be any $x$-radial energy GSS with critical mass $c_{\gamma}$.
Then the following results hold true.

\begin{enumerate}
\item [(i)]
$U$ is the optimizer of \eqref{1.2};

\item [(ii)]
for $2<p<q<2+\frac{4}{N_{\gamma}}$,
if the corresponding Lagrange multiplier is $\omega_{*}$,
then $U$ is a $x$-radial action GSS with $\omega=\omega_{*}$.
\end{enumerate}
\end{theorem}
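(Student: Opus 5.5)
We prove Theorem \ref{Theorem1.5} by showing that, at the critical mass, the energy $E_\gamma$ is nonnegative on $\tilde S_{c_\gamma}$ precisely because of the sharp inequality \eqref{1.2}. A minimizer with energy zero must therefore make every inequality in that argument an equality, and in particular must be an optimizer of \eqref{1.2}.

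For part (i), write $A=\int|\nabla_\gamma u|^2$, $B=\int|u|^p$, $D=\int|u|^q$ for $u\in\tilde S_{c}$.
\begin{enumerate}
\item[Step 1.] Apply \eqref{1.2} to get $D\le C_{N_\gamma,p,q}\,c^{\theta_1}B^{\theta_2}A^{\theta_3}$, where $\theta_1,\theta_2,\theta_3$ are the exponents in \eqref{1.2} and $\theta_2+\theta_3=1$. Hence
\[
E_\gamma(u)\ge \tfrac12 A+\tfrac1p B-\tfrac1q C_{N_\gamma,p,q}c^{\theta_1}B^{\theta_2}A^{\theta_3}.
\]
\item[Step 2.] Apply the weighted Young inequality with exponents $1/\theta_2$ and $1/\theta_3$:
\[
\tfrac12A+\tfrac1pB\ \ge\ \left(\tfrac{1}{2\theta_3}\right)^{\theta_3}\left(\tfrac{1}{p\theta_2}\right)^{\theta_2}A^{\theta_3}B^{\theta_2}.
\]
Equality holds iff $\tfrac{1}{2\theta_3}A=\tfrac{1}{p\theta_2}B$.
\item[Step 3.] Compare the constants. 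The definition of $c_\gamma$ in Theorem \ref{Theorem1.2} should be exactly the value making $\tfrac1q C_{N_\gamma,p,q}c_\gamma^{\theta_1}=(2\theta_3)^{-\theta_3}(p\theta_2)^{-\theta_2}$. I expect this identity to be verifiable by direct algebra, using $\theta_1=2(q-p)/(4-N_\gamma(p-2))$ and the analogous expressions for $\theta_2,\theta_3$.
\end{enumerate}
Steps 1--3 together give $E_\gamma\ge0$ on $\tilde S_{c_\gamma}$, which is consistent with $\tilde m_{c_\gamma,\gamma}=0$ from Theorem \ref{Theorem1.2}.

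Now let $U$ be an $x$-radial energy GSS with mass $c_\gamma$, so $E_\gamma(U)=0$ and $U\neq0$. Then all inequalities above are equalities. In particular equality holds in \eqref{1.2}, that is, $W_{N_\gamma,p,q}(U)=C_{N_\gamma,p,q}^{-1}$, so $U$ is an optimizer. The Young equality $A/(2\theta_3)=B/(p\theta_2)$ comes for free and will be used in part (ii).

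The main point to check carefully is Step 3, the exact matching of constants, since everything hinges on the algebra being tight. To be safe, I would cross-check it against Theorem \ref{Theorem1.4}: for $u_c$ built from $Q$ there, the energy $E_\gamma(u_c)$ should vanish, and this pins down the same constant identity.

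For part (ii), with $2<p<q<2+\frac4{N_\gamma}$ and Lagrange multiplier $\omega_*$, $U$ solves \eqref{G}, so $I'_{\omega_*}(U)=0$ and hence $\tilde e_{\omega_*}\le I_{\omega_*}(U)=E_\gamma(U)+\frac{\omega_*}2c_\gamma=\frac{\omega_*}{2}c_\gamma$.

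For the reverse inequality, let $v\in H^{1,2}_{\gamma,rad_x}(\mathbb{R}^N)\setminus\{0\}$ be any critical point of $I_{\omega_*}$, with mass $c=\int|v|^2$.
\begin{enumerate}
\item[Case $c\le c_\gamma$.] Theorem \ref{Theorem1.2} gives $E_\gamma(v)\ge\tilde m_{c,\gamma}=0$. Hence $I_{\omega_*}(v)=E_\gamma(v)+\frac{\omega_*}{2}c$, and we must still show this is at least $\frac{\omega_*}2c_\gamma$.
\item[Case $c>c_\gamma$.] Here $E_\gamma(v)$ can be negative, so the bound on $E_\gamma$ alone does not suffice.
\end{enumerate}
Rather than splitting on the mass, I would reduce everything to the sharp inequality via the Nehari and Pohozaev identities for $v$:
\[
A+\omega_* c+B=D,\qquad A=\frac{N_\gamma(q-2)}{2q}D-\frac{N_\gamma(p-2)}{2p}B,
\]
where the second identity comes from the scaling $\lambda v(\mu x,\mu^{1+\gamma}y)$. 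These two relations let me express $I_{\omega_*}(v)$, together with $A$, $c$ and $D$, in terms of $B$ alone, and likewise for $U$. Since $v$ must also satisfy \eqref{1.2}, i.e. $W_{N_\gamma,p,q}(v)\ge C^{-1}_{N_\gamma,p,q}$, this yields a one-variable inequality in $B$ whose extremal case is $U$. The conclusion is $I_{\omega_*}(v)\ge I_{\omega_*}(U)$, with equality exactly at optimizers.

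The delicate point is the direction of the monotonicity. The restriction $q<2+\frac4{N_\gamma}$ should be what makes the resulting function of $B$ have its minimum at the optimizer's value. Alternatively, one can rewrite this as the statement that $U$ coincides, up to the scaling of Theorem \ref{Theorem1.4}, with the $u_c$ built from $Q$ there. Theorem \ref{Theorem1.4}(iii) then gives the action GSS property directly, after checking that the scaling parameters are determined by $\omega_*$ and the equality relations from Step 2.
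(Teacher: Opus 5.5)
Your part (i) is correct and follows a different, cleaner route than the paper. The constant identity you leave as an expectation does hold. Set $\theta_1=\frac{2(q-p)}{4-N_\gamma(p-2)}$, $\theta_2=\frac{4-N_\gamma(q-2)}{4-N_\gamma(p-2)}$ and $\theta_3=\frac{N_\gamma(q-p)}{4-N_\gamma(p-2)}$; all lie in $(0,1)$ in both ranges of \eqref{H}. The definition of $c_\gamma$ then gives $\frac1q C_{N_\gamma,p,q}c_\gamma^{\theta_1}=\frac{1}{p\theta_2}\bigl(\frac{2\theta_3}{p\theta_2}\bigr)^{-\theta_3}=(2\theta_3)^{-\theta_3}(p\theta_2)^{-\theta_2}$.

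The paper instead uses the dilation $U_t=t^{N_\gamma/2}U(tx,t^{1+\gamma}y)$. Since $E_\gamma\geqslant0$ on $\tilde S_{c_\gamma}$ and $E_\gamma(U)=0$, $t=1$ must be the unique critical point of the fibering map $h_U$. This forces $\int|U|^p=\frac{p\theta_2}{q}\int|U|^q$ and $\int|\nabla_\gamma U|^2=\frac{2\theta_3}{q}\int|U|^q$. Substituting into $W_{N_\gamma,p,q}$ with the same identity gives $W_{N_\gamma,p,q}(U)=C^{-1}_{N_\gamma,p,q}$; only the range $p<q$ is written out. Your GN-plus-weighted-AM--GM chain treats both ranges at once and reproves $\tilde m_{c_\gamma,\gamma}\geqslant0$ along the way.

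Part (ii) has a genuine gap. You prove $\tilde e_{\omega_*}\leqslant I_{\omega_*}(U)=\frac{\omega_*}{2}c_\gamma$, but the reverse inequality is never established. Your main route does not work as described. With $\omega_*$ fixed, Nehari and Pohozaev are two linear relations among the four quantities $A,B,c,D$, so they cannot express everything through $B$ alone. Even after using homogeneity you are left with a one-variable minimization in the ratio $D/B$. Showing that its minimum sits at the optimizer's ratio is the whole content of (ii), and you leave it as an expectation.

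The missing idea is the scaling of Lemma \ref{Lemma2.6}, $v^t(z)=t^2v(t^{p-2}x,t^{(p-2)(1+\gamma)}y)$. It multiplies $A$ and $B$ by the same factor $t^{2p-N_\gamma(p-2)}$, the mass by $t^{4-N_\gamma(p-2)}$, and $D$ by $t^{2q-N_\gamma(p-2)}$. For any $x$-radial critical point $v$ of $I_{\omega_*}$, the difference $I_{\omega_*}(v)-I_{\omega_*}(v^t)$ is a multiple of $J(v)=2N(v)-(p-2)P(v)=0$ plus positive multiples of $f_1(t)\omega_*c$ and $f_2(t)D$. Here $f_1,f_2\geqslant0$ and $\omega_*>0$, and this nonnegativity is exactly where $2<p<q<2+\frac4{N_\gamma}$ enters. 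Choose $t_*$ with $\|v^{t_*}\|_{L^2}^2=c_\gamma$. Then $I_{\omega_*}(v)\geqslant I_{\omega_*}(v^{t_*})=E_\gamma(v^{t_*})+\frac{\omega_*}{2}c_\gamma\geqslant\frac{\omega_*}{2}c_\gamma$, where the last step is your own Steps 1--3. This is the argument of Lemma \ref{Lemma6.5} with $U$ in place of $u_c$, which is what the paper invokes.

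Your fallback via Theorem \ref{Theorem1.4} can be rescued, but not as stated. Nothing shows that $U$ is a rescaling of the particular $Q$ of Theorem \ref{Theorem1.1}, since uniqueness of optimizers is unknown, so you cannot identify $U$ with that $u_c$. You do not need to. By Lemma \ref{Lemma6.4}, $u_c$ has mass $c_\gamma$ and $E_\gamma(u_c)=0$, so Theorem \ref{Theorem1.4}(iii) gives $\tilde e_{\omega_*}=I_{\omega_*}(u_c)=\frac{\omega_*}{2}c_\gamma=I_{\omega_*}(U)$. Since $U$ is a critical point of $I_{\omega_*}$, it is then an $x$-radial action GSS.
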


\begin{theorem}\label{Theorem1.6}
Let $N=k+l$,
$k,l\geqslant 1$,
$\gamma\geqslant0$,
$\alpha=1$
and 
$2<p<q<2+\frac{4}{N_{\gamma}}$.
For $\omega=\omega_{*}$, equation \eqref{G} has a $x$-radial action GSS, say $V\in H^{1,2}_{\gamma,rad_{x}}(\mathbb{R}^{N})$. 
Moreover, the following assertions hold true.

\begin{enumerate}
 \item [(i)]
$V$ is a $x$-radial energy GSS of \eqref{G} for the mass $c_{\gamma}$ with the Lagrange multiplier $\omega_{*}$;

\item [(ii)]
$V$ satisfies the Pohozaev normalization condition
\begin{equation*}
\begin{aligned}
\int_{\mathbb{R}^{N}}
|\nabla_{\gamma}V|^{2}
\mathrm{d}z
=&
\frac{2N_{\gamma}(q-p)}
{q[4-N_{\gamma}(p-2)]}
\int_{\mathbb{R}^{N}}
|V|^{q}
\mathrm{d}z,\\
\int_{\mathbb{R}^{N}}
|V|^{p}
\mathrm{d}z
=&
\frac{p[4-N_{\gamma}(q-2)]}
{q[4-N_{\gamma}(p-2)]}
\int_{\mathbb{R}^{N}}
|V|^{q}
\mathrm{d}z;
\end{aligned}
\end{equation*}

\item [(iii)]
$V$ is the optimizer of \eqref{1.2}.
\end{enumerate}
\end{theorem}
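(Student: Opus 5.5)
The existence part comes directly from Theorem~\ref{Theorem1.4}(iii). The function $u_{c}(x,y)=\lambda Q(\mu x,\mu^{1+\gamma}y)$ built there is an $x$-radial action GSS with $\omega=\omega_{*}$. So $\tilde e_{\omega_*}$ is attained and $\tilde e_{\omega_*}=I_{\omega_*}(u_c)$. By Theorem~\ref{Theorem1.4}(ii), $u_c$ is an $x$-radial energy GSS of mass $c_\gamma$, and Theorem~\ref{Theorem1.2} gives $E_\gamma(u_c)=\tilde m_{c_\gamma,\gamma}=0$. Hence
\begin{equation*}
\tilde e_{\omega_*}=E_\gamma(u_c)+\frac{\omega_*}{2}c_\gamma=\frac{\omega_*}{2}c_\gamma .
\end{equation*}
It then remains to prove (i)--(iii) for an arbitrary $x$-radial action GSS $V$ with $\omega_*$.

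\emph{Step 1 (identities).} Write $A=\int|\nabla_\gamma V|^2$, $B=\int|V|^p$, $D=\int|V|^q$ and $m=\int|V|^2$. Since $V$ solves \eqref{G} with $\alpha=1$ and $\omega=\omega_*$, the Nehari identity gives
\begin{equation*}
A+\omega_* m+B=D .
\end{equation*}
The Pohozaev identity attached to the anisotropic dilation $(x,y)\mapsto(\mu x,\mu^{1+\gamma}y)$, which has Jacobian $\mu^{N_\gamma}$, gives
\begin{equation*}
A=N_\gamma\Big(\tfrac{q-2}{2q}D-\tfrac{p-2}{2p}B\Big).
\end{equation*}
I take this identity as available, by the usual truncation argument or by differentiating $t\mapsto I_{\omega_*}(V(t\,\cdot,t^{1+\gamma}\cdot))$ at $t=1$, which is legitimate for critical points. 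These two relations express $A$ and $B$ linearly in terms of $D$ and $\omega_* m$. It follows that $I_{\omega_*}(V)$, $E_\gamma(V)$ and $W_{N_\gamma,p,q}(V)$ are all explicit functions of the two quantities $D$ and $\omega_* m$.

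\emph{Step 2 (energy GSS and mass).} Since $V\in\tilde S_m$, we have $E_\gamma(V)\ge \tilde m_{m,\gamma}$. Hence
\begin{equation*}
\frac{\omega_*}{2}c_\gamma=I_{\omega_*}(V)\ge \tilde m_{m,\gamma}+\frac{\omega_*}{2}m .
\end{equation*}
I then show that $g(c):=\tilde m_{c,\gamma}+\frac{\omega_*}{2}c$ satisfies $g(c)\ge \frac{\omega_*}{2}c_\gamma$, with equality only at $c=c_\gamma$.
\begin{itemize}
\item For $c\le c_\gamma$ this is immediate from Theorem~\ref{Theorem1.2}, since there $g(c)=\frac{\omega_*}{2}c$.
\item For $c>c_\gamma$ I bound $E_\gamma$ from below on $\tilde S_c$ using \eqref{1.2}. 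I insert the GN bound for $\int|u|^q$ and minimise the resulting expression in the two free variables $\int|\nabla_\gamma u|^2$ and $\int|u|^p$. This gives an explicit lower bound $\tilde m_{c,\gamma}\ge -\Phi(c)$. The claim then reduces to the elementary inequality $\Phi(c)\le\frac{\omega_*}{2}(c-c_\gamma)$.
\end{itemize}
The value of $\omega_*$ is exactly the slope making $\frac{\omega_*}{2}(c-c_\gamma)$ tangent to $\Phi$ at $c_\gamma$, which is consistent with the formula for $\omega_*$ in Theorem~\ref{Theorem1.4}. Granting the inequality, equality must hold throughout. This forces $m=c_\gamma$ and $E_\gamma(V)=\tilde m_{c_\gamma,\gamma}=0$, which is (i); the Lagrange multiplier is $\omega_*$ by construction.

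\emph{Step 3 (normalisation and optimality).} From $E_\gamma(V)=0$ we get
\begin{equation*}
\tfrac12A+\tfrac1pB-\tfrac1qD=0 .
\end{equation*}
Together with the Pohozaev identity this is a nondegenerate $2\times2$ linear system for $A$ and $B$ in terms of $D$, because $p\neq q$. Solving it yields exactly the two relations in (ii). For (iii), $V$ is an $x$-radial energy GSS with critical mass $c_\gamma$. So Theorem~\ref{Theorem1.5}(i) applies directly and gives that $V$ optimises \eqref{1.2}. Alternatively, one can substitute (ii) and $m=c_\gamma$ into $W_{N_\gamma,p,q}(V)$ and check against the formula for $c_\gamma$ in Theorem~\ref{Theorem1.2} that $W_{N_\gamma,p,q}(V)=C_{N_\gamma,p,q}^{-1}$.

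The main obstacle is Step 2, and specifically the convexity-type inequality $\tilde m_{c,\gamma}\ge -\frac{\omega_*}{2}(c-c_\gamma)$ for $c>c_\gamma$. The fallback is to avoid a global estimate on $\tilde m_{c,\gamma}$. Instead I would use the Step~1 parametrisation to write $I_{\omega_*}(V)-\frac{\omega_*}{2}c_\gamma$ and $W_{N_\gamma,p,q}(V)-C_{N_\gamma,p,q}^{-1}$ as functions of $(D,\omega_*m)$. Then I would show directly, via the weighted AM--GM inequality underlying the definition of $c_\gamma$, that $I_{\omega_*}(V)\le\frac{\omega_*}{2}c_\gamma$ combined with $W_{N_\gamma,p,q}(V)\ge C_{N_\gamma,p,q}^{-1}$ forces $m=c_\gamma$.
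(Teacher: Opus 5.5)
Step 2 fails. The inequality $g(c)\ge\frac{\omega_*}{2}c_\gamma$ is false for \emph{every} $c\neq c_\gamma$, so the reduction to $\Phi(c)\le\frac{\omega_*}{2}(c-c_\gamma)$ leads nowhere.

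For $c<c_\gamma$ you have $g(c)=\frac{\omega_*}{2}c<\frac{\omega_*}{2}c_\gamma$, since $\omega_*>0$. So the case you call immediate goes the wrong way, and nothing in your chain excludes $\|V\|_2^2<c_\gamma$.

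For $c>c_\gamma$ it fails too. For fixed $u\in\tilde S_1$ we have $E_\gamma(\sqrt{c}\,u)/c=\frac12\int|\nabla_\gamma u|^2+\frac{c^{(p-2)/2}}{p}\int|u|^p-\frac{c^{(q-2)/2}}{q}\int|u|^q\to-\infty$, hence $g(c)\to-\infty$. More sharply, apply Lemma~\ref{Lemma2.6} to the solution $u_c$ of Theorem~\ref{Theorem1.4}. For $t\neq1$ it gives $E_\gamma(u_c^t)+\frac{\omega_*}{2}t^{4-N_\gamma(p-2)}c_\gamma=I_{\omega_*}(u_c^t)<I_{\omega_*}(u_c)=\frac{\omega_*}{2}c_\gamma$. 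Since $u_c^t$ is $x$-radial with mass $t^{4-N_\gamma(p-2)}c_\gamma$, this yields $g(c)<\frac{\omega_*}{2}c_\gamma$ for all $c\neq c_\gamma$.

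So $c_\gamma$ is the strict global \emph{maximum} of $g$. The line of slope $-\frac{\omega_*}{2}$ through $(c_\gamma,0)$ touches the graph of $c\mapsto\tilde m_{c,\gamma}$ from above, not from below. Your bound $I_{\omega_*}(V)\ge g(\|V\|_2^2)$ is true but says nothing about the mass. Comparing $V$ with the energy infimum at its own mass throws away the fact that $V$ is a critical point.

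The missing idea is to use the equation to move $V$ to mass $c_\gamma$ \emph{without increasing its action}. Consider $V^t(z)=t^2V(t^{p-2}x,t^{(p-2)(1+\gamma)}y)$. Along this curve the gradient and $L^p$ terms scale by the same power $t^{a}$, with $a=2p-N_\gamma(p-2)$. Then $J(V)=2N(V)-(p-2)P(V)=0$ gives the identity in the proof of Lemma~\ref{Lemma2.6}: $I_{\omega_*}(V)-I_{\omega_*}(V^t)=\frac{f_1(t)}{2a}\omega_*\|V\|_2^2+\frac{f_2(t)}{qa}\|V\|_q^q$, where $f_1,f_2>0$ on $(0,\infty)\setminus\{1\}$ in the range $2<p<q<2+\frac{4}{N_\gamma}$.

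Choose $\tilde t_*$ with $\|V^{\tilde t_*}\|_2^2=c_\gamma$. Using $\tilde m_{c_\gamma,\gamma}=0$, this gives $\frac{\omega_*}{2}c_\gamma=I_{\omega_*}(V)\ge I_{\omega_*}(V^{\tilde t_*})=E_\gamma(V^{\tilde t_*})+\frac{\omega_*}{2}c_\gamma\ge\frac{\omega_*}{2}c_\gamma$. Hence $f_1(\tilde t_*)=0$, so $\tilde t_*=1$, $\|V\|_2^2=c_\gamma$ and $E_\gamma(V)=0$. This is the paper's proof of (i).

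Your fallback would reduce to this same computation if carried out. Nehari and Pohozaev give $I_{\omega_*}(V)=\frac{2(q-p)}{qa}\|V\|_q^q+\frac{p-2}{a}\omega_*\|V\|_2^2$, and $W_{N_\gamma,p,q}$ is invariant along $t\mapsto V^t$. As written, though, it is only announced, so (i) is not proved.

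The rest is fine. Getting existence from Theorem~\ref{Theorem1.4}(iii) is legitimate and avoids Lemma~\ref{Lemma8.1}, and Step 3 is correct, with two small corrections:
\begin{itemize}
\item The $2\times2$ system is nondegenerate because $p\neq 2+\frac{4}{N_\gamma}$, not because $p\neq q$.
\item The Pohozaev identity should be quoted from Lemma~\ref{Lemma2.5}. Formally differentiating along dilations is not automatically justified for critical points.
\end{itemize}
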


\begin{remark}
For $\gamma=0$,
the relation between energy GSS and action GSS has a long history, see \cite{Jeanjean-LuSS2022CVPDE}.
For $\gamma>0$,
the relation between $x$-radial energy GSS and $x$-radial action GSS is given in \cite{YuSB-YangC-TangCL2026JGA}.
Here, we derive the relation between the $x$-radial energy GSS, $x$-radial action GSS and optimizers of Gagliardo-Nirenberg inequality.
Our results clearly hold also for $\gamma=0$.
\end{remark}

The paper is organized as follows. 
In section \ref{section2}, we present some preliminary results. 
In sections \ref{section3}-\ref{section8}, we derive the main Theorems \ref{Theorem1.1}-\ref{Theorem1.6}, respectively.

\section{Preliminaries}
\label{section2}
In this section, we first recall some preliminaries on the Grushin operator and related results. For $\gamma\geqslant0$, let
\begin{equation*}
\begin{aligned}
\nabla_{\gamma}u=
(\nabla_{x}u,(1+\gamma)|x|^{\gamma}\nabla_{y}u)=
(u_{x_1},\cdots,u_{x_k},(1+\gamma)|x|^{\gamma}u_{y_1},\cdots,(1+\gamma)|x|^{\gamma}u_{y_l})
\end{aligned}
\end{equation*}
and
\begin{equation*}
\begin{aligned}
|\nabla_{\gamma}u|^{2}=
|\nabla_{x}u|^{2}
+
(1+\gamma)^{2}|x|^{2\gamma}
|\nabla_{y}u|^{2}.
\end{aligned}
\end{equation*}
Let us define the space $D^{1,2}_{\gamma}(\mathbb{R}^{N})$ as the closure of $C_0^\infty(\mathbb{R}^N)$ with respect to the norm
\begin{equation*}
\begin{aligned}
\|u\|_{D^{1,2}_{\gamma}(\mathbb{R}^{N})}^{2}:=
\int_{\mathbb{R}^N}
|\nabla_{\gamma}u|^{2}
\mathrm{d}z.
\end{aligned}
\end{equation*}
We let $H^{1,2}_{\gamma}(\mathbb{R}^{N})$ bne the weighted Sobolev space
\begin{equation*}
\begin{aligned}
H^{1,2}_{\gamma}(\mathbb{R}^{N}):=
\left\{
u\in L^{2}(\mathbb{R}^{N})
:
\frac{\partial u}{\partial x_i},|x|^{\gamma}\frac{\partial u}{\partial y_j}\in L^{2}(\mathbb{R}^{N}),i=1,\cdots k, j=1,\cdots l
\right\},
\end{aligned}
\end{equation*}
endowed with the scalar product  
\begin{equation*}
\begin{aligned}
\langle u,v\rangle_{H^{1,2}_{\gamma}(\mathbb{R}^{N})}:=
\int_{\mathbb{R}^N}
(\nabla_{\gamma}u\nabla_{\gamma}v+uv)
\mathrm{d}z
\end{aligned}
\end{equation*}
and the norm
\begin{equation*}
\begin{aligned}
\|u\|_{H^{1,2}_{\gamma}(\mathbb{R}^{N})}^{2}
:=
\int_{\mathbb{R}^N}
(|\nabla_{\gamma}u|^{2}+|u|^{2})
\mathrm{d}z.
\end{aligned}
\end{equation*}
From \cite{Monti2006Duke},
it is easy to see that the embedding $H^{1,2}_{\gamma}(\mathbb{R}^N)\hookrightarrow L^r(\mathbb{R}^N)$ is continuous for every $r\in[2,2_{\gamma}^{*}]$.

We introduce the subspace $H^{1,2}_{\gamma,rad_{x}}(\mathbb{R}^N)$ as follows
\begin{equation*}
\begin{aligned}
H^{1,2}_{\gamma,rad_{x}}(\mathbb{R}^{N}):=
\left\{
u\in H^{1,2}_{\gamma}(\mathbb{R}^N)
:
u(x,y)
=
u(|x|,y)
~\mathrm{for~all}~(x,y)\in
\mathbb{R}^{k}\times \mathbb{R}^{l}
\right\}.
\end{aligned}
\end{equation*}
\begin{lemma}\label{Lemma2.1}
{\rm \cite[Lemma 2.2]{Alves-de-Holanda2021}}
Let $R>0$ and $2< r<2_{\gamma}^{*}$.
Then the embedding of
$H^{1,2}_{\gamma,rad_{x}}(\mathbb{R}^N)$ into $L^{r}(\mathbb{R}^{k}\times B(0,R))$ is compact,
where $B(0,R)$ is the ball with radius $R$.
\end{lemma}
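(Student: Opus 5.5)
The plan is to take a bounded sequence $u_n\rightharpoonup u$ in $H^{1,2}_{\gamma,rad_{x}}(\mathbb{R}^N)$ and show $u_n\to u$ in $L^{r}(\mathbb{R}^{k}\times B(0,R))$. Replacing $u_n$ by $v_n:=u_n-u$, which is still $x$-radial, bounded and weakly null, it suffices to prove $\|v_n\|_{L^r(\mathbb{R}^k\times B(0,R))}\to0$. For a radius $\rho\geqslant1$ I split
\[
\mathbb{R}^{k}\times B(0,R)=\big(\{|x|<\rho\}\times B(0,R)\big)\cup\big(\{|x|\geqslant\rho\}\times B(0,R)\big).
\]
On the bounded piece I use local compactness. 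On the exterior piece I show a tail estimate that is uniform in $n$ and tends to $0$ as $\rho\to\infty$. This is the classical Strauss/Lions mechanism, carried out only in the $x$-variables.

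\emph{Bounded piece.} The vector fields $\partial_{x_i}$ and $|x|^{\gamma}\partial_{y_j}$ define a Grushin-type subelliptic structure. On bounded domains the embedding of $H^{1,2}_{\gamma}$ into $L^{r}$ is compact for every $r<2^{*}_{\gamma}$; this is the Rellich theorem for Grushin/Franchi--Lanconelli type spaces, which I take as known from \cite{Monti2006Duke} and related work. Applying it with a cutoff in $(x,y)$ gives $v_n\to0$ in $L^r(\{|x|<\rho\}\times B(0,R))$ for each fixed $\rho$.

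\emph{Exterior piece.} On $\{|x|\geqslant1\}$ we have $(1+\gamma)|x|^{\gamma}\geqslant1$, so $|\nabla v_n|\leqslant|\nabla_\gamma v_n|$ there. Hence on this region the $v_n$ are bounded in the ordinary $H^1$, and the Euclidean Sobolev exponent $2N/(N-2)$ exceeds $2^*_\gamma>r$ because $N_\gamma\geqslant N$. I cover $\{|x|\geqslant\rho\}\times B(0,R)$ by unit cubes $Q$ and use the local Gagliardo--Nirenberg bound
\[
\int_Q|v_n|^r\leqslant C\|v_n\|_{L^2(Q)}^{\theta}\|v_n\|_{H^1(Q)}^{2}
\]
for a suitable $\theta>0$. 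Here I use $x$-radiality. A cube at distance $s$ from $\{x=0\}$ has, under rotations in $x$, roughly $s^{k-1}$ essentially disjoint congruent copies carrying the same $L^2$ mass. Therefore
\[
\sup_{Q\subset\{|x|\geqslant\rho\}}\|v_n\|_{L^2(Q)}^2\leqslant C\rho^{-(k-1)}\sup_n\|v_n\|_{L^2}^2 .
\]
Summing over the cubes, with bounded overlap so that $\sum_Q\|v_n\|_{H^1(Q)}^2\leqslant C\|v_n\|^2_{H^{1,2}_\gamma}$, gives
\[
\int_{\{|x|\geqslant\rho\}\times B(0,R)}|v_n|^r\,\mathrm{d}z\leqslant C\rho^{-(k-1)\theta/2}.
\]
This is uniform in $n$. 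Choosing $\rho$ large and then $n$ large concludes the proof.

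\emph{Main obstacle.} The delicate step is the counting argument for the tail, which gains decay only when $k\geqslant2$. For $k=1$, "$x$-radial" just means even in $x$. A pair of bumps $\varphi(x\mp n,y)$ is then bounded and weakly null but does not converge strongly, so the statement as written appears to need $k\geqslant2$. In proving it, I would either add $k\geqslant2$ explicitly or check the precise hypotheses of \cite[Lemma 2.2]{Alves-de-Holanda2021}. A secondary technical point is to get the exponent $\theta>0$ in the local inequality for every $r\in(2,2^*_\gamma)$. This is fine since $r<2N/(N-2)$, and when $N\leqslant2$ every $r<\infty$ is admissible.
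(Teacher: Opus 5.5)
The paper gives no proof of this lemma; it imports it from \cite[Lemma 2.2]{Alves-de-Holanda2021}. Your argument is correct when $k\geqslant2$, for every $\gamma\geqslant0$: Rellich compactness on the bounded piece, plus a Strauss--Lions counting bound on the tail. The gap is the case $k=1$, $\gamma>0$. This case is part of the statement, and there your counting argument gives no decay. Your conclusion that the lemma ``needs $k\geqslant2$'' is also wrong there. For $\gamma>0$ the even pair $\varphi(x\mp n,y)$ is \emph{not} bounded in $H^{1,2}_{\gamma}(\mathbb{R}^N)$, since $(1+\gamma)^{2}\int|x|^{2\gamma}|\nabla_y\varphi(x\mp n,y)|^{2}\,\mathrm{d}z\approx(1+\gamma)^{2}n^{2\gamma}\|\nabla_y\varphi\|_{L^2}^{2}\to\infty$. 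Your example is a genuine counterexample only for $\gamma=0$. There it does show that the lemma, read for all $\gamma\geqslant0$, fails when $k=1$.

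The missing idea is that for $\gamma>0$ the degenerate weight itself gives the tail decay. You threw this away by keeping only $(1+\gamma)|x|^{\gamma}\geqslant1$. On the tail one has $\int_{\{|x|\geqslant\rho\}\times\mathbb{R}^l}|\nabla_y u|^2\,\mathrm{d}z\leqslant(1+\gamma)^{-2}\rho^{-2\gamma}\|u\|^2_{D^{1,2}_{\gamma}(\mathbb{R}^N)}$. For $f\in H^1(\mathbb{R}^l)$, H\"older on $B(0,R)$ and the Gagliardo--Nirenberg inequality in $\mathbb{R}^l$ give $\int_{B(0,R)}|f|^2\,\mathrm{d}y\leqslant C(l,R)\|f\|_{L^2(\mathbb{R}^l)}^{2(1-a)}\|\nabla f\|_{L^2(\mathbb{R}^l)}^{2a}$ for some $a\in(0,1)$. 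Apply this to $f=u(x,\cdot)$, then use H\"older in $x$ with exponents $\frac{1}{1-a}$ and $\frac{1}{a}$. This yields
\[
\|u\|^2_{L^2(\{|x|\geqslant\rho\}\times B(0,R))}\leqslant C\rho^{-2\gamma a}\|u\|^2_{H^{1,2}_{\gamma}(\mathbb{R}^N)}.
\]
Now interpolate with the bound on $\|u\|_{L^{2_{\gamma}^{*}}}$ from the Grushin Sobolev inequality; note $2_{\gamma}^{*}<\infty$ because $N_\gamma\geqslant2+\gamma>2$. This gives an $L^r$ tail bound that is uniform on bounded sets, and your local-compactness step then finishes the proof. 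The argument works for every $k\geqslant1$ and does not even use $x$-radiality. Your radial counting argument is needed only in the Euclidean case $\gamma=0$, where $k\geqslant2$ is indeed necessary.
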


We recall the Gagliardo-Nirenberg inequality for the Grushin operator.
\begin{lemma}\label{Lemma2.2}
{\rm \cite{YuSB-YangC-TangCL2026JGA,SongML-LiWJ2019ANA}}
Let $N=k+l$,
$k,l\geqslant 1$,
$\gamma>0$
and
$2<r\leqslant 2_{\gamma}^{*}$.
Then for $u\in H^{1,2}_{\gamma}(\mathbb{R}^{N})$, there exists a constant $C_{N_{\gamma},r}>0$ such that
\begin{equation*}
\begin{aligned}
\|u\|_{L^{r}(\mathbb{R}^{N})}^{r}
\leqslant
C_{N_{\gamma},r}
\|u\|_{D_{\gamma}^{1,2}(\mathbb{R}^{N})}
^{\frac{N_{\gamma}(r-2)}{2}}
\|u\|_{L^{2}(\mathbb{R}^{N})}
^{r-\frac{N_{\gamma}(r-2)}{2}}.
\end{aligned}
\end{equation*}
\end{lemma}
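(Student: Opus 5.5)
The plan is to derive the inequality from the Sobolev inequality for the Grushin gradient, combined with H\"older interpolation between $L^{2}$ and $L^{2_{\gamma}^{*}}$. Since $\gamma>0$ and $k,l\geqslant1$, we have $N_{\gamma}=k+(1+\gamma)l\geqslant 2+\gamma>2$. Hence $2_{\gamma}^{*}=\frac{2N_{\gamma}}{N_{\gamma}-2}$ is finite, and the degenerate case $k=l=1$, $\gamma=0$ never occurs here.

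The first step is to invoke the Sobolev inequality associated with $\Delta_{\gamma}$ (Franchi--Gutierrez--Wheeden, Monti \cite{Monti2006Duke}, Beckner \cite{Beckner2001PAMS}):
\begin{equation*}
\|u\|_{L^{2_{\gamma}^{*}}(\mathbb{R}^{N})}\leqslant S_{\gamma}\|u\|_{D^{1,2}_{\gamma}(\mathbb{R}^{N})},\qquad u\in C_{0}^{\infty}(\mathbb{R}^{N}).
\end{equation*}
This inequality is dilation invariant under $u\mapsto u(\mu x,\mu^{1+\gamma}y)$, whose Jacobian is $\mu^{-N_{\gamma}}$. That invariance is exactly why the homogeneous dimension $N_{\gamma}$ appears in the exponents. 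If one prefers to rely only on the continuous embedding $H^{1,2}_{\gamma}\hookrightarrow L^{2_{\gamma}^{*}}$ quoted from \cite{Monti2006Duke}, one can recover the homogeneous form from it. Apply the embedding to $u_{\mu}(x,y)=u(\mu x,\mu^{1+\gamma}y)$, which gives
\begin{equation*}
\mu^{-N_{\gamma}/2_{\gamma}^{*}}\|u\|_{2_{\gamma}^{*}}\leqslant C\left(\mu^{2-N_{\gamma}}\|\nabla_{\gamma}u\|_{2}^{2}+\mu^{-N_{\gamma}}\|u\|_{2}^{2}\right)^{1/2},
\end{equation*}
and then let $\mu\to\infty$. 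Because $N_{\gamma}/2_{\gamma}^{*}=(N_{\gamma}-2)/2$, the $L^{2}$ term disappears in the limit and the homogeneous Sobolev inequality remains.

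The second step is interpolation. For $2<r<2_{\gamma}^{*}$, choose $\theta\in(0,1)$ with $\frac1r=\frac{\theta}{2}+\frac{1-\theta}{2_{\gamma}^{*}}$. H\"older's inequality then gives $\|u\|_{r}^{r}\leqslant\|u\|_{2}^{\theta r}\|u\|_{2_{\gamma}^{*}}^{(1-\theta)r}$. A direct computation yields $(1-\theta)r=\frac{N_{\gamma}(r-2)}{2}$, and hence $\theta r=r-\frac{N_{\gamma}(r-2)}{2}$. Inserting the Sobolev bound produces the claim with $C_{N_{\gamma},r}=S_{\gamma}^{N_{\gamma}(r-2)/2}$. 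The endpoint $r=2_{\gamma}^{*}$ is just the Sobolev inequality itself, because there the exponent of $\|u\|_{2}$ vanishes.

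The final step extends the inequality from $C_{0}^{\infty}$ to all of $H^{1,2}_{\gamma}(\mathbb{R}^{N})$ by density, using Fatou's lemma or the continuity of the embedding on the left-hand side.

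The only genuinely nontrivial ingredient is the Sobolev inequality for $\nabla_{\gamma}$. It rests on the sub-Riemannian (Carnot--Carath\'eodory) structure, namely doubling of the metric balls with homogeneous dimension $N_{\gamma}$ together with a Poincar\'e inequality. I would take it from the cited literature rather than reprove it. Everything else reduces to exponent bookkeeping.
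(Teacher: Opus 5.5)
Your argument is correct. The paper does not prove this lemma; it quotes it from \cite{YuSB-YangC-TangCL2026JGA,SongML-LiWJ2019ANA}. Your route is the standard derivation: first the homogeneous Sobolev inequality for $\nabla_\gamma$, then H\"older interpolation between $L^2$ and $L^{2_\gamma^*}$.

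The bookkeeping checks out. Since $N_\gamma/2_\gamma^*=\frac{N_\gamma-2}{2}$, the dilation $u(\mu x,\mu^{1+\gamma}y)$ removes the $L^2$ term as $\mu\to\infty$, and $(1-\theta)r=\frac{N_\gamma(r-2)}{2}$.

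One small simplification is available. If you start from the embedding $H^{1,2}_\gamma(\mathbb{R}^N)\hookrightarrow L^{2_\gamma^*}(\mathbb{R}^N)$ on the whole space, the dilation argument applies directly to every $u\in H^{1,2}_\gamma(\mathbb{R}^N)$. Then your final density step is not needed; that step would need some care for $0<\gamma<1$, since $|x|^\gamma$ is not Lipschitz there.

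The only related proof in the paper is Lemma~\ref{Lemma2.3}. It treats only the mass-critical exponent $r=2+\frac{4}{N_\gamma}$, by a completely different and self-contained method:
\begin{itemize}
\item a partial Fourier transform in $y$;
\item the uncertainty-type bound \eqref{2.4} for $\int(t|\nabla_x\psi|^2+s|x|^{2\gamma}|\psi|^2)$;
\item Hausdorff--Young;
\item a weighted H\"older inequality against $\rho_t:=t+a|x|^2+b|y|^{2/(1+\gamma)}$ for suitable constants $a,b>0$.
\end{itemize}
That route avoids the sub-Riemannian Sobolev inequality entirely, gives an explicit constant, and also works for $\gamma=0$. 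It cannot reach the endpoint, however. The H\"older step needs $\int\rho_t^{-r/(r-2)}\,\mathrm{d}x\,\mathrm{d}y<\infty$, which by the anisotropic homogeneity means $\frac{2r}{r-2}>N_\gamma$, that is, $r<2_\gamma^*$.

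Your approach covers $2<r\leqslant 2_\gamma^*$ in one stroke. That includes the case $r=2_\gamma^*$, which the paper actually uses in Step 4.2 of Lemma~\ref{Lemma3.1} and Step 2.2 of Lemma~\ref{Lemma5.5}. The cost is an inexplicit constant $S_\gamma^{N_\gamma(r-2)/2}$, and the Franchi--Gutierrez--Wheeden/Monti Sobolev inequality is imported as a black box.
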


\begin{remark}\label{remark2.1}
In \cite{Caffarelli-Kohn-Nirenberg1984Compos}, for the case $\gamma=0$,
we have the above inequality.
Moreover,
for $k=l=1$, $\gamma=0$ and $2<p<r$, the following inequality holds true
\begin{equation*}
\begin{aligned}
\|u\|_{L^{r}(\mathbb{R}^{N})}^{r}
\leqslant
C
\|u\|_{D^{1,2}(\mathbb{R}^{N})}
^{r-p}
\|u\|_{L^{p}(\mathbb{R}^{N})}^{p}.
\end{aligned}
\end{equation*}
\end{remark}

Next, we give an explicit upper bound for the constant in Lemma \ref{Lemma2.2}
at the mass critical exponent.
\begin{lemma}\label{Lemma2.3}
Let $N=k+l$, $k,l\geqslant1$ and $\gamma\geqslant0$.
For the mass critical exponent $r=2+\frac{4}{N_{\gamma}}$,
the Gagliardo-Nirenberg inequality in Lemma \ref{Lemma2.2}
holds with a constant $C_{N_{\gamma},2+\frac{4}{N_{\gamma}}}$ satisfying
\begin{equation}\label{2.1}
\begin{aligned}
C_{N_{\gamma},2+\frac{4}{N_{\gamma}}}
\leqslant&
\frac{N_{\gamma}+2}{N_{\gamma}}
\left(\frac{k+\gamma l}{k}\right)^{\frac{k}{N_{\gamma}}}
\left[
\frac{(N_{\gamma}+2)(1+\gamma)
\Gamma\left(\frac{(1+\gamma)l}{2}\right)}
{2^{N+1}\pi^{\frac{N}{2}}
\Gamma\left(\frac{l}{2}\right)
\Gamma\left(\frac{N_{\gamma}+2}{2}\right)}
\right]^{\frac{2}{N_{\gamma}}}
\\&\times
\left[
\frac{2\gamma+1}{(1+\gamma)^{\gamma+3}}
\left(\frac{4(k+\gamma l)}{k^{2}l}\right)^{\gamma}
\right]^{\frac{l}{N_{\gamma}}},
\end{aligned}
\end{equation}
where $\Gamma$ is the Gamma function.
In particular, when $\gamma=0$ \eqref{2.1} reduces to
\begin{equation*}
\begin{aligned}
C_{N,2+\frac{4}{N}}
\leqslant
\frac{N+2}{N}
\left[
\frac{N+2}{2^{N+1}\pi^{\frac{N}{2}}
\Gamma\left(\frac{N+2}{2}\right)}
\right]^{\frac{2}{N}}.
\end{aligned}
\end{equation*}
\end{lemma}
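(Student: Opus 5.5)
The plan is to prove the mass-critical inequality with a semigroup (Rumin/Nash-type) argument. Its only input is an explicit \emph{on-diagonal} bound for the Grushin heat kernel, which is uniform in $z$ and has the homogeneous-dimension rate $t^{-N_{\gamma}/2}$. I deliberately avoid rearrangements, since they are unavailable for $\gamma>0$. Throughout, $r=2+\frac4{N_\gamma}$ and $L=-\Delta_\gamma$, a nonnegative self-adjoint operator on $L^2(\mathbb R^N)$ with $\langle Lu,u\rangle=\|u\|^2_{D^{1,2}_\gamma}$.

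\emph{Step 1: a diagonal heat-kernel bound.} Take the partial Fourier transform in $y$. Then $e^{-tL}$ decomposes into the fibers $e^{-tH_\eta}$, where $H_\eta=-\Delta_x+(1+\gamma)^2|\eta|^2|x|^{2\gamma}$ acts on $\mathbb R^k$. Hence
$$p_t(z,z)=(2\pi)^{-l}\int_{\mathbb R^l}h^\eta_t(x,x)\,\mathrm d\eta .$$
For each fiber I would use the Feynman--Kac formula with a Brownian bridge from $x$ to $x$:
$$h^\eta_t(x,x)=(4\pi t)^{-k/2}\,\mathbb E\Big[\exp\Big(-(1+\gamma)^2|\eta|^2\!\int_0^t|x+b_s|^{2\gamma}\mathrm ds\Big)\Big].$$
Integrating in $\eta$ first gives a Gaussian integral, which equals $\pi^{l/2}\big((1+\gamma)^2\int_0^t|x+b_s|^{2\gamma}\mathrm ds\big)^{-l/2}$ up to normalization. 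I would then bound $\mathbb E\big[(\int_0^t|x+b_s|^{2\gamma})^{-l/2}\big]$ from above, using the worst case $x=0$, Jensen/H\"older in $s$, and a Gamma-moment computation for $|b_s|$. The aim is
$$\sup_z p_t(z,z)\le A\,t^{-N_\gamma/2}$$
with an explicit $A$. The factors $\Gamma\big(\frac{(1+\gamma)l}{2}\big)/\Gamma\big(\frac l2\big)$, $\big(\frac k2+\gamma\big)$ and $\frac{2\gamma+1}{(1+\gamma)^{\gamma+3}}$ in \eqref{2.1} should come out of exactly these radial moment computations. For $\gamma=0$ the bound is simply $A=(4\pi)^{-N/2}$.

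\emph{Step 2: the Rumin-type splitting.} From $\sup_z p_t(z,z)\le At^{-N_\gamma/2}$ we get $\|e^{-tL}\|_{L^2\to L^\infty}\le A^{1/2}t^{-N_\gamma/4}$. Fix $u$ and, for each level $\tau>0$, split $u=e^{-tL}u+(1-e^{-tL})u$ with $t=t(\tau)$ chosen so that $\|e^{-tL}u\|_\infty\le\tau/2$, i.e.\ $A^{1/2}t^{-N_\gamma/4}\|u\|_2=\tau/2$. Then $\{|u|>\tau\}\subset\{|(1-e^{-tL})u|>\tau/2\}$. The layer-cake formula, Chebyshev and the spectral theorem then give
$$\|u\|_r^r\le C\int_0^\infty \tau^{r-3}\,\big\|(1-e^{-t(\tau)L})u\big\|_2^2\,\mathrm d\tau .$$
Writing this via the spectral measure $\mathrm d\mu_u(\lambda)$ and using Fubini, the $\tau$-integral of $\tau^{r-3}(1-e^{-t(\tau)\lambda})^2$ equals $c\,\lambda\,\|u\|_2^{4/N_\gamma}A^{2/N_\gamma}$; here the exponent $r-2=\frac4{N_\gamma}$ is exactly what makes the power of $\lambda$ equal to one. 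This yields
$$\|u\|_r^r\le C\,\|u\|_{D^{1,2}_\gamma}^2\|u\|_2^{4/N_\gamma}.$$
The prefactor $\frac{N_\gamma+2}{N_\gamma}$ and the $\Gamma\big(\frac{N_\gamma+2}{2}\big)$ should arise from the elementary one-dimensional integrals in this step, after optimizing the splitting threshold.

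\emph{Step 3: the anisotropic factor.} The factor $\big(\frac{k+\gamma l}{k}\big)^{k/N_\gamma}$ suggests treating the $x$- and $y$-directions separately. I would insert the anisotropic dilation $u\mapsto u(\mu x,\mu^{1+\gamma}y)$ in the intermediate estimate, which weights $\|\nabla_x u\|^2$ and $\|(1+\gamma)|x|^\gamma\nabla_yu\|^2$ differently. Optimizing with the weighted AM--GM inequality, with weights $k/N_\gamma$ and $\gamma l/N_\gamma$, then recombines the two terms into $\|\nabla_\gamma u\|_2^2$. Finally, setting $\gamma=0$ (so $N_\gamma=N$ and all $\gamma$-dependent brackets equal $1$) recovers the stated Euclidean bound as a consistency check.

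The main obstacle is Step 1: getting a \emph{uniform in $x$} on-diagonal bound with the sharp rate $t^{-N_\gamma/2}$ and an explicit constant. The negative moment $\mathbb E\big[(\int_0^t|x+b_s|^{2\gamma})^{-l/2}\big]$ must be controlled near the degenerate set $x=0$. My first attempt would be a pointwise convexity bound $\int_0^t|x+b_s|^{2\gamma}\,\mathrm ds\ge t^{1-\gamma}\big(\int_0^t|x+b_s|^2\,\mathrm ds\big)^{\gamma}$ for $\gamma\ge1$ (and the appropriate H\"older variant for $\gamma<1$), which reduces matters to explicit Gaussian moments of the bridge. If that proves too lossy, I would instead bound $h^\eta_t$ through the ground state energy of the anharmonic oscillator $H_\eta$, obtained by scaling $x\mapsto|\eta|^{-1/(1+\gamma)}x$, together with a trace estimate.
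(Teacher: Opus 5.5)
There is a genuine gap. The scheme you describe cannot yield the constant in \eqref{2.1}, and that explicit constant is the entire content of the lemma; existence of \emph{some} constant is already Lemma~\ref{Lemma2.2}. Test Step~2 at $\gamma=0$, where Step~1 is exact. Take the exact norm $\|e^{t\Delta}\|_{L^{2}\to L^{\infty}}=(8\pi t)^{-N/4}$ (this norm is $\sup_z p_{2t}(z,z)^{1/2}$, which only helps you). Split with $\|e^{t\Delta}u\|_{\infty}\leqslant\theta\tau$ and apply Chebyshev at level $(1-\theta)\tau$. The only one-dimensional integral that occurs is $\int_0^\infty s^{-2}(1-e^{-s})^{2}\,\mathrm{d}s=2\ln 2$, and your argument gives exactly
\[
\int_{\mathbb{R}^{N}}|u|^{2+\frac{4}{N}}\mathrm{d}z\leqslant\frac{(N+2)\ln 2}{8\pi(1-\theta)^{2}\theta^{4/N}}\int_{\mathbb{R}^{N}}|\nabla u|^{2}\mathrm{d}z\Big(\int_{\mathbb{R}^{N}}|u|^{2}\mathrm{d}z\Big)^{\frac{2}{N}}.
\]
For $N=2$ the optimal $\theta=\frac12$ gives $8\ln 2/\pi\approx 1.77$, whereas the right-hand side of \eqref{2.1} equals $1/\pi$. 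For large $N$ your constant grows like $N\ln 2/(8\pi)$, while \eqref{2.1} decays like $e/(2\pi N)$. So your consistency check at $\gamma=0$ fails, and neither $\Gamma(\frac{N_\gamma+2}{2})$ nor $\frac{N_\gamma+2}{N_\gamma}$ can arise from Step~2. Your matching of factors is guesswork: the factor $\frac k2+\gamma$ does not occur in \eqref{2.1} at all; it comes from the Gaussian trial function in the lower bound of Theorem~\ref{Theorem1.1}. At best your plan reproves Lemma~\ref{Lemma2.2} with a larger constant.

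The other steps also fail as described.
\begin{itemize}
\item In Step~1, bounding $\mathbb{E}\big[(\int_0^t|x+b_s|^{2\gamma}\mathrm{d}s)^{-l/2}\big]$ from above by Jensen in $s$ requires the negative moments $\mathbb{E}|b_s|^{-\gamma l}$ at $x=0$. These are infinite as soon as $\gamma l\geqslant k$.
\item The reduction to $x=0$ via Anderson's inequality needs convexity of $|\cdot|^{2\gamma}$, i.e.\ $2\gamma\geqslant1$.
\item Your alternative for $\gamma\geqslant1$ leads to negative moments of $\int_0^t|b_s|^{2}\mathrm{d}s$. These are governed by L\'evy's $\sinh$-type formula, not by Gamma factors.
\item In Step~3, the dilation $u(\mu x,\mu^{1+\gamma}y)$ multiplies $\|\nabla_xu\|_{2}^{2}$ and $\|(1+\gamma)|x|^{\gamma}\nabla_yu\|_{2}^{2}$ by the same factor $\mu^{2-N_\gamma}$. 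That is exactly what makes it the Grushin dilation, so it cannot reweight the two terms.
\end{itemize}

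The idea you are missing is the paper's Fourier-side argument.
\begin{itemize}
\item After the Fourier transform in $y$, a fraction $\frac{\gamma l}{k+\gamma l}$ of $|\nabla_xw|^{2}$ is combined fibrewise with $(1+\gamma)^{2}|x|^{2\gamma}|y|^{2}|w|^{2}$ through the uncertainty inequality \eqref{2.3}--\eqref{2.4}.
\item This gives \eqref{2.5}: $\|u\|_{D^{1,2}_{\gamma}}^{2}\geqslant\int\Phi|v|^{2}$, with $\Phi=\frac{k}{k+\gamma l}|x|^{2}+c_{k,l,\gamma}|y|^{\frac{2}{1+\gamma}}$ in the full Fourier variables.
\item Hausdorff--Young then bounds $\|u\|_{L^{2+\frac{4}{N_\gamma}}}$ by $\|v\|_{L^{\frac{2(N_\gamma+2)}{N_\gamma+4}}}$, followed by H\"older against the weight $t+\Phi$.
\item The exact integral \eqref{2.8} of $(t+\Phi)^{-\frac{N_\gamma+2}{2}}$ is the source of $\Gamma(\frac{N_\gamma+2}{2})$, $\Gamma(\frac{(1+\gamma)l}{2})/\Gamma(\frac l2)$ and $(\frac{k+\gamma l}{k})^{\frac{k}{N_\gamma}}$.
\item Optimization in $t$ is the source of $\frac{N_\gamma+2}{N_\gamma}(\frac{N_\gamma+2}{2})^{\frac{2}{N_\gamma}}$.
\end{itemize}
Together these give \eqref{2.1} directly, with no heat kernel and no level-set decomposition.
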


\begin{proof}
We divide the proof into six steps.

\noindent
{\bf Step 1}.
We first introduce the Fourier transforms and decompose the Grushin energy.
Let $u\in C_0^{\infty}(\mathbb{R}^{N})$ and set
\begin{equation*}
\begin{aligned}
w(x,y)&=(2\pi)^{-\frac l2}
\int_{\mathbb{R}^{l}}e^{-iy\cdot z}u(x,z)\mathrm{d}z,
\\
v(x,y)&=(2\pi)^{-\frac k2}
\int_{\mathbb{R}^{k}}e^{-ix\cdot z}w(z,y)\mathrm{d}z,
\end{aligned}
\end{equation*}
where $w$ is the Fourier transform of $u$ with respect to $y$, and $v$ is the full Fourier transform of $u$.
By Plancherel's identity, we have
\begin{equation*}
\begin{aligned}
\|v\|_{L^{2}(\mathbb{R}^{N})}^{2}
=\|w\|_{L^{2}(\mathbb{R}^{N})}^{2}
=\|u\|_{L^{2}(\mathbb{R}^{N})}^{2}
\end{aligned}
\end{equation*}
Making use of Plancherel's identity again, one can see that
\begin{equation*}
\begin{aligned}
\|u\|_{D_{\gamma}^{1,2}(\mathbb{R}^{N})}^{2}
=
\int_{\mathbb{R}^{N}}
\left(|\nabla_xw|^{2}+(1+\gamma)^{2}|x|^{2\gamma}|y|^{2}|w|^{2}\right)
\mathrm{d}x\mathrm{d}y.
\end{aligned}
\end{equation*}
For $\gamma>0$, we have
\begin{equation*}
\begin{aligned}
\|u\|_{D_{\gamma}^{1,2}(\mathbb{R}^{N})}^{2}
=&
\int_{\mathbb{R}^{l}}
\int_{\mathbb{R}^{k}}
\left(\frac{\gamma l}{k+\gamma l}|\nabla_xw|^{2}
+(1+\gamma)^{2}|x|^{2\gamma}|y|^{2}|w|^{2}\right)
\mathrm{d}x
\mathrm{d}y
\\&+
\frac{k}{k+\gamma l}
\int_{\mathbb{R}^{N}}|\nabla_xw|^{2}\mathrm{d}x\mathrm{d}y.
\end{aligned}
\end{equation*}

\noindent
{\bf Step 2}.
In this step, we establish an estimate relating the weighted $L^{2}$ norm
to the gradient norm.
Let $\gamma>0$ and $f\in H^1(\mathbb{R})$ with
$\|f\|_{L^{2}(\mathbb{R})}=1$.
Since the continuous representative of $f$ tends to zero as $x\to\pm\infty$,
we have, for every $x\in\mathbb{R}$,
\begin{equation*}
\begin{aligned}
|f(x)|^{2}
&=\operatorname{Re}\int_{-\infty}^{x}\overline{f(s)}f'(s)\mathrm{d}s
-\operatorname{Re}\int_x^{\infty}\overline{f(s)}f'(s)\mathrm{d}s
\\
&\leqslant\int_{\mathbb{R}}|f(s)||f'(s)|\mathrm{d}s
\leqslant\|f\|_{L^{2}(\mathbb{R})}\|f'\|_{L^{2}(\mathbb{R})},
\end{aligned}
\end{equation*}
Since $\|f\|_{L^{2}(\mathbb{R})}=1$, we have
\begin{equation*}
\begin{aligned}
|f(x)|^{2}\leqslant
\|f'\|_{L^{2}(\mathbb{R})}.
\end{aligned}
\end{equation*}
Taking the supremum over $x$, we obtain
\begin{equation*}
\begin{aligned}
\|f\|_{L^{\infty}(\mathbb{R})}^{2}
\leqslant
\|f'\|_{L^{2}(\mathbb{R})}.
\end{aligned}
\end{equation*}
It follows that, for every $t>0$,
\begin{equation*}
\begin{aligned}
\int_{|x|\leqslant t}
|f(x)|^{2}
\mathrm{d}x
\leqslant
2t\|f\|_{L^\infty(\mathbb{R})}^{2}
\leqslant
2t\|f'\|_{L^{2}(\mathbb{R})}.
\end{aligned}
\end{equation*}
Then,
\begin{equation*}
\begin{aligned}
\int_{|x|>t}|f|^{2}\mathrm{d}x
=
1-
\int_{|x|\leqslant t}|f|^{2}\mathrm{d}x
\geqslant
1-2t\|f'\|_{L^{2}(\mathbb{R})},
\ \ t>0.
\end{aligned}
\end{equation*}
By Tonelli's theorem, we have
\begin{equation*}
\begin{aligned}
\int_{\mathbb{R}}|x|^{2\gamma}|f|^{2}\mathrm{d}x
&=
\int_{\mathbb{R}}
\int_{0}^{|x|}
2\gamma t^{2\gamma-1}
|f(x)|^{2}
\mathrm{d}t
\mathrm{d}x\\
&=\int_0^{\infty}2\gamma t^{2\gamma-1}
\left(\int_{|x|>t}|f|^{2}\mathrm{d}x\right)\mathrm{d}t
\\
&\geqslant\int_0^{\frac{1}{2\|f'\|_{L^{2}(\mathbb{R})}}}
2\gamma t^{2\gamma-1}
\left(1-2t\|f'\|_{L^{2}(\mathbb{R})}\right)\mathrm{d}t
\\
&=\left[
t^{2\gamma}-\frac{4\gamma}{2\gamma+1}
\|f'\|_{L^{2}(\mathbb{R})}t^{2\gamma+1}
\right]_0^{\frac{1}{2\|f'\|_{L^{2}(\mathbb{R})}}}
\\
&=\frac{1}{\left(2\|f'\|_{L^{2}(\mathbb{R})}\right)^{2\gamma}}
\left(1-\frac{2\gamma}{2\gamma+1}\right)
\\
&=\frac{1}{4^{\gamma}(2\gamma+1)
\|f'\|_{L^{2}(\mathbb{R})}^{2\gamma}}.
\end{aligned}
\end{equation*}
Note that $f'$ does not vanish identically, since otherwise $f$ would be
constant, contrary to $\|f\|_{L^{2}(\mathbb{R})}=1$.

Let $\phi\in C_0^{\infty}(\mathbb{R}^{k})$ with
$\|\phi\|_{L^{2}(\mathbb{R}^{k})}=1$.
Fix $j\in\{1,\ldots,k\}$, for $k\geqslant2$, we define
\begin{equation*}
\begin{aligned}
f_j(t)=\left.
\left(\int_{\mathbb{R}^{k-1}}|\phi(x)|^{2}
\prod_{i\ne j}\mathrm{d}x_i\right)^{\frac12}
\right|_{x_j=t}.
\end{aligned}
\end{equation*}
Since the norm on $L^{2}(\mathbb{R}^{k-1})$ is $1$-Lipschitz,
the function $f_j$ belongs to $H^1(\mathbb{R})$.
By Fubini's theorem, we have
\begin{equation*}
\begin{aligned}
\int_{\mathbb{R}}|f_j(t)|^{2}\mathrm{d}t
=\int_{\mathbb{R}^{k}}|\phi(x)|^{2}\mathrm{d}x=1.
\end{aligned}
\end{equation*}
Note that
$
\partial_{x_j}|\phi|^{2}
=\partial_{x_j}(\phi\overline \phi)
=2\operatorname{Re}\bigl(\overline \phi\,\partial_{x_j}\phi\bigr)
$.
Differentiating $f_j^{2}$, we obtain
\begin{equation*}
\begin{aligned}
2f_j(t)f_j'(t)
=
\left.
2\operatorname{Re}\int_{\mathbb R^{k-1}}
\overline{\phi(x)}\,\partial_{x_j}\phi(x)
\prod_{i\ne j}\mathrm{d}x_i
\right|_{x_j=t}.
\end{aligned}
\end{equation*}
For $f_j(t)>0$, by the Cauchy-Schwarz inequality, we have
\begin{equation*}
\begin{aligned}
|f_j'(t)|^{2}
=\left.
\frac{\left|\operatorname{Re}\int_{\mathbb{R}^{k-1}}
\overline{\phi(x)}\partial_{x_j}\phi(x)
\prod_{i\ne j}\mathrm{d}x_i\right|^{2}}
{\int_{\mathbb{R}^{k-1}}|\phi(x)|^{2}
\prod_{i\ne j}\mathrm{d}x_i}
\right|_{x_j=t}
\leqslant\left.
\int_{\mathbb{R}^{k-1}}|\partial_{x_j}\phi(x)|^{2}
\prod_{i\ne j}\mathrm{d}x_i\right|_{x_j=t}.
\end{aligned}
\end{equation*}
Since $f_j'=0$ almost everywhere on the zero set of $f_j$,
we integrate the above estimate in $t$ and obtain
\begin{equation*}
\begin{aligned}
\|f_j'\|_{L^{2}(\mathbb{R})}^{2}
\leqslant\int_{\mathbb{R}^{k}}|\partial_{x_j}\phi|^{2}\mathrm{d}x
\ \
\mathrm{and}
\ \
\int_{\mathbb{R}}|t|^{2\gamma}|f_j(t)|^{2}\mathrm{d}t
=\int_{\mathbb{R}^{k}}|x_j|^{2\gamma}|\phi|^{2}\mathrm{d}x.
\end{aligned}
\end{equation*}
For $k=1$, take $f_1=|\phi|$ and use $|(|\phi|)'|\leqslant|\phi'|$
almost everywhere. It follows that, for every $k\geqslant1$,
\begin{equation*}
\begin{aligned}
\int_{\mathbb{R}^{k}}|x_j|^{2\gamma}|\phi|^{2}\mathrm{d}x
\geqslant\frac{1}{4^{\gamma}(2\gamma+1)}
\left(\int_{\mathbb{R}^{k}}|\partial_{x_j}\phi|^{2}\mathrm{d}x\right)^{-\gamma},
\ \ j=1,\ldots,k.
\end{aligned}
\end{equation*}
Note that all the integrals of $|\partial_{x_j}\phi|^{2}$ are positive.

For $0<\gamma\leqslant1$, it follows from the concavity of $t\mapsto t^{\gamma}$ that
\begin{equation*}
\begin{aligned}
|x|^{2\gamma}
=k^{\gamma}\left(\frac{1}{k}\sum_{j=1}^{k}|x_j|^{2}\right)^{\gamma}
\geqslant k^{\gamma-1}\sum_{j=1}^{k}|x_j|^{2\gamma}.
\end{aligned}
\end{equation*}
By the convexity of $t\mapsto t^{-\gamma}$, we also have
\begin{equation*}
\begin{aligned}
\sum_{j=1}^{k}
\left(\int_{\mathbb{R}^{k}}|\partial_{x_j}\phi|^{2}\mathrm{d}x\right)^{-\gamma}
\geqslant
k\left(\frac{1}{k}\int_{\mathbb{R}^{k}}|\nabla \phi|^{2}\mathrm{d}x\right)^{-\gamma}
=k^{\gamma+1}
\left(\int_{\mathbb{R}^{k}}|\nabla \phi|^{2}\mathrm{d}x\right)^{-\gamma}.
\end{aligned}
\end{equation*}
It follows that
\begin{equation*}
\begin{aligned}
\int_{\mathbb{R}^{k}}|x|^{2\gamma}|\phi|^{2}\mathrm{d}x
\geqslant
\frac{k^{\gamma-1}}{4^{\gamma}(2\gamma+1)}
\sum_{j=1}^{k}
\left(\int_{\mathbb{R}^{k}}|\partial_{x_j}\phi|^{2}\mathrm{d}x\right)^{-\gamma}
\geqslant
\frac{k^{2\gamma}}{4^{\gamma}(2\gamma+1)}
\left(\int_{\mathbb{R}^{k}}|\nabla \phi|^{2}\mathrm{d}x\right)^{-\gamma}.
\end{aligned}
\end{equation*}
For $\gamma\geqslant1$, since $\phi$ has compact support, we have
\begin{equation*}
\begin{aligned}
k
&=k\int_{\mathbb{R}^{k}}|\phi|^{2}
\mathrm{d}x\\&=
-\int_{\mathbb{R}^{k}}x\cdot\nabla(|\phi|^{2})\mathrm{d}x\\&=
-2\operatorname{Re}\int_{\mathbb{R}^{k}}
\overline \phi\,x\cdot\nabla \phi
\mathrm{d}x\\&\leqslant
2\left(\int_{\mathbb{R}^{k}}|x|^{2}|\phi|^{2}\mathrm{d}x\right)^{\frac{1}{2}}
\left(\int_{\mathbb{R}^{k}}|\nabla \phi|^{2}\mathrm{d}x\right)^{\frac{1}{2}}.
\end{aligned}
\end{equation*}
Hence,
\begin{equation*}
\begin{aligned}
\int_{\mathbb{R}^{k}}|x|^{2}|\phi|^{2}\,\mathrm{d}x
\geqslant
\frac{k^{2}}{4}
\left(\int_{\mathbb{R}^{k}}|\nabla \phi|^{2}\,\mathrm{d}x\right)^{-1}.
\end{aligned}
\end{equation*}
By Jensen's inequality and $\|\phi\|_{L^{2}(\mathbb{R}^{k})}=1$, we have
\begin{equation}\label{2.2}
\begin{aligned}
\int_{\mathbb{R}^{k}}|x|^{2\gamma}|\phi|^{2}\mathrm{d}x
\geqslant
\left(\int_{\mathbb{R}^{k}}|x|^{2}|\phi|^{2}\mathrm{d}x\right)^{\gamma}\geqslant
\frac{k^{2\gamma}}{4^{\gamma}}
\left(\int_{\mathbb{R}^{k}}|\nabla \phi|^{2}\mathrm{d}x\right)^{-\gamma}.
\end{aligned}
\end{equation}
Therefore, for every $\gamma>0$, we have
\begin{equation}\label{2.3}
\begin{aligned}
\left(\int_{\mathbb{R}^{k}}|\nabla \phi|^{2}\mathrm{d}x\right)^{\gamma}
\int_{\mathbb{R}^{k}}|x|^{2\gamma}|\phi|^{2}\mathrm{d}x
\geqslant\frac{k^{2\gamma}}{4^{\gamma}(2\gamma+1)}.
\end{aligned}
\end{equation}

Next, we use \eqref{2.3} to establish a lower bound for the weighted energy.
Let $\gamma>0$. 
We first compute the following minimum.
For $t>0$, we have
\begin{equation*}
\begin{aligned}
\frac{\mathrm{d}}{\mathrm{d}t}
\left[t+\frac{k^{2\gamma}}{4^{\gamma}(2\gamma+1)}t^{-\gamma}\right]
&=1-\frac{\gamma k^{2\gamma}}{4^{\gamma}(2\gamma+1)}t^{-\gamma-1},
\\
\frac{\mathrm{d}^{2}}{\mathrm{d}t^{2}}
\left[t+\frac{k^{2\gamma}}{4^{\gamma}(2\gamma+1)}t^{-\gamma}\right]
&=\frac{\gamma(\gamma+1)k^{2\gamma}}
{4^{\gamma}(2\gamma+1)}t^{-\gamma-2}>0.
\end{aligned}
\end{equation*}
Hence the minimum is attained at
$t=\left[\frac{\gamma k^{2\gamma}}{4^{\gamma}(2\gamma+1)}
\right]^{\frac{1}{1+\gamma}}$.
Therefore,
\begin{equation*}
\begin{aligned}
\inf_{t>0}\left[t+\frac{k^{2\gamma}}{4^{\gamma}(2\gamma+1)}t^{-\gamma}\right]
=
\frac{1+\gamma}{\gamma}
\left[\frac{\gamma k^{2\gamma}}{4^{\gamma}(2\gamma+1)}
\right]^{\frac{1}{1+\gamma}}
=
\frac{1+\gamma}{(2\gamma+1)^{\frac{1}{1+\gamma}}}
\left(\frac{k^{2}}{4\gamma}\right)^{\frac{\gamma}{1+\gamma}}.
\end{aligned}
\end{equation*}
For nonzero $\psi\in C_0^{\infty}(\mathbb{R}^{k})$, applying \eqref{2.3}
to $\frac{\psi}{\|\psi\|_{L^{2}(\mathbb{R}^{k})}}$, we obtain
\begin{equation*}
\begin{aligned}
\int_{\mathbb{R}^{k}}|x|^{2\gamma}|\psi|^{2}\mathrm{d}x
\geqslant\frac{k^{2\gamma}}{4^{\gamma}(2\gamma+1)}
\|\psi\|_{L^{2}(\mathbb{R}^{k})}^{2\gamma+2}
\left(\int_{\mathbb{R}^{k}}|\nabla \psi|^{2}\mathrm{d}x\right)^{-\gamma}.
\end{aligned}
\end{equation*}
It follows that, for $t,s>0$,
\begin{equation*}
\begin{aligned}
&\int_{\mathbb{R}^{k}}
\left(t|\nabla \psi|^{2}+s|x|^{2\gamma}|\psi|^{2}\right)
\mathrm{d}x\\&\geqslant
t\int_{\mathbb{R}^{k}}|\nabla \psi|^{2}
\mathrm{d}x
+\frac{sk^{2\gamma}}{4^\gamma(2\gamma+1)}
\|\psi\|_{L^{2}(\mathbb{R}^{k})}^{2\gamma+2}
\left(\int_{\mathbb{R}^{k}}|\nabla \psi|^{2}
\mathrm{d}x\right)
^{-\gamma}\\&=
\|\psi\|_{L^{2}(\mathbb{R}^{k})}^{2}
\left[
t\frac{\displaystyle\int_{\mathbb{R}^{k}}|\nabla \psi|^{2}\,\mathrm{d}x}
{\|\psi\|_{L^{2}(\mathbb{R}^{k})}^{2}}
+\frac{sk^{2\gamma}}{4^\gamma(2\gamma+1)}
\left(
\frac{\int_{\mathbb{R}^{k}}|\nabla \psi|^{2}\,\mathrm{d}x}
{\|\psi\|_{L^{2}(\mathbb{R}^{k})}^{2}}
\right)^{-\gamma}
\right]\\
&\geqslant
\|\psi\|_{L^{2}(\mathbb{R}^{k})}^{2}
\inf_{r>0}
\left[tr+\frac{sk^{2\gamma}}{4^\gamma(2\gamma+1)}r^{-\gamma}\right].
\end{aligned}
\end{equation*}
Taking $r=\left(\frac{s}{t}\right)^{\frac{1}{1+\gamma}}\mu$, we obtain
\begin{equation*}
\begin{aligned}
\inf_{r>0}\left[tr+
\frac{sk^{2\gamma}}{4^{\gamma}(2\gamma+1)}r^{-\gamma}\right]
=&
t^{\frac{\gamma}{1+\gamma}}s^{\frac{1}{1+\gamma}}
\inf_{\mu>0}\left[\mu+
\frac{k^{2\gamma}}{4^{\gamma}(2\gamma+1)}\mu^{-\gamma}\right]
\\=&\frac{1+\gamma}{(2\gamma+1)^{\frac{1}{1+\gamma}}}
\left(\frac{k^{2}}{4\gamma}\right)^{\frac{\gamma}{1+\gamma}}
t^{\frac{\gamma}{1+\gamma}}s^{\frac{1}{1+\gamma}}.
\end{aligned}
\end{equation*}
Therefore,
\begin{equation}\label{2.4}
\begin{aligned}
\int_{\mathbb{R}^{k}}
\left(t|\nabla \psi|^{2}+s|x|^{2\gamma}|\psi|^{2}\right)\mathrm{d}x
\geqslant
\frac{1+\gamma}{(2\gamma+1)^{\frac{1}{1+\gamma}}}
\left(\frac{k^{2}}{4\gamma}\right)^{\frac{\gamma}{1+\gamma}}
t^{\frac{\gamma}{1+\gamma}}s^{\frac{1}{1+\gamma}}
\|\psi\|_{L^{2}(\mathbb{R}^{k})}^{2}.
\end{aligned}
\end{equation}
Clearly, \eqref{2.4} also holds for $\psi=0$ or $s=0$.

\noindent
{\bf Step 3}.
We now estimate a weighted $L^{2}$ norm of the Fourier transform
by the Grushin operator.
For each fixed $y$, the function $w(\cdot,y)$ belongs to $C_0^{\infty}(\mathbb{R}^{k})$.
Its support is contained in the $x$-projection of the support of $u$.
For $\gamma>0$, applying \eqref{2.4} to $w(\cdot,y)$ with
$t=\frac{\gamma l}{k+\gamma l}$ and $s=(1+\gamma)^{2}|y|^{2}$, we deduce
\begin{equation*}
\begin{aligned}
\|u\|_{D_{\gamma}^{1,2}(\mathbb{R}^{N})}^{2}
\geqslant&
\frac{1+\gamma}{(2\gamma+1)^{\frac{1}{1+\gamma}}}
\left(\frac{k^{2}}{4\gamma}\right)^{\frac{\gamma}{1+\gamma}}
\left(\frac{\gamma l}{k+\gamma l}\right)^{\frac{\gamma}{1+\gamma}}
(1+\gamma)^{\frac{2}{1+\gamma}}
\int_{\mathbb{R}^{l}}
|y|^{\frac{2}{1+\gamma}}
\left(
\int_{\mathbb{R}^{k}}
|w|^{2}\mathrm{d}x
\right)
\mathrm{d}y
\\&+
\frac{k}{k+\gamma l}
\int_{\mathbb{R}^{N}}|\nabla_xw|^{2}\mathrm{d}x
\mathrm{d}y.
\end{aligned}
\end{equation*}
A direct calculation gives
\begin{equation*}
\begin{aligned}
\frac{1+\gamma}{(2\gamma+1)^{\frac{1}{1+\gamma}}}
\left(\frac{k^{2}}{4\gamma}\right)^{\frac{\gamma}{1+\gamma}}
\left(\frac{\gamma l}{k+\gamma l}\right)^{\frac{\gamma}{1+\gamma}}
(1+\gamma)^{\frac{2}{1+\gamma}}
&=\left[
\frac{(1+\gamma)^{1+\gamma+2}}{2\gamma+1}
\left(\frac{k^{2}}{4\gamma}\frac{\gamma l}{k+\gamma l}\right)^{\gamma}
\right]^{\frac{1}{1+\gamma}}
\\&=\left[
\frac{(1+\gamma)^{\gamma+3}}{2\gamma+1}
\left(\frac{k^{2}l}{4(k+\gamma l)}\right)^{\gamma}
\right]^{\frac{1}{1+\gamma}}.
\end{aligned}
\end{equation*}
By Plancherel's identity in the $x$-variables, we have
\begin{equation*}
\begin{aligned}
\int_{\mathbb{R}^{N}}|\nabla_xw|^{2}\mathrm{d}x\mathrm{d}y
&=\int_{\mathbb{R}^{N}}|x|^{2}|v|^{2}\mathrm{d}x\mathrm{d}y,
\\
\int_{\mathbb{R}^{N}}|y|^{\frac{2}{1+\gamma}}|w|^{2}\mathrm{d}x\mathrm{d}y
&=\int_{\mathbb{R}^{N}}|y|^{\frac{2}{1+\gamma}}|v|^{2}\mathrm{d}x\mathrm{d}y.
\end{aligned}
\end{equation*}
Therefore,
\begin{equation}\label{2.5}
\begin{aligned}
&\|u\|_{D_{\gamma}^{1,2}(\mathbb{R}^{N})}^{2}\\
\geqslant&\int_{\mathbb{R}^{N}}\Bigg\{
\frac{k}{k+\gamma l}|x|^{2}
+\left[
\frac{(1+\gamma)^{\gamma+3}}{2\gamma+1}
\left(\frac{k^{2}l}{4(k+\gamma l)}\right)^{\gamma}
\right]^{\frac{1}{1+\gamma}}
|y|^{\frac{2}{1+\gamma}}
\Bigg\}|v|^{2}
\mathrm{d}x
\mathrm{d}y.
\end{aligned}
\end{equation}
For $\gamma=0$, \eqref{2.5} follows from Plancherel's identity.
Hence \eqref{2.5} holds for every $\gamma\geqslant0$.

\noindent
{\bf Step 4}.
In this step, we estimate the  $L^{2+\frac4{N_\gamma}}$ norm by Hausdorff-Young's
and H\"{o}lder's inequalities.
It is easy to see that
\begin{equation*}
\begin{aligned}
\frac{N_{\gamma}+4}{2(N_{\gamma}+2)}
+\frac{N_{\gamma}}{2(N_{\gamma}+2)}=1
\ \
\mathrm{and}
\ \
\frac{N_{\gamma}+4}{2(N_{\gamma}+2)}-\frac12
=\frac1{N_{\gamma}+2}.
\end{aligned}
\end{equation*}
By virtue of Hausdorff-Young's inequality for the inverse Fourier transform
with the unitary normalization, we have
\begin{equation}\label{2.6}
\begin{aligned}
\|u\|_{L^{2+\frac4{N_{\gamma}}}(\mathbb{R}^{N})}
\leqslant(2\pi)^{-\frac{N}{N_{\gamma}+2}}
\|v\|_{L^{\frac{2(N_{\gamma}+2)}{N_{\gamma}+4}}(\mathbb{R}^{N})}.
\end{aligned}
\end{equation}
For $t>0$, it follows from H\"{o}lder's inequality  that
\begin{equation*}
\begin{aligned}
&\int_{\mathbb{R}^{N}}|v|^{\frac{2(N_{\gamma}+2)}{N_{\gamma}+4}}
\mathrm{d}x
\mathrm{d}y\\
=&
\int_{\mathbb{R}^{N}}
\Bigg(
\Bigg\{
t+\frac{k}{k+\gamma l}|x|^{2}
+\left[
\frac{(1+\gamma)^{\gamma+3}}{2\gamma+1}
\left(\frac{k^{2}l}{4(k+\gamma l)}\right)^\gamma
\right]^{\frac1{1+\gamma}}
|y|^{\frac2{1+\gamma}}
\Bigg\}|v|^{2}
\Bigg)^{\frac{N_{\gamma}+2}{N_{\gamma}+4}}\\&\times
\Bigg\{
t+\frac{k}{k+\gamma l}|x|^{2}
+\left[
\frac{(1+\gamma)^{\gamma+3}}{2\gamma+1}
\left(\frac{k^{2}l}{4(k+\gamma l)}\right)^\gamma
\right]^{\frac1{1+\gamma}}
|y|^{\frac2{1+\gamma}}
\Bigg\}^{-\frac{N_{\gamma}+2}{N_{\gamma}+4}}
\mathrm{d}x
\mathrm{d}y\\\leqslant&
\Bigg(\!
\int_{\mathbb{R}^{N}}
\Bigg\{
t+\frac{k}{k+\gamma l}|x|^{2}
+\left[
\frac{(1+\gamma)^{\gamma+3}}{2\gamma+1}
\left(\frac{k^{2}l}{4(k+\gamma l)}\right)^\gamma
\right]^{\frac1{1+\gamma}}
|y|^{\frac2{1+\gamma}}
\Bigg\}|v|^{2}
\mathrm{d}x
\mathrm{d}y\!
\Bigg)^{\frac{N_{\gamma}+2}{N_{\gamma}+4}}\\&\times
\Bigg(\!
\int_{\mathbb{R}^{N}}
\Bigg\{
t+\frac{k}{k+\gamma l}|x|^{2}
+\left[
\frac{(1+\gamma)^{\gamma+3}}{2\gamma+1}
\left(\frac{k^{2}l}{4(k+\gamma l)}\right)^\gamma
\right]^{\frac1{1+\gamma}}
|y|^{\frac2{1+\gamma}}
\Bigg\}^{-\frac{N_{\gamma}+2}{2}}
\mathrm{d}x
\mathrm{d}y\!
\Bigg)^{\frac2{N_{\gamma}+4}},
\end{aligned}
\end{equation*}
which gives
\begin{equation}\label{2.7}
\begin{aligned}
&\|v\|_{L^{\frac{2(N_\gamma+2)}{N_\gamma+4}}(\mathbb{R}^N)}^{2+\frac4{N_\gamma}}\\\leqslant&
\Bigg(\int_{\mathbb{R}^N}\Bigg\{
t+\frac{k}{k+\gamma l}|x|^{2}
+\left[
\frac{(1+\gamma)^{\gamma+3}}{2\gamma+1}
\left(\frac{k^{2}l}{4(k+\gamma l)}\right)^\gamma
\right]^{\frac1{1+\gamma}}|y|^{\frac2{1+\gamma}}
\Bigg\}|v|^{2}\mathrm{d}x\mathrm{d}y\Bigg)^{\frac{N_\gamma+2}{N_\gamma}}\\&\times
\Bigg(\int_{\mathbb{R}^N}\Bigg\{
t+\frac{k}{k+\gamma l}|x|^{2}
+
\left[
\frac{(1+\gamma)^{\gamma+3}}{2\gamma+1}
\left(\frac{k^{2}l}{4(k+\gamma l)}\right)^\gamma
\right]^{\frac1{1+\gamma}}|y|^{\frac2{1+\gamma}}
\Bigg\}^{-\frac{N_\gamma+2}{2}}\mathrm{d}x\mathrm{d}y\Bigg)^{\frac2{N_\gamma}}.
\end{aligned}
\end{equation}
\noindent
{\bf Step 5}.
In this step, we compute the second integral on the right-hand side of \eqref{2.7}.

For $s>0$, by polar coordinates in $\mathbb{R}^{k}$ with
$r_x=|x|$ and $\mu_x=s\frac{k}{k+\gamma l}r_x^{2}$, we have
\begin{equation*}
\begin{aligned}
\int_{\mathbb{R}^{k}}
e^{-s\frac{k}{k+\gamma l}|x|^{2}}\mathrm{d}x
&=\frac{2\pi^{\frac k2}}{\Gamma\left(\frac k2\right)}
\int_0^{\infty}r_x^{k-1}e^{-s\frac{k}{k+\gamma l}r_x^{2}}\mathrm{d}r_x
\\
&=\frac{2\pi^{\frac k2}}{\Gamma\left(\frac k2\right)}
\frac12s^{-\frac k2}
\left(\frac{k+\gamma l}{k}\right)^{\frac k2}
\int_0^{\infty}\mu_x^{\frac k2-1}e^{-\mu_x}\mathrm{d}\mu_x
\\
&=\pi^{\frac k2}s^{-\frac k2}
\left(\frac{k+\gamma l}{k}\right)^{\frac k2}.
\end{aligned}
\end{equation*}
Set $r_y=|y|$ and
\begin{equation*}
\begin{aligned}
\mu_y=s\left[
\frac{(1+\gamma)^{\gamma+3}}{2\gamma+1}
\left(\frac{k^{2}l}{4(k+\gamma l)}\right)^{\gamma}
\right]^{\frac{1}{1+\gamma}}r_y^{\frac{2}{1+\gamma}}.
\end{aligned}
\end{equation*}
Then we have
\begin{equation*}
\begin{aligned}
r_y
&=s^{-\frac{1+\gamma}{2}}\mu_y^{\frac{1+\gamma}{2}}
\left[\frac{2\gamma+1}{(1+\gamma)^{\gamma+3}}
\left(\frac{4(k+\gamma l)}{k^{2}l}\right)^{\gamma}\right]^{\frac12},
\\
r_y^{l-1}\mathrm{d}r_y
&=\frac{1+\gamma}{2}s^{-\frac{(1+\gamma)l}{2}}
\left[\frac{2\gamma+1}{(1+\gamma)^{\gamma+3}}
\left(\frac{4(k+\gamma l)}{k^{2}l}\right)^{\gamma}\right]^{\frac l2}
\mu_y^{\frac{(1+\gamma)l}{2}-1}\mathrm{d}\mu_y.
\end{aligned}
\end{equation*}
By polar coordinates, we obtain
\begin{equation*}
\begin{aligned}
&\int_{\mathbb{R}^{l}}
\exp\Bigg\{-s\left[
\frac{(1+\gamma)^{\gamma+3}}{2\gamma+1}
\left(\frac{k^{2}l}{4(k+\gamma l)}\right)^{\gamma}
\right]^{\frac{1}{1+\gamma}}
|y|^{\frac{2}{1+\gamma}}\Bigg\}
\mathrm{d}y
\\&=
\frac{2\pi^{\frac l2}}{\Gamma\left(\frac l2\right)}
\frac{1+\gamma}{2}s^{-\frac{(1+\gamma)l}{2}}
\left[\frac{2\gamma+1}{(1+\gamma)^{\gamma+3}}
\left(\frac{4(k+\gamma l)}{k^{2}l}\right)^{\gamma}\right]^{\frac l2}
\int_{0}^{\infty}
\mu_y^{\frac{(1+\gamma)l}{2}-1}e^{-\mu_y}
\mathrm{d}\mu_y
\\&=\frac{\pi^{\frac l2}(1+\gamma)
\Gamma\left(\frac{(1+\gamma)l}{2}\right)}
{\Gamma\left(\frac l2\right)}s^{-\frac{(1+\gamma)l}{2}}
\left[\frac{2\gamma+1}{(1+\gamma)^{\gamma+3}}
\left(\frac{4(k+\gamma l)}{k^{2}l}\right)^{\gamma}\right]^{\frac l2}.
\end{aligned}
\end{equation*}
For $t>0$, by the Gamma integral and Tonelli's theorem, we have
\begin{equation*}
\begin{aligned}
&\int_{\mathbb{R}^{N}}\Bigg\{
t+\frac{k}{k+\gamma l}|x|^{2}
+\left[
\frac{(1+\gamma)^{\gamma+3}}{2\gamma+1}
\left(\frac{k^{2}l}{4(k+\gamma l)}\right)^{\gamma}
\right]^{\frac{1}{1+\gamma}}|y|^{\frac{2}{1+\gamma}}
\Bigg\}^{-\frac{N_{\gamma}+2}{2}}\mathrm{d}x\mathrm{d}y
\\=&
\frac{1}{\Gamma\left(\frac{N_{\gamma}+2}{2}\right)}
\int_0^{\infty}s^{\frac{N_{\gamma}}2}e^{-st}
\left(\int_{\mathbb{R}^{k}}
e^{-s\frac{k}{k+\gamma l}|x|^{2}}\mathrm{d}x\right)
\\&\times
\left(\int_{\mathbb{R}^{l}}
\exp\Bigg\{-s\left[
\frac{(1+\gamma)^{\gamma+3}}{2\gamma+1}
\left(\frac{k^{2}l}{4(k+\gamma l)}\right)^{\gamma}
\right]^{\frac{1}{1+\gamma}}|y|^{\frac{2}{1+\gamma}}
\Bigg\}\mathrm{d}y\right)\mathrm{d}s.
\end{aligned}
\end{equation*}
Since $N_\gamma=k+(1+\gamma)l$, we have
\begin{equation*}
\begin{aligned}
\frac{N_{\gamma}}2-\frac k2-\frac{(1+\gamma)l}{2}=0
\ \
\mathrm{and}
\ \
\int_0^{\infty}e^{-st}\mathrm{d}s=\frac1t.
\end{aligned}
\end{equation*}
It follows that
\begin{equation}\label{2.8}
\begin{aligned}
&\int_{\mathbb{R}^{N}}\Bigg\{
t+\frac{k}{k+\gamma l}|x|^{2}
+\left[
\frac{(1+\gamma)^{\gamma+3}}{2\gamma+1}
\left(\frac{k^{2}l}{4(k+\gamma l)}\right)^{\gamma}
\right]^{\frac{1}{1+\gamma}}|y|^{\frac{2}{1+\gamma}}
\Bigg\}^{-\frac{N_{\gamma}+2}{2}}\mathrm{d}x\mathrm{d}y
\\&=\frac{\pi^{\frac N2}(1+\gamma)
\Gamma\left(\frac{(1+\gamma)l}{2}\right)}
{t\Gamma\left(\frac l2\right)
\Gamma\left(\frac{N_{\gamma}+2}{2}\right)}
\left(\frac{k+\gamma l}{k}\right)^{\frac k2}
\left[
\frac{2\gamma+1}{(1+\gamma)^{\gamma+3}}
\left(\frac{4(k+\gamma l)}{k^{2}l}\right)^{\gamma}
\right]^{\frac l2}.
\end{aligned}
\end{equation}

\noindent
{\bf Step 6}.
In the last step, we combine the above estimates.
Note that
\begin{equation*}
\begin{aligned}
(2\pi)^{-\frac{2N}{N_{\gamma}}}
\left(\pi^{\frac N2}\right)^{\frac2{N_{\gamma}}}
=2^{-\frac{2N}{N_{\gamma}}}\pi^{-\frac N{N_{\gamma}}}
=
\left(\frac{1}{2^N\pi^{\frac N2}}\right)^{\frac2{N_{\gamma}}}.
\end{aligned}
\end{equation*}
By \eqref{2.5}-\eqref{2.8} and Plancherel's identity, we obtain, for every $t>0$,
\begin{equation}\label{2.9}
\begin{aligned}
\|u\|_{L^{2+\frac4{N_{\gamma}}}(\mathbb{R}^{N})}
^{2+\frac4{N_{\gamma}}}
\leqslant&
\left[\frac{(1+\gamma)\Gamma\left(\frac{(1+\gamma)l}{2}\right)}
{2^N\pi^{\frac N2}\Gamma\left(\frac l2\right)
\Gamma\left(\frac{N_{\gamma}+2}{2}\right)}\right]^{\frac2{N_{\gamma}}}
\left(\frac{k+\gamma l}{k}\right)^{\frac k{N_{\gamma}}}\\
&\times
\left[\frac{2\gamma+1}{(1+\gamma)^{\gamma+3}}
\left(\frac{4(k+\gamma l)}{k^{2}l}\right)^{\gamma}\right]^{\frac l{N_{\gamma}}}\\
&\times
t^{-\frac2{N_{\gamma}}}
\left(\|u\|_{D_{\gamma}^{1,2}(\mathbb{R}^{N})}^{2}
+t\|u\|_{L^{2}(\mathbb{R}^{N})}^{2}\right)^{\frac{N_{\gamma}+2}{N_{\gamma}}}.
\end{aligned}
\end{equation}
Finally, we minimize the right-hand side in \eqref{2.9} to obtain an explicit upper bound for $C_{N_\gamma,2+\frac4{N_\gamma}}$.
For $u\not\equiv0$, both $\|u\|_{L^{2}(\mathbb{R}^{N})}$ and
$\|u\|_{D_{\gamma}^{1,2}(\mathbb{R}^{N})}$ are positive.
Differentiating with respect to $t$, we have
\begin{equation*}
\begin{aligned}
&\frac{\mathrm{d}}{\mathrm{d}t}
\log\Bigg[
t^{-\frac2{N_{\gamma}}}
\left(\|u\|_{D_{\gamma}^{1,2}(\mathbb{R}^{N})}^{2}
+t\|u\|_{L^{2}(\mathbb{R}^{N})}^{2}\right)^{\frac{N_{\gamma}+2}{N_{\gamma}}}
\Bigg]\\
=&
-\frac2{N_{\gamma}t}
+\frac{N_{\gamma}+2}{N_{\gamma}}
\frac{\|u\|_{L^{2}(\mathbb{R}^{N})}^{2}}
{\|u\|_{D_{\gamma}^{1,2}(\mathbb{R}^{N})}^{2}
+t\|u\|_{L^{2}(\mathbb{R}^{N})}^{2}}\\
=&
\frac{N_{\gamma}t\|u\|_{L^{2}(\mathbb{R}^{N})}^{2}
-2\|u\|_{D_{\gamma}^{1,2}(\mathbb{R}^{N})}^{2}}
{N_{\gamma}t\left(\|u\|_{D_{\gamma}^{1,2}(\mathbb{R}^{N})}^{2}
+t\|u\|_{L^{2}(\mathbb{R}^{N})}^{2}\right)}.
\end{aligned}
\end{equation*}
The above derivative changes sign from negative to positive at
\begin{equation*}
\begin{aligned}
t=\frac{2\|u\|_{D_{\gamma}^{1,2}(\mathbb{R}^{N})}^{2}}
{N_{\gamma}\|u\|_{L^{2}(\mathbb{R}^{N})}^{2}}.
\end{aligned}
\end{equation*}
Therefore,
\begin{equation}\label{2.10}
\begin{aligned}
&\inf_{t>0}t^{-\frac2{N_{\gamma}}}
\left(\|u\|_{D_{\gamma}^{1,2}(\mathbb{R}^{N})}^{2}
+t\|u\|_{L^{2}(\mathbb{R}^{N})}^{2}\right)^{\frac{N_{\gamma}+2}{N_{\gamma}}}\\
=&\left(
\frac{N_{\gamma}\|u\|_{L^{2}(\mathbb{R}^{N})}^{2}}
{2\|u\|_{D_{\gamma}^{1,2}(\mathbb{R}^{N})}^{2}}
\right)^{\frac2{N_{\gamma}}}
\left(\frac{N_{\gamma}+2}{N_{\gamma}}
\|u\|_{D_{\gamma}^{1,2}(\mathbb{R}^{N})}^{2}
\right)^{\frac{N_{\gamma}+2}{N_{\gamma}}}\\
=&\left(\frac{N_{\gamma}}2\right)^{\frac2{N_{\gamma}}}
\left(\frac{N_{\gamma}+2}{N_{\gamma}}\right)^{\frac{N_{\gamma}+2}{N_{\gamma}}}
\|u\|_{D_{\gamma}^{1,2}(\mathbb{R}^{N})}^{2}
\|u\|_{L^{2}(\mathbb{R}^{N})}^{\frac4{N_{\gamma}}}\\
=&
\frac{N_{\gamma}+2}{N_{\gamma}}
\left(\frac{N_{\gamma}+2}{2}\right)^{\frac2{N_{\gamma}}}
\|u\|_{D_{\gamma}^{1,2}(\mathbb{R}^{N})}^{2}
\|u\|_{L^{2}(\mathbb{R}^{N})}^{\frac4{N_{\gamma}}}.
\end{aligned}
\end{equation}
Note that
\begin{equation*}
\begin{aligned}
\left(\frac{N_{\gamma}}2\right)^{\frac2{N_{\gamma}}}
\left(\frac{N_{\gamma}+2}{N_{\gamma}}\right)^{\frac{N_{\gamma}+2}{N_{\gamma}}}
=\frac{N_{\gamma}+2}{N_{\gamma}}
\left(\frac{N_{\gamma}+2}{2}\right)^{\frac2{N_{\gamma}}}.
\end{aligned}
\end{equation*}
Moreover, we have
\begin{equation*}
\begin{aligned}
&\left(\frac{N_{\gamma}+2}{2}\right)^{\frac2{N_{\gamma}}}
\left[\frac{(1+\gamma)\Gamma\left(\frac{(1+\gamma)l}{2}\right)}
{2^N\pi^{\frac N2}\Gamma\left(\frac l2\right)
\Gamma\left(\frac{N_{\gamma}+2}{2}\right)}\right]^{\frac2{N_{\gamma}}}
=
\left[\frac{(N_{\gamma}+2)(1+\gamma)
\Gamma\left(\frac{(1+\gamma)l}{2}\right)}
{2^{N+1}\pi^{\frac N2}\Gamma\left(\frac l2\right)
\Gamma\left(\frac{N_{\gamma}+2}{2}\right)}\right]^{\frac2{N_{\gamma}}}.
\end{aligned}
\end{equation*}
By \eqref{2.9}-\eqref{2.10}, we obtain
\begin{equation*}
\begin{aligned}
\|u\|_{L^{2+\frac4{N_{\gamma}}}(\mathbb{R}^{N})}^{2+\frac4{N_{\gamma}}}
\leqslant&
\frac{N_{\gamma}+2}{N_{\gamma}}
\left(\frac{k+\gamma l}{k}\right)^{\frac k{N_{\gamma}}}
\left[\frac{(N_{\gamma}+2)(1+\gamma)
\Gamma\left(\frac{(1+\gamma)l}{2}\right)}
{2^{N+1}\pi^{\frac N2}\Gamma\left(\frac l2\right)
\Gamma\left(\frac{N_{\gamma}+2}{2}\right)}\right]^{\frac2{N_{\gamma}}}\\
&\times
\left[\frac{2\gamma+1}{(1+\gamma)^{\gamma+3}}
\left(\frac{4(k+\gamma l)}{k^{2}l}\right)^{\gamma}\right]^{\frac l{N_{\gamma}}}
\|u\|_{D_{\gamma}^{1,2}(\mathbb{R}^{N})}^{2}
\|u\|_{L^{2}(\mathbb{R}^{N})}^{\frac4{N_{\gamma}}}.
\end{aligned}
\end{equation*}
Clearly, the above inequality holds for $u=0$.
By density and the continuous embedding into $L^{2+\frac4{N_\gamma}}(\mathbb{R}^{N})$, the above inequality holds for every $u\in H_{\gamma}^{1,2}(\mathbb{R}^{N})$.
This proves \eqref{2.1} and gives an upper bound for the constant $C_{N_{\gamma},2+\frac{4}{N_{\gamma}}}$.

When $\gamma=0$, \eqref{2.1} reduces to
\begin{equation*}
\begin{aligned}
C_{N,2+\frac4N}
\leqslant
\frac{N+2}{N}
\left[
\frac{N+2}{2^{N+1}\pi^{\frac N2}
\Gamma\left(\frac{N+2}{2}\right)}
\right]^{\frac2N}.
\end{aligned}
\end{equation*}
The proof is completed.
\end{proof}

\begin{lemma}\label{Lemma2.4}
Let $N=k+l$,
$k,l\geqslant 1$,
$\gamma\geqslant0$
and $p,q$ satisfy \eqref{H}.
Then for $u\in H^{1,2}_{\gamma}(\mathbb{R}^{N})$,
we have
\begin{equation*}
\begin{aligned}
&\int_{\mathbb{R}^{N}}
|u|^{q}
\mathrm{d}z\\
\leqslant&
C
\left(
\int_{\mathbb{R}^{N}}
|u|^{p}
\mathrm{d}z
\right)
^{\frac{2N_{\gamma}+4-qN_{\gamma}}{2N_{\gamma}+4-pN_{\gamma}}}
\left(
\int_{\mathbb{R}^{N}}
|\nabla_{\gamma}u|^{2}
\mathrm{d}z
\right)
^{\frac{N_{\gamma}(q-p)}{2N_{\gamma}+4-pN_{\gamma}}}
\left(
\int_{\mathbb{R}^{N}}
|u|^{2}
\mathrm{d}z
\right)
^{\frac{2(q-p)}{2N_{\gamma}+4-pN_{\gamma}}},
\end{aligned}
\end{equation*}
where the constant $C>0$ is independent of $u$.
\end{lemma}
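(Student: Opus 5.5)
The plan is to get the inequality by Hölder interpolation between $L^{p}$ and the mass critical space $L^{r}$, $r=2+\frac{4}{N_{\gamma}}$, and then to apply the Gagliardo-Nirenberg inequality of Lemma \ref{Lemma2.2} at that exponent. Lemma \ref{Lemma2.3} shows that this inequality holds with a finite constant for every $\gamma\geqslant0$. At $r=2+\frac4{N_\gamma}$ the inequality reads
\begin{equation*}
\int_{\mathbb{R}^{N}}|u|^{r}\mathrm{d}z\leqslant C_{N_{\gamma},r}\int_{\mathbb{R}^{N}}|\nabla_{\gamma}u|^{2}\mathrm{d}z\left(\int_{\mathbb{R}^{N}}|u|^{2}\mathrm{d}z\right)^{\frac{2}{N_{\gamma}}}.
\end{equation*}
This exponent is forced. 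The Hölder weight on $\int|u|^p$ is $\frac{r-q}{r-p}$, and it must equal the target exponent $\frac{2N_\gamma+4-qN_\gamma}{2N_\gamma+4-pN_\gamma}$. Solving $(r-q)(2N_\gamma+4-pN_\gamma)=(r-p)(2N_\gamma+4-qN_\gamma)$ gives $rN_\gamma(q-p)=(q-p)(2N_\gamma+4)$, that is, $r=2+\frac{4}{N_\gamma}$.

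Under \eqref{H}, $q$ always lies strictly between $p$ and $r$. Indeed, $p<q<r$ in the first case and $r<q<p$ in the second. Set $a=\frac{r-q}{r-p}\in(0,1)$ and $b=1-a=\frac{q-p}{r-p}$. Then $q=ap+br$, and Hölder's inequality with exponents $1/a$ and $1/b$ gives
\begin{equation*}
\int_{\mathbb{R}^{N}}|u|^{q}\mathrm{d}z\leqslant\left(\int_{\mathbb{R}^{N}}|u|^{p}\mathrm{d}z\right)^{a}\left(\int_{\mathbb{R}^{N}}|u|^{r}\mathrm{d}z\right)^{b}.
\end{equation*}
Inserting the critical Gagliardo-Nirenberg bound yields the powers $a$ on $\int|u|^p$, $b$ on $\int|\nabla_\gamma u|^2$ and $\frac{2b}{N_\gamma}$ on $\int|u|^2$.

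It remains to check these powers against the statement. Since $r-p=\frac{2N_\gamma+4-pN_\gamma}{N_\gamma}$, we get $b=\frac{N_\gamma(q-p)}{2N_\gamma+4-pN_\gamma}$ and $\frac{2b}{N_\gamma}=\frac{2(q-p)}{2N_\gamma+4-pN_\gamma}$. Also $a=\frac{2N_\gamma+4-qN_\gamma}{2N_\gamma+4-pN_\gamma}$, so all three agree with the lemma, with $C=C_{N_\gamma,2+4/N_\gamma}^{\,b}$.

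Both sides make sense for $u\in H^{1,2}_\gamma(\mathbb{R}^N)$, because $p,q,r\in(2,2^*_\gamma)$ and the embedding into $L^s$ is continuous for these exponents. Hence there is no density issue. The one point needing care, and the only real obstacle, is the case $k=l=1$, $\gamma=0$ with $2^*_\gamma=\infty$. There the mass critical inequality still holds by Lemma \ref{Lemma2.3} (or by Remark \ref{remark2.1}), so the argument goes through unchanged.
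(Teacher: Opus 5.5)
Your proposal is correct and follows essentially the same route as the paper: Hölder interpolation of $L^{q}$ between $L^{p}$ and the mass-critical space $L^{2+\frac{4}{N_{\gamma}}}$, with weights $\frac{2+\frac{4}{N_\gamma}-q}{2+\frac{4}{N_\gamma}-p}$ and $\frac{q-p}{2+\frac{4}{N_\gamma}-p}$, followed by the Gagliardo-Nirenberg inequality at that exponent. Your exponent bookkeeping checks out. You are also slightly more careful than the paper on one point: Lemma \ref{Lemma2.2} is stated only for $\gamma>0$, and you cover $\gamma=0$ by appealing to Lemma \ref{Lemma2.3} or Remark \ref{remark2.1}.
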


\begin{proof}
Applying the H\"{o}lder inequality and Lemma \ref{Lemma2.2},
we have that
\begin{equation*}
\begin{aligned}
&\int_{\mathbb{R}^{N}}
|u|^{q}
\mathrm{d}z\\
\leqslant&
\left(
\int_{\mathbb{R}^{N}}
|u|^{p}
\mathrm{d}z
\right)
^{\frac{2+\frac{4}{N_{\gamma}}-q}{2+\frac{4}{N_{\gamma}}-p}}
\left(
\int_{\mathbb{R}^{N}}
|u|^{2+\frac{4}{N_{\gamma}}}
\mathrm{d}z
\right)
^{\frac{q-p}{2+\frac{4}{N_{\gamma}}-p}}\\
\leqslant&
C
\left(
\int_{\mathbb{R}^{N}}
|u|^{p}
\mathrm{d}z
\right)
^{\frac{2N_{\gamma}+4-qN_{\gamma}}{2N_{\gamma}+4-pN_{\gamma}}}
\left(
\int_{\mathbb{R}^{N}}
|\nabla_{\gamma}u|^{2}
\mathrm{d}z
\right)
^{\frac{N_{\gamma}(q-p)}{2N_{\gamma}+4-pN_{\gamma}}}
\left(
\int_{\mathbb{R}^{N}}
|u|^{2}
\mathrm{d}z
\right)
^{\frac{2(q-p)}{2N_{\gamma}+4-pN_{\gamma}}}.
\end{aligned}
\end{equation*}
\end{proof}

\begin{lemma}
\label{Lemma2.5}
{\rm\cite[Proposition 5.5]
{YuSB-YangC-TangCL2026JGA}}
Let $N=k+l$,
$k,l\geqslant 1$,
$\gamma\geqslant0$,
$\alpha=1$
and $p,q$ satisfy \eqref{H}.
Suppose that $u\in H_{\gamma}^{1,2}(\mathbb{R}^N)$ is a weak solution of equation \eqref{G}. Then u satisfies the following Nehari identity $N(u)=0$ and the Pohozaev identity $P(u)=0$, where
\begin{equation*}
\begin{aligned}
N(u):=
\|u\|_{D_{\gamma}^{1,2}(\mathbb{R}^{N})}^{2}
+
\omega
\|u\|_{L^{2}(\mathbb{R}^{N})}^{2}
+
\|u\|_{L^{p}(\mathbb{R}^{N})}^{p}
-
\|u\|_{L^{q}(\mathbb{R}^{N})}^{q}
\end{aligned}
\end{equation*}
and
\begin{equation*}
\begin{aligned}
P(u):=
\frac{N_{\gamma}-2}{2}
\|u\|_{D_{\gamma}^{1,2}(\mathbb{R}^{N})}^{2}
+
\frac{\omega N_{\gamma}}{2}
\|u\|_{L^{2}(\mathbb{R}^{N})}^{2}
+
\frac{N_{\gamma}}{p}
\|u\|_{L^{p}(\mathbb{R}^{N})}^{p}
-
\frac{N_{\gamma}}{q}
\|u\|_{L^{q}(\mathbb{R}^{N})}^{q}.
\end{aligned}
\end{equation*}

\end{lemma}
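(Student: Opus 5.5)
The statement is Lemma \ref{Lemma2.5}: every weak solution $u\in H^{1,2}_{\gamma}(\mathbb{R}^N)$ of \eqref{G} with $\alpha=1$ satisfies $N(u)=0$ and $P(u)=0$.

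\textbf{Nehari identity.} The identity $N(u)=0$ is immediate. The weak formulation of \eqref{G} may be tested with any $\varphi\in H^{1,2}_{\gamma}(\mathbb{R}^N)$. This is legitimate because, under \eqref{H}, we have $p,q\in(2,2^*_\gamma)$. Hence $|u|^{p-2}u\in L^{p'}$ and $|u|^{q-2}u\in L^{q'}$ by the continuous embedding $H^{1,2}_\gamma\hookrightarrow L^r$. Taking $\varphi=u$ gives $N(u)=0$.

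\textbf{Pohozaev identity via the anisotropic dilation.} For $P(u)=0$ we use the dilation adapted to the Grushin structure, $u_\lambda(x,y)=u(\lambda x,\lambda^{1+\gamma}y)$. It satisfies
\begin{equation*}
\|\nabla_\gamma u_\lambda\|_2^2=\lambda^{2-N_\gamma}\|\nabla_\gamma u\|_2^2,
\qquad
\|u_\lambda\|_r^r=\lambda^{-N_\gamma}\|u\|_r^r .
\end{equation*}
Formally, $\frac{d}{d\lambda}I_\omega(u_\lambda)\big|_{\lambda=1}=-P(u)$. Since $u$ is a critical point of $I_\omega$, this derivative equals $\langle I'_\omega(u),\partial_\lambda u_\lambda|_{\lambda=1}\rangle$ with generator
\begin{equation*}
Zu=x\cdot\nabla_x u+(1+\gamma)\,y\cdot\nabla_y u .
\end{equation*}
The content of the lemma is justifying that pairing, because $Zu$ need not lie in $H^{1,2}_\gamma$.

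\textbf{Justification by truncation and regularization.} We test the equation with $\psi_R\,Zu$, where $\psi_R(z)=\psi(x/R,y/R^{1+\gamma})$ is a cutoff adapted to the dilation. Before doing so, $u$ is regularized, or one first proves local regularity: $u\in H^2_{loc}$ away from $\{x=0\}$ and enough integrability of the second horizontal derivatives. Then we integrate by parts each term.

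For the lower-order terms,
\begin{equation*}
\int f(u)\,Zu\,\psi_R=-\int F(u)\,\mathrm{div}(\psi_R Z),
\qquad \mathrm{div}\,Z=N_\gamma .
\end{equation*}

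For the principal term, one uses the commutator identity $[\nabla_\gamma,Z]=\nabla_\gamma$, which holds because $\nabla_x$ has degree $1$ and $|x|^\gamma\nabla_y$ has degree $1$ under the dilation. This gives
\begin{equation*}
\int\nabla_\gamma u\cdot\nabla_\gamma(Zu)\,\psi_R=\Big(1-\frac{N_\gamma}{2}\Big)\int|\nabla_\gamma u|^2\psi_R+O(\text{terms with }\nabla\psi_R).
\end{equation*}
The error terms carry $Z\psi_R$ and $\nabla_\gamma\psi_R\cdot Zu$. They are bounded by $\int_{A_R}(|\nabla_\gamma u|^2+|u|^r)$ over dyadic annuli $A_R$, using that $|Z\psi_R|$ and $|\nabla_\gamma\psi_R|\,|z|_{\mathrm{hom}}$ are bounded. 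These tend to $0$ as $R\to\infty$ by dominated convergence. Letting $R\to\infty$ yields $P(u)=0$.

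\textbf{Main obstacle.} The key step is the regularity needed to make $\psi_R Zu$ an admissible test function near the degenerate set $\{x=0\}$. There $|x|^{\gamma}$ vanishes and $y$-derivatives are not controlled. I would first handle this with difference quotients along the dilation flow, $(u_{1+h}-u)/h$, which lie in $H^{1,2}_\gamma$. Then I would use the scaling formulas and pass $h\to0$ via the identity
\begin{equation*}
\langle I'_\omega(u),\tfrac{u_{1+h}-u}{h}\rangle=0 ,
\end{equation*}
expanding $I_\omega(u_{1+h})-I_\omega(u)$ and using convexity/symmetric-difference tricks to avoid second derivatives entirely. This avoids the regularity issue, at the cost of a careful limit in the nonlinear terms, which is handled by strong continuity of dilations in $L^r$.
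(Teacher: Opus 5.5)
Your Nehari step and the formal Pohozaev computation are correct. The identities $\frac{d}{d\lambda}I_\omega(u_\lambda)\big|_{\lambda=1}=-P(u)$, $\operatorname{div}Z=N_\gamma$ and $[\nabla_\gamma,Z]=\nabla_\gamma$ all check out. You also rightly say that the whole content of the lemma is the justification. Neither of your two routes supplies it, so there is a genuine gap. (The paper gives no argument of its own here; it imports the statement from Yu--Yang--Tang, Proposition~5.5. For $\gamma=0$ your truncation route is the standard Berestycki--Lions argument and is fine once elliptic regularity is quoted; the problem is $\gamma>0$.)

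In the truncation route, $\psi_R Zu\in H^{1,2}_\gamma$ and your integrations by parts require $Zu=x\cdot\nabla_xu+(1+\gamma)\,y\cdot\nabla_yu$ and $\nabla_\gamma(Zu)$ to be locally square integrable. That means controlling the bare derivatives $\nabla_yu$, $\nabla_x\nabla_yu$ and $|x|^\gamma\nabla_y^2u$ on a neighbourhood of $\{0\}\times\mathbb{R}^l$. Membership in $H^{1,2}_\gamma$ controls only $|x|^\gamma\nabla_yu$. For $\gamma>0$ there are functions in $H^{1,2}_\gamma$ with $\nabla_yu\notin L^2_{\mathrm{loc}}$: under $(x,y)\mapsto(\lambda x,\lambda^{1+\gamma}y)$, $\partial_y$ has degree $1+\gamma$ while $\nabla_\gamma$ has degree $1$. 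So ``$H^2_{\mathrm{loc}}$ away from $\{x=0\}$ plus integrability of second horizontal derivatives'' is not enough. You need a regularity theorem for $\Delta_\gamma$ across the degenerate set, and you do not provide one.

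The same point affects your far-field estimate as written. The degenerate set is unbounded and meets every annulus $A_R$, and there $|Zu|$ is not dominated by $|z|_{\mathrm{hom}}|\nabla_\gamma u|$. So boundedness of $|\nabla_\gamma\psi_R|\,|z|_{\mathrm{hom}}$ is not the right reason. The bound by $\int_{A_R}|\nabla_\gamma u|^2$ does hold for cutoffs whose $\nabla_x\psi_R$ vanishes near $x=0$ (e.g.\ $\chi(x/R)\eta(y/R^{1+\gamma})$), but you have to use that structure.

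The fallback via $(u_{1+h}-u)/h$ does not avoid the regularity issue. Since $\langle I'_\omega(u),u_{1+h}-u\rangle=0$, you get exactly $I_\omega(u_{1+h})-I_\omega(u)=R(h)$, the second-order Taylor remainder. It contains $\tfrac12\|u_{1+h}-u\|^2_{D^{1,2}_\gamma}+\tfrac{\omega}{2}\|u_{1+h}-u\|^2_{L^2}$ plus the remainders of the $p$- and $q$-terms.

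The left-hand side equals $-P(u)h+o(h)$, so the lemma is \emph{equivalent} to $R(h)=o(h)$. That is a quantitative regularity of the orbit $\lambda\mapsto u_\lambda$ at $\lambda=1$ (it holds, e.g., if $Zu$ lies in the energy space). Strong continuity of dilations gives only $R(h)\to0$. Convexity signs the convex pieces, but the focusing remainder enters with the opposite sign, as does the $\omega$-term when $\omega<0$ (the lemma does not fix its sign). So no two-sided conclusion follows.

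In fact no argument can work that uses only criticality, homogeneity under dilations, a convex/concave splitting and strong continuity. On $L^2\cap L^4(\mathbb{R})$, $I(v)=\int(v^2-\tfrac12v^4)\,\mathrm{d}x$ has the critical point $v=\chi_{[0,1]}$, yet $I(v(\lambda\,\cdot))=\frac{1}{2\lambda}$ has derivative $-\frac12$ at $\lambda=1$. The smoothing effect of $-\Delta_\gamma$ must enter, and that is exactly the regularity problem near $\{x=0\}$.

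If you only need the identity for minimizers of $E_\gamma$ on $\tilde S_c$ (as in \eqref{5.6}), there is a regularity-free argument. The map $t\mapsto E_\gamma(t^{N_\gamma/2}u(tx,t^{1+\gamma}y))$ is an explicit function of $t$ minimized at $t=1$. Its derivative there equals $\frac{N_\gamma}{2}N(u)-P(u)$, so $P(u)=0$ follows from $N(u)=0$. This does not, however, cover arbitrary weak solutions as the lemma claims.
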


By using Lemma \ref{Lemma2.5},
we know that
\begin{equation}\label{2.11}
\begin{aligned}
0=J(u):=&2N(u)-(p-2)P(u)
\\=&
\frac{2p-N_{\gamma}(p-2)}{2}
\|u\|_{D_{\gamma}^{1,2}(\mathbb{R}^{N})}^{2}
+
\frac{4-N_{\gamma}(p-2)}{2}
\omega\|u\|_{L^{2}(\mathbb{R}^{N})}^{2}
\\&+
\frac{2p-N_{\gamma}(p-2)}{p}
\|u\|_{L^{p}(\mathbb{R}^{N})}^{p}
-
\frac{2q-N_{\gamma}(p-2)}{q}
\|u\|_{L^{q}(\mathbb{R}^{N})}^{q}.
\end{aligned}
\end{equation}

\begin{lemma}\label{Lemma2.6}
Let $N=k+l$,
$k,l\geqslant 1$,
$\gamma\geqslant0$,
$\alpha=1$
and $2<p<q<2+\frac{4}{N_{\gamma}}$.
Suppose that $u\in H_{\gamma}^{1,2}(\mathbb{R}^N)$ is a weak solution of equation \eqref{G}. Then it holds that
\begin{equation*}
\begin{aligned}
I_{\omega}(u)-I_{\omega}(u^{t})
\geqslant&
\frac{f_{1}(t)}{2[2p-N_{\gamma}(p-2)]}
\omega
\|u\|_{L^{2}(\mathbb{R}^{N})}^{2}
+
\frac{f_{2}(t)}{q[2p-N_{\gamma}(p-2)]}
\|u\|_{L^{q}(\mathbb{R}^{N})}^{q}\\
\geqslant&0,
\end{aligned}
\end{equation*}
where $u^{t}(z)=t^{2}u(t^{p-2}x,t^{(p-2)(1+\gamma)}y)$, $t>0$ and
\begin{equation*}
\begin{aligned}
\begin{cases}
f_{1}(t):=
2(p-2)-[2p-N_{\gamma}(p-2)]t^{4-N_{\gamma}(p-2)}+[4-N_{\gamma}(p-2)]t^{2p-N_{\gamma}(p-2)},\\
f_{2}(t):=2(q-p)-[2q-N_{\gamma}(p-2)]t^{2p-N_{\gamma}(p-2)}
+[2p-N_{\gamma}(p-2)]t^{2q-N_{\gamma}(p-2)}.
\end{cases}
\end{aligned}
\end{equation*}
\end{lemma}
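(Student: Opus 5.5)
The plan is to compute $I_{\omega}(u)-I_{\omega}(u^{t})$ exactly through the scaling exponents of each term, then eliminate the gradient and $L^p$ terms using the identity $J(u)=0$ from \eqref{2.11}. What remains should be precisely the displayed combination of $\|u\|_{L^2}^2$ and $\|u\|_{L^q}^q$, so the first inequality is in fact an equality. Nonnegativity then reduces to the elementary facts $f_1,f_2\geqslant0$ on $(0,\infty)$, together with the sign of $\omega$.

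\textbf{Step 1 (scaling).} Throughout, abbreviate $a:=2p-N_{\gamma}(p-2)$, $b:=4-N_{\gamma}(p-2)$ and $c:=2q-N_{\gamma}(p-2)$. Since $2<p<q<2+\frac{4}{N_\gamma}$, we have $c>a>b>0$, with $a-b=2(p-2)$ and $c-a=2(q-p)$.

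The change of variables $x'=t^{p-2}x$, $y'=t^{(p-2)(1+\gamma)}y$ has Jacobian factor $t^{-(p-2)N_\gamma}$. This gives
\begin{equation*}
\|u^t\|_{L^2}^2=t^{b}\|u\|_{L^2}^2,\qquad \|u^t\|_{L^p}^p=t^{a}\|u\|_{L^p}^p,\qquad \|u^t\|_{L^q}^q=t^{c}\|u\|_{L^q}^q .
\end{equation*}
For the gradient, $\partial_x$ produces a factor $t^{p-2}$. In the $y$-part, $|x|^{\gamma}=t^{-(p-2)\gamma}|x'|^\gamma$ and $\partial_y$ produces $t^{(p-2)(1+\gamma)}$, so that part also scales by $t^{p-2}$ in total. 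Hence $\|u^t\|_{D^{1,2}_\gamma}^2=t^{a}\|u\|_{D^{1,2}_\gamma}^2$, with the same exponent as the $L^p$ term. Therefore
\begin{equation*}
I_\omega(u)-I_\omega(u^t)=(1-t^a)\Big(\tfrac12\|u\|_{D^{1,2}_\gamma}^2+\tfrac1p\|u\|_{L^p}^p\Big)+\tfrac{\omega}{2}(1-t^b)\|u\|_{L^2}^2-\tfrac1q(1-t^c)\|u\|_{L^q}^q .
\end{equation*}

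\textbf{Step 2 (use $J(u)=0$).} By Lemma \ref{Lemma2.5}, \eqref{2.11} reads
\begin{equation*}
\tfrac a2\|u\|_{D^{1,2}_\gamma}^2+\tfrac ap\|u\|_{L^p}^p=\tfrac cq\|u\|_{L^q}^q-\tfrac b2\,\omega\|u\|_{L^2}^2 .
\end{equation*}
I substitute this into the bracket of Step 1 and collect terms.
\begin{itemize}
\item The coefficient of $\omega\|u\|_{L^2}^2$ becomes $\frac{1}{2a}\big[(a-b)-a t^b+b t^a\big]=\frac{f_1(t)}{2a}$.
\item The coefficient of $\|u\|_{L^q}^q$ becomes $\frac{1}{qa}\big[(c-a)-c t^a+a t^c\big]=\frac{f_2(t)}{qa}$.
\end{itemize}
This yields the first displayed relation, with equality.

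\textbf{Step 3 (signs).} For $f_1$: we have $f_1(1)=0$ and $f_1'(t)=ab\,t^{b-1}(t^{a-b}-1)$. This is negative on $(0,1)$ and positive on $(1,\infty)$, so $t=1$ is the global minimum and $f_1\geqslant0$. The same argument applies to $f_2$, since $f_2(1)=0$ and $f_2'(t)=ac\,t^{a-1}(t^{c-a}-1)$. Equivalently, both facts are weighted AM--GM inequalities. Since also $a>0$, the $L^q$ term is nonnegative.

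\textbf{Main obstacle.} The remaining point is the sign of $\omega$, which is needed for the $L^2$ term to be nonnegative. Combining the Nehari and Pohozaev identities only gives
\begin{equation*}
2\omega\|u\|_{L^2}^2=\Big(\tfrac{2N_\gamma}{q}-N_\gamma+2\Big)\|u\|_{L^q}^q-\Big(\tfrac{2N_\gamma}{p}-N_\gamma+2\Big)\|u\|_{L^p}^p .
\end{equation*}
Both coefficients here are positive, so this is not signed by itself. My first attempt would therefore be to take $\omega\geqslant0$ from the context in which the lemma is applied: the solutions arising there come with positive Lagrange multipliers, as in Theorem \ref{Theorem1.3}. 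I would then state the second inequality under that hypothesis. If an intrinsic argument is needed, I would test the equation against suitable truncations to try to show that no nontrivial $H^{1,2}_\gamma$ solution exists when $\omega<0$, but I expect this to be the delicate step.
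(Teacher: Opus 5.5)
Your Steps 1--3 are the paper's proof. In your notation, the paper writes $I_\omega(u)-I_\omega(u^t)=\frac{1-t^{a}}{a}J(u)+\frac{f_1(t)}{2a}\,\omega\|u\|_{L^2}^2+\frac{f_2(t)}{qa}\|u\|_{L^q}^q$ and reads off from $f_1',f_2'$ that $t=1$ is the global minimum of $f_1$ and $f_2$.

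The obstacle you isolate is a genuine gap in your proposal as it stands, and the paper has the same gap. The paper never discusses the sign of $\omega$, so $f_1,f_2\geqslant0$ gives the final $\geqslant0$ only when $\omega\geqslant0$. This is harmless for its applications, since Lemmas~\ref{Lemma6.5} and~\ref{Lemma8.2} use $\omega=\omega_*>0$. But importing $\omega\geqslant0$ from context changes the statement. Your fallback, excluding $H^{1,2}_\gamma$ solutions with $\omega<0$, amounts to ruling out the positive eigenvalue $-\omega$ of $-\Delta_\gamma+|u|^{p-2}-|u|^{q-2}$. That is an embedded-eigenvalue problem, hard and not needed here.

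The missing idea is in your last display. There you eliminated $\|\nabla_\gamma u\|_{L^2}^2$, which throws away its sign; eliminate $\|u\|_{L^p}^p$ instead. The combination $\frac{N_\gamma}{2}N(u)-P(u)=0$ is \eqref{5.6}, i.e.
\[
\|u\|_{L^p}^p=\frac{p(q-2)}{q(p-2)}\|u\|_{L^q}^q-\frac{2p}{N_\gamma(p-2)}\|\nabla_\gamma u\|_{L^2}^2 .
\]
Inserting this into $N(u)=0$ gives, for every weak solution and every $\omega\in\mathbb{R}$,
\[
\omega\|u\|_{L^2}^2=\frac{a}{N_\gamma(p-2)}\|\nabla_\gamma u\|_{L^2}^2-\frac{2(q-p)}{q(p-2)}\|u\|_{L^q}^q .
\]
Substitute this into your Step 2 formula and use $\frac{q-p}{p-2}=\frac{c-a}{a-b}$. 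This yields
\[
I_\omega(u)-I_\omega(u^t)=\frac{f_1(t)}{2N_\gamma(p-2)}\|\nabla_\gamma u\|_{L^2}^2+\frac{(a-b)t^{c}+(c-a)t^{b}-(c-b)t^{a}}{2q(p-2)}\|u\|_{L^q}^q .
\]
The first term is nonnegative by your Step 3. The numerator of the second is nonnegative by weighted AM--GM, since $a=\frac{a-b}{c-b}\,c+\frac{c-a}{c-b}\,b$ with $b<a<c$, so that $(c-b)t^a\leqslant(a-b)t^c+(c-a)t^b$. This proves the lemma as stated, for all $\omega\in\mathbb{R}$, with no sign assumption and no nonexistence result.
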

\begin{proof}
By \eqref{2.11}, we know that
\begin{equation*}
\begin{aligned}
&I_{\omega}(u)-I_{\omega}(u^{t})\\
=&
\frac{1-t^{2p-N_{\gamma}(p-2)}}{2p-N_{\gamma}(p-2)}J(u)\\
&+
\frac{2(p-2)-[2p-N_{\gamma}(p-2)]t^{4-N_{\gamma}(p-2)}+[4-N_{\gamma}(p-2)]t^{2p-N_{\gamma}(p-2)}}{2[2p-N_{\gamma}(p-2)]}
\omega
\|u\|_{L^{2}(\mathbb{R}^{N})}^{2}
\\&+
\frac{2(q-p)-[2q-N_{\gamma}(p-2)]t^{2p-N_{\gamma}(p-2)}
+[2p-N_{\gamma}(p-2)]t^{2q-N_{\gamma}(p-2)}}{q[2p-N_{\gamma}(p-2)]}
\|u\|_{L^{q}(\mathbb{R}^{N})}^{q}.
\end{aligned}
\end{equation*}
We set
\begin{equation*}
\begin{aligned}
\begin{cases}
f_{1}(t):=
2(p-2)-[2p-N_{\gamma}(p-2)]t^{4-N_{\gamma}(p-2)}+[4-N_{\gamma}(p-2)]t^{2p-N_{\gamma}(p-2)},\\
f_{2}(t):=2(q-p)-[2q-N_{\gamma}(p-2)]t^{2p-N_{\gamma}(p-2)}
+[2p-N_{\gamma}(p-2)]t^{2q-N_{\gamma}(p-2)}.
\end{cases}
\end{aligned}
\end{equation*}
Then 
\begin{equation*}
\begin{aligned}
\begin{cases}
f_{1}'(t)
=
[2N_{\gamma}-(N_{\gamma}-2)p][4-N_{\gamma}(p-2)]t^{2p-N_{\gamma}(p-2)-1}(1-t^{2(2-p)}),\\
f_{2}'(t)
=
[2N_{\gamma}-(N_{\gamma}-2)p][2q-N_{\gamma}(p-2)]t^{2q-N_{\gamma}(p-2)-1}(1-t^{2(p-q)}).
\end{cases}
\end{aligned}
\end{equation*}
If $2<p<q<2+\frac{4}{N_{\gamma}}$, then $f_{1}(1)=f_{2}(1)=0$ is global minimum on $(0,\infty)$.
\end{proof}

\section{Proof of Theorem \ref{Theorem1.1}}
\label{section3}
In this section,
we prove Theorem \ref{Theorem1.1}, i.e., the attainability of Gagliardo-Nirenberg inequality \eqref{1.2}
and the two-sided estimates for its best constant.

We set the Weinstein type functional
\begin{equation*}
\begin{aligned}
W_{N_{\gamma},p,q}(u)=
\frac{
\left(
\|u\|_{L^{2}(\mathbb{R}^N)}^{2}
\right)
^{\frac{2(q-p)}{2N_{\gamma}+4-pN_{\gamma}}}
\left(
\|u\|_{L^{p}(\mathbb{R}^N)}^{p}
\right)
^{\frac{2N_{\gamma}+4-qN_{\gamma}}{2N_{\gamma}+4-pN_{\gamma}}}
\left(
\|u\|_{D^{1,2}_{\gamma}(\mathbb{R}^N)}^{2}
\right)
^{\frac{N_{\gamma}(q-p)}{2N_{\gamma}+4-pN_{\gamma}}}
}
{\|u\|_{L^{q}(\mathbb{R}^N)}^{q}}.
\end{aligned}
\end{equation*}
We define
\begin{equation*}
\begin{aligned}
C_{N_{\gamma},p,q}^{-1}:=
\inf_{u\in H_{\gamma,rad_{x}}^{1,2}(\mathbb{R}^N)\backslash\{0\}}W_{N_{\gamma},p,q}(u).
\end{aligned}
\end{equation*}
Our aim is to prove the attainability of $C_{N_{\gamma},p,q}^{-1}$
and obtain the upper and lower bounds for $C_{N_{\gamma},p,q}$.
\begin{lemma}\label{Lemma3.1}
Assume that $\gamma\geqslant0$ and $p,q$ satisfy \eqref{H}.
Then, there exists a bounded sequence $\{u_n\}\subset H_{\gamma,rad_{x}}^{1,2}(\mathbb{R}^N)$ such that
\begin{equation*}
\begin{aligned}
\lim_{n\to\infty}
W_{N_{\gamma},p,q}(u_n)=C_{N_{\gamma},p,q}^{-1},
\ \
\|u_n\|_{D_{\gamma}^{1,2}(\mathbb{R}^{N})}
=
\|u_n\|_{L^{p}(\mathbb{R}^{N})}=1,
\end{aligned}
\end{equation*}
and
\begin{equation*}
\begin{aligned}
C_1\leqslant \|u_n\|_{L^{2}(\mathbb{R}^{N})}
\leqslant C_2
\ \
and
\ \
C_3\leqslant
\|u_n\|_{L^{q}(\mathbb{R}^{N})},
\end{aligned}
\end{equation*}
for some constants $C_1,C_2,C_3>0$.
\end{lemma}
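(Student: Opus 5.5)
Take any minimizing sequence $\{v_n\}\subset H^{1,2}_{\gamma,rad_x}(\mathbb{R}^N)\setminus\{0\}$ with $W_{N_\gamma,p,q}(v_n)\to C_{N_\gamma,p,q}^{-1}$. The constant $C_{N_\gamma,p,q}^{-1}$ is positive by Lemma \ref{Lemma2.4}. We use the two-parameter family $u(z)=\lambda v(\mu x,\mu^{1+\gamma}y)$, which preserves $x$-radial symmetry. Writing $A=\|v\|_{D^{1,2}_\gamma}^2$, $B=\|v\|_{L^p}^p$, $M=\|v\|_{L^2}^2$ and $D=\|v\|_{L^q}^q$, a change of variables gives
\begin{equation*}
\|u\|_{D^{1,2}_\gamma}^2=\lambda^2\mu^{2-N_\gamma}A,\quad
\|u\|_{L^p}^p=\lambda^p\mu^{-N_\gamma}B,\quad
\|u\|_{L^2}^2=\lambda^2\mu^{-N_\gamma}M,\quad
\|u\|_{L^q}^q=\lambda^q\mu^{-N_\gamma}D.
\end{equation*}
Here $\mu^{2-N_\gamma}$ comes from $|\nabla_x|^2$ scaling like $\mu^2$ and the $|x|^{2\gamma}|\nabla_y|^2$ term scaling like $\mu^{-2\gamma}\mu^{2(1+\gamma)}=\mu^2$. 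The first step is to check that $W_{N_\gamma,p,q}$ is invariant under this family. Homogeneity in $\lambda$ is immediate, since $2\cdot\frac{2(q-p)}{d}+p\cdot\frac{2N_\gamma+4-qN_\gamma}{d}+2\cdot\frac{N_\gamma(q-p)}{d}=q$ with $d=2N_\gamma+4-pN_\gamma$. The $\mu$-exponents cancel by a similar linear check. Then I solve $\lambda^2\mu^{2-N_\gamma}A=1$ and $\lambda^p\mu^{-N_\gamma}B=1$ for $(\lambda,\mu)$. Taking logarithms gives a $2\times2$ linear system with determinant $-2N_\gamma-p(2-N_\gamma)=-(2N_\gamma+4-pN_\gamma)+4-\dots$, i.e.\ a nonzero multiple of $d$ (up to sign). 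Under \eqref{H} with $p<2_\gamma^*$ we have $d>0$ in the first case. In the second case $d=N_\gamma(2_\gamma^*-p)(N_\gamma-2)/N_\gamma\cdot$const$>0$ as well, since $p<2^*_\gamma$ is equivalent to $2N_\gamma-p(N_\gamma-2)>0$. So the system is uniquely solvable, and we obtain $u_n$ with $\|u_n\|_{D^{1,2}_\gamma}=\|u_n\|_{L^p}=1$ and $W(u_n)=W(v_n)\to C_{N_\gamma,p,q}^{-1}$.

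With this normalization, $W(u_n)=M_n^{\theta}/D_n$, where $M_n=\|u_n\|_{L^2}^2$, $D_n=\|u_n\|_{L^q}^q$ and $\theta=\frac{2(q-p)}{d}$. The sign of $\theta$ equals the sign of $q-p$. The remaining task is to get two-sided bounds on $M_n$ and a lower bound on $D_n$. I treat the two cases of \eqref{H} separately, using interpolation inequalities that involve only the normalized quantities.

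\emph{Case $2<p<q<2+\frac4{N_\gamma}$}, so $\theta>0$. Since $D_n\le M_n^\theta/(C^{-1}_{N_\gamma,p,q}/2)$ for large $n$, an upper bound on $M_n$ is needed. The lower bound on $M_n$ comes from Lemma \ref{Lemma2.2} together with Hölder. Interpolating $L^p$ between $L^2$ and $L^{2+4/N_\gamma}$ gives $1=\|u_n\|_{L^p}^p\le M_n^{a}\|u_n\|_{L^{2+4/N_\gamma}}^{b}$ for some $a,b>0$. Lemma \ref{Lemma2.2} gives $\|u_n\|_{L^{2+4/N_\gamma}}^{2+4/N_\gamma}\le C M_n^{2/N_\gamma}$ because the gradient norm is $1$. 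Hence $M_n\ge C_1^2$. For the upper bound on $M_n$ I use $W(u_n)\le 2C^{-1}_{N_\gamma,p,q}$, that is, $M_n^\theta\le 2C^{-1}_{N_\gamma,p,q}D_n$. I bound $D_n$ by interpolating $L^q$ between $L^p$ and $L^{2+4/N_\gamma}$ as in Lemma \ref{Lemma2.4}. By Lemma \ref{Lemma2.2} this gives $D_n\le CM_n^{\theta'}$ with $\theta'=\frac{2(q-p)}{d}=\theta$, which is circular. So instead I use Lemma \ref{Lemma2.2} with a super-critical exponent $r\in(q,2^*_\gamma)$ combined with the $L^p$ constraint. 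Interpolating $L^q$ between $L^p$ and $L^{r}$, where the $L^r$ norm is bounded through the Sobolev-type bound $\|u\|_{L^{2^*_\gamma}}\le C$ (or its Gagliardo--Nirenberg variant), yields $D_n\le C$ independent of $M_n$. Then $M_n^\theta\le C$ gives $M_n\le C_2^2$. I expect the main obstacle to be this step, namely producing a $q$-norm bound free of the $L^2$ mass. It is clean when $2^*_\gamma<\infty$. In the case $k=l=1$, $\gamma=0$ I would instead use Remark \ref{remark2.1}: $\|u\|_{L^r}^r\le C\|u\|_{D^{1,2}}^{r-p}\|u\|_{L^p}^p=C$. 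Finally, $D_n\ge C_3^q$ follows from $D_n\ge M_n^\theta/(2C^{-1}_{N_\gamma,p,q})\ge C_1^{2\theta}/(2C^{-1}_{N_\gamma,p,q})$.

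\emph{Case $2+\frac4{N_\gamma}<q<p<2_\gamma^*$}, so $\theta<0$. The $L^p$-anchored bound $D_n\le C$ (interpolating $L^q$ between $L^2$ and $L^p$ is unnecessary; directly interpolate $L^q$ between $L^{2+4/N_\gamma}$ and $L^p$) combined with Lemma \ref{Lemma2.2} gives $D_n\le C M_n^{\beta}$ for some $\beta\ge0$. The lower bound on $M_n$ again comes from interpolating $q$ between $2$ and $p$: $D_n\le M_n^{a'}\cdot 1$ with $a'>0$. Together with $M_n^\theta\le 2C^{-1}_{N_\gamma,p,q} D_n$ and $\theta<0$, this gives $M_n^{\theta-a'}\le C$, hence $M_n\ge C_1^2$. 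For the upper bound on $M_n$, and the lower bound on $D_n$, I use $D_n\ge M_n^\theta/(2C^{-1}_{N_\gamma,p,q})$ together with an upper bound on $D_n$ that does not involve $M_n$. Since $q<p<2^*_\gamma$, interpolating $L^q$ between $L^p$ and $L^2$ only produces $M_n$-dependent bounds. I therefore expect to combine $D_n\ge M_n^{\theta}/(2C^{-1}_{N_\gamma,p,q})$ (large when $M_n$ is small and small when $M_n$ is large) with Lemma \ref{Lemma2.4}-type bounds in the exponent bookkeeping, choosing interpolation exponents so that the powers of $M_n$ do not cancel. The lower bound on $D_n$ follows once $M_n$ is bounded above, since $\theta<0$.
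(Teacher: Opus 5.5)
Your Case 1 is sound and close to the paper: the mass-independent bound on $\|u_n\|_{L^q}$ via $L^{2^*_\gamma}$ (or Remark \ref{remark2.1} when $k=l=1$, $\gamma=0$) is the paper's Steps 4.1--4.2. Your lower bounds, obtained from the constraint $\|u_n\|_{L^p}=1$ and from $D_n\geqslant M_n^\theta/(2C_{N_\gamma,p,q}^{-1})$, are a bit simpler than the paper's Steps 2--3.

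The gap is in the second range of \eqref{H}, and it starts with a sign error. You identified $d=2N_\gamma+4-pN_\gamma$ with the determinant of your scaling system, but that determinant is $-2N_\gamma-p(2-N_\gamma)=-[2N_\gamma-p(N_\gamma-2)]$. This quantity is nonzero for $p<2^*_\gamma$, so the normalization itself is fine; it is the exponent appearing in the paper's $\mu_n$. However, $d=4-N_\gamma(p-2)<0$ when $p>2+\frac4{N_\gamma}$. Hence $\theta=\frac{2(q-p)}{d}>0$ in \emph{both} ranges, not negative in the second.

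With the correct sign, every inequality in your Case 2 points the wrong way. First, $M_n^{\theta-a'}\leqslant C$ is an upper bound on $M_n$, not a lower one. Second, the lower bound on $D_n$ has to come from a lower bound on $M_n$, not an upper one. Third, the upper bound on $M_n$, which you leave as ``I expect to combine \dots'', is never established.

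The fix is the H\"older step you dismissed. Interpolating $L^q$ between $L^2$ and $L^p$ gives $D_n\leqslant M_n^{\frac{p-q}{p-2}}$. Combined with $M_n^\theta\leqslant 2C_{N_\gamma,p,q}^{-1}D_n$, this yields $M_n^{\theta-\frac{p-q}{p-2}}\leqslant C$, where
\[
\theta-\frac{p-q}{p-2}=\frac{(p-q)\,[2N_\gamma-p(N_\gamma-2)]}{(p-2)\,[N_\gamma(p-2)-4]}>0
\]
precisely because $p<2^*_\gamma$. So $M_n$ is bounded above. It does not matter that the interpolation bound depends on $M_n$; what matters is that its power of $M_n$ is strictly below $\theta$. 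This is the paper's Step 4.3.

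For the lower bound on $M_n$, apply Lemma \ref{Lemma2.2} with $r=p$ together with the two constraints. This gives $1\leqslant C\,M_n^{\frac{2N_\gamma-p(N_\gamma-2)}{4}}$, whose exponent is positive since $p<2^*_\gamma$. Alternatively, use the paper's Step 2, which applies Lemma \ref{Lemma2.2} to $L^q$ and uses $\theta<\frac q2-\frac{N_\gamma(q-2)}{4}$. Finally, $D_n\geqslant M_n^\theta/(2C_{N_\gamma,p,q}^{-1})$ bounds $\|u_n\|_{L^q}$ from below, exactly as in your Case 1.
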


\begin{proof}
\textbf{Step 1.}
Let $\{v_n\}\subset H_{\gamma,rad_{x}}^{1,2}(\mathbb{R}^N)\backslash\{0\}$ be any minimizing sequence of $C_{N_{\gamma},p,q}^{-1}$ and put
\begin{equation*}
\begin{aligned}
\tilde{u}_n(z):=
\lambda_{n}
v_n(\mu_nx,\mu_n^{1+\gamma}y),
\end{aligned}
\end{equation*}
where
\begin{equation*}
\begin{aligned}
\begin{cases}
\mu_n
=
\left[
\frac{\int_{\mathbb{R}^{N}}
|v_n|^{p}
\mathrm{d}z}
{\left(
\int |\nabla_\gamma v_n|^{2} \mathrm{d}x\mathrm{d}y
\right)^{\frac{p}{2}}}
\right]
^{\frac{2}{2p-(p-2)[k+l(1+\gamma)]}},
\\
\lambda_{n}
=
\left[
\frac{\int_{\mathbb{R}^{N}}
|v_n|^{p}
\mathrm{d}z}
{\left(
\int |\nabla_\gamma v_n|^{2} \mathrm{d}x\mathrm{d}y
\right)^{\frac{p}{2}}}
\right]
^{\frac{2[k+l(1+\gamma)]}{p\{2p-(p-2)[k+l(1+\gamma)]\}}
}
\left(
\int_{\mathbb{R}^{N}}
|v_n|^{p}
\mathrm{d}z
\right)
^{-\frac{1}{p}}.
\end{cases}
\end{aligned}
\end{equation*}
Then we have that
\begin{equation*}
\begin{aligned}
&\int |\nabla_\gamma\tilde{u}_n|^{2} \mathrm{d}x\mathrm{d}y
=
\lambda_n^{2} \mu_n^{2-k-l(1+\gamma)} 
\int|\nabla_\gamma v_n|^{2} \mathrm{d}x\mathrm{d}y
=1,\\
&\|\tilde{u}_n\|_{L^{p}(\mathbb{R}^{N})}^p
=
\lambda_{n}^p
\mu_n^{-k-l(1+\gamma)}
\int_{\mathbb{R}^{N}}
|v_n|^{p}
\mathrm{d}z=1,\\
&\|\tilde{u}_n\|_{L^{2}(\mathbb{R}^{N})}^{2}=
\lambda_{n}^{2}
\mu_n^{-k-l(1+\gamma)}
\int_{\mathbb{R}^{N}}
|v_n|^{2}
\mathrm{d}z,\\
&\|\tilde{u}_n\|_{L^{q}(\mathbb{R}^{N})}^q=
\lambda_{n}^q
\mu_n^{-k-l(1+\gamma)}
\int_{\mathbb{R}^{N}}
|v_n|^{q}
\mathrm{d}z.
\end{aligned}
\end{equation*}
It is easy to see that
\begin{equation*}
\begin{aligned}
W_{N_{\gamma},p,q}(\tilde{u}_n)
=
W_{N_{\gamma},p,q}(v_n)=C_{N_{\gamma},p,q}^{-1}+o(1),
\end{aligned}
\end{equation*}
which yields that $\{\tilde{u}_n\}\subset H_{\gamma,rad_{x}}^{1,2}(\mathbb{R}^N)\backslash\{0\}$ is also a minimizing sequence.

By virtue of
$\int_{\mathbb{R}^{N}}
|\nabla_\gamma\tilde{u}_n|^{2} \mathrm{d}z
=\int_{\mathbb{R}^{N}}
|\tilde{u}_n|^p \mathrm{d}z=1$,
we get that
\begin{equation}\label{3.1}
\begin{aligned}
C_{N_{\gamma},p,q}^{-1}+o(1)=
\frac{
\left(
\|\tilde{u}_n\|_{L^{2}(\mathbb{R}^N)}^{2}
\right)
^{\frac{2(q-p)}{2N_{\gamma}+4-pN_{\gamma}}}}
{\|\tilde{u}_n\|_{L^{q}(\mathbb{R}^N)}^{q}}.
\end{aligned}
\end{equation}

\textbf{Step 2.}
Next we claim that there exists $C>0$ such that $\|\tilde{u}_n\|_{L^{2}(\mathbb{R}^N)}\geqslant C$.
It follows from Lemma \ref{Lemma2.2} and $\int_{\mathbb{R}^{N}}
|\nabla_\gamma\tilde{u}_n|^{2} \mathrm{d}z=1$ that
\begin{equation*}
\begin{aligned}
C_{N_{\gamma},p,q}^{-1}+o(1)
\geqslant& C
\frac{
\left(
\|\tilde{u}_n\|_{L^{2}(\mathbb{R}^N)}^{2}
\right)
^{\frac{2(q-p)}{2N_{\gamma}+4-pN_{\gamma}}}}
{\left(
\int_{\mathbb{R}^{N}}
|\nabla_{\gamma}\tilde{u}_n|^{2}
\mathrm{d}z
\right)
^{\frac{N_{\gamma}(q-2)}{4}}
\left(
\|\tilde{u}_n\|_{L^{2}(\mathbb{R}^N)}^{2}
\right)
^{\frac{q}{2}-\frac{N_{\gamma}(q-2)}{4}}}\\
=& C
\left(
\|\tilde{u}_n\|_{L^{2}(\mathbb{R}^N)}^{2}
\right)
^{\frac{2(q-p)}{2N_{\gamma}+4-pN_{\gamma}}-\frac{q}{2}+\frac{N_{\gamma}(q-2)}{4}}.
\end{aligned}
\end{equation*}
From $\frac{2(q-p)}{2N_{\gamma}+4-pN_{\gamma}}<\frac{q}{2}-\frac{N_{\gamma}(q-2)}{4}$,
we know that $\|\tilde{u}_n\|_{L^{2}(\mathbb{R}^N)}\geqslant C$.

\textbf{Step 3.}
Next we claim that there exists $C>0$ such that $\|\tilde{u}_n\|_{L^{q}(\mathbb{R}^N)}\geqslant C$.
It follows from Lemma \ref{Lemma2.2}, $\int_{\mathbb{R}^{N}}
|\nabla_\gamma\tilde{u}_n|^{2} \mathrm{d}z=1$ and $\frac{2(q-p)}{2N_{\gamma}+4-pN_{\gamma}}<\frac{q}{2}-\frac{N_{\gamma}(q-2)}{4}$ that
\begin{equation*}
\begin{aligned}
C_{N_{\gamma},p,q}^{-1}+o(1)
=&
\frac{
\left(
\|\tilde{u}_n\|_{L^{2}(\mathbb{R}^N)}^{2}
\right)
^{\frac{2(q-p)}{2N_{\gamma}+4-pN_{\gamma}}}}
{\|\tilde{u}_n\|_{L^{q}(\mathbb{R}^N)}^{q}}\\
=&
\frac
{\left[
\left(
\int_{\mathbb{R}^{N}}
|\nabla_{\gamma}\tilde{u}_n|^{2}
\mathrm{d}z
\right)
^{\frac{N_{\gamma}(q-2)}{4}}
\left(
\|\tilde{u}_n\|_{L^{2}(\mathbb{R}^N)}^{2}
\right)
^{\frac{q}{2}-\frac{N_{\gamma}(q-2)}{4}}
\right]
^{
\frac{\frac{2(q-p)}{2N_{\gamma}+4-pN_{\gamma}}}{\frac{q}{2}-\frac{N_{\gamma}(q-2)}{4}}
}
}
{\|\tilde{u}_n\|_{L^{q}(\mathbb{R}^N)}^{q}}\\
\geqslant& C
\frac
{\left[
\|\tilde{u}_n\|_{L^{q}(\mathbb{R}^N)}^{q}
\right]
^{
\frac{\frac{2(q-p)}{2N_{\gamma}+4-pN_{\gamma}}}{\frac{q}{2}-\frac{N_{\gamma}(q-2)}{4}}
}
}
{\|\tilde{u}_n\|_{L^{q}(\mathbb{R}^N)}^{q}}\\
=& C
\left(
\|\tilde{u}_n\|_{L^{q}(\mathbb{R}^N)}^{q}
\right)
^{\frac{\frac{2(q-p)}{2N_{\gamma}+4-pN_{\gamma}}}{\frac{q}{2}-\frac{N_{\gamma}(q-2)}{4}}-1}.
\end{aligned}
\end{equation*}
From $\frac{\frac{2(q-p)}{2N_{\gamma}+4-pN_{\gamma}}}{\frac{q}{2}-\frac{N_{\gamma}(q-2)}{4}}<1$,
we know that $\|\tilde{u}_n\|_{L^{q}(\mathbb{R}^N)}\geqslant C$.

\textbf{Step 4.}
In this step, we claim that there exists $C>0$ such that $\|\tilde{u}_n\|_{L^{2}(\mathbb{R}^N)}\leqslant C$.
We now 
divide the proof into three cases:
\begin{enumerate}
\item $2<p<q<2+\frac{4}{N_{\gamma}}$, $k=l=1$ and $\gamma=0$;

\item $2<p<q<2+\frac{4}{N_{\gamma}}$ and others;

\item $2+\frac{4}{N_{\gamma}}<q<p<2_{\gamma}^{*}$.
\end{enumerate}

\textbf{Step 4.1.}
For case (1),
it follows from the inequality in Remark 2.1 and $\int_{\mathbb{R}^{N}}
|\nabla_\gamma\tilde{u}_n|^{2} \mathrm{d}z
=\int_{\mathbb{R}^{N}}
|\tilde{u}_n|^p \mathrm{d}z=1$ that
\begin{equation}\label{3.2}
\begin{aligned}
\|\tilde{u}_n\|_{L^{q}(\mathbb{R}^N)}^{q}
\leqslant&
C
\|\tilde{u}_n\|_{L^{p}(\mathbb{R}^N)}^{p}
\left(
\|\tilde{u}_n\|_{D_{\gamma}^{1,2}(\mathbb{R}^N)}^{2}
\right)^{\frac{q-p}{2}}=C.
\end{aligned}
\end{equation}
Putting \eqref{3.2} into \eqref{3.1},
we have
\begin{equation*}
\begin{aligned}
\|\tilde{u}_n\|_{L^{q}(\mathbb{R}^N)}^{q}[C_{N_{\gamma},p,q}^{-1}+o(1)]
=
\left(
\|\tilde{u}_n\|_{L^{2}(\mathbb{R}^N)}^{2}
\right)
^{\frac{2(q-p)}{2N_{\gamma}+4-pN_{\gamma}}},
\end{aligned}
\end{equation*}
which shows $\|\tilde{u}_{n}\|_{L^{2}(\mathbb{R}^N)}\leqslant C$.

\textbf{Step 4.2.}
For case (2),
it follows from the H\"{o}lder inequality, Lemma \ref{Lemma2.2} and $\int_{\mathbb{R}^{N}}
|\nabla_\gamma\tilde{u}_n|^{2} \mathrm{d}z
=\int_{\mathbb{R}^{N}}
|\tilde{u}_n|^p \mathrm{d}z=1$ that
\begin{equation}\label{3.3}
\begin{aligned}
\|\tilde{u}_n\|_{L^{q}(\mathbb{R}^N)}^{q}
\leqslant&
\left(
\|\tilde{u}_n\|_{L^{p}(\mathbb{R}^N)}^{p}
\right)^{\frac{2_{\gamma}^{*}-q}{2_{\gamma}^{*}-p}}
\left(
\|\tilde{u}_n\|_{L^{2_{\gamma}^{*}}(\mathbb{R}^N)}^{2_{\gamma}^{*}}
\right)^{\frac{q-p}{2_{\gamma}^{*}-p}}\\
\leqslant&
C
\left(
\|\tilde{u}_n\|_{L^{p}(\mathbb{R}^N)}^{p}
\right)^{\frac{2_{\gamma}^{*}-q}{2_{\gamma}^{*}-p}}
\left(
\|\tilde{u}_n\|_{D_{\gamma}^{1,2}(\mathbb{R}^N)}^{2_{\gamma}^{*}}
\right)^{\frac{q-p}{2_{\gamma}^{*}-p}}=C.
\end{aligned}
\end{equation}
Similarly,
putting \eqref{3.3} into \eqref{3.1},
we can show that $\|\tilde{u}_{n}\|_{L^{2}(\mathbb{R}^N)}\leqslant C$.

\textbf{Step 4.3.}
For case (3),
it follows from the H\"{o}lder inequality, Lemma \ref{Lemma2.2} and $\int_{\mathbb{R}^{N}}
|\tilde{u}_n|^p \mathrm{d}z=1$ that
\begin{equation}\label{3.4}
\begin{aligned}
\|\tilde{u}_n\|_{L^{q}(\mathbb{R}^N)}^{q}
\leqslant
\left(
\|\tilde{u}_n\|_{L^{p}(\mathbb{R}^N)}^{p}
\right)^{\frac{q-2}{p-2}}
\left(
\|\tilde{u}_n\|_{L^{2}(\mathbb{R}^N)}^{2}
\right)^{\frac{p-q}{p-2}}
=
\left(
\|\tilde{u}_n\|_{L^{2}(\mathbb{R}^N)}^{2}
\right)^{\frac{p-q}{p-2}}.
\end{aligned}
\end{equation}
Combining \eqref{3.1} and \eqref{3.4},
we get that
\begin{equation*}
\begin{aligned}
C_{N_{\gamma},p,q}^{-1}+o(1)
\geqslant
\frac{
\left(
\|\tilde{u}_n\|_{L^{2}(\mathbb{R}^N)}^{2}
\right)
^{\frac{2(q-p)}{2N_{\gamma}+4-pN_{\gamma}}}}
{\left(
\|\tilde{u}_n\|_{L^{2}(\mathbb{R}^N)}^{2}
\right)^{\frac{p-q}{p-2}}}
=
\left(
\|\tilde{u}_n\|_{L^{2}(\mathbb{R}^N)}^{2}
\right)^{\frac{2(q-p)}{2N_{\gamma}+4-pN_{\gamma}}-\frac{p-q}{p-2}}.
\end{aligned}
\end{equation*}
From $\frac{2(q-p)}{2N_{\gamma}+4-pN_{\gamma}}>\frac{p-q}{p-2}$,
we obtain that $\|\tilde{u}_{n}\|_{L^{2}(\mathbb{R}^N)}\leqslant C$.

\textbf{Step 5.}
By Lions' vanishing lemma \cite[Lemma 2.1]{Alves-de-Holanda2021} and Lemma \ref{Lemma2.1}, we know that there exists a sequence $\{y_{n}\}\subset \mathbb{R}^{l}$ such that
\begin{equation*}
\begin{aligned}
u_{n}:=\tilde{u}_{n}(x,y+y_{n})
~~\mathrm{and}~~
u_{n}\rightharpoonup Q\not\equiv 0,
\end{aligned}
\end{equation*}
and $\{u_{n}\}$ is a bounded minimizing sequence of $C_{N_{\gamma},p,q}^{-1}$. The proof is complete.
\end{proof}

\begin{proof}[Proof of Theorem \ref{Theorem1.1}]
\textbf{Step 1.}
We first prove the attainability of $C_{N_{\gamma},p,q}^{-1}$.
It follows from Lemma \ref{Lemma3.1}, $\|u_n\|_{L^{p}(\mathbb{R}^N)}^{p}
=
\|u_n\|_{D_{\gamma}^{1,2}(\mathbb{R}^N)}
=1$ and Br\'{e}zis-Lieb Lemma \cite{Brezis-Lieb1983PAMS} that
\begin{equation*}
\begin{aligned}
C_{N_{\gamma},p,q}+o(1)
=&
\frac
{\|u_{n}\|_{L^{q}(\mathbb{R}^N)}^{q}}
{
\left(
\|u_{n}\|_{L^{2}(\mathbb{R}^N)}^{2}
\right)
^{\frac{2(q-p)}{4-N_{\gamma}(p-2)}}
}
\\=&
\frac
{\|u_{n}-Q\|_{L^{q}(\mathbb{R}^N)}^{q}}
{
\left(
\|u_{n}\|_{L^{2}(\mathbb{R}^N)}^{2}
\right)
^{\frac{2(q-p)}{4-N_{\gamma}(p-2)}}
}
+
\frac
{\|Q\|_{L^{q}(\mathbb{R}^N)}^{q}}
{
\left(
\|u_{n}\|_{L^{2}(\mathbb{R}^N)}^{2}
\right)
^{\frac{2(q-p)}{4-N_{\gamma}(p-2)}}
}
+o(1).
\end{aligned}
\end{equation*}
By applying \eqref{1.2},
$\frac{\|Q\|_{L^{2}(\mathbb{R}^{N})}}
{\|u_{n}\|_{L^{2}(\mathbb{R}^{N})}}\leqslant1$
and $\frac{\|u_{n}-Q\|_{L^{2}(\mathbb{R}^{N})}}{\|u_{n}\|_{L^{2}(\mathbb{R}^{N})}}\leqslant1$, one has
\begin{equation*}
\begin{aligned}
&C_{N_{\gamma},p,q}+o(1)
\\\leqslant&
C_{N_{\gamma},p,q}
\frac{
\left(
\|Q\|_{L^{p}(\mathbb{R}^N)}^{p}
\right)
^{\frac{4-N_{\gamma}(q-2)}{4-N_{\gamma}(p-2)}}
\left(
\|Q\|_{D_{\gamma}^{1,2}(\mathbb{R}^N)}^{2}
\right)
^{\frac{N_{\gamma}(q-p)}{4-N_{\gamma}(p-2)}}
\left(
\|Q\|_{L^{2}(\mathbb{R}^N)}^{2}
\right)
^{\frac{2(q-p)}{4-N_{\gamma}(p-2)}}
}
{
\left(
\|u_{n}\|_{L^{2}(\mathbb{R}^N)}^{2}
\right)
^{\frac{2(q-p)}{4-N_{\gamma}(p-2)}}
}
\\&+
C_{N_{\gamma},p,q}
\frac{
\left(
\|u_{n}-Q\|_{L^{p}(\mathbb{R}^N)}^{p}
\right)
^{\frac{4-N_{\gamma}(q-2)}{4-N_{\gamma}(p-2)}}
\left(
\|u_n-Q\|_{D_{\gamma}^{1,2}(\mathbb{R}^N)}^{2}
\right)
^{\frac{N_{\gamma}(q-p)}{4-N_{\gamma}(p-2)}}
\left(
\|u_{n}-Q\|_{L^{2}(\mathbb{R}^N)}^{2}
\right)
^{\frac{2(q-p)}{4-N_{\gamma}(p-2)}}
}
{
\left(
\|u_{n}\|_{L^{2}(\mathbb{R}^N)}^{2}
\right)
^{\frac{2(q-p)}{4-N_{\gamma}(p-2)}}
}
\\\leqslant&
C_{N_{\gamma},p,q}(1+o(1))
\left(
\|Q\|_{L^{p}(\mathbb{R}^N)}^{p}
\right)
^{\frac{4-N_{\gamma}(q-2)}{4-N_{\gamma}(p-2)}}
\left(
\|Q\|_{D_{\gamma}^{1,2}(\mathbb{R}^N)}^{2}
\right)
^{\frac{N_{\gamma}(q-p)}{4-N_{\gamma}(p-2)}}
\\&+
C_{N_{\gamma},p,q}(1+o(1))
\left(
\|u_{n}-Q\|_{L^{p}(\mathbb{R}^N)}^{p}
\right)
^{\frac{4-N_{\gamma}(q-2)}{4-N_{\gamma}(p-2)}}
\left(
\|u_n-Q\|_{D_{\gamma}^{1,2}(\mathbb{R}^N)}^{2}
\right)
^{\frac{N_{\gamma}(q-p)}{4-N_{\gamma}(p-2)}}.
\end{aligned}
\end{equation*}
Using the Young inequality, we get
\begin{equation*}
\begin{aligned}
&C_{N_{\gamma},p,q}+o(1)
\\\leqslant&
C_{N_{\gamma},p,q}(1+o(1))
\left[
\frac{4-N_{\gamma}(q-2)}{4-N_{\gamma}(p-2)}
\|Q\|_{L^{p}(\mathbb{R}^N)}^{p}
+
\frac{N_{\gamma}(q-p)}{4-N_{\gamma}(p-2)}
\|Q\|_{D_{\gamma}^{1,2}(\mathbb{R}^N)}^{2}
\right]
\\&+
C_{N_{\gamma},p,q}(1+o(1))
\left[
\frac{4-N_{\gamma}(q-2)}{4-N_{\gamma}(p-2)}
\|u_{n}-Q\|_{L^{p}(\mathbb{R}^N)}^{p}
+
\frac{N_{\gamma}(q-p)}{4-N_{\gamma}(p-2)}
\|u_n-Q\|_{D_{\gamma}^{1,2}(\mathbb{R}^N)}^{2}
\right]
\\=&
C_{N_{\gamma},p,q}(1+o(1)).
\end{aligned}
\end{equation*}
Thus, all the inequalities above have to be equalities.
By the equality of $\frac{\|Q\|_{L^{2}(\mathbb{R}^{N})}}{\|u_{n}\|_{L^{2}(\mathbb{R}^{N})}}=1$
or $\frac{\|u_{n}-Q\|_{L^{2}(\mathbb{R}^{N})}}{\|u_{n}\|_{L^{2}(\mathbb{R}^{N})}}=1$,
from Br\'{e}zis-Lieb Lemma \cite{Brezis-Lieb1983PAMS},
we know that
\begin{equation*}
\begin{aligned}
\|Q\|_{L^{2}(\mathbb{R}^{N})}=0
\ \
\mathrm{or}
\ \
\lim\limits_{n\to\infty}
\|u_{n}-Q\|_{L^{2}(\mathbb{R}^{N})}=0.
\end{aligned}
\end{equation*}
Note that $Q\not\equiv0$. Then
\begin{equation}\label{3.5}
\begin{aligned}
\lim\limits_{n\to\infty}
\|u_{n}-Q\|_{L^{2}(\mathbb{R}^{N})}=0.
\end{aligned}
\end{equation}
By the equality of the Young inequality, we know that
\begin{equation}\label{3.6}
\begin{aligned}
&\int_{\mathbb{R}^{N}}
|Q|^{p}
\mathrm{d}z
=
\int_{\mathbb{R}^{N}}
|\nabla_{\gamma}Q|^{2}
\mathrm{d}z
\ \
\mathrm{and}
\ \
\int_{\mathbb{R}^{N}}
|u_{n}-Q|^{p}
\mathrm{d}z
=
\int_{\mathbb{R}^{N}}
|\nabla_{\gamma}(u_{n}-Q)|^{2}
\mathrm{d}z.
\end{aligned}
\end{equation}
From Lemma \ref{Lemma2.2} and \eqref{3.5},
we obtain that
\begin{equation*}
\begin{aligned}
\lim\limits_{n\to\infty}
\|u_{n}-Q\|_{L^{p}(\mathbb{R}^{N})}^{p}
=0.
\end{aligned}
\end{equation*}
Putting this result back to \eqref{3.6},
one has
\begin{equation*}
\begin{aligned}
\lim\limits_{n\to\infty}
\int_{\mathbb{R}^{N}}
|\nabla_{\gamma}(u_{n}-Q)|^{2}
\mathrm{d}z
=0.
\end{aligned}
\end{equation*}
Hence, $u_n\to Q$ strongly in $H^{1,2}_{\gamma}(\mathbb{R}^{N})$.
By Lemma \ref{Lemma2.2} and Remark \ref{remark2.1}, we also have
$u_n\to Q$ strongly in $L^q(\mathbb{R}^{N})$.
It follows that
\begin{equation*}
\begin{aligned}
W_{N_{\gamma},p,q}(Q)=C_{N_{\gamma},p,q}^{-1}
\ \ 
\mathrm{and}
\ \ 
\int_{\mathbb{R}^{N}}|\nabla_{\gamma}Q|^{2}\mathrm{d}z
=\int_{\mathbb{R}^{N}}|Q|^p\mathrm{d}z=1.
\end{aligned}
\end{equation*}

\textbf{Step 2.}
We next prove the upper bound of $C_{N_{\gamma},p,q}$.
For both ranges of $p,q$, we have
\begin{equation*}
\begin{aligned}
0<\frac{N_{\gamma}(q-p)}{2N_{\gamma}+4-pN_{\gamma}}<1
\ \
\mathrm{and}
\ \
\frac{2N_{\gamma}+4-qN_{\gamma}}{2N_{\gamma}+4-pN_{\gamma}}
+\frac{N_{\gamma}(q-p)}{2N_{\gamma}+4-pN_{\gamma}}=1.
\end{aligned}
\end{equation*}
Applying the H\"{o}lder inequality, for every
$u\in H^{1,2}_{\gamma,rad_{x}}(\mathbb{R}^{N})\backslash\{0\}$, we get
\begin{equation}\label{3.7}
\begin{aligned}
\int_{\mathbb{R}^{N}}|u|^q\mathrm{d}z
<&
\left(\int_{\mathbb{R}^{N}}|u|^p\mathrm{d}z\right)
^{\frac{2N_{\gamma}+4-qN_{\gamma}}{2N_{\gamma}+4-pN_{\gamma}}}
\left(\int_{\mathbb{R}^{N}}|u|^{2+\frac{4}{N_{\gamma}}}
\mathrm{d}z\right)
^{\frac{N_{\gamma}(q-p)}{2N_{\gamma}+4-pN_{\gamma}}}.
\end{aligned}
\end{equation}
We point out that the equality in the H\"{o}lder inequality is not valid.
Since the equality holds when $|u|$ is either zero or a fixed positive constant.
From $u\in L^{2}(\mathbb{R}^{N})$,
we know that $u$ takes only the value zero.
Moreover,
from $u\in H^{1,2}_{\gamma,rad_{x}}(\mathbb{R}^{N})\backslash\{0\}$,
one deduces that the equality in the H\"{o}lder inequality is not valid.

It follows that \eqref{3.7} and Lemma \ref{Lemma2.2}, we obtain
\begin{equation*}
\begin{aligned}
\int_{\mathbb{R}^{N}}|u|^q\mathrm{d}z
<&
\left(C_{N_{\gamma},2+\frac{4}{N_{\gamma}}}\right)
^{\frac{N_{\gamma}(q-p)}{2N_{\gamma}+4-pN_{\gamma}}}
\left(\int_{\mathbb{R}^{N}}|u|^{2}\mathrm{d}z\right)
^{\frac{2(q-p)}{2N_{\gamma}+4-pN_{\gamma}}}
\left(\int_{\mathbb{R}^{N}}|\nabla_{\gamma}u|^{2}\mathrm{d}z\right)
^{\frac{N_{\gamma}(q-p)}{2N_{\gamma}+4-pN_{\gamma}}}\\
&\times
\left(\int_{\mathbb{R}^{N}}|u|^p\mathrm{d}z\right)
^{\frac{2N_{\gamma}+4-qN_{\gamma}}{2N_{\gamma}+4-pN_{\gamma}}}.
\end{aligned}
\end{equation*}
Taking $u=Q$, where $Q$ is the optimizer obtained in Step 1, and using
$W_{N_{\gamma},p,q}(Q)=C_{N_{\gamma},p,q}^{-1}$,
we obtain the strict upper bound of $C_{N_{\gamma},p,q}$.

Finally, choosing the mass critical constant as in
Lemma \ref{Lemma2.3} gives the explicit upper bound stated in
Theorem \ref{Theorem1.1}, since
$\frac{N_{\gamma}(q-p)}{2N_{\gamma}+4-pN_{\gamma}}>0$
in both ranges of $p,q$.
For $\gamma\geqslant1$, the improved estimate in the same lemma
gives the claimed replacement of $2\gamma+1$ by $1$.

\textbf{Step 3.}
We next prove the lower bound of $C_{N_{\gamma},p,q}$.
For $t>0$, let
\begin{equation*}
\begin{aligned}
u(x,y)=e^{-\frac{|x|^{2}+t|y|^{2}}{2}}.
\end{aligned}
\end{equation*}
Then $u$ is radial in $x$ and
\begin{equation*}
\begin{aligned}
\nabla_xu=-xu,
\ \ 
\nabla_yu=-tyu,
\ \ 
|\nabla_{\gamma}u|^{2}
=\left[|x|^{2}+(1+\gamma)^{2}t^{2}|x|^{2\gamma}|y|^{2}\right]
e^{-|x|^{2}-t|y|^{2}}.
\end{aligned}
\end{equation*}
For every $r>0$, by the Fubini theorem and the Gaussian integral, we have
\begin{equation*}
\begin{aligned}
\int_{\mathbb{R}^{N}}|u|^r\mathrm{d}z
&=\left(\int_{\mathbb{R}^{k}}e^{-\frac{r}{2}|x|^{2}}\mathrm{d}x\right)
\left(\int_{\mathbb{R}^{l}}e^{-\frac{rt}{2}|y|^{2}}\mathrm{d}y\right)
\\
&=\left(\frac{2\pi}{r}\right)^{\frac{k}{2}}
\left(\frac{2\pi}{rt}\right)^{\frac{l}{2}}
=\left(\frac{2\pi}{r}\right)^{\frac{N}{2}}t^{-\frac{l}{2}}.
\end{aligned}
\end{equation*}
Then
\begin{equation}\label{3.8}
\begin{aligned}
&\int_{\mathbb{R}^{N}}|u|^{2}\mathrm{d}z
=
\pi^{\frac{N}{2}}t^{-\frac{l}{2}},\\
&\int_{\mathbb{R}^{N}}|u|^p\mathrm{d}z
=
\pi^{\frac{N}{2}}t^{-\frac{l}{2}}
\left(\frac{2}{p}\right)^{\frac{N}{2}},\\
&\int_{\mathbb{R}^{N}}|u|^q\mathrm{d}z
=
\pi^{\frac{N}{2}}t^{-\frac{l}{2}}
\left(\frac{2}{q}\right)^{\frac{N}{2}}.
\end{aligned}
\end{equation}
In order to show that $u\in H^{1,2}_{\gamma,rad_{x}}(\mathbb{R}^{N})\backslash\{0\}$,
we need to estimate the gradient term.
Integration by parts gives
\begin{equation*}
\begin{aligned}
\int_{\mathbb{R}^{k}}|x|^{2}e^{-|x|^{2}}\mathrm{d}x
&=-\frac{1}{2}\sum_{i=1}^{k}
\int_{\mathbb{R}^{k}}x_i
\frac{\partial}{\partial x_i}e^{-|x|^{2}}\mathrm{d}x
\\
&=\frac{k}{2}\int_{\mathbb{R}^{k}}e^{-|x|^{2}}\mathrm{d}x
=\frac{k}{2}\pi^{\frac{k}{2}},
\\
\int_{\mathbb{R}^{l}}|y|^{2}e^{-t|y|^{2}}\mathrm{d}y
&=-\frac{1}{2t}\sum_{j=1}^{l}
\int_{\mathbb{R}^{l}}y_j
\frac{\partial}{\partial y_j}e^{-t|y|^{2}}\mathrm{d}y
\\
&=\frac{l}{2t}\int_{\mathbb{R}^{l}}e^{-t|y|^{2}}\mathrm{d}y
=\frac{l}{2}\pi^{\frac{l}{2}}t^{-\frac{l}{2}-1}.
\end{aligned}
\end{equation*}
For $\gamma>0$ and $x\neq0$, the elementary inequality
\begin{equation*}
\begin{aligned}
\log\left(\frac{|x|^{2}}{\frac{k}{2}+\gamma}\right)
\leqslant
\frac{|x|^{2}}{\frac{k}{2}+\gamma}-1
\end{aligned}
\end{equation*}
which implies that
\begin{equation*}
\begin{aligned}
|x|^{2\gamma}e^{-|x|^{2}}
=&
\left(\frac{k}{2}+\gamma\right)^{\gamma}
e^{
\gamma\log\left(\frac{|x|^{2}}{\frac{k}{2}+\gamma}\right)-|x|^{2}}\\
\leqslant&
\left(\frac{k}{2}+\gamma\right)^{\gamma}
e^{
\gamma\left(\frac{|x|^{2}}{\frac{k}{2}+\gamma}-1\right)-|x|^{2}}\\
=&
\left(\frac{k}{2}+\gamma\right)^{\gamma}
e^{-\gamma}
e^{
\left(\frac{\gamma}{\frac{k}{2}+\gamma}-1\right)|x|^{2}}\\
=&
\left(\frac{k}{2}+\gamma\right)^{\gamma}
e^{-\gamma}
e^{-\frac{k}{k+2\gamma}|x|^{2}}.
\end{aligned}
\end{equation*}
For $x=0$, 
we have
\begin{equation*}
\begin{aligned}
|x|^{2\gamma}e^{-|x|^{2}}
=0\leqslant
\left(\frac{k}{2}+\gamma\right)^{\gamma}
e^{-\gamma}
=
\left(\frac{k}{2}+\gamma\right)^{\gamma}
e^{-\gamma}
e^{-\frac{k}{k+2\gamma}|x|^{2}}.
\end{aligned}
\end{equation*}
Since
\begin{equation*}
\begin{aligned}
\frac{k}{2}\log\left(1+\frac{2\gamma}{k}\right)\leqslant\gamma,
\end{aligned}
\end{equation*}
we obtain that
\begin{equation*}
\begin{aligned}
\int_{\mathbb{R}^{k}}|x|^{2\gamma}e^{-|x|^{2}}\mathrm{d}x
\leqslant&
\left(\frac{k}{2}+\gamma\right)^{\gamma}e^{-\gamma}
\int_{\mathbb{R}^{k}}
e^{-\frac{k}{k+2\gamma}|x|^{2}}
\mathrm{d}x\\
=&\pi^{\frac{k}{2}}\left(\frac{k}{2}+\gamma\right)^{\gamma}
e^{-\gamma}\left(1+\frac{2\gamma}{k}\right)^{\frac{k}{2}}
\\
\leqslant&
\pi^{\frac{k}{2}}\left(\frac{k}{2}+\gamma\right)^{\gamma}.
\end{aligned}
\end{equation*}
For $\gamma=0$, the same estimate holds with equality.
Consequently,
\begin{equation*}
\begin{aligned}
\int_{\mathbb{R}^{N}}|\nabla_xu|^{2}\mathrm{d}z
=
\left(\int_{\mathbb{R}^{k}}|x|^{2}e^{-|x|^{2}}\mathrm{d}x\right)
\left(\int_{\mathbb{R}^{l}}e^{-t|y|^{2}}\mathrm{d}y\right)
=
\frac{k}{2}\pi^{\frac{N}{2}}t^{-\frac{l}{2}}
\end{aligned}
\end{equation*}
and
\begin{equation*}
\begin{aligned}
(1+\gamma)^{2}\int_{\mathbb{R}^{N}}|x|^{2\gamma}|\nabla_yu|^{2}
\mathrm{d}z
=&(1+\gamma)^{2}t^{2}
\left(\int_{\mathbb{R}^{k}}|x|^{2\gamma}e^{-|x|^{2}}\mathrm{d}x\right)
\left(\int_{\mathbb{R}^{l}}|y|^{2}e^{-t|y|^{2}}\mathrm{d}y\right)
\\
\leqslant&
\frac{(1+\gamma)^{2}l}{2}
\pi^{\frac{N}{2}}t^{1-\frac{l}{2}}
\left(\frac{k}{2}+\gamma\right)^{\gamma}.
\end{aligned}
\end{equation*}
From the above two estimates, we get that
\begin{equation}\label{3.9}
\begin{aligned}
\int_{\mathbb{R}^{N}}|\nabla_{\gamma}u|^{2}\mathrm{d}z
&=\int_{\mathbb{R}^{N}}|\nabla_xu|^{2}\mathrm{d}z
+(1+\gamma)^{2}\int_{\mathbb{R}^{N}}|x|^{2\gamma}|\nabla_yu|^{2}
\mathrm{d}z
\\
&\leqslant
\frac{\pi^{\frac{N}{2}}t^{-\frac{l}{2}}}{2}
\left[k+(1+\gamma)^{2}lt
\left(\frac{k}{2}+\gamma\right)^{\gamma}\right].
\end{aligned}
\end{equation}
We show that
$u\in H^{1,2}_{\gamma,rad_{x}}(\mathbb{R}^{N})\backslash\{0\}$.

By the definition of $C_{N_{\gamma},p,q}$, we have
$C_{N_{\gamma},p,q}\geqslant W_{N_{\gamma},p,q}(u)^{-1}$.
Note that
\begin{equation*}
\begin{aligned}
1-\frac{2(q-p)}{2N_{\gamma}+4-pN_{\gamma}}
-\frac{2N_{\gamma}+4-qN_{\gamma}}{2N_{\gamma}+4-pN_{\gamma}}
-\frac{N_{\gamma}(q-p)}{2N_{\gamma}+4-pN_{\gamma}}
=-\frac{2(q-p)}{2N_{\gamma}+4-pN_{\gamma}}
\end{aligned}
\end{equation*}
and
\begin{equation*}
\begin{aligned}
1-\frac{2N_{\gamma}+4-qN_{\gamma}}{2N_{\gamma}+4-pN_{\gamma}}
=
\frac{N_{\gamma}(q-p)}{2N_{\gamma}+4-pN_{\gamma}}>0.
\end{aligned}
\end{equation*}
Substituting \eqref{3.8} and  \eqref{3.9} into $W_{N_{\gamma},p,q}(u)^{-1}$,
we obtain that
\begin{equation*}
\begin{aligned}
W_{N_{\gamma},p,q}(u)^{-1}
\geqslant&
\left(\frac{2}{q}\right)^{\frac{N}{2}}
\left(\frac{2}{p}\right)
^{-\frac{N(2N_{\gamma}+4-qN_{\gamma})}{2(2N_{\gamma}+4-pN_{\gamma})}}
\left(\pi^{\frac{N}{2}}t^{-\frac{l}{2}}\right)
^{-\frac{2(q-p)}{2N_{\gamma}+4-pN_{\gamma}}}
\\
&\times
\left[\frac{k+(1+\gamma)^{2}lt
\left(\frac{k}{2}+\gamma\right)^{\gamma}}{2}\right]
^{-\frac{N_{\gamma}(q-p)}{2N_{\gamma}+4-pN_{\gamma}}}
\\
=&
\left(\frac{p}{q}\right)^{\frac{N}{2}}
\left(\frac{2}{p}\right)
^{\frac{NN_{\gamma}(q-p)}{2(2N_{\gamma}+4-pN_{\gamma})}}
\pi^{-\frac{N(q-p)}{2N_{\gamma}+4-pN_{\gamma}}}
t^{\frac{l(q-p)}{2N_{\gamma}+4-pN_{\gamma}}}
\\
&\times
\left[\frac{2}{k+(1+\gamma)^{2}lt
\left(\frac{k}{2}+\gamma\right)^{\gamma}}\right]
^{\frac{N_{\gamma}(q-p)}{2N_{\gamma}+4-pN_{\gamma}}}
\\
=&
\left(\frac{p}{q}\right)^{\frac{N}{2}}
\left[
\frac{2^{1+\frac{N}{2}}t^{\frac{l}{N_{\gamma}}}}
{p^{\frac{N}{2}}\pi^{\frac{N}{N_{\gamma}}}
\left[k+(1+\gamma)^{2}lt
\left(\frac{k}{2}+\gamma\right)^{\gamma}\right]}
\right]^{\frac{N_{\gamma}(q-p)}{2N_{\gamma}+4-pN_{\gamma}}}.
\end{aligned}
\end{equation*}

It remains to choose $t$.
Since $N_{\gamma}-l=k+\gamma l$, we have
\begin{equation*}
\begin{aligned}
\frac{\mathrm{d}}{\mathrm{d}t}
\log\left[\frac{t^{\frac{l}{N_{\gamma}}}}
{k+(1+\gamma)^{2}lt\left(\frac{k}{2}+\gamma\right)^{\gamma}}\right]
=&\frac{l}{N_{\gamma}t}
-\frac{(1+\gamma)^{2}l\left(\frac{k}{2}+\gamma\right)^{\gamma}}
{k+(1+\gamma)^{2}lt\left(\frac{k}{2}+\gamma\right)^{\gamma}}
\\
=&\frac{l\left[k-(1+\gamma)^{2}(k+\gamma l)t
\left(\frac{k}{2}+\gamma\right)^{\gamma}\right]}
{N_{\gamma}t\left[k+(1+\gamma)^{2}lt
\left(\frac{k}{2}+\gamma\right)^{\gamma}\right]}.
\end{aligned}
\end{equation*}
We know that
\begin{equation*}
\begin{aligned}
\begin{cases}
\frac{\mathrm{d}}{\mathrm{d}t}
\log\left[\frac{t^{\frac{l}{N_{\gamma}}}}
{k+(1+\gamma)^{2}lt\left(\frac{k}{2}+\gamma\right)^{\gamma}}\right]>0,&t\in(0,t_{1}),\\
\frac{\mathrm{d}}{\mathrm{d}t}
\log\left[\frac{t^{\frac{l}{N_{\gamma}}}}
{k+(1+\gamma)^{2}lt\left(\frac{k}{2}+\gamma\right)^{\gamma}}\right]=0,&t=t_{1}=\frac{k}
{(1+\gamma)^{2}(k+\gamma l)
\left(\frac{k}{2}+\gamma\right)^{\gamma}},\\
\frac{\mathrm{d}}{\mathrm{d}t}
\log\left[\frac{t^{\frac{l}{N_{\gamma}}}}
{k+(1+\gamma)^{2}lt\left(\frac{k}{2}+\gamma\right)^{\gamma}}\right]<0,&t\in(t_{1},+\infty).
\end{cases}
\end{aligned}
\end{equation*}
For $t=t_{1}$, we have
\begin{equation*}
\begin{aligned}
k+(1+\gamma)^{2}lt\left(\frac{k}{2}+\gamma\right)^{\gamma}
=
k+\frac{kl}{k+\gamma l}
=
\frac{kN_{\gamma}}{k+\gamma l}
\end{aligned}
\end{equation*}
and
\begin{equation*}
\begin{aligned}
t^{\frac{l}{N_{\gamma}}}
=
\left[\frac{k}
{(1+\gamma)^{2}(k+\gamma l)
\left(\frac{k}{2}+\gamma\right)^{\gamma}}\right]^{\frac{l}{N_{\gamma}}}.
\end{aligned}
\end{equation*}
Therefore,
\begin{equation*}
\begin{aligned}
C_{N_{\gamma},p,q}
\geqslant
\left(\frac{p}{q}\right)^{\frac{N}{2}}
\left[
\frac{2^{1+\frac{N}{2}}(k+\gamma l)}
{kN_{\gamma}p^{\frac{N}{2}}\pi^{\frac{N}{N_{\gamma}}}}
\left(
\frac{k}
{(1+\gamma)^{2}(k+\gamma l)
\left(\frac{k}{2}+\gamma\right)^{\gamma}}
\right)^{\frac{l}{N_{\gamma}}}
\right]^{\frac{N_{\gamma}(q-p)}{2N_{\gamma}+4-pN_{\gamma}}},
\end{aligned}
\end{equation*}
which gives the required lower bound of $C_{N_{\gamma},p,q}$.

For $\gamma=0$, the lower bound becomes
\begin{equation*}
\begin{aligned}
C_{N,p,q}\geqslant\left(\frac{p}{q}\right)^{\frac{N}{2}}
\left(\frac{2^{1+\frac{N}{2}}}{\pi Np^{\frac{N}{2}}}\right)
^{\frac{N(q-p)}{2N+4-pN}}.
\end{aligned}
\end{equation*}
This completes the proof.
\end{proof}

\section{Proof of Theorem \ref{Theorem1.2}}
\label{section4}
In this section,
we prove Theorem \ref{Theorem1.2},
i.e.,
characterizing the conditions for the minimum of $E_{\gamma}$ taking negative values on $\tilde{S}_{c}$.

\begin{lemma}\label{Lemma4.1}
Let $N=k+l$,
$k,l\geqslant 1$,
$\gamma\geqslant0$,
$\alpha=1$,
$u\in \tilde{S}_{c}$
and $p,q$ satisfy \eqref{H}.
Then the infimum of $E_{\gamma}$ takes negative values in $\tilde{S}_{c}$ if and only if, there exists $u\in \tilde{S}_{c}$ such that 
\begin{equation*}
\begin{aligned}
\|u\|_{L^{p}(\mathbb{R}^N)}^{\frac{p[N_{\gamma}(q-2)-4]}{N_{\gamma}(p-2)-4}}
\|u\|_{D_{\gamma}^{1,2}(\mathbb{R}^N)}
^{\frac{2[(p-q)N_{\gamma}]}{N_{\gamma}(p-2)-4}}
<
\frac{p}{q}
\frac{N_{\gamma}(q-2)-4}{N_{\gamma}(p-2)-4}
\left[
\frac{2}{p}
\frac{N_{\gamma}(q-p)}{4-N_{\gamma}(q-2)}
\right]^{\frac{(q-p)N_{\gamma}}{4-N_{\gamma}(p-2)}}
\|u\|_{L^{q}(\mathbb{R}^N)}^{q},
\end{aligned}
\end{equation*}
\end{lemma}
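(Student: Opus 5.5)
The plan is to use the mass-preserving scaling $u_\mu(x,y)=\mu^{N_\gamma/2}u(\mu x,\mu^{1+\gamma}y)$, $\mu>0$. It keeps $u_\mu\in\tilde S_c$ and $x$-radiality. Under this scaling
\[
\|\nabla_\gamma u_\mu\|_2^2=\mu^2 A,\quad \|u_\mu\|_p^p=\mu^{a}B,\quad \|u_\mu\|_q^q=\mu^{b}C,
\]
where $A=\|u\|_{D^{1,2}_\gamma}^2$, $B=\|u\|_p^p$, $C=\|u\|_q^q$, $a=\frac{N_\gamma(p-2)}{2}$ and $b=\frac{N_\gamma(q-2)}{2}$. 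Thus for each fixed $u$ we study
\[
g(\mu):=E_\gamma(u_\mu)=\tfrac12\mu^2A+\tfrac1p\mu^aB-\tfrac1q\mu^bC .
\]
The key observation is that, under \eqref{H}, $b$ lies strictly between $a$ and $2$. Either $a<b<2$ (first range) or $2<b<a$ (second range). Hence $\mu^{-b}g(\mu)=\frac12\mu^{2-b}A+\frac1p\mu^{a-b}B-\frac1qC$. The two positive terms have exponents $2-b$ and $a-b$ of opposite signs, so $h(\mu):=\frac12\mu^{2-b}A+\frac1p\mu^{a-b}B$ is strictly convex-like in $\log\mu$ with a unique minimum. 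Consequently $\inf_{\mu>0}g(\mu)<0$ if and only if $\min_\mu h(\mu)<\frac1qC$.

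Next I compute $\min_\mu h$ explicitly. Setting $h'(\mu)=0$ gives
\[
\mu_*^{2-a}=\frac{2(b-a)}{p(2-b)}\frac{B}{A}=\frac{2}{p}\frac{N_\gamma(q-p)}{4-N_\gamma(q-2)}\frac BA ,
\]
which is positive in both ranges. Substituting back, $h(\mu_*)$ is a constant times $A^{\frac{b-a}{2-a}}B^{\frac{2-b}{2-a}}$. Written in the exponents of the statement, this is $B^{\frac{4-N_\gamma(q-2)}{4-N_\gamma(p-2)}}A^{\frac{N_\gamma(q-p)}{4-N_\gamma(p-2)}}$, which matches $\|u\|_p^{p[\cdots]/[\cdots]}\|u\|_{D}^{2[\cdots]/[\cdots]}$ up to sign-flipping both numerator and denominator. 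The constant simplifies to $\frac1p\frac{4-N_\gamma(p-2)}{4-N_\gamma(q-2)}\big[\frac2p\frac{N_\gamma(q-p)}{4-N_\gamma(q-2)}\big]^{-\frac{N_\gamma(q-p)}{4-N_\gamma(p-2)}}$. Rearranging $h(\mu_*)<C/q$ then yields exactly the displayed inequality, with the prefactor $\frac pq\frac{N_\gamma(q-2)-4}{N_\gamma(p-2)-4}[\cdots]^{\frac{(q-p)N_\gamma}{4-N_\gamma(p-2)}}$. This bookkeeping of exponents and constants is the main, purely computational, obstacle. I would verify it by checking the identities $\frac{b-a}{2-a}+\frac{2-b}{2-a}=1$ and $\mu_*^{a-b}B=\frac{p(2-b)}{2(b-a)}\mu_*^{2-b}A$, which give $h(\mu_*)=\frac{2-a}{2(b-a)}\mu_*^{2-b}A$.

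Finally, both directions follow. If some $u\in\tilde S_c$ satisfies the inequality, then $E_\gamma(u_{\mu_*})=\mu_*^{b}\,(h(\mu_*)-C/q)<0$ with $u_{\mu_*}\in\tilde S_c$, so $\tilde m_{c,\gamma}<0$. Conversely, if $\tilde m_{c,\gamma}<0$, pick $v\in\tilde S_c$ with $E_\gamma(v)<0$. Then $\mu^{-b}g(\mu)<0$ at $\mu=1$, so $\min h\le h(1)<C/q$, which is the inequality for $u=v$.
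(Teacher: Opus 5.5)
Your proposal is correct and is essentially the paper's argument: the same mass-preserving dilation $u\mapsto t^{N_\gamma/2}u(tx,t^{1+\gamma}y)$, followed by an explicit one-variable minimization along the fiber; your exponent and constant bookkeeping checks out against the displayed inequality. The only difference is the normalization. You divide by the middle power $\mu^{N_\gamma(q-2)/2}$, which makes the $L^q$ term constant and handles both ranges in \eqref{H} with one computation. The paper instead divides by $t^2$ in the first range and by $t^{N_\gamma(p-2)/2}$ in the second, treating the two cases separately.
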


\begin{proof}
For any $c>0$ and $u\in \tilde{S}_{c}$, set
\begin{equation*}
\begin{aligned}
u_{t}(x,y)=t^{\frac{N_{\gamma}}{2}}u(tx,t^{1+\gamma}y),\qquad t>0.
\end{aligned}
\end{equation*}
Then $u_t\in\tilde{S}_{c}$ and
\begin{equation*}
\begin{aligned}
E_{\gamma}(u_{t}):=
\frac{t^{2}}{2}
\int_{\mathbb{R}^{N}}
|\nabla_{\gamma} u|^{2}
\mathrm{d}z
+
\frac{t^{\frac{N_{\gamma}(p-2)}{2}}}{p}
\int_{\mathbb{R}^{N}}
|u|^{p}
\mathrm{d}z
-\frac{t^{\frac{N_{\gamma}(q-2)}{2}}}{q}
\int_{\mathbb{R}^{N}}
|u|^{q}
\mathrm{d}z.
\end{aligned}
\end{equation*}
We divide the proof into two cases.

(1) For $2<p<q<2+\frac{4}{N_{\gamma}}$,
we set
\begin{equation*}
\begin{aligned}
h_{u}(t):=
\frac{E_{\gamma}(u_{t})}{t^{2}}
=\frac{1}{2}
\int_{\mathbb{R}^{N}}
|\nabla_{\gamma} u|^{2}
\mathrm{d}z
+
\frac{t^{\frac{N_{\gamma}(p-2)-4}{2}}}{p}
\int_{\mathbb{R}^{N}}
|u|^{p}
\mathrm{d}z
-\frac{t^{\frac{N_{\gamma}(q-2)-4}{2}}}{q}
\int_{\mathbb{R}^{N}}
|u|^{q}
\mathrm{d}z.
\end{aligned}
\end{equation*}
Clearly,
\begin{equation*}
\begin{aligned}
h_{u}'(t)
=
\frac{N_{\gamma}(p-2)-4}{2}
\frac{t^{\frac{N_{\gamma}(p-2)-6}{2}}}{p}
\int_{\mathbb{R}^{N}}
|u|^{p}
\mathrm{d}z
-
\frac{N_{\gamma}(q-2)-4}{2}
\frac{t^{\frac{N_{\gamma}(q-2)-6}{2}}}{q}
\int_{\mathbb{R}^{N}}
|u|^{q}
\mathrm{d}z.
\end{aligned}
\end{equation*}
Thus $h'_{u}(t)=0$ if and only if
\begin{equation*}
\begin{aligned}
t=t_{u}
=
\left[
\frac{q}{p}
\frac{N_{\gamma}(p-2)-4}{N_{\gamma}(q-2)-4}
\frac{
\int_{\mathbb{R}^{N}}
|u|^{p}
\mathrm{d}z
}
{
\int_{\mathbb{R}^{N}}
|u|^{q}
\mathrm{d}z
}
\right]
^{\frac{2}{(q-p)N_{\gamma}}}.
\end{aligned}
\end{equation*}
From $p<q$, we have
$\frac{N_{\gamma}(q-2)-4}{2}>\frac{N_{\gamma}(p-2)-4}{2}$ and
\begin{equation*}
\begin{aligned}
\begin{cases}
h'_{u}(t)<0,&t<t_{u},\\
h'_{u}(t)=0,&t=t_{u},\\
h'_{u}(t)>0,&t>t_{u}.
\end{cases}
\end{aligned}
\end{equation*}
This shows that $t_{u}$ is the unique critical point of $h_{u}(\cdot)$, and the minimum
\begin{equation*}
\begin{aligned}
h_{u}(t_{u})=&
\frac{1}{p}
\left[
\frac{q}{p}
\frac{N_{\gamma}(p-2)-4}{N_{\gamma}(q-2)-4}
\right]
^{\frac{N_{\gamma}(p-2)-4}{(q-p)N_{\gamma}}}
\frac{N_{\gamma}(q-p)}{N_{\gamma}(q-2)-4}
\|u\|_{L^{p}(\mathbb{R}^{N})}
^{\frac{p[N_{\gamma}(q-2)-4]}{(q-p)N_{\gamma}}}
\|u\|_{L^{q}(\mathbb{R}^{N})}
^{-\frac{q[N_{\gamma}(p-2)-4]}{(q-p)N_{\gamma}}}\\
&+
\frac{1}{2}
\|u\|_{D_{\gamma}^{1,2}(\mathbb{R}^N)}^{2},
\end{aligned}
\end{equation*}
Thus $h_{u}(t_{u})<0$ if and only if
\begin{equation*}
\begin{aligned}
\|u\|_{L^{p}(\mathbb{R}^{N})}
^{\frac{p[4-N_{\gamma}(q-2)]}{4-N_{\gamma}(p-2)}}
\|u\|_{D_{\gamma}^{1,2}(\mathbb{R}^N)}^{\frac{2(q-p)N_{\gamma}}{4-N_{\gamma}(p-2)}}
<
\frac{p}{q}
\frac{N_{\gamma}(q-2)-4}{N_{\gamma}(p-2)-4}
\left[
\frac{2}{p}
\frac{N_{\gamma}(q-p)}{4-N_{\gamma}(q-2)}
\right]^{\frac{(q-p)N_{\gamma}}{4-N_{\gamma}(p-2)}}
\|u\|_{L^{q}(\mathbb{R}^{N})}^{q}.
\end{aligned}
\end{equation*}

(2) 
For $2+\frac{4}{N_{\gamma}}<q<p<2_{\gamma}^{*}$,
we set
\begin{equation*}
\begin{aligned}
g_{u}(t):=&
\frac{E_{\gamma}(u_{t})}{t^{\frac{N_{\gamma}(p-2)}{2}}}
=
\frac{t^{\frac{4-N_{\gamma}(p-2)}{2}}}{2}
\int_{\mathbb{R}^{N}}
|\nabla_{\gamma} u|^{2}
\mathrm{d}z
+
\frac{1}{p}
\int_{\mathbb{R}^{N}}
|u|^{p}
\mathrm{d}z
-\frac{t^{\frac{(q-p)N_{\gamma}}{2}}}{q}
\int_{\mathbb{R}^{N}}
|u|^{q}
\mathrm{d}z.
\end{aligned}
\end{equation*}
Then
\begin{equation*}
\begin{aligned}
g'_{u}(t)=&
\frac{4-N_{\gamma}(p-2)}{2}
\frac{t^{\frac{4-N_{\gamma}(p-2)}{2}-1}}{2}
\int_{\mathbb{R}^{N}}
|\nabla_{\gamma} u|^{2}
\mathrm{d}z
-
\frac{(q-p)N_{\gamma}}{2}\frac{t^{\frac{(q-p)N_{\gamma}}{2}-1}}{q}
\int_{\mathbb{R}^{N}}
|u|^{q}
\mathrm{d}z.
\end{aligned}
\end{equation*}
Let $g'_{u}(t)=0$, we have
\begin{equation*}
\begin{aligned}
\tilde{t}_{u}=
\left[
\frac{q(4-N_{\gamma}(p-2))}{2(q-p)N_{\gamma}}
\frac{\int_{\mathbb{R}^{N}}
|\nabla_{\gamma} u|^{2}
\mathrm{d}z}
{\int_{\mathbb{R}^{N}}
|u|^{q}
\mathrm{d}z}
\right]
^{\frac{2}{N_{\gamma}(q-2)-4}}.
\end{aligned}
\end{equation*}
So we have 
\begin{equation*}
\begin{aligned}
\begin{cases}
g'_{u}(t)<0,&t<\tilde{t}_{u},
\\
g'_{u}(t)=0,&t=\tilde{t}_{u},
\\
g'_{u}(t)>0,&t>\tilde{t}_{u}.
\end{cases}
\end{aligned}
\end{equation*}
This shows that $\tilde{t}_{u}$ is the unique critical point of $g_{u}(\cdot)$, and the minimum
\begin{equation*}
\begin{aligned}
g_{u}(\tilde{t}_{u})
=&
\frac{N_{\gamma}(q-2)-4}{2(q-p)N_{\gamma}}
\left[
\frac{q(4-N_{\gamma}(p-2))}{2(q-p)N_{\gamma}}
\right]
^{\frac{4-N_{\gamma}(p-2)}{N_{\gamma}(q-2)-4}}
\|u\|_{D_{\gamma}^{1,2}(\mathbb{R}^N)}
^{\frac{2(q-p)N_{\gamma}}{N_{\gamma}(q-2)-4}}
\|u\|_{L^{q}(\mathbb{R}^N)}
^{\frac{q[4-N_{\gamma}(p-2)]}{4-N_{\gamma}(q-2)}}
\\&+
\frac{1}{p}
\|u\|_{L^{p}(\mathbb{R}^N)}^{p}.
\end{aligned}
\end{equation*}
Thus $g_{u}(\tilde{t}_{u})<0$ if and only if
\begin{equation*}
\begin{aligned}
\|u\|_{L^{p}(\mathbb{R}^N)}^{\frac{p\left[N_{\gamma}(q-2)-4\right]}{N_{\gamma}(p-2)-4}}
\|u\|_{D_{\gamma}^{1,2}(\mathbb{R}^N)}
^{\frac{2(p-q)N_{\gamma}}{N_{\gamma}(p-2)-4}}
<
\frac{p}{q}
\frac{N_{\gamma}(q-2)-4}{N_{\gamma}(p-2)-4}
\left[
\frac{2}{p}
\frac{N_{\gamma}(q-p)}{4-N_{\gamma}(q-2)}
\right]^{\frac{(q-p)N_{\gamma}}{4-N_{\gamma}(p-2)}}
\|u\|_{L^{q}(\mathbb{R}^N)}^{q},
\end{aligned}
\end{equation*}
which ends the proof.
\end{proof}

\begin{lemma}\label{Lemma4.2}
Let $N=k+l$,
$k,l\geqslant 1$,
$\gamma\geqslant0$,
$\alpha=1$,
$u\in \tilde{S}_{c}$
and $p,q$ satisfy \eqref{H}.
If $c\leq c_\gamma$,
then $\tilde{m}_{c,\gamma}=0$.
\end{lemma}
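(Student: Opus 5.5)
We prove the two inequalities $\tilde m_{c,\gamma}\le 0$ and $\tilde m_{c,\gamma}\ge 0$ separately; the second is where the sharp constant $C_{N_\gamma,p,q}$ of Theorem~\ref{Theorem1.1} and the value of $c_\gamma$ enter.

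\emph{Upper bound $\tilde m_{c,\gamma}\le 0$.} This does not need $c\le c_\gamma$. Fix any $u\in\tilde S_c$ and use the mass-preserving dilation $u_t(x,y)=t^{N_\gamma/2}u(tx,t^{1+\gamma}y)$ from the proof of Lemma~\ref{Lemma4.1}. All three terms of $E_\gamma(u_t)$ are positive powers of $t$, namely $t^2$, $t^{N_\gamma(p-2)/2}$ and $t^{N_\gamma(q-2)/2}$. Hence $E_\gamma(u_t)\to0$ as $t\to0^+$, and so $\tilde m_{c,\gamma}\le 0$.

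\emph{Lower bound $\tilde m_{c,\gamma}\ge0$ for $c\le c_\gamma$.} It suffices to show that $E_\gamma(u)\ge0$ for every $u\in\tilde S_c$. Write $A=\|u\|^2_{D^{1,2}_\gamma}$, $B=\|u\|_{L^p}^p$, $\theta=\frac{N_\gamma(q-p)}{4-N_\gamma(p-2)}\in(0,1)$ and $\beta=\frac{2(q-p)}{4-N_\gamma(p-2)}$. Inequality \eqref{1.2} gives
\begin{equation*}
\frac1q\|u\|_{L^q}^q\le \frac{C_{N_\gamma,p,q}}{q}\,c^{\beta}A^{\theta}B^{1-\theta}.
\end{equation*}
By the weighted Young inequality, for $a,b>0$ with $a\theta+b(1-\theta)\cdot$(normalization) matched, we have
\begin{equation*}
A^\theta B^{1-\theta}\le \theta\lambda^{1/\theta}\cdots,
\end{equation*}
or more cleanly
\begin{equation*}
\frac{A}{2}+\frac{B}{p}\ \ge\ \kappa\,A^{\theta}B^{1-\theta},\qquad
\kappa=\Big(\frac{1}{2\theta}\Big)^{\theta}\Big(\frac{1}{p(1-\theta)}\Big)^{1-\theta},
\end{equation*}
with equality exactly when $\frac{A}{2\theta}=\frac{B}{p(1-\theta)}$. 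Therefore
\begin{equation*}
E_\gamma(u)\ge \Big(\kappa-\frac{C_{N_\gamma,p,q}}{q}c^{\beta}\Big)A^\theta B^{1-\theta},
\end{equation*}
which is nonnegative as soon as $c^\beta\le q\kappa/C_{N_\gamma,p,q}$. Taking the infimum over $u$ then gives $\tilde m_{c,\gamma}\ge0$.

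\emph{Main obstacle.} The remaining step is the algebra: we must check that $(q\kappa/C_{N_\gamma,p,q})^{1/\beta}$ coincides with the stated $c_\gamma$. Note $1/\beta=\frac{4-N_\gamma(p-2)}{2(q-p)}$, which matches the power of $C_{N_\gamma,p,q}$ in $c_\gamma$. We also have $1-\theta=\frac{4-N_\gamma(q-2)}{4-N_\gamma(p-2)}$, so the factors of $\kappa$ should reproduce the two bracketed terms of $c_\gamma$ after simplification. In the range $2+\frac4{N_\gamma}<q<p$, $\beta$ is negative and $1-\theta$ becomes $\frac{N_\gamma(q-2)-4}{N_\gamma(p-2)-4}$. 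The sign conventions (and the direction of the inequality after raising to $1/\beta$) must therefore be tracked carefully in each of the two regimes of \eqref{H}. I would cross-check by evaluating the threshold in Lemma~\ref{Lemma4.1} on the optimizer $Q$: with $A=B=1$, the condition should fail exactly for $c\le c_\gamma$.

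If the constant bookkeeping turns out cleaner that way, an alternative is to argue by contradiction via Lemma~\ref{Lemma4.1}. If $\tilde m_{c,\gamma}<0$, some $u\in\tilde S_c$ satisfies the strict inequality there. Combining it with \eqref{1.2} (with $\|u\|_{L^2}^2=c$) then yields $c>c_\gamma$, a contradiction. This reduces to the same exponent comparison.
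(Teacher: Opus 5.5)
Your argument is correct. It differs from the paper's only in how the sharp inequality \eqref{1.2} is turned into a sign for $E_\gamma$.

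The paper feeds \eqref{1.2} into the fibering criterion of Lemma~\ref{Lemma4.1}, which comes from minimizing over the mass-preserving dilations $u_t$ separately in each of the two regimes of \eqref{H}. It then observes that the strict inequality there can never hold when $c\leqslant c_\gamma$. You bypass Lemma~\ref{Lemma4.1} with the weighted AM--GM bound $\frac A2+\frac Bp\geqslant\kappa A^\theta B^{1-\theta}$.

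In substance the two routes coincide. Rewritten in your notation, the criterion of Lemma~\ref{Lemma4.1} reads exactly $\|u\|_{L^q}^q>q\kappa\,A^\theta B^{1-\theta}$, so the optimization over $t$ there amounts to finding the optimal Young weights. Your route is shorter and treats both regimes uniformly. It also gives the explicit bound $E_\gamma(u)\geqslant\big(\kappa-\frac{C_{N_\gamma,p,q}}{q}c^\beta\big)A^\theta B^{1-\theta}$. The paper's route has the advantage that Lemma~\ref{Lemma4.1} is an equivalence, reused for Lemma~\ref{Lemma4.4} and Lemma~\ref{Lemma7.1}. You also prove $\tilde m_{c,\gamma}\leqslant0$ explicitly via $t\to0^+$, which the paper only asserts in the proof of Theorem~\ref{Theorem1.2}.

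Two corrections to your ``main obstacle'' paragraph.

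First, $\beta$ is not negative when $2+\frac4{N_\gamma}<q<p<2_\gamma^*$. There both $q-p<0$ and $4-N_\gamma(p-2)<0$, and in fact $\beta=\frac{2}{N_\gamma}\theta>0$ in both regimes. Hence no inequality flips, and $c^\beta\leqslant c_\gamma^\beta\iff c\leqslant c_\gamma$ throughout. Had you treated the exponent $1/\beta$ as negative, you would have reached the false conclusion $c\geqslant c_\gamma$ in the second regime.

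Second, the bookkeeping closes exactly. Use $1-\theta=\frac{4-N_\gamma(q-2)}{4-N_\gamma(p-2)}$, $\frac{\theta}{1-\theta}=\frac{N_\gamma(q-p)}{4-N_\gamma(q-2)}$, $\frac{4-N_\gamma(p-2)}{2(p-q)}=-\frac1\beta$ and $\frac{N_\gamma}{2}=\frac{\theta}{\beta}$. Then the definition of $c_\gamma$ becomes
\[
c_\gamma^{\beta}=C_{N_\gamma,p,q}^{-1}\Big[\frac{p(1-\theta)}{q}\Big]^{-1}\Big[\frac{2\theta}{p(1-\theta)}\Big]^{-\theta}
=\frac{q}{C_{N_\gamma,p,q}}\Big(\frac{1}{2\theta}\Big)^{\theta}\Big(\frac{1}{p(1-\theta)}\Big)^{1-\theta}
=\frac{q\kappa}{C_{N_\gamma,p,q}}.
\]
So your sufficient condition $c^\beta\leqslant q\kappa/C_{N_\gamma,p,q}$ is precisely $c\leqslant c_\gamma$, and the proof is complete.
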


\begin{proof}
Using \eqref{1.2}, for all $u\in \tilde{S}_{c}$, we know that
\begin{equation}\label{4.1}
\begin{aligned}
\|u\|_{L^{q}(\mathbb{R}^N)}^{q}
\leqslant c^{\frac{2(q-p)}{4-N_{\gamma}(p-2)}}
C_{N_{\gamma},p,q}
\|u\|_{L^{p}(\mathbb{R}^N)}^{\frac{p\left[N_{\gamma}(q-2)-4\right]}{N_{\gamma}(p-2)-4}}
\|u\|_{D_{\gamma}^{1,2}(\mathbb{R}^N)}
^{\frac{2(p-q)N_{\gamma}}{N_{\gamma}(p-2)-4}}.
\end{aligned}
\end{equation}
By using \eqref{4.1} and
\begin{equation*}
\begin{aligned}
c^{\frac{2(q-p)}{4-N_{\gamma}(p-2)}}
C_{N_{\gamma},p,q}
\leqslant
\frac{q[4-N_{\gamma}(p-2)]}{p[4-N_{\gamma}(q-2)]}
\left[
\frac{p}{2}
\frac{4-N_{\gamma}(q-2)}{N_{\gamma}(q-p)}
\right]
^{\frac{N_{\gamma}(q-p)}{4-N_{\gamma}(p-2)}},
\end{aligned}
\end{equation*}
one deduces
\begin{equation*}
\begin{aligned}
\frac{p}{q}
\frac{N_{\gamma}(q-2)-4}{N_{\gamma}(p-2)-4}
\left[
\frac{2}{p}
\frac{N_{\gamma}(q-p)}{4-N_{\gamma}(q-2)}
\right]^{\frac{(q-p)N_{\gamma}}{4-N_{\gamma}(p-2)}}
\|u\|_{L^{q}(\mathbb{R}^N)}^{q}
\leqslant
\|u\|_{L^{p}(\mathbb{R}^N)}^{\frac{p\left[N_{\gamma}(q-2)-4\right]}{N_{\gamma}(p-2)-4}}
\|u\|_{D_{\gamma}^{1,2}(\mathbb{R}^N)}
^{\frac{2(p-q)N_{\gamma}}{N_{\gamma}(p-2)-4}}.
\end{aligned}
\end{equation*}
According to Lemma \ref{Lemma4.1}, we know that $\tilde{m}_{c,\gamma}=0$. The proof is complete.
\end{proof}

\begin{lemma}\label{Lemma4.4}
If $c>c_\gamma$,
then $\tilde{m}_{c,\gamma}<0$.
\end{lemma}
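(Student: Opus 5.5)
We plan to use the optimizer $Q$ from Theorem \ref{Theorem1.1} as a test function and to combine the mass-preserving scaling with an amplitude rescaling. By Lemma \ref{Lemma4.1}, it suffices to exhibit one $u\in\tilde S_c$ for which the strict inequality of that lemma holds. That inequality is invariant under the mass-preserving dilation $u_t(x,y)=t^{N_\gamma/2}u(tx,t^{1+\gamma}y)$, because Lemma \ref{Lemma4.1} already optimized over $t$. Hence only one further degree of freedom remains, namely the amplitude/mass, and this is where the hypothesis $c>c_\gamma$ enters.

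The first step is to take $Q$ with $W_{N_\gamma,p,q}(Q)=C_{N_\gamma,p,q}^{-1}$ and to set $v=\lambda Q(\mu x,\mu^{1+\gamma}y)$, choosing $\lambda,\mu>0$ so that $\|v\|_{L^2}^2=c$. One convenient choice is $\mu=1$ with $\lambda=(c/\|Q\|_{L^2}^2)^{1/2}$. The family of maps $u\mapsto\lambda u(\mu\cdot,\mu^{1+\gamma}\cdot)$ leaves $W_{N_\gamma,p,q}$ invariant; this was used in Lemma \ref{Lemma3.1}, and the exponents are built so that $W$ is $0$-homogeneous in both $\lambda$ and $\mu$. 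Therefore $v\in\tilde S_c$ still satisfies equality in \eqref{1.2}:
\[
\|v\|_{L^q}^q=C_{N_\gamma,p,q}\,c^{\frac{2(q-p)}{4-N_\gamma(p-2)}}\,\|v\|_{L^p}^{p\frac{4-N_\gamma(q-2)}{4-N_\gamma(p-2)}}\,\|v\|_{D^{1,2}_\gamma}^{\frac{2N_\gamma(q-p)}{4-N_\gamma(p-2)}}.
\]

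The second step is to insert this equality into the criterion of Lemma \ref{Lemma4.1}. The $L^p$ and $D^{1,2}_\gamma$ factors cancel exactly; the exponents in \eqref{4.1} are written so as to match Lemma \ref{Lemma4.1}. The criterion then reduces to the scalar inequality
\[
1<\frac{p}{q}\frac{4-N_\gamma(q-2)}{4-N_\gamma(p-2)}\Big[\frac{2}{p}\frac{N_\gamma(q-p)}{4-N_\gamma(q-2)}\Big]^{\frac{(q-p)N_\gamma}{4-N_\gamma(p-2)}}C_{N_\gamma,p,q}\,c^{\frac{2(q-p)}{4-N_\gamma(p-2)}}.
\]
This is the reverse of the condition used in Lemma \ref{Lemma4.2}. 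Solving it for $c$ gives exactly $c>c_\gamma$ with $c_\gamma$ as defined in Theorem \ref{Theorem1.2}, provided the exponent $\frac{2(q-p)}{4-N_\gamma(p-2)}$ is positive. That exponent is positive in both ranges of \eqref{H}: when $p<q<2+4/N_\gamma$ the numerator and denominator are both positive, and when $q<p$ with $p>2+4/N_\gamma$ they are both negative. Lemma \ref{Lemma4.1} then yields $\inf_{\tilde S_c}E_\gamma<0$, that is, $\tilde m_{c,\gamma}<0$.

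I expect the main obstacle to be the exponent bookkeeping rather than any analysis. One has to verify that raising to the power $\frac{4-N_\gamma(p-2)}{2(q-p)}$ reproduces the three factors in $c_\gamma$ with the stated signs, using $(N_\gamma(q-2)-4)/(N_\gamma(p-2)-4)=(4-N_\gamma(q-2))/(4-N_\gamma(p-2))$. One also has to check that the direction of the inequality is preserved in the second range $q<p$, where the intermediate exponents change sign. If a sign slips there, I would fall back on checking $E_\gamma(v_t)<0$ directly at $t=t_v$ (or $t=\tilde t_v$), using the explicit minimum values computed in the proof of Lemma \ref{Lemma4.1}.
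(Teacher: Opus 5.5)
Your proposal is correct and follows the paper's proof. The paper uses exactly your test function $Q_c=\frac{\sqrt{c}}{\|Q\|_{L^{2}(\mathbb{R}^{N})}}Q$ (your choice $\mu=1$), observes that it is still an optimizer lying in $\tilde S_c$, and combines equality in \eqref{1.2} with $c>c_\gamma$ to get the strict inequality in the criterion of Lemma \ref{Lemma4.1}. Your worry about signs in the range $q<p$ does not materialize: that criterion has the same form, with positive exponents, in both ranges, and $c_\gamma=(C_{N_\gamma,p,q}K)^{-1/\theta}$ with $\theta=\frac{2(q-p)}{4-N_\gamma(p-2)}>0$, where $K$ denotes the constant on the right-hand side of the criterion.
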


\begin{proof}
Let $Q$ be the optimizer of $C_{N_{\gamma},p,q}^{-1}$.
Set $Q_{c}:=\frac{\sqrt{c}}{\|Q\|_{L^{2}(\mathbb{R}^{N})}}Q$.
Then
\begin{equation*}
\begin{aligned}
C_{N_{\gamma},p,q}^{-1}
=
\frac{
\|Q_{c}\|_{L^{p}(\mathbb{R}^N)}
^{\frac{p[4-N_{\gamma}(q-2)]}{4-N_{\gamma}(p-2)}}
\|Q_{c}\|_{D_{\gamma}^{1,2}(\mathbb{R}^N)}
^{\frac{2(q-p)N_{\gamma}}{4-N_{\gamma}(p-2)}}
\|Q_{c}\|_{L^{2}(\mathbb{R}^N)}
^{\frac{4(q-p)}{4-N_{\gamma}(p-2)}}}
{\|Q_{c}\|_{L^{q}(\mathbb{R}^N)}^{q}}
.
\end{aligned}
\end{equation*}
This shows that $Q_{c}$ is also a optimizer of $C_{N_{\gamma},p,q}^{-1}$.
From $Q_{c}\in \tilde{S}_{c}$,
we have
\begin{equation*}
\begin{aligned}
\|Q_{c}\|_{L^{q}(\mathbb{R}^{N})}^{q}
=&c^{\frac{2(q-p)}{4-N_{\gamma}(p-2)}}
C_{N_{\gamma},p,q}
\|Q_{c}\|_{L^{p}(\mathbb{R}^{N})}
^{\frac{p[4-N_{\gamma}(q-2)]}{4-N_{\gamma}(p-2)}}
\|Q_{c}\|_{D_{\gamma}^{1,2}(\mathbb{R}^{N})}
^{\frac{2N_{\gamma}(q-p)}{4-N_{\gamma}(p-2)}}
\\>&
\left[
\frac{q}{p}
\frac{4-N_{\gamma}(p-2)}{4-N_{\gamma}(q-2)}
\right]
\left[
\frac{p}{2}
\frac{4-N_{\gamma}(q-2)}{N_{\gamma}(q-p)}
\right]
^{\frac{N_{\gamma}(q-p)}{4-N_{\gamma}(p-2)}}
\|Q_{c}\|_{L^{p}(\mathbb{R}^{N})}
^{\frac{p[4-N_{\gamma}(q-2)]}{4-N_{\gamma}(p-2)}}
\|Q_{c}\|_{D_{\gamma}^{1,2}(\mathbb{R}^{N})}
^{\frac{2N_{\gamma}(q-p)}{4-N_{\gamma}(p-2)}}.
\end{aligned}
\end{equation*}
From $Q_{c}\in \tilde{S}_{c}$ and Lemma \ref{Lemma4.1}, we know $\tilde{m}_{c,\gamma}<0$. The proof is complete.
\end{proof}

\begin{proof}[Proof of Theorem \ref{Theorem1.2}]
Combining Lemmas \ref{Lemma4.2}-\ref{Lemma4.4}, observing also that $\tilde{m}_{c,\gamma}\leqslant0$ for all $c>0$, we get the desired result.
\end{proof}

\section{Proof of Theorem \ref{Theorem1.3}}
\label{section5}
In this section, we prove Theorem \ref{Theorem1.3}, i.e., the existence and non-existence of $x$-radial energy GSS of equation \eqref{G}.
\begin{lemma}\label{Lemma5.1}
Let $N=k+l$,
$k,l\geqslant 1$,
$\gamma\geqslant0$,
$\alpha=1$
and $p,q$ satisfy \eqref{H}.
Then we know that $E_{\gamma}$ is coercive on $\tilde{S}_{c}$, and so $\tilde{m}_{c,\gamma}>-\infty$.
\end{lemma}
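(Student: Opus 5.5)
We need to prove coercivity of $E_\gamma$ on $\tilde S_c$ with $\alpha=1$, separately in the two ranges of \eqref{H}. On $\tilde S_c$ we have $\|u\|_{L^2}^2=c$ fixed. The idea is to bound the negative term $\frac1q\|u\|_{L^q}^q$ by the two positive terms $\frac12\|u\|_{D^{1,2}_\gamma}^2+\frac1p\|u\|_{L^p}^p$. For this we use \eqref{1.2}, or Lemma \ref{Lemma2.4}, with the $L^2$ factor frozen at $c$.

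Set $\theta:=\frac{N_\gamma(q-p)}{2N_\gamma+4-pN_\gamma}\in(0,1)$; Step 2 of the proof of Theorem \ref{Theorem1.1} shows this holds in both ranges. The complementary exponent of $\|u\|_{L^p}^p$ is $1-\theta$. By \eqref{1.2}, for $u\in\tilde S_c$,
\begin{equation*}
\|u\|_{L^q}^q\leqslant C_{N_\gamma,p,q}\,c^{\frac{2(q-p)}{4-N_\gamma(p-2)}}\big(\|u\|_{L^p}^p\big)^{1-\theta}\big(\|u\|_{D^{1,2}_\gamma}^2\big)^{\theta}.
\end{equation*}
Young's inequality with $\varepsilon$ then gives $\|u\|_{L^p}^{p(1-\theta)}\|u\|_{D^{1,2}_\gamma}^{2\theta}\leqslant \varepsilon\|u\|_{D^{1,2}_\gamma}^2+C_\varepsilon\|u\|_{L^p}^p$.

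The difficulty is that both resulting terms are of the same homogeneity as the positive ones. Young's inequality alone therefore only shows $E_\gamma(u)\geqslant -C$ if the constant $C_\varepsilon$ is small. That is not guaranteed when $c$ is large, since for $c>c_\gamma$ the functional does become negative. So I will not try to absorb everything. Instead I use the sublinearity of the product $X^{1-\theta}Y^{\theta}$ against a sum with \emph{different} powers, working range by range. The main obstacle is that both the gradient and $L^p$ terms scale linearly in the bound, so a sharper interpolation is needed.

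For $2<p<q<2+\frac4{N_\gamma}$, I use Lemma \ref{Lemma2.2} with $r=q$:
\begin{equation*}
\|u\|_{L^q}^q\leqslant C c^{\frac q2-\frac{N_\gamma(q-2)}4}\|u\|_{D^{1,2}_\gamma}^{\frac{N_\gamma(q-2)}2}.
\end{equation*}
Since $\frac{N_\gamma(q-2)}{2}<2$, we get $E_\gamma(u)\geqslant\frac12\|u\|_{D^{1,2}_\gamma}^2-C\|u\|_{D^{1,2}_\gamma}^{\frac{N_\gamma(q-2)}{2}}$. This tends to $+\infty$ as $\|u\|_{D^{1,2}_\gamma}\to\infty$ and is bounded below. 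Because the $L^2$ norm is fixed, this yields coercivity in the $H^{1,2}_\gamma$ norm, and hence $\tilde m_{c,\gamma}>-\infty$.

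For $2+\frac4{N_\gamma}<q<p<2^*_\gamma$, I use the Hölder interpolation already appearing in \eqref{3.4}:
\begin{equation*}
\|u\|_{L^q}^q\leqslant c^{\frac{p-q}{p-2}}\big(\|u\|_{L^p}^p\big)^{\frac{q-2}{p-2}},
\end{equation*}
where $\frac{q-2}{p-2}<1$. Then $E_\gamma(u)\geqslant\frac12\|u\|_{D^{1,2}_\gamma}^2+\frac1p X-CX^{\frac{q-2}{p-2}}$ with $X=\|u\|_{L^p}^p$. The function $X\mapsto\frac1pX-CX^{\frac{q-2}{p-2}}$ is bounded below on $[0,\infty)$, so $E_\gamma(u)\geqslant\frac12\|u\|_{D^{1,2}_\gamma}^2-C'$. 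Combined with $\|u\|_{L^2}^2=c$, this gives coercivity and $\tilde m_{c,\gamma}>-\infty$.
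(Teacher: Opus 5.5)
Your proof is correct and follows the paper's argument. In the range $2<p<q<2+\frac{4}{N_\gamma}$ the paper likewise drops the $L^p$ term and applies Lemma~\ref{Lemma2.2} with $r=q$. In the range $2+\frac{4}{N_\gamma}<q<p<2_\gamma^*$ it uses the same H\"older interpolation between $L^2$ and $L^p$, followed by Young's inequality with $\varepsilon=\frac{q(p-2)}{p(q-2)}$ chosen to cancel the $L^p$ term exactly; this is an explicit form of your observation that $X\mapsto\frac1pX-CX^{\frac{q-2}{p-2}}$ is bounded below. Your preliminary detour through \eqref{1.2} is never used and can simply be dropped.
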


\begin{proof}
For $2<p<q<2+\frac{4}{N_{\gamma}}$,
it follows from $u\in \tilde{S}_{c}$ and Lemma \ref{Lemma2.2} that
\begin{equation}\label{5.1}
\begin{aligned}
E_{\gamma}(u)
\geqslant&
\frac{1}{2}
\|u\|_{D_{\gamma}^{1,2}(\mathbb{R}^{N})}^{2}
-
\frac{C_{N_{\gamma},q}}{q}
\|u\|_{D_{\gamma}^{1,2}(\mathbb{R}^{N})}
^{\frac{N_{\gamma}(q-2)}{2}}
\|u\|_{L^{2}(\mathbb{R}^{N})}
^{q-\frac{N_{\gamma}(q-2)}{2}}\\
\geqslant&
\frac{1}{2}
\|u\|_{D_{\gamma}^{1,2}(\mathbb{R}^{N})}^{2}
-
\frac{C_{N_{\gamma},q}}{q}
\|u\|_{D_{\gamma}^{1,2}(\mathbb{R}^{N})}
^{\frac{N_{\gamma}(q-2)}{2}}
c
^{\frac{q}{2}-\frac{N_{\gamma}(q-2)}{4}}.
\end{aligned}
\end{equation}
We see that $E_{\gamma}$ is coercive on $\tilde{S}_{c}$, and so $\tilde{m}_{c,\gamma}>-\infty$.

For $2+\frac{4}{N_{\gamma}}<q<p<2_{\gamma}^{*}$,
it follows from $u\in \tilde{S}_{c}$ and Young inequality with $\varepsilon>0$ small enough that
\begin{equation}\label{5.2}
\begin{aligned}
E_{\gamma}(u)
\geqslant&
\frac{1}{2}
\|u\|_{D_{\gamma}^{1,2}(\mathbb{R}^{N})}^{2}
+
\frac{1}{p}
\|u\|_{L^{p}(\mathbb{R}^{N})}^{p}
-\frac{1}{q}
\left(
\|u\|_{L^{2}(\mathbb{R}^{N})}^{2}
\right)^{\frac{p-q}{p-2}}
\left(
\|u\|_{L^{p}(\mathbb{R}^{N})}^{p}
\right)^{\frac{q-2}{p-2}}\\
\geqslant&
\frac{1}{2}
\|u\|_{D_{\gamma}^{1,2}(\mathbb{R}^{N})}^{2}
+
\frac{1}{p}
\|u\|_{L^{p}(\mathbb{R}^{N})}^{p}\\
&-\frac{1}{q}
\left[
\frac{\varepsilon^{-\frac{q-2}{p-q}}(p-q)}{p-2}
\|u\|_{L^{2}(\mathbb{R}^{N})}^{2}
+
\frac{\varepsilon(q-2)}{p-2}
\|u\|_{L^{p}(\mathbb{R}^{N})}^{p}
\right]
\\=&
\frac{1}{2}
\|u\|_{D_{\gamma}^{1,2}(\mathbb{R}^{N})}^{2}
+
\left[
\frac{1}{p}
-
\frac{\varepsilon(q-2)}{q(p-2)}
\right]
\|u\|_{L^{p}(\mathbb{R}^{N})}^{p}
-\frac{\varepsilon^{-\frac{q-2}{p-q}}(p-q)c}{q(p-2)}
\\=&
\frac{1}{2}
\|u\|_{D_{\gamma}^{1,2}(\mathbb{R}^{N})}^{2}
-\frac{\varepsilon^{-\frac{q-2}{p-q}}(p-q)c}{q(p-2)},
\end{aligned}
\end{equation}
where $\varepsilon=\frac{q(p-2)}{p(q-2)}$.
We see that $E_{\gamma}$ is coercive on $\tilde{S}_{c}$, and so $\tilde{m}_{c,\gamma}>-\infty$. The proof is complete.
\end{proof}

\begin{lemma}\label{Lemma5.2}
For all $c'>c$,
\begin{equation*}
\begin{aligned}
\tilde{m}_{c',\gamma}\leqslant \frac{c'}{c} \tilde{m}_{c,\gamma}.
\end{aligned}
\end{equation*}
Furthermore, for all $\bar{c}\in(0,c)$, we know that
\begin{equation*}
\begin{aligned}
\tilde{m}_{c,\gamma}\leqslant \tilde{m}_{\bar{c},\gamma}+\tilde{m}_{c-\bar{c},\gamma}.
\end{aligned}
\end{equation*}
If $\tilde{m}_{\bar{c},\gamma}$ is achieved, then
\begin{equation*}
\begin{aligned}
\tilde{m}_{c,\gamma}< \tilde{m}_{\bar{c},\gamma}+\tilde{m}_{c-\bar{c},\gamma}.
\end{aligned}
\end{equation*}
\end{lemma}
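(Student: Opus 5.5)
We will use the amplitude scaling $u\mapsto\theta^{1/2}u$, which maps $\tilde S_c$ onto $\tilde S_{\theta c}$ and keeps functions $x$-radial, combined with $\tilde m_{c,\gamma}\leqslant 0$, which holds for every $c>0$ (Theorem \ref{Theorem1.2}).

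\textbf{First inequality.} Fix $\theta=c'/c>1$ and $u\in\tilde S_c$. Then
\begin{equation*}
E_{\gamma}(\theta^{1/2}u)=\frac{\theta}{2}\|u\|_{D^{1,2}_\gamma}^2+\frac{\theta^{p/2}}{p}\|u\|_{L^p}^p-\frac{\theta^{q/2}}{q}\|u\|_{L^q}^q
=\theta E_\gamma(u)+\frac{\theta^{p/2}-\theta}{p}\|u\|_{L^p}^p-\frac{\theta^{q/2}-\theta}{q}\|u\|_{L^q}^q .
\end{equation*}
Here the $L^p$ correction is nonnegative, so this identity alone does not give the claim. We therefore use the mass-preserving dilation $u_t$ of Lemma \ref{Lemma4.1} to reduce to a favourable minimizing sequence.

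If $\tilde m_{c,\gamma}=0$, the claim reduces to $\tilde m_{c',\gamma}\leqslant0$, which is known. If $\tilde m_{c,\gamma}<0$, take a minimizing sequence $u_n\in\tilde S_c$ with $E_\gamma(u_n)\leqslant \tilde m_{c,\gamma}/2<0$. Then $\frac1p\|u_n\|_p^p<\frac1q\|u_n\|_q^q$. We will use this to compare the two correction terms, aiming to show $\frac{\theta^{q/2}-\theta}{q}\|u_n\|_q^q\geqslant \frac{\theta^{p/2}-\theta}{p}\|u_n\|_p^p$.
\begin{itemize}
\item When $q>p$, we have $\theta^{q/2}-\theta\geqslant\theta^{p/2}-\theta\geqslant0$, so the comparison is immediate.
\item When $q<p$, the inequality $\theta^{q/2}-\theta\geqslant\theta^{p/2}-\theta$ fails, and we must instead choose the sequence via the dilation $u_t$ so that the ratio $\|u\|_p^p/\|u\|_q^q$ is small. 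Since $E_\gamma(u_t)$ is minimized along the fiber, at the optimal $t$ one can compute this ratio from $g_u'(\tilde t_u)=0$ and the sign of $g_u$. This is the main obstacle, and we would handle it through the Pohozaev-type relation at the fiber minimizer.
\end{itemize}
In both cases we get $E_\gamma(\theta^{1/2}u_n)\leqslant\theta E_\gamma(u_n)$. Passing to the limit gives $\tilde m_{c',\gamma}\leqslant\theta\,\tilde m_{c,\gamma}$.

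\textbf{Subadditivity.} Write $c=\bar c+(c-\bar c)$ and assume, without loss of generality, $\bar c\geqslant c-\bar c$. Apply the first inequality twice:
\begin{equation*}
\tilde m_{c,\gamma}\leqslant\frac{c}{\bar c}\tilde m_{\bar c,\gamma}=\tilde m_{\bar c,\gamma}+\frac{c-\bar c}{\bar c}\tilde m_{\bar c,\gamma},
\qquad
\tilde m_{\bar c,\gamma}\leqslant\frac{\bar c}{c-\bar c}\tilde m_{c-\bar c,\gamma}.
\end{equation*}
Substituting the second into the first gives $\tilde m_{c,\gamma}\leqslant\tilde m_{\bar c,\gamma}+\tilde m_{c-\bar c,\gamma}$.

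\textbf{Strict inequality.} Suppose $\tilde m_{\bar c,\gamma}$ is attained by some $u$. We rerun the computation of the first part with the fixed minimizer $u$ in place of the sequence. In the case $q>p$ with $\theta>1$, the $L^q$ correction is strictly larger because $u\neq0$, so the inequality becomes strict. This gives $\tilde m_{\theta\bar c,\gamma}<\theta\,\tilde m_{\bar c,\gamma}$ whenever $\tilde m_{\bar c,\gamma}<0$.

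We then combine this strict estimate with the non-strict one for the other mass, choosing $\theta$ appropriately depending on which of the two masses is larger. If $\tilde m_{\bar c,\gamma}=0$, attainment forces $\bar c=c_\gamma$ by Theorem \ref{Theorem1.2}. In that case $c>c_\gamma$, so $\tilde m_{c,\gamma}<0=\tilde m_{\bar c,\gamma}+\tilde m_{c-\bar c,\gamma}$ whenever $c-\bar c\leqslant c_\gamma$; the remaining configurations need a separate check.
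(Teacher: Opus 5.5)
Your proof has a genuine gap in the range $2+\frac{4}{N_{\gamma}}<q<p<2_{\gamma}^{*}$, which is half of \eqref{H}. There the first inequality is never established. Your subadditivity and strict subadditivity inherit the gap, although deriving subadditivity from the first inequality is otherwise fine.

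The remedy you sketch does not work. Write $A=\|u\|_{D^{1,2}_{\gamma}(\mathbb{R}^N)}^{2}$, $B=\frac{1}{p}\|u\|_{L^{p}(\mathbb{R}^N)}^{p}$, $C=\frac{1}{q}\|u\|_{L^{q}(\mathbb{R}^N)}^{q}$. The amplitude scaling needs $\frac{B}{C}\leqslant\frac{\theta^{q/2}-\theta}{\theta^{p/2}-\theta}$, and for $q<p$ the right-hand side tends to $0$ as $\theta\to\infty$. At a fiber minimizer, however, the Pohozaev relation $A=\frac{N_\gamma}{2}[(q-2)C-(p-2)B]$ together with $E_\gamma(u)<0$ gives
$\frac{B}{C}=\frac{N_\gamma(q-2)-4}{N_\gamma(p-2)-4}+\frac{4|E_\gamma(u)|}{[N_\gamma(p-2)-4]C}$,
so the ratio stays away from $0$. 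More generally, the interpolation $qC\leqslant c^{\frac{p-q}{p-2}}(pB)^{\frac{q-2}{p-2}}$ together with $C\geqslant|E_\gamma(u)|$ bounds $\frac{B}{C}$ from below along any minimizing sequence when $\tilde m_{c,\gamma}<0$. So no choice of sequence makes the one-step argument work for large $\theta$.

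It could be salvaged for $\theta$ near $1$, using a uniform margin in $\frac{B}{C}<\frac{q-2}{p-2}$, and then iterating via $\tilde m_{\theta_1\theta_2c,\gamma}\leqslant\theta_2\tilde m_{\theta_1c,\gamma}\leqslant\theta_1\theta_2\tilde m_{c,\gamma}$. None of this is in the proposal.

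The strict part is also unfinished: it ignores $q<p$ and leaves ``remaining configurations'' open. Also, the claim that attainment of $\tilde m_{\bar c,\gamma}=0$ forces $\bar c=c_\gamma$ needs Lemma~\ref{Lemma5.3}, not Theorem~\ref{Theorem1.2}.

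The missing idea, which the paper uses, is to change the mass by a dilation that leaves the amplitude untouched. For $\theta=c'/c$ put $\bar u(x,y)=u(\theta^{-1/N_\gamma}x,\theta^{-(1+\gamma)/N_\gamma}y)$, which is still $x$-radial. Every integral $\int_{\mathbb{R}^N}|u|^r\,\mathrm{d}z$ is multiplied by exactly $\theta$, while the Grushin Dirichlet integral is multiplied by $\theta^{1-2/N_\gamma}$. Hence
\begin{equation*}
E_{\gamma}(\bar u)=\theta E_{\gamma}(u)-\frac{1}{2}\left(\theta-\theta^{1-\frac{2}{N_{\gamma}}}\right)\|u\|_{D^{1,2}_{\gamma}(\mathbb{R}^N)}^{2}\leqslant\theta E_{\gamma}(u),
\end{equation*}
strictly if $u\neq0$. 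This holds in both ranges of \eqref{H} and for any sign of $E_\gamma(u)$. So you need no comparison of the $L^p$ and $L^q$ terms, no selection of minimizing sequences, and no case analysis on $\tilde m_{\bar c,\gamma}$.

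Subadditivity is then $\tilde m_{c,\gamma}=\frac{\bar c}{c}\tilde m_{c,\gamma}+\frac{c-\bar c}{c}\tilde m_{c,\gamma}\leqslant\tilde m_{\bar c,\gamma}+\tilde m_{c-\bar c,\gamma}$. If $u$ attains $\tilde m_{\bar c,\gamma}$, take $\theta=c/\bar c$ to get $\tilde m_{c,\gamma}\leqslant E_\gamma(\bar u)<\frac{c}{\bar c}\tilde m_{\bar c,\gamma}$. This makes the first term strict, without any ordering of $\bar c$ and $c-\bar c$.
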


\begin{proof}
Let $u\in \tilde{S}_{c}$ and $\bar{u}(x):=u((\frac{c'}{c})^{-\frac{1}{k+l(1+\gamma)}}x,(\frac{c'}{c})^{-\frac{1+\gamma}{k+l(1+\gamma)}}y)$.
Then we have that
\begin{equation*}
\begin{aligned}
\int_{\mathbb{R}^{N}}
|\bar{u}|^{2}
\mathrm{d}z
=c'.
\end{aligned}
\end{equation*}
From $\bar{u}\in S_{c'}$, we know that
\begin{equation}\label{5.3}
\begin{aligned}
\tilde{m}_{c',\gamma}
\leqslant& E_{\gamma}(\bar{u})
\\=&
\left(\frac{c'}{c}\right)
^{1-\frac{2}{k+l(1+\gamma)}}
\frac{1}{2}
\int_{\mathbb{R}^{N}}
|\nabla_{\gamma}u|^{2}
\mathrm{d}z
+\frac{c'}{c}
\frac{1}{p}
\int_{\mathbb{R}^{N}}
|u|^{p}
\mathrm{d}z
-\frac{c'}{c}
\frac{1}{q}
\int_{\mathbb{R}^{N}}
|u|^{q}
\mathrm{d}z
\\=&
\frac{c'}{c}
E_{\gamma}(u)
+
\frac{1}{2}
\int_{\mathbb{R}^{N}}
|\nabla_{\gamma}u|^{2}
\mathrm{d}z
\left[
\left(
\frac{c'}{c}
\right)
^{1-\frac{2}{k+l(1+\gamma)}}
-
\frac{c'}{c}
\right]\\
\leqslant&
\frac{c'}{c}
E_{\gamma}(u),
\end{aligned}
\end{equation}
which shows
\begin{equation*}
\begin{aligned}
\tilde{m}_{c',\gamma}\leqslant \frac{c'}{c} \tilde{m}_{c,\gamma}.
\end{aligned}
\end{equation*}
For $\bar{c}\in(0,c)$,
we have that
\begin{equation*}
\begin{aligned}
\tilde{m}_{c,\gamma}
=
\frac{c-\bar{c}}{c}\tilde{m}_{c,\gamma}
+\frac{\bar{c}}{c}\tilde{m}_{c,\gamma}
\leqslant
\tilde{m}_{c-\bar{c},\gamma}
+
\tilde{m}_{\bar{c},\gamma}.
\end{aligned}
\end{equation*}
If $m_{\bar{c},\gamma}$ is achieved,
then there exists $u\in S_{\bar{c}}$ such that $E_{\gamma}(u)=\tilde{m}_{\bar{c},\gamma}$.
Then similar to \eqref{5.3}, 
we have that
\begin{equation*}
\begin{aligned}
\tilde{m}_{c,\gamma}
<\frac{c}{\bar{c}}
E_{\gamma}(u)
=\frac{c}{\bar{c}}m_{\bar{c},\gamma},
\end{aligned}
\end{equation*}
which gives
\begin{equation*}
\begin{aligned}
\tilde{m}_{c,\gamma}
=
\frac{c-\bar{c}}{c}\tilde{m}_{c,\gamma}
+\frac{\bar{c}}{c}\tilde{m}_{c,\gamma}
<\tilde{m}_{\bar{c},\gamma}+\tilde{m}_{c-\bar{c},\gamma}.
\end{aligned}
\end{equation*}
The proof is complete.
\end{proof}

\begin{lemma}\label{Lemma5.3}
Let $c<c_{\gamma}$.
Then
$\tilde{m}_{c,\gamma}=0$ is not achieved.
\end{lemma}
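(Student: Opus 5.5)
We argue by contradiction. Suppose $c<c_\gamma$ and that $\tilde m_{c,\gamma}$ is achieved by some $u\in\tilde S_c$. By Theorem \ref{Theorem1.2} (Lemma \ref{Lemma4.2}) we know $\tilde m_{c,\gamma}=0$, so $E_\gamma(u)=0$. The plan is to produce a function of slightly larger mass $c'\in(c,c_\gamma]$ whose energy is strictly negative. This contradicts Theorem \ref{Theorem1.2}, which gives $\tilde m_{c',\gamma}=0$ for every $c'\leqslant c_\gamma$.

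\textbf{Step 1: the strict inequality.} I would use the scaling of Lemma \ref{Lemma5.2}. For $c'>c$, let
\[
\bar u(x,y)=u\bigl((c'/c)^{-\frac{1}{N_\gamma}}x,(c'/c)^{-\frac{1+\gamma}{N_\gamma}}y\bigr),
\]
so that $\bar u\in\tilde S_{c'}$; this map preserves radiality in $x$. The computation \eqref{5.3} gives
\[
E_\gamma(\bar u)=\frac{c'}{c}E_\gamma(u)+\frac12\|u\|_{D^{1,2}_\gamma}^2\Bigl[\Bigl(\frac{c'}{c}\Bigr)^{1-\frac{2}{N_\gamma}}-\frac{c'}{c}\Bigr].
\]
Since $E_\gamma(u)=0$, $u\not\equiv0$ (so $\|u\|_{D^{1,2}_\gamma}>0$) and $c'/c>1$, the bracket is strictly negative. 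Hence $E_\gamma(\bar u)<0$.

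\textbf{Step 2: the contradiction.} Choose $c'\in(c,c_\gamma]$, which is possible because $c<c_\gamma$. Then $\tilde m_{c',\gamma}\leqslant E_\gamma(\bar u)<0$, while Theorem \ref{Theorem1.2} gives $\tilde m_{c',\gamma}=0$. This contradiction shows that $\tilde m_{c,\gamma}$ is not achieved.

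\textbf{Main obstacle.} The only delicate point is the strictness of the inequality in Step 1, which needs $\|u\|_{D^{1,2}_\gamma}\neq 0$. This holds because $u\in H^{1,2}_\gamma$ has positive mass $c$, so $u$ cannot have vanishing Grushin gradient: a function with zero $D^{1,2}_\gamma$ norm is constant, and the only constant in $L^2$ is zero. One should also check that the exponent satisfies $1-\frac{2}{N_\gamma}<1$, which is true since $N_\gamma>0$. Apart from these checks, the argument only uses the already established characterization in Theorem \ref{Theorem1.2}.
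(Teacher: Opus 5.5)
Your argument is correct and is essentially the paper's proof. The paper rescales the minimizer to mass $1$ and then to mass $c_\gamma$ (via the auxiliary functional $E_{c,\gamma}$); this composes to your single dilation with $c'=c_\gamma$, and it uses the same strict drop of the gradient term together with $\tilde m_{c_\gamma,\gamma}=0$ from Theorem~\ref{Theorem1.2}. Your observation that strictness needs $\|u\|_{D^{1,2}_\gamma}>0$ makes explicit a point the paper passes over with ``clearly''.
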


\begin{proof}
Define
\begin{equation*}
\begin{aligned}
E_{c,\gamma}(u):=
\frac{c^{-\frac{2}{k+l(1+\gamma)}}}{2}
\int_{\mathbb{R}^{N}}
|\nabla_{\gamma}u|^{2}
\mathrm{d}z
+\frac{1}{p}
\int_{\mathbb{R}^{N}}
|u|^{p}
\mathrm{d}z
-
\frac{1}{q}
\int_{\mathbb{R}^{N}}
|u|^{q}
\mathrm{d}z.
\end{aligned}
\end{equation*}
Clearly, $E_{1,\gamma}(u)=E_{\gamma}(u)$, $E_{c_{\gamma},\gamma}(u)<E_{c,\gamma}(u)$, and
\begin{equation*}
\begin{aligned}
&E_{\gamma}(u(c^{-\frac{1}{k+l(1+\gamma)}}x,c^{-\frac{1+\gamma}{k+l(1+\gamma)}}y))\\
=&
\frac{c^{1-\frac{2}{k+l(1+\gamma)}}}{2}
\int_{\mathbb{R}^{N}}
|\nabla_{\gamma}u|^{2}
\mathrm{d}z
+
\frac{c}{p}
\int_{\mathbb{R}^{N}}
|u|^{p}
\mathrm{d}x
-
\frac{c}{q}
\int_{\mathbb{R}^{N}}
|u|^{q}
\mathrm{d}x
=cE_{c,\gamma}(u).
\end{aligned}
\end{equation*}

On the contrary, we suppose that $\tilde{m}_{c,\gamma}=0$ is achived, and $u_{c}$ is the minimizer. Then
\begin{equation*}
\begin{aligned}
0
=\tilde{m}_{c,\gamma}
=E_{\gamma}(u_{c}).
\end{aligned}
\end{equation*}

Set $\tilde{u}_{c}:=u_{c}(c^{\frac{1}{k+l(1+\gamma)}}x,c^{\frac{1+\gamma}{k+l(1+\gamma)}}y)$. Then $\tilde{u}_{c}\in \tilde{S}_{1}$ and
$u_{c}=\tilde{u}_{c}(c^{-\frac{1}{k+l(1+\gamma)}}x,c^{-\frac{1+\gamma}{k+l(1+\gamma)}}y)$
and
\begin{equation*}
\begin{aligned}
0=E_{\gamma}(u_{c})
=E_{\gamma}(\tilde{u}_{c}(c^{-\frac{1}{k+l(1+\gamma)}}x,c^{-\frac{1+\gamma}{k+l(1+\gamma)}}y))
=&cE_{c,\gamma}(\tilde{u}_{c}).
\end{aligned}
\end{equation*}
This shows that $E_{c,\gamma}(\tilde{u}_{c})=0$ and $E_{c_{\gamma},\gamma}(\tilde{u}_{c})<E_{c,\gamma}(\tilde{u}_{c})=0$.

Set $\tilde{\tilde{u}}_{c_{\gamma}}:=\tilde{u}_{c}(c_{\gamma}^{-\frac{1}{k+l(1+\gamma)}}x,c_{\gamma}^{-\frac{1+\gamma}{k+l(1+\gamma)}}y)$. 
Then $\tilde{\tilde{u}}_{c_{\gamma}}\in \tilde{S}_{c_{\gamma}}$ and
\begin{equation*}
\begin{aligned}
\tilde{m}_{c_{\gamma},\gamma}\leqslant E_{\gamma}(\tilde{\tilde{u}}_{c_{\gamma}})
= E_{\gamma}(\tilde{u}_{c}(c_{\gamma}^{-\frac{1}{k+l(1+\gamma)}}x,c_{\gamma}^{-\frac{1+\gamma}{k+l(1+\gamma)}}y))
=&c_{\gamma}E_{c_{\gamma},\gamma}(\tilde{u}_{c})<0.
\end{aligned}
\end{equation*}
This is a contradiction with $\tilde{m}_{c_{\gamma},\gamma}=0$. Hence, the infimum is not achieved. The proof is complete.
\end{proof}

\begin{lemma}\label{Lemma5.4}
Let $c>c_{\gamma}$.
Then
$\tilde{m}_{c,\gamma}<0$ is achieved.
\end{lemma}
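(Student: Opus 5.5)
Since $c>c_{\gamma}$, Theorem~\ref{Theorem1.2} (through Lemma~\ref{Lemma4.4}) gives $\tilde{m}_{c,\gamma}<0$, and Lemma~\ref{Lemma5.1} gives $\tilde{m}_{c,\gamma}>-\infty$. So it remains to show that the infimum is attained, and I will do this by concentration compactness on $\tilde{S}_{c}$.

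Take a minimizing sequence $\{u_n\}\subset\tilde{S}_{c}$. Coercivity, from \eqref{5.1} or \eqref{5.2}, bounds $\|u_n\|_{D^{1,2}_{\gamma}}$. The mass is fixed at $c$, so $\{u_n\}$ is bounded in $H^{1,2}_{\gamma,rad_x}(\mathbb{R}^N)$, and interpolation then bounds it in $L^p$ and $L^q$.

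\textbf{Non-vanishing.} Suppose, for contradiction, that
\[
\sup_{y_0\in\mathbb{R}^l}\int_{\mathbb{R}^k\times B(y_0,1)}|u_n|^2\,\mathrm{d}z\to0 .
\]
The Lions vanishing lemma in the $x$-radial Grushin setting \cite[Lemma 2.1]{Alves-de-Holanda2021}, already used in Lemma~\ref{Lemma3.1}, then gives $u_n\to0$ in $L^r$ for $2<r<2^*_{\gamma}$. In particular $\|u_n\|_{L^q}\to0$, so $\liminf E_{\gamma}(u_n)\geqslant0$, which contradicts $\tilde{m}_{c,\gamma}<0$. Hence there are $y_n\in\mathbb{R}^l$ such that, after translating in $y$, $u_n(\cdot,\cdot+y_n)\rightharpoonup u\neq0$ weakly in $H^{1,2}_{\gamma,rad_x}$. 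This translation preserves $x$-radiality, the energy and the mass. By Lemma~\ref{Lemma2.1} we may also assume a.e.\ convergence.

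\textbf{Ruling out dichotomy.} Set $\bar c=\|u\|_{L^2}^2\in(0,c]$ and $v_n=u_n-u$. The Brezis--Lieb lemma applied to the $L^2$, $L^p$ and $L^q$ terms, together with the Hilbert splitting of the $D^{1,2}_{\gamma}$ term, gives
\[
E_{\gamma}(u_n)=E_{\gamma}(u)+E_{\gamma}(v_n)+o(1),\qquad \|v_n\|_{L^2}^2\to c-\bar c .
\]
Suppose $\bar c<c$. Scale $v_n$ to exact mass $c-\bar c$ (the scaling factor tends to $1$), so that $\liminf E_{\gamma}(v_n)\geqslant\tilde{m}_{c-\bar c,\gamma}$. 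Combined with $E_{\gamma}(u)\geqslant\tilde{m}_{\bar c,\gamma}$ this yields
\[
\tilde{m}_{c,\gamma}\geqslant\tilde{m}_{\bar c,\gamma}+\tilde{m}_{c-\bar c,\gamma}.
\]
Lemma~\ref{Lemma5.2} gives the reverse inequality, so all these are equalities, and in particular $E_{\gamma}(u)=\tilde{m}_{\bar c,\gamma}$. Then $\tilde{m}_{\bar c,\gamma}$ is achieved, and the strict part of Lemma~\ref{Lemma5.2} gives $\tilde{m}_{c,\gamma}<\tilde{m}_{\bar c,\gamma}+\tilde{m}_{c-\bar c,\gamma}$, a contradiction. (Strictness ultimately rests on $\|\nabla_{\gamma}u\|\neq0$ in \eqref{5.3}.) Therefore $\bar c=c$, and $v_n\to0$ in $L^2$.

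\textbf{Conclusion.} From $v_n\to0$ in $L^2$ and the $H^{1,2}_{\gamma}$ bound, Lemma~\ref{Lemma2.2} gives $v_n\to0$ in $L^p$ and $L^q$. By weak lower semicontinuity of the gradient term,
\[
E_{\gamma}(u)\leqslant\liminf E_{\gamma}(u_n)=\tilde{m}_{c,\gamma},
\]
and $u\in\tilde{S}_{c}$, so $u$ is a minimizer. The gradient norms then converge as well, so the convergence is in fact strong.

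The main obstacle is the dichotomy step. One has to handle carefully the passage from $\liminf E_{\gamma}(v_n)$ to $\tilde{m}_{c-\bar c,\gamma}$ when the mass of $v_n$ is only asymptotically $c-\bar c$. This can be done via the mass-scaling used in Lemma~\ref{Lemma5.2} and continuity of $c\mapsto\tilde{m}_{c,\gamma}$, or more directly by rescaling $v_n$ by its exact mass and using boundedness. One also has to treat the case $c-\bar c\leqslant c_{\gamma}$, where $\tilde{m}_{c-\bar c,\gamma}=0$ is not attained. There the argument needs only $E_{\gamma}(v_n)\geqslant o(1)$ together with $\tilde{m}_{c,\gamma}<\tilde{m}_{\bar c,\gamma}$, which follows from Lemma~\ref{Lemma5.2} once $\tilde{m}_{\bar c,\gamma}$ is attained.
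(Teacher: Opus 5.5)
Your proposal is correct and follows essentially the same route as the paper: a bounded minimizing sequence, non-vanishing from $\tilde{m}_{c,\gamma}<0$ via the Lions lemma and $y$-translations, a Br\'ezis--Lieb splitting, and the subadditivity and strict subadditivity of Lemma~\ref{Lemma5.2} to rule out $\bar c<c$. Your rescaling of $v_n$ to exact mass $c-\bar c$ and your final weak lower semicontinuity argument make explicit two steps that the paper leaves implicit.
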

\begin{proof}
Let $\{u_{n}\}$ be the minimizing sequence of $\tilde{m}_{c,\gamma}$.
It is easy to see that $\{u_{n}\}$ is bounded in $H^{1,2}_{\gamma,rad_{x}}(\mathbb{R}^{N})$.

If $\lim\limits_{n\to\infty}\int_{\mathbb{R}^{N}}|u_{n}|^{q}\mathrm{d}z=0$, then
\begin{equation*}
\begin{aligned}
0>\tilde{m}_{c,\gamma}+o_{n}(1)=E_{\gamma}(u_{n})=
\frac{1}{2}
\int_{\mathbb{R}^{N}}
|\nabla_{\gamma}u_{n}|^{2}
\mathrm{d}z
+\frac{1}{p}
\int_{\mathbb{R}^{N}}|u_{n}|^{p}\mathrm{d}z+o(1)\geqslant 0.
\end{aligned}
\end{equation*}
This is a contradiction.
We get that $\lim\limits_{n\to\infty}\int_{\mathbb{R}^{N}}|u_{n}|^{q}\mathrm{d}z>0$.
By using Lions' vanishing Lemma \cite[Lemma 2.1]{Alves-de-Holanda2021} and Lemma \ref{Lemma2.1}, there exists $\{y_{n}\}\subset \mathbb{R}^{l}$ such that  $\bar{u}_{n}:=u_{n}(x,y+y_{n})\rightharpoonup u\not \equiv0$ in $L^{q}_{loc}(\mathbb{R}^{N})$.

We claim that $\int_{\mathbb{R}^{N}}|u|^{2}\mathrm{d}x=c$.
If $\int_{\mathbb{R}^{N}}
|u|^{2}
\mathrm{d}x
=\bar{c}
<c$,
we deduce from the Br\'{e}zis-Lieb Lemma \cite{Brezis-Lieb1983PAMS} and Lemma \ref{Lemma5.2} that
\begin{equation*}
\begin{aligned}
\tilde{m}_{c,\gamma}
=\lim_{n\to\infty}E_{\gamma}(\bar{u}_{n})
=E_{\gamma}(u)
+\lim_{n\to\infty}E_{\gamma}(\bar{u}_{n}-u)
\geqslant \tilde{m}_{\bar{c},\gamma}+\tilde{m}_{c-\bar{c},\gamma}
\geqslant \tilde{m}_{c,\gamma}.
\end{aligned}
\end{equation*}
Hence, $E_{\gamma}(u)=\tilde{m}_{\bar{c},\gamma}$.
Using the Br\'{e}zis-Lieb Lemma \cite{Brezis-Lieb1983PAMS} and Lemma \ref{Lemma5.2} again,
we know that
\begin{equation*}
\begin{aligned}
\tilde{m}_{c,\gamma}
=\lim_{n\to\infty}E_{\gamma}(\bar{u}_{n})
=E_{\gamma}(u)
+\lim_{n\to\infty}E_{\gamma}(\bar{u}_{n}-u)
\geqslant \tilde{m}_{\bar{c},\gamma}+\tilde{m}_{c-\bar{c},\gamma}
> \tilde{m}_{c,\gamma}.
\end{aligned}
\end{equation*}
This is a contradiction.
Therefore,
we get that $\int_{\mathbb{R}^{N}}|u|^{2}\mathrm{d}x=c$ and
$E_{\gamma}(u)=\tilde{m}_{c,\gamma}$. The proof is complete.
\end{proof}

\begin{lemma}\label{Lemma5.5}
$\tilde{m}_{c_{\gamma},\gamma}=0$ is achieved.
\end{lemma}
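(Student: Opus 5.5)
The plan is to exhibit an explicit minimizer at $c=c_\gamma$, built from the Gagliardo–Nirenberg optimizer $Q$ of Theorem \ref{Theorem1.1} by a two-parameter rescaling.

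\textbf{Step 1: the ansatz.} Take $u(x,y)=\lambda Q(\mu x,\mu^{1+\gamma}y)$ with $\lambda,\mu>0$. Recall that $\|\nabla_\gamma Q\|_2^2=\|Q\|_p^p=1$. The rescaled quantities are
\begin{equation*}
\|u\|_{D_\gamma^{1,2}}^2=\lambda^2\mu^{2-N_\gamma},\quad \|u\|_p^p=\lambda^p\mu^{-N_\gamma},\quad \|u\|_q^q=\lambda^q\mu^{-N_\gamma}\|Q\|_q^q,\quad \|u\|_2^2=\lambda^2\mu^{-N_\gamma}\|Q\|_2^2 .
\end{equation*}
Since $W_{N_\gamma,p,q}$ is invariant under $u\mapsto \lambda u(\mu x,\mu^{1+\gamma}y)$, as used in Step 1 of Lemma \ref{Lemma3.1}, the function $u$ is still an optimizer of \eqref{1.2}.

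\textbf{Step 2: choosing $\lambda,\mu$ from the equality case.} Look at the proof of Lemma \ref{Lemma4.1}. Equality $h_u(t_u)=0$ (respectively $g_u(\tilde t_u)=0$) occurs exactly when the displayed inequality is an equality, and there $t_u=1$ is the minimizing scale. So I would impose two conditions.
\begin{itemize}
\item The critical point condition $t_u=1$ (respectively $\tilde t_u=1$). This is one algebraic relation between $\|u\|_p^p$ and $\|u\|_q^q$.
\item The condition $E_\gamma(u)=0$.
\end{itemize}
Solving these gives exactly the $\lambda,\mu$ appearing in Theorem \ref{Theorem1.4}. I would not redo that algebra in detail; it only involves powers of $\lambda$ and $\mu$.

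\textbf{Step 3: the mass is $c_\gamma$.} Because $u$ is an optimizer, \eqref{4.1} holds with equality:
\begin{equation*}
\|u\|_q^q=\|u\|_2^{\frac{4(q-p)}{4-N_\gamma(p-2)}}\,C_{N_\gamma,p,q}\,\|u\|_p^{\cdots}\,\|u\|_{D^{1,2}_\gamma}^{\cdots}.
\end{equation*}
The equality in Lemma \ref{Lemma4.1} fixes the ratio $\|u\|_q^q\big/\bigl(\|u\|_p^{\cdots}\|u\|_{D^{1,2}_\gamma}^{\cdots}\bigr)$ to be the threshold constant appearing in Lemma \ref{Lemma4.2}. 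Comparing the two forces
\begin{equation*}
\|u\|_2^{\frac{4(q-p)}{4-N_\gamma(p-2)}}\,C_{N_\gamma,p,q}=\text{that constant},
\end{equation*}
which is precisely the defining relation of $c_\gamma$. Hence $u\in\tilde S_{c_\gamma}$. It is $x$-radial since $Q$ is.

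\textbf{Step 4: conclusion.} We get $E_\gamma(u)=0=\tilde m_{c_\gamma,\gamma}$ by Theorem \ref{Theorem1.2}, so $\tilde m_{c_\gamma,\gamma}$ is attained.

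A more transparent variant avoids explicit exponents. Take any optimizer with $\|\cdot\|_2^2=c_\gamma$, namely $Q_{c_\gamma}$ as in Lemma \ref{Lemma4.4}. By the equality case of the chain in Lemma \ref{Lemma4.2}, $\min_t h_{Q_{c_\gamma}}(t)=0$ (respectively $\min_t g=0$), since the inequality of Lemma \ref{Lemma4.1} becomes an equality. Then $(Q_{c_\gamma})_{t_u}\in\tilde S_{c_\gamma}$ satisfies $E_\gamma=t_u^2h(t_u)=0$, and we are done. I would present this variant.

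The main obstacle is checking that the equality case of the estimate in Lemma \ref{Lemma4.1} really gives $\min h_u=0$ rather than only $\min h_u\ge 0$. This needs the exact constant computed from $h_u(t_u)$ to match $c_\gamma$; I would verify it through the exponent bookkeeping of Lemma \ref{Lemma4.2}.
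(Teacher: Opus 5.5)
Your argument is correct, but it takes a different route from the paper's proof of this lemma.

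\textbf{The paper's route.} The paper argues by compactness. It takes minimizers $u_{c_n}$ of $\tilde m_{c_n,\gamma}$ with $c_n=c_\gamma+\frac1n$, which exist by Lemma \ref{Lemma5.4}. It shows these are bounded and that $\|u_{c_n}\|_{L^q}$ does not vanish, then translates in $y$ to get a nonzero weak limit. Finally, the Br\'ezis--Lieb splitting, combined with Theorem \ref{Theorem1.2} and Lemma \ref{Lemma5.3}, forces the limit to have mass exactly $c_\gamma$ and zero energy.

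\textbf{Your route.} You instead write down a minimizer directly from the Gagliardo--Nirenberg optimizer. The function $Q_{c_\gamma}=\sqrt{c_\gamma}\,Q/\|Q\|_{L^2}$ gives equality in \eqref{4.1}. Set $\beta=\frac{2(q-p)}{4-N_\gamma(p-2)}$. The definition of $c_\gamma$ says exactly that $c_\gamma^{\beta}C_{N_\gamma,p,q}$ equals the reciprocal of the threshold constant in Lemma \ref{Lemma4.1}, and that constant is the same in both ranges of \eqref{H}. Hence $\min_t h_{Q_{c_\gamma}}=0$ (respectively $\min_t g_{Q_{c_\gamma}}=0$), attained at the unique critical point. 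The mass-preserving dilate at that point is then an $x$-radial element of $\tilde S_{c_\gamma}$ with $E_\gamma=0=\tilde m_{c_\gamma,\gamma}$.

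\textbf{The step you flag.} It is not a real obstacle. By the computation in Lemma \ref{Lemma4.1}, $h_u(t_u)$ equals $\frac12\|u\|^2_{D^{1,2}_\gamma}$ minus a positive multiple of a product of powers of $\|u\|_{L^p}$ and $\|u\|_{L^q}$, and similarly for $g_u(\tilde t_u)$ with $\frac1p\|u\|_{L^p}^p$ in place of the gradient term. So equality in the displayed condition is literally $h_u(t_u)=0$ (respectively $g_u(\tilde t_u)=0$).

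\textbf{Comparison.} Your approach is shorter and needs no concentration-compactness beyond what is already in Theorem \ref{Theorem1.1}. It also shows that a suitably rescaled optimizer of \eqref{1.2} is itself a critical-mass minimizer. That is essentially the content of Lemmas \ref{Lemma6.2}--\ref{Lemma6.4}, so in the paper Lemma \ref{Lemma5.5} could have been read off from Section 6. The paper's argument does not exploit the equality case of \eqref{1.2} at mass $c_\gamma$; it uses only the sign dichotomy of Theorem \ref{Theorem1.2} and the existence result for $c>c_\gamma$. In return it gives extra information: minimizers at masses $c_n\downarrow c_\gamma$ converge, up to $y$-translations and subsequences, to a minimizer at $c_\gamma$.
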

\begin{proof}
Let $c_{n}:=c_{\gamma}+\frac{1}{n}$, for all $n\in \mathbb{N}^{+}$.
From $c_{n}>c_{\gamma}$ and $\tilde{m}_{c_{n}}<0$,
we know by Lemma \ref{Lemma5.4} that $\tilde{m}_{c_{n},\gamma}$ admits a minimizer $u_{c_{n}}\in\tilde{S}_{c_{n}}$ for all $n\in \mathbb{N}^{+}$.
Since $\tilde{m}_{c_{n},\gamma}\to \tilde{m}_{c_{\gamma},\gamma}$,
it can be deduced that
\begin{equation*}
\begin{aligned}
\lim_{n\to\infty}
E_{\gamma}(u_{c_{n}})
=
\lim_{n\to\infty}
\tilde{m}_{c_{n},\gamma}
=
\tilde{m}_{c_{\gamma},\gamma}
=0.
\end{aligned}
\end{equation*}
{\bf Step 1.}
In this step,
we show that $\{u_{c_{n}}\}$ is bounded in $H^{1,2}_{\gamma,rad_{x}}(\mathbb{R}^{N})$.

For $2<p<q<2+\frac{4}{N_{\gamma}}$,
it follows from \eqref{5.1} that
\begin{equation*}
\begin{aligned}
E_{\gamma}(u_{c_{n}})
\geqslant
\frac{1}{2}
\|u_{c_{n}}\|_{D_{\gamma}^{1,2}(\mathbb{R}^{N})}^{2}
-
\frac{C_{N_{\gamma},q}}{q}
\|u_{c_{n}}\|_{D_{\gamma}^{1,2}(\mathbb{R}^{N})}
^{\frac{N_{\gamma}(q-2)}{2}}
c_{n}^{\frac{q}{2}-\frac{N_{\gamma}(q-2)}{4}}.
\end{aligned}
\end{equation*}
For $2+\frac{4}{N_{\gamma}}<q<p<2_{\gamma}^{*}$,
it follows from \eqref{5.2} that
\begin{equation*}
\begin{aligned}
E_{\gamma}(u_{c_{n}})
\geqslant
\frac{1}{2}
\|u_{c_{n}}\|_{D_{\gamma}^{1,2}(\mathbb{R}^{N})}^{2}
-\frac{\varepsilon^{-\frac{q-2}{p-q}}(p-q)c_{n}}{q(p-2)}.
\end{aligned}
\end{equation*}
These imply that $\int_{\mathbb{R}^{N}}
|\nabla_{\gamma}u_{c_{n}}|^{2}
\mathrm{d}z$ is bounded.
Then we conclude that $\{u_{c_{n}}\}$ is bounded in $H^{1,2}_{\gamma,rad_{x}}(\mathbb{R}^{N})$.

{\bf Step 2.}
In this step,
we prove that
\begin{equation*}
\begin{aligned}
\lim_{n\to\infty}
\int_{\mathbb{R}^{N}}
|u_{c_{n}}|^{q}
\mathrm{d}z
>0.
\end{aligned}
\end{equation*}
Indeed,
if $\lim\limits_{n\to\infty}
\|u_{c_{n}}\|_{L^{q}(\mathbb{R}^N)}^{q}=0$,
then by $\lim\limits_{n\to\infty}E_{\gamma}(u_{c_{n}})=0$,
we obtain that
\begin{equation}\label{5.4}
\begin{aligned}
\lim_{n\to\infty}
\int_{\mathbb{R}^{N}}
|\nabla_{\gamma}u_{c_{n}}|^{2}
\mathrm{d}z
=0
\ \
\mathrm{and}
\ \
\lim_{n\to\infty}
\int_{\mathbb{R}^{N}}
|u_{c_{n}}|^{p}
\mathrm{d}z
=0.
\end{aligned}
\end{equation}
We now 
divide the proof into three cases:
\begin{enumerate}
\item $2<p<q<2+\frac{4}{N_{\gamma}}$, $k=l=1$ and $\gamma=0$;

\item $2<p<q<2+\frac{4}{N_{\gamma}}$ and others;

\item $2+\frac{4}{N_{\gamma}}<q<p<2_{\gamma}^{*}$.
\end{enumerate}

{\bf Step 2.1.}
For case (1), 
by using the the inequality in Remark 2.1, the Young inequality and \eqref{5.4},
we have that
\begin{equation*}
\begin{aligned}
E_{\gamma}(u_{c_{n}})
\geqslant&
\frac{1}{2}
\|u_{c_{n}}\|_{D_{\gamma}^{1,2}(\mathbb{R}^N)}^{2}
+
\frac{1}{p}
\|u_{c_{n}}\|_{L^{p}(\mathbb{R}^N)}^{p}
-\frac{C}{q}
\|u_{c_{n}}\|_{L^{p}(\mathbb{R}^N)}^{p}
\left(
\|u_{c_{n}}\|_{D_{\gamma}^{1,2}(\mathbb{R}^N)}^{2}
\right)^{\frac{q-p}{2}}\\
=&
\frac{1}{2}
\|u_{c_{n}}\|_{D_{\gamma}^{1,2}(\mathbb{R}^N)}^{2}
+
\|u_{c_{n}}\|_{L^{p}(\mathbb{R}^N)}^{p}
\left[
\frac{1}{p}
-\frac{C}{q}
\left(
\|u_{c_{n}}\|_{D_{\gamma}^{1,2}(\mathbb{R}^N)}^{2}
\right)^{\frac{q-p}{2}}
\right]
\geqslant 0.
\end{aligned}
\end{equation*}
This is a contradiction with $E_{\gamma}(u_{c_{n}})
=\tilde{m}_{c_{n},\gamma}
<0$.
Hence,
$\lim\limits_{n\to\infty}
\|u_{c_{n}}\|_{L^{q}(\mathbb{R}^N)}^{q}>0$.

{\bf Step 2.2.}
For case (2), by using the H\"{o}lder, Young inequalities, Lemma \ref{Lemma2.2} and \eqref{5.4},
we have that
\begin{equation*}
\begin{aligned}
E_{\gamma}(u_{c_{n}})
\geqslant&
\frac{1}{2}
\|u_{c_{n}}\|_{D_{\gamma}^{1,2}(\mathbb{R}^N)}^{2}
+
\frac{1}{p}
\|u_{c_{n}}\|_{L^{p}(\mathbb{R}^N)}^{p}\\
&-\frac{1}{q}
\left(
\|u_{c_{n}}\|_{L^{p}(\mathbb{R}^N)}^{p}
\right)^{\frac{2_{\gamma}^{*}-q}{2_{\gamma}^{*}-p}}
\left(
C_{N_\gamma,2_\gamma^*}
\|u_{c_{n}}\|_{D_{\gamma}^{1,2}(\mathbb{R}^N)}^{2_{\gamma}^{*}}
\right)^{\frac{q-p}{2_{\gamma}^{*}-p}}\\
\geqslant&
\frac{1}{2}
\|u_{c_{n}}\|_{D_{\gamma}^{1,2}(\mathbb{R}^N)}^{2}
+
\frac{1}{p}
\|u_{c_{n}}\|_{L^{p}(\mathbb{R}^N)}^{p}\\
&-\frac{1}{q}
\left[
\frac{\varepsilon(2_{\gamma}^{*}-q)}{2_{\gamma}^{*}-p}
\|u_{c_{n}}\|_{L^{p}(\mathbb{R}^N)}^{p}
+
\frac{C_{N_\gamma,2_\gamma^*}\varepsilon^{-\frac{2_{\gamma}^{*}-q}{q-p}}(q-p)}{2_{\gamma}^{*}-p}
\|u_{c_{n}}\|_{D_{\gamma}^{1,2}(\mathbb{R}^N)}^{2_{\gamma}^{*}}
\right]
\\\geqslant&
\frac{1}{2}
\|u_{c_{n}}\|_{D_{\gamma}^{1,2}(\mathbb{R}^N)}^{2}
-
\frac{C_{N_\gamma,2_\gamma^*}\varepsilon^{-\frac{2_{\gamma}^{*}-q}{q-p}}(q-p)}{q(2_{\gamma}^{*}-p)}
\|u_{c_{n}}\|_{D_{\gamma}^{1,2}(\mathbb{R}^N)}^{2_{\gamma}^{*}}
\geqslant 0,
\end{aligned}
\end{equation*}
where $\varepsilon>0$ small.
This is a contradiction with $E_{\gamma}(u_{c_{n}})
=\tilde{m}_{c_{n},\gamma}
<0$.
Hence,
$\lim\limits_{n\to\infty}
\|u_{c_{n}}\|_{L^{q}(\mathbb{R}^N)}^{q}>0$.

{\bf Step 2.3.}
For case (3),
by using Lemma \ref{Lemma2.2} and \eqref{5.4},
we have that
\begin{equation*}
\begin{aligned}
E_{\gamma}(u_{c_{n}})
\geqslant&
\frac{1}{2}
\|u_{c_{n}}\|_{D_{\gamma}^{1,2}(\mathbb{R}^N)}^{2}
+
\frac{1}{p}
\|u_{c_{n}}\|_{L^{p}(\mathbb{R}^N)}^{p}
-\frac{C_{N_\gamma,q}}{q}
\|u_{c_{n}}\|_{D_{\gamma}^{1,2}(\mathbb{R}^N)}^{\frac{N_{\gamma}(q-2)}{2}}
c_{n}^{\frac{q}{2}-\frac{N_{\gamma}(q-2)}{4}}
\geqslant 0.
\end{aligned}
\end{equation*}
This is a contradiction with $E_{\gamma}(u_{c_{n}})
=\tilde{m}_{c_{n},\gamma}
<0$.
Hence,
$\lim\limits_{n\to\infty}
\|u_{c_{n}}\|_{L^{q}(\mathbb{R}^N)}^{q}>0$.

By virtue of Lions' vanishing Lemma \cite[Lemma 2.1]{Alves-de-Holanda2021} and Lemma \ref{Lemma2.1}, we know that there exists $\{y_{n}\}\subset \mathbb{R}^{l}$ such that  $\bar{u}_{c_{n}}:=u_{c_{n}}(x,y+y_{n})$ converges to $\bar{u}_{c_{\gamma}}\not \equiv0$ in $L_{loc}^{q}(\mathbb{R}^{N})$.

{\bf Step 3.}
In this step,
we prove that $\bar{u}_{c_{\gamma}}$ is a minimizer of $\tilde{m}_{c_{\gamma},\gamma}$.
By using the Br\'{e}zis-Lieb Lemma \cite{Brezis-Lieb1983PAMS},
we get that
\begin{equation}\label{5.5}
\begin{aligned}
0=
\tilde{m}_{c_{\gamma},\gamma}
=
\lim_{n\to\infty}
\tilde{m}_{c_{n}}
=
\lim_{n\to\infty}
E_{\gamma}(\bar{u}_{c_{n}})
=E_{\gamma}(\bar{u}_{c_{\gamma}})
+\lim_{n\to\infty}E_{\gamma}(\bar{u}_{c_{n}}-\bar{u}_{c_{\gamma}})
\end{aligned}
\end{equation}
and
\begin{equation*}
\begin{aligned}
c_{\gamma}
=
\lim_{n\to\infty}
c_{n}
=
\lim_{n\to\infty}
\|\bar{u}_{c_{n}}\|_{L^{2}(\mathbb{R}^{N})}^{2}
=
\|\bar{u}_{c_{\gamma}}\|_{L^{2}(\mathbb{R}^{N})}^{2}
+
\lim_{n\to\infty}
\|\bar{u}_{c_{n}}-\bar{u}_{c_{\gamma}}\|_{L^{2}(\mathbb{R}^{N})}^{2}.
\end{aligned}
\end{equation*}
From $0<\|\bar{u}_{c_{0}}\|_{L^{2}(\mathbb{R}^{N})}^{2}\leqslant c_{\gamma}$ and Theorem \ref{Theorem1.2},
we obtain that $E_{\gamma}(\bar{u}_{c_{\gamma}})\geqslant 0$
and
\begin{equation*}
\begin{aligned}
\lim_{n\to\infty}
\|\bar{u}_{c_{n}}-\bar{u}_{c_{\gamma}}\|_{L^{2}(\mathbb{R}^{N})}^{2}
=
c_{\gamma}
-
\|\bar{u}_{c_{\gamma}}\|_{L^{2}(\mathbb{R}^{N})}^{2}<c_{\gamma}.
\end{aligned}
\end{equation*}
This shows that
$\lim_{n\to\infty}E_{\gamma}(\bar{u}_{c_{n}}-\bar{u}_{c_{\gamma}})\geqslant 0$.
From \eqref{5.5},
we know that $E_{\gamma}(\bar{u}_{c_{\gamma}})=0$.

If $\|\bar{u}_{c_{\gamma}}\|_{L^{2}(\mathbb{R}^{N})}^{2}<c_{\gamma}$,
then $\bar{u}_{c_{\gamma}}$ is a minimizer of $\tilde{m}_{c,\gamma}$ for some $c<c_{\gamma}$,
and we reach a contradiction with Lemma \ref{Lemma5.3}.
Hence, we know that $\|\bar{u}_{c_{\gamma}}\|_{L^{2}(\mathbb{R}^{N})}^{2}=c_{\gamma}$ and $E_{\gamma}(\bar{u}_{c_{\gamma}})=\tilde{m}_{c_{\gamma},\gamma}=0$. The proof is complete.
\end{proof}

We are now in a position to prove Theorem \ref{Theorem1.3}.
\begin{proof}[Proof of Theorem \ref{Theorem1.3}]
From Lemmas \ref{Lemma5.3}-\ref{Lemma5.5},
we get the existence and non-existence results of minimizer for $\tilde{m}_{c,\gamma}$.

For $c\geqslant c_{\gamma}$, we know that $u\in \tilde{S}_{c}$ is a minimizer of $\tilde{m}_{c,\gamma}$, then $u$ is a critical point of $E_{\gamma}$ on the sphere $\tilde{S}_{c}$. There exists a Lagrange multiplier $\omega$ such that $(u,\omega)\in H^{1,2}_{\gamma,rad_{x}}(\mathbb{R}^{N})\times \mathbb{R}$ is a couple of weak solution to equation \eqref{G}.
By applying Lemma \ref{Lemma2.5}, one can deduce
\begin{equation}\label{5.6}
\begin{aligned}
\|u\|_{D_{\gamma}^{1,2}(\mathbb{R}^N)}^{2}
=
\frac{N_{\gamma}(q-2)}{2q}
\|u\|_{L^{q}(\mathbb{R}^{N})}^{q}
-\frac{N_{\gamma}(p-2)}{2p}
\|u\|_{L^{p}(\mathbb{R}^{N})}^{p}.
\end{aligned}
\end{equation}
It follows from \eqref{5.6} that
\begin{equation*}
\begin{aligned}
\omega c=\omega \|u\|_{L^{2}(\mathbb{R}^{N})}^{2}
=&
\|u\|_{L^{q}(\mathbb{R}^{N})}^{q}
-
\|u\|_{D_{\gamma}^{1,2}(\mathbb{R}^N)}^{2}
-\|u\|_{L^{p}(\mathbb{R}^{N})}^{p}\\
=&
\frac{2}{N_{\gamma}}
\|u\|_{D_{\gamma}^{1,2}(\mathbb{R}^N)}^{2}
-2\tilde{m}_{c,\gamma}
\\\geqslant&
\frac{2}{N_{\gamma}}
\|u\|_{D_{\gamma}^{1,2}(\mathbb{R}^N)}^{2}>0.
\end{aligned}
\end{equation*}
Hence, $\omega>0$. 

It follows from $2+\frac{4}{N_{\gamma}}<q<p<2_{\gamma}^{*}$, the H\"{o}lder and Young inequalities that
\begin{equation*}
\begin{aligned}
\frac{1}{q}
\|u\|_{L^{q}(\mathbb{R}^{N})}^{q}
\leqslant&
\frac{1}{q}
\left(
\|u\|_{L^{2}(\mathbb{R}^{N})}^{2}
\right)^{\frac{p-q}{p-2}}
\left(
\|u\|_{L^{p}(\mathbb{R}^{N})}^{p}
\right)^{\frac{q-2}{p-2}}
\\\leqslant&
\frac{\varepsilon^{-\frac{q-2}{p-q}}(p-q)}{q(p-2)}
\|u\|_{L^{2}(\mathbb{R}^{N})}^{2}
+
\frac{\varepsilon(q-2)}{q(p-2)}
\|u\|_{L^{p}(\mathbb{R}^{N})}^{p}
\\=&
\frac{p-q}{q(p-2)}
\left[
\frac{p(q-2)}{q(p-2)}
\right]
^{\frac{q-2}{p-q}}
\|u\|_{L^{2}(\mathbb{R}^{N})}^{2}
+
\frac{1}{p}
\|u\|_{L^{p}(\mathbb{R}^{N})}^{p},
\end{aligned}
\end{equation*}
where we take
$\varepsilon^{-1}
=
\frac{p(q-2)}{q(p-2)}$.
If $\omega\geqslant
\frac{2(p-q)}{q(p-2)}
\left[
\frac{p(q-2)}{q(p-2)}
\right]
^{\frac{q-2}{p-q}}$,
then we have
\begin{equation*}
\begin{aligned}
0=&
\frac{N_{\gamma}-2}{2}
\|u\|_{D_{\gamma}^{1,2}(\mathbb{R}^N)}^{2}
+\omega
\frac{N_{\gamma}}{2} \|u\|_{L^{2}(\mathbb{R}^{N})}^{2}
+\frac{N_{\gamma}}{p}
\|u\|_{L^{p}(\mathbb{R}^{N})}^{p}
-
\frac{N_{\gamma}}{q}
\|u\|_{L^{q}(\mathbb{R}^{N})}^{q}\\
\geqslant&
\frac{N_{\gamma}-2}{2}
\|u\|_{D_{\gamma}^{1,2}(\mathbb{R}^N)}^{2}.
\end{aligned}
\end{equation*}
Hence,
we know that $\omega<\omega_{\sharp}:=
\frac{2(p-q)}{q(p-2)}
\left[
\frac{p(q-2)}{q(p-2)}
\right]
^{\frac{q-2}{p-q}}$. This ends the proof.
\end{proof}

\section{Proof of Theorem \ref{Theorem1.4}}
\label{section6}
In this section,
we prove Theorem \ref{Theorem1.4}.

\begin{lemma}\label{Lemma6.1}
Let $N=k+l$,
$k,l\geqslant 1$,
$\gamma\geqslant0$,
$\alpha=1$
and $p,q$ satisfy \eqref{H}.
Let $Q\in H_{\gamma,rad_{x}}^{1,2}(\mathbb{R}^{N})$ be the optimizer of $C_{N_{\gamma},p,q}^{-1}$, which is obtained in Theorem \ref{Theorem1.1}.
Then $Q$ is a weak solution of
\begin{equation*}
\begin{aligned}
&
-\Delta_{\gamma}Q
+
\frac{2(q-p)}{N_{\gamma}(q-p)}
\frac{Q}
{\int_{\mathbb{R}^{N}}
|Q|^{2}
\mathrm{d}z}
+
\frac{p[4-N_{\gamma}(q-2)]}{2N_{\gamma}(q-p)}
|Q|^{p-2}Q
\\&=
\frac{q[4-N_{\gamma}(p-2)]}{2N_{\gamma}(q-p)}
\frac{
|Q|^{q-2}Q
}
{\int_{\mathbb{R}^{N}}
|Q|^{q}
\mathrm{d}z},
\ \ x\in \mathbb{R}^{N}.
\end{aligned}
\end{equation*}
\end{lemma}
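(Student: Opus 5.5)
The plan is to derive the Euler--Lagrange equation of the Weinstein type functional $W_{N_{\gamma},p,q}$ at its minimizer $Q$, and then use the normalization $\|\nabla_\gamma Q\|_{L^2}^2=\|Q\|_{L^p}^p=1$ from Theorem \ref{Theorem1.1} to simplify the coefficients. The one genuine subtlety is that $Q$ minimizes $W_{N_{\gamma},p,q}$ only over $H^{1,2}_{\gamma,rad_{x}}(\mathbb{R}^N)\setminus\{0\}$, whereas a weak solution must satisfy the identity for every test function in $H^{1,2}_{\gamma}(\mathbb{R}^N)$.

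\textbf{Step 1: first variation in $x$-radial directions.} Set $M:=4-N_\gamma(p-2)=2N_\gamma+4-pN_\gamma$ and
\begin{equation*}
a=\frac{2(q-p)}{M},\qquad b=\frac{4-N_\gamma(q-2)}{M},\qquad d=\frac{N_\gamma(q-p)}{M}.
\end{equation*}
Write $A(u)=\|u\|_{L^2}^2$, $B(u)=\|u\|_{L^p}^p$, $D(u)=\|u\|^2_{D^{1,2}_\gamma}$ and $C(u)=\|u\|_{L^q}^q$. Then
\begin{equation*}
\log W_{N_{\gamma},p,q}=a\log A+b\log B+d\log D-\log C.
\end{equation*}
Each of these functionals is $C^1$ on $H^{1,2}_\gamma(\mathbb{R}^N)$, since $p,q<2^*_\gamma$ and we have the continuous embedding. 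Also $A,B,C,D>0$ at $Q$, because $Q\not\equiv0$. For $\varphi\in H^{1,2}_{\gamma,rad_x}(\mathbb{R}^N)$, the map $t\mapsto \log W_{N_{\gamma},p,q}(Q+t\varphi)$ has a minimum at $t=0$, so
\begin{equation*}
\frac{2a}{A(Q)}\int Q\varphi+\frac{bp}{B(Q)}\int|Q|^{p-2}Q\varphi+\frac{2d}{D(Q)}\int\nabla_\gamma Q\cdot\nabla_\gamma\varphi-\frac{q}{C(Q)}\int|Q|^{q-2}Q\varphi=0 .
\end{equation*}
Inserting $B(Q)=D(Q)=1$ and dividing by $2d$ gives the claimed coefficients:
\begin{equation*}
\frac{a}{d}=\frac{2(q-p)}{N_\gamma(q-p)},\qquad \frac{bp}{2d}=\frac{p[4-N_\gamma(q-2)]}{2N_\gamma(q-p)},\qquad \frac{q}{2d}=\frac{q[4-N_\gamma(p-2)]}{2N_\gamma(q-p)}.
\end{equation*}

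\textbf{Step 2: extension to all test functions.} This is the main obstacle, and I would handle it with the principle of symmetric criticality done by hand. Let $T(\varphi)$ denote the left-hand side above, viewed as a continuous linear functional on $H^{1,2}_\gamma(\mathbb{R}^N)$. The group $O(k)$ acts on functions by $(g\varphi)(x,y)=\varphi(gx,y)$. This action preserves all the integrals involved, and it preserves $|\nabla_\gamma\cdot|$ because the weight $|x|^{2\gamma}$ is rotation invariant. Since $Q$ is invariant under this action, we get $T(g\varphi)=T(\varphi)$ for every $g\in O(k)$. For arbitrary $\varphi\in H^{1,2}_\gamma(\mathbb{R}^N)$, define the average
\begin{equation*}
\bar\varphi=\int_{O(k)}g\varphi\,\mathrm{d}g
\end{equation*}
with respect to normalized Haar measure, understood as a Bochner integral in $H^{1,2}_\gamma(\mathbb{R}^N)$. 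Then $\bar\varphi\in H^{1,2}_{\gamma,rad_x}(\mathbb{R}^N)$, and by linearity and continuity of $T$,
\begin{equation*}
T(\varphi)=\int_{O(k)}T(g\varphi)\,\mathrm{d}g=T(\bar\varphi)=0 .
\end{equation*}
Hence the identity holds for all $\varphi\in H^{1,2}_\gamma(\mathbb{R}^N)$, which is precisely the statement that $Q$ is a weak solution of the displayed equation. Alternatively, one could simply cite Palais' principle of symmetric criticality for the isometric $O(k)$-action, noting that $W_{N_{\gamma},p,q}$ is $O(k)$-invariant.
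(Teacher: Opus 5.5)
Your proposal is correct. Step 1 is exactly the paper's argument. The paper only records that $\left.\frac{\mathrm{d}}{\mathrm{d}\varepsilon}\right|_{\varepsilon=0}W_{N_{\gamma},p,q}(Q+\varepsilon\phi)=0$ for every $\phi\in H^{1,2}_{\gamma,rad_{x}}(\mathbb{R}^{N})$ and declares $Q$ a weak solution. It leaves the coefficient bookkeeping implicit, and your logarithmic differentiation, with $\int_{\mathbb{R}^{N}}|\nabla_{\gamma}Q|^{2}\mathrm{d}z=\int_{\mathbb{R}^{N}}|Q|^{p}\mathrm{d}z=1$ and division by $2d$, supplies it.

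The real difference is your Step 2. The paper never passes from $x$-radial test functions to arbitrary ones, so as written it only shows that the equation holds when tested against $x$-radial functions. Your $O(k)$-averaging is a hand-made principle of symmetric criticality, and it gives the weak formulation against all of $H^{1,2}_{\gamma}(\mathbb{R}^{N})$. That is the sense in which weak solutions are used, e.g.\ in Lemma \ref{Lemma2.5}, and the rescaled function $u_c$ of Lemma \ref{Lemma6.2} inherits it.

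Two small points deserve a line each.
\begin{enumerate}
\item The Bochner integral needs strong continuity of $g\mapsto g\varphi$ in $H^{1,2}_{\gamma}(\mathbb{R}^{N})$. This holds because $\nabla_x(g\varphi)=g^{T}(\nabla_x\varphi)(g\,\cdot,\cdot)$ and, since $|gx|=|x|$, $|x|^{\gamma}\nabla_y(g\varphi)=(|x|^{\gamma}\nabla_y\varphi)(g\,\cdot,\cdot)$. Everything therefore reduces to continuity of rotations on $L^{2}$.
\item An a.e.\ $O(k)$-invariant element must be identified with an element of $H^{1,2}_{\gamma,rad_{x}}(\mathbb{R}^{N})$, which the paper defines pointwise.
\end{enumerate}

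You can also bypass vector-valued integration entirely. Write $T=\langle w,\cdot\rangle_{H^{1,2}_{\gamma}(\mathbb{R}^{N})}$ by Riesz. Invariance of $T$ under the unitary $O(k)$-action forces $w$ to be $x$-radial, so $\|w\|^{2}=T(w)=0$ by Step 1.
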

\begin{proof}
Note that $Q$ is a minimizer of $C_{N_{\gamma},p,q}^{-1}$. Then it satisfies
\begin{equation*}
\begin{aligned}
\left.\frac{\mathrm{d}}{\mathrm{d}\varepsilon}\right|_{\varepsilon=0}W_{N_{\gamma},p,q}(Q+\varepsilon \phi)=0,
\end{aligned}
\end{equation*}
for all $\phi\in
H_{\gamma,rad_{x}}^{1,2}(\mathbb{R}^{N})$. Thus, $Q$ is a weak solution to the  equation.
\end{proof}

\begin{lemma}\label{Lemma6.2}
There exists $u_{c}\in H_{\gamma,rad_{x}}^{1,2}(\mathbb{R}^{N})$ such that
\begin{equation}\label{6.1}
\begin{aligned}
-\Delta_{\gamma} u_{c}
+\omega_{*} u_{c}
+|u_{c}|^{p-2}u_{c}
=|u_{c}|^{q-2}u_{c},
\ \ x\in \mathbb{R}^{N},
\end{aligned}
\end{equation}
where
\begin{equation*}
\begin{aligned}
\omega_{*}=
\frac{4(q-p)}{p[4-N_{\gamma}(q-2)]}
\left[
\frac{p}{q}
\frac{4-N_{\gamma}(q-2)}{4-N_{\gamma}(p-2)}
\int_{\mathbb{R}^{N}}
|Q|^{q}
\mathrm{d}z
\right]
^{\frac{p-2}{p-q}}
\frac{1}
{\int_{\mathbb{R}^{N}}
|Q|^{2}
\mathrm{d}z}.
\end{aligned}
\end{equation*}
Furthermore,
$W_{N_{\gamma},p,q}(u_{c})=C_{N_{\gamma},p,q}^{-1}$,
i.e. $u_{c}$ is an optimizer with $C_{N_{\gamma},p,q}^{-1}$.
\end{lemma}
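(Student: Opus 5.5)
The plan is to take $u_{c}$ to be the explicit rescaling of the Gagliardo--Nirenberg optimizer already announced in Theorem \ref{Theorem1.4},
\begin{equation*}
u_{c}(x,y)=\lambda Q(\mu x,\mu^{1+\gamma}y),
\end{equation*}
and to choose $\lambda,\mu>0$ so that the Euler--Lagrange equation of Lemma \ref{Lemma6.1} becomes exactly \eqref{6.1}. Throughout we use the normalization $\int_{\mathbb{R}^N}|\nabla_\gamma Q|^2\mathrm{d}z=\int_{\mathbb{R}^N}|Q|^p\mathrm{d}z=1$ from Theorem \ref{Theorem1.1}.

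\textbf{Step 1: recompute the Euler--Lagrange equation.} I will not rely on the printed constants in Lemma \ref{Lemma6.1}; the coefficient of the $Q$-term looks misprinted. Instead, I will differentiate $\log W_{N_\gamma,p,q}(Q+\varepsilon\phi)$ at $\varepsilon=0$, using the normalization above. With $D:=4-N_\gamma(p-2)$ this gives
\begin{equation*}
\frac{N_\gamma(q-p)}{D}\bigl(-\Delta_\gamma Q\bigr)+\frac{2(q-p)}{D}\frac{Q}{\|Q\|_{L^2}^2}+\frac{p[4-N_\gamma(q-2)]}{2D}|Q|^{p-2}Q=\frac{q}{2}\frac{|Q|^{q-2}Q}{\|Q\|_{L^q}^q}.
\end{equation*}
Dividing by $N_\gamma(q-p)/D$ puts it in the form
\begin{equation*}
-\Delta_\gamma Q+B\,Q+C\,|Q|^{p-2}Q=E\,|Q|^{q-2}Q .
\end{equation*}

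\textbf{Step 2: match the coefficients.} For $v=\lambda Q(\mu x,\mu^{1+\gamma}y)$, the anisotropic dilation satisfies $\Delta_\gamma[Q(\mu x,\mu^{1+\gamma}y)]=\mu^2(\Delta_\gamma Q)(\mu x,\mu^{1+\gamma}y)$. Evaluating the $Q$-equation at the dilated point and multiplying by $\lambda\mu^2$ shows that $v$ solves
\begin{equation*}
-\Delta_\gamma v+\mu^2B\,v+\mu^2C\lambda^{2-p}|v|^{p-2}v=\mu^2E\lambda^{2-q}|v|^{q-2}v .
\end{equation*}
Hence \eqref{6.1} holds as soon as
\begin{equation*}
\mu^2C\lambda^{2-p}=1,\qquad \mu^2E\lambda^{2-q}=1,\qquad \omega_*=\mu^2B .
\end{equation*}
Dividing the first two conditions gives $\lambda^{q-p}=E/C$, that is, $\lambda^{p-q}=C/E$. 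Then $\mu^2=\lambda^{p-2}/C$, and $\omega_*=\mu^2 B$.

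\textbf{Step 3: check against the stated formulas.} Substituting $C$, $E$ and $B$, I expect to recover exactly the $\lambda$, $\mu$ of Theorem \ref{Theorem1.4}, including the factor $\frac{p}{q}\frac{4-N_\gamma(q-2)}{4-N_\gamma(p-2)}\int_{\mathbb{R}^N}|Q|^q\mathrm{d}z$. I also expect the displayed $\omega_*$, including its factor $\frac{4(q-p)}{p[4-N_\gamma(q-2)]}$ and the power $\frac{p-2}{p-q}$. This bookkeeping is where I anticipate the only real difficulty: keeping track of the exponents and the exact Euler--Lagrange constants. If a mismatch appears, I will trace it back to the coefficient in Lemma \ref{Lemma6.1} rather than to the scaling.

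\textbf{Step 4: optimality and regularity class.} The computation in Step 1 of Lemma \ref{Lemma3.1} shows that every map $v\mapsto\lambda v(\mu x,\mu^{1+\gamma}y)$ leaves $W_{N_\gamma,p,q}$ invariant. Indeed, the total homogeneity in $\lambda$ is $2\cdot\frac{2(q-p)}{D}+p\cdot\frac{4-N_\gamma(q-2)}{D}+2\cdot\frac{N_\gamma(q-p)}{D}-q=0$, and the same cancellation holds in $\mu$. Therefore
\begin{equation*}
W_{N_\gamma,p,q}(u_c)=W_{N_\gamma,p,q}(Q)=C_{N_\gamma,p,q}^{-1}.
\end{equation*}
Finally, $x$-radiality and membership in $H^{1,2}_{\gamma,rad_x}(\mathbb{R}^N)$ are clearly preserved by this dilation. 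This will prove that $u_c$ solves \eqref{6.1} with the stated $\omega_*$ and is an optimizer of $C_{N_\gamma,p,q}^{-1}$.
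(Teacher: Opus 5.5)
Your proposal is correct and follows the paper's route: set $u_c=\lambda Q(\mu x,\mu^{1+\gamma}y)$, use the Euler--Lagrange equation of Lemma \ref{Lemma6.1} and the dilation invariance of $W_{N_\gamma,p,q}$, and carry out the coefficient matching that the paper only asserts; with $B=\frac{2}{N_\gamma\|Q\|_{L^2}^2}$, $C=\frac{p[4-N_\gamma(q-2)]}{2N_\gamma(q-p)}$ and $E=\frac{q[4-N_\gamma(p-2)]}{2N_\gamma(q-p)\|Q\|_{L^q}^q}$ (all positive in both ranges of \eqref{H}), your relations $\lambda^{p-q}=C/E$, $\mu^2=\lambda^{p-2}/C$, $\omega_*=\mu^2B$ reproduce the stated $\lambda$, $\mu$, $\omega_*$ exactly. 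One correction: Lemma \ref{Lemma6.1} is not misprinted, since $\frac{2(q-p)}{N_\gamma(q-p)}$ is just $\frac{2}{N_\gamma}$ unsimplified, which is exactly the $Q$-coefficient your Step 1 produces after dividing by $N_\gamma(q-p)/[4-N_\gamma(p-2)]$.
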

\begin{proof}
Note that $Q$ is a minimizer of $C_{N_{\gamma},p,q}^{-1}$. 
We define
\begin{equation*}
\begin{aligned}
u_{c}(x,y)=\lambda Q(\mu x, \mu^{1+\gamma} y),
\end{aligned}
\end{equation*}
where
\begin{equation}\label{6.2}
\begin{aligned}
\begin{cases}
\lambda=
\left[
\frac{p}{q}
\frac{4-N_{\gamma}(q-2)}{4-N_{\gamma}(p-2)}
\int_{\mathbb{R}^{N}}
|Q|^{q}
\mathrm{d}z
\right]^{\frac{1}{p-q}},\\
\mu
=
\left[
\frac{2}{p}
\frac{N_{\gamma}(q-p)}{4-N_{\gamma}(q-2)}
\right]^{\frac{1}{2}}
\left[
\frac{p}{q}
\frac{4-N_{\gamma}(q-2)}{4-N_{\gamma}(p-2)}
\int_{\mathbb{R}^{N}}
|Q|^{q}
\mathrm{d}z
\right]
^{\frac{p-2}{2(p-q)}},
\end{cases}
\end{aligned}
\end{equation}
Then $W_{N_{\gamma},p,q}(u_{c})=W_{N_{\gamma},p,q}(Q)=C_{N_{\gamma},p,q}^{-1}$ and
\begin{equation*}
\begin{aligned}
-\Delta_{\gamma} u_{c}
+\omega_{*} u_{c}
+|u_{c}|^{p-2}u_{c}
=|u_{c}|^{q-2}u_{c},
\ \ 
x\in\mathbb{R}^{N},
\end{aligned}
\end{equation*}
with
\begin{equation*}
\begin{aligned}
\omega_{*}=
\frac{4(q-p)}{p[4-N_{\gamma}(q-2)]}
\left[
\frac{p}{q}
\frac{4-N_{\gamma}(q-2)}{4-N_{\gamma}(p-2)}
\int_{\mathbb{R}^{N}}
|Q|^{q}
\mathrm{d}z
\right]
^{\frac{p-2}{p-q}}
\frac{1}
{\int_{\mathbb{R}^{N}}
|Q|^{2}
\mathrm{d}z}.
\end{aligned}
\end{equation*}
This completes the proof.
\end{proof}

\begin{lemma}\label{Lemma6.3}
It holds
\begin{equation*}
\begin{aligned}
\int_{\mathbb{R}^{N}}
|\nabla_{\gamma}u_{c}|^{2}
\mathrm{d}z
=
\frac{2N_{\gamma}(q-p)}{p[4-N_{\gamma}(q-2)]}
\int_{\mathbb{R}^{N}}
|u_{c}|^{p}
\mathrm{d}z,
\end{aligned}
\end{equation*}
\begin{equation*}
\begin{aligned}
\int_{\mathbb{R}^{N}}
|\nabla_{\gamma}u_{c}|^{2}
\mathrm{d}z
=
\frac{2N_{\gamma}(q-p)}{q[4-N_{\gamma}(p-2)]}
\int_{\mathbb{R}^{N}}
|u_{c}|^{q}
\mathrm{d}z
\end{aligned}
\end{equation*}
and
\begin{equation*}
\begin{aligned}
\int_{\mathbb{R}^{N}}|u_{c}|^{q}\mathrm{d}x
=
\frac{q[4-N_{\gamma}(p-2)]}{p[4-N_{\gamma}(q-2)]}
\int_{\mathbb{R}^{N}}
|u_{c}|^{p}
\mathrm{d}z,
\end{aligned}
\end{equation*}
where $u_{c}$ is defined in Lemma \ref{Lemma6.2}.
\end{lemma}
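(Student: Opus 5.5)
The plan is a direct computation from the explicit form $u_{c}(x,y)=\lambda Q(\mu x,\mu^{1+\gamma}y)$ in Lemma \ref{Lemma6.2}, together with the normalization $\int_{\mathbb{R}^{N}}|\nabla_{\gamma}Q|^{2}\mathrm{d}z=\int_{\mathbb{R}^{N}}|Q|^{p}\mathrm{d}z=1$ from Theorem \ref{Theorem1.1}. I will not need the Euler--Lagrange equation of Lemma \ref{Lemma6.1}.

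\textbf{Step 1 (scaling laws).} The change of variables $(x',y')=(\mu x,\mu^{1+\gamma}y)$ has Jacobian $\mu^{-k-(1+\gamma)l}=\mu^{-N_{\gamma}}$. For the gradient, $\nabla_{x}u_{c}=\lambda\mu(\nabla_{x'}Q)$. For the $y$-part,
\[
(1+\gamma)|x|^{\gamma}\nabla_{y}u_{c}=\lambda\mu^{1+\gamma}(1+\gamma)|x|^{\gamma}\nabla_{y'}Q=\lambda\mu\,(1+\gamma)|x'|^{\gamma}\nabla_{y'}Q ,
\]
using $|x'|^{\gamma}=\mu^{\gamma}|x|^{\gamma}$. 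Hence $|\nabla_{\gamma}u_{c}|^{2}=\lambda^{2}\mu^{2}|\nabla_{\gamma}Q|^{2}(x',y')$. This yields
\[
\int_{\mathbb{R}^{N}}|\nabla_{\gamma}u_{c}|^{2}\mathrm{d}z=\lambda^{2}\mu^{2-N_{\gamma}},\quad
\int_{\mathbb{R}^{N}}|u_{c}|^{p}\mathrm{d}z=\lambda^{p}\mu^{-N_{\gamma}},\quad
\int_{\mathbb{R}^{N}}|u_{c}|^{q}\mathrm{d}z=\lambda^{q}\mu^{-N_{\gamma}}\int_{\mathbb{R}^{N}}|Q|^{q}\mathrm{d}z .
\]

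\textbf{Step 2 (plug in $\lambda,\mu$).} Write
\[
A:=\frac{p}{q}\frac{4-N_{\gamma}(q-2)}{4-N_{\gamma}(p-2)}\int_{\mathbb{R}^{N}}|Q|^{q}\mathrm{d}z ,
\]
so that by \eqref{6.2} we have $\lambda=A^{\frac{1}{p-q}}$ and $\mu^{2}=\frac{2N_{\gamma}(q-p)}{p[4-N_{\gamma}(q-2)]}A^{\frac{p-2}{p-q}}$.

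For the first identity, the ratio of the gradient term to the $L^{p}$ term is
\[
\lambda^{2-p}\mu^{2}=A^{\frac{2-p}{p-q}}\cdot\frac{2N_{\gamma}(q-p)}{p[4-N_{\gamma}(q-2)]}A^{\frac{p-2}{p-q}}=\frac{2N_{\gamma}(q-p)}{p[4-N_{\gamma}(q-2)]},
\]
which is exactly the first identity.

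For the third identity, the ratio of the $L^{q}$ term to the $L^{p}$ term is
\[
\lambda^{q-p}\int_{\mathbb{R}^{N}}|Q|^{q}\mathrm{d}z=A^{-1}\int_{\mathbb{R}^{N}}|Q|^{q}\mathrm{d}z=\frac{q[4-N_{\gamma}(p-2)]}{p[4-N_{\gamma}(q-2)]},
\]
which gives the third identity.

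The second identity then follows by substituting the third into the first:
\[
\frac{2N_{\gamma}(q-p)}{p[4-N_{\gamma}(q-2)]}\cdot\frac{p[4-N_{\gamma}(q-2)]}{q[4-N_{\gamma}(p-2)]}=\frac{2N_{\gamma}(q-p)}{q[4-N_{\gamma}(p-2)]}.
\]

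\textbf{Main obstacle.} The only delicate point is the anisotropic gradient scaling in Step 1. One must check that the weight $|x|^{\gamma}$ absorbs the extra factor $\mu^{\gamma}$ from the $y$-dilation, so that both components of $\nabla_{\gamma}$ scale uniformly by $\lambda\mu$. The remaining work is exponent bookkeeping: the powers of $A$ must cancel exactly, which they do because $\lambda$ and $\mu$ were chosen in \eqref{6.2} precisely for this purpose.
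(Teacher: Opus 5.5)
Your proposal is correct and is essentially the paper's proof: the same scaling computation using $\int_{\mathbb{R}^{N}}|\nabla_{\gamma}Q|^{2}\mathrm{d}z=\int_{\mathbb{R}^{N}}|Q|^{p}\mathrm{d}z=1$, and the same ratio $\lambda^{2-p}\mu^{2}$ for the first identity. The only differences are the order of deduction and the level of detail. The paper computes $\lambda^{2-q}\mu^{2}/\int_{\mathbb{R}^{N}}|Q|^{q}\mathrm{d}z$ directly to get the second identity and deduces the third. You compute $\lambda^{q-p}\int_{\mathbb{R}^{N}}|Q|^{q}\mathrm{d}z$ directly to get the third and deduce the second, and you also check explicitly the anisotropic scaling $|\nabla_{\gamma}u_{c}|^{2}=\lambda^{2}\mu^{2}|\nabla_{\gamma}Q|^{2}(\mu x,\mu^{1+\gamma}y)$, which the paper uses without comment.
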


\begin{proof}
Using \eqref{6.2} and $\int_{\mathbb{R}^{N}}
|\nabla_{\gamma}Q|^{2}
\mathrm{d}z=\int_{\mathbb{R}^{N}}|Q|^{p}\mathrm{d}z=1$, a direct calculation yields
\begin{equation*}
\begin{aligned}
\frac{\int_{\mathbb{R}^{N}}
|\nabla_{\gamma}u_{c}|^{2}
\mathrm{d}z}
{\int_{\mathbb{R}^{N}}|u_{c}|^{p}\mathrm{d}z}
=&
\frac{
\lambda^{2}
\mu^{2-k-l(1+\gamma)}
\int_{\mathbb{R}^{N}}
|\nabla_{\gamma}Q|^{2}
\mathrm{d}z}
{\lambda^{p}
\mu^{-k-l(1+\gamma)}
\int_{\mathbb{R}^{N}}|Q|^{p}\mathrm{d}z}
=
\lambda^{2-p}
\mu^{2}
=
\frac{2N_{\gamma}(q-p)}{p[4-N_{\gamma}(q-2)]}
\end{aligned}
\end{equation*}
and
\begin{equation*}
\begin{aligned}
\frac{\int_{\mathbb{R}^{N}}
|\nabla_{\gamma}u_{c}|^{2}
\mathrm{d}z}
{\int_{\mathbb{R}^{N}}|u_{c}|^{q}\mathrm{d}z}
=&
\frac{
\lambda^{2}
\mu^{2-k-l(1+\gamma)}
\int_{\mathbb{R}^{N}}
|\nabla_{\gamma}Q|^{2}
\mathrm{d}z}
{\lambda^{q}
\mu^{-k-l(1+\gamma)}
\int_{\mathbb{R}^{N}}|Q|^{q}\mathrm{d}z}
=
\frac{
\lambda^{2-q}\mu^{2}}
{\int_{\mathbb{R}^{N}}|Q|^{q}\mathrm{d}z}
=
\frac{2N_{\gamma}(q-p)}{q[4-N_{\gamma}(p-2)]}.
\end{aligned}
\end{equation*}
Then
\begin{equation*}
\begin{aligned}
\int_{\mathbb{R}^{N}}
|u_{c}|^{q}
\mathrm{d}z
=
\frac{q[4-N_{\gamma}(p-2)]}{p[4-N_{\gamma}(q-2)]}
\int_{\mathbb{R}^{N}}
|u_{c}|^{p}
\mathrm{d}z,
\end{aligned}
\end{equation*}
which shows our desired result.
\end{proof}

\begin{lemma}\label{Lemma6.4}
Let $u_{c}$ be defined in Lemma \ref{Lemma6.2}. Then
\begin{equation*}
\begin{aligned}
\frac{1}{2}
\int_{\mathbb{R}^{N}}
|\nabla_{\gamma}u_{c}|^{2}
\mathrm{d}z
+
\frac{1}{p}
\int_{\mathbb{R}^{N}}
|u_{c}|^{p}
\mathrm{d}z
-
\frac{1}{q}
\int_{\mathbb{R}^{N}}
|u_{c}|^{q}
\mathrm{d}z
=0,
\end{aligned}
\end{equation*}
and
\begin{equation}\label{6.3}
\begin{aligned}
\int_{\mathbb{R}^{N}}|u_{c}|^{2}\mathrm{d}x
=&
C_{N_{\gamma},p,q}
^{-\frac{4-N_{\gamma}(p-2)}{2(q-p)}}
\left[
\frac{p}{q}
\frac{4-N_{\gamma}(q-2)}{4-N_{\gamma}(p-2)}
\right]
^{\frac{4-N_{\gamma}(p-2)}{2(p-q)}}
\left[
\frac{2}{p}
\frac{N_{\gamma}(q-p)}{4-N_{\gamma}(q-2)}
\right]
^{\frac{-N_{\gamma}}{2}}
\\=&c_{\gamma}.
\end{aligned}
\end{equation}
\end{lemma}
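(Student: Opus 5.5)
The plan is to derive both assertions from the three relations in Lemma \ref{Lemma6.3}, together with the fact (Lemma \ref{Lemma6.2}) that $u_c$ attains $C_{N_{\gamma},p,q}^{-1}$. Write
$$
A=\int_{\mathbb{R}^N}|\nabla_\gamma u_c|^2\,\mathrm{d}z,\qquad B=\int_{\mathbb{R}^N}|u_c|^p\,\mathrm{d}z,\qquad D=\int_{\mathbb{R}^N}|u_c|^q\,\mathrm{d}z .
$$

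\textbf{Energy identity.} Lemma \ref{Lemma6.3} expresses $A$ and $D$ as multiples of $B$:
$$
A=\frac{2N_\gamma(q-p)}{p[4-N_\gamma(q-2)]}B,\qquad D=\frac{q[4-N_\gamma(p-2)]}{p[4-N_\gamma(q-2)]}B .
$$
Substituting these into $\frac12A+\frac1pB-\frac1qD$ gives $\frac{B}{p[4-N_\gamma(q-2)]}$ times
$$
N_\gamma(q-p)+4-N_\gamma(q-2)-\bigl[4-N_\gamma(p-2)\bigr].
$$
Expanding, this equals $N_\gamma(q-p)-N_\gamma(q-2)+N_\gamma(p-2)=0$. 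So the energy vanishes, and this step is routine.

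\textbf{Mass identity.} Since $W_{N_\gamma,p,q}(u_c)=C_{N_\gamma,p,q}^{-1}$, set $\theta=4-N_\gamma(p-2)$. The optimality relation reads
$$
c^{\frac{2(q-p)}{\theta}}=C_{N_\gamma,p,q}^{-1}\,D\,B^{-\frac{4-N_\gamma(q-2)}{\theta}}A^{-\frac{N_\gamma(q-p)}{\theta}},
\qquad c=\int_{\mathbb{R}^N}|u_c|^2\,\mathrm{d}z .
$$
Using the relations above, the factor $DB^{-1}A^{-N_\gamma(q-p)/\theta}B^{N_\gamma(q-p)/\theta}$ becomes explicit. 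The remaining power of $B$ is $1-\frac{4-N_\gamma(q-2)}{\theta}-\frac{N_\gamma(q-p)}{\theta}=0$, so $B$ drops out entirely. What is left is a pure constant:
$$
c^{\frac{2(q-p)}{\theta}}=C_{N_\gamma,p,q}^{-1}\cdot\frac{q}{p}\frac{\theta}{4-N_\gamma(q-2)}\left[\frac{p}{2}\frac{4-N_\gamma(q-2)}{N_\gamma(q-p)}\right]^{\frac{N_\gamma(q-p)}{\theta}} .
$$
This is exactly the constant appearing in the proof of Lemma \ref{Lemma4.2}, taken with equality.

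Raising both sides to the power $\theta/(2(q-p))$ gives
$$
c=C_{N_\gamma,p,q}^{-\frac{\theta}{2(q-p)}}\left[\frac pq\frac{4-N_\gamma(q-2)}{\theta}\right]^{\frac{\theta}{2(p-q)}}\left[\frac2p\frac{N_\gamma(q-p)}{4-N_\gamma(q-2)}\right]^{-\frac{N_\gamma}{2}},
$$
since $\frac{N_\gamma(q-p)}{\theta}\cdot\frac{\theta}{2(q-p)}=\frac{N_\gamma}{2}$. This is the definition of $c_\gamma$ in Theorem \ref{Theorem1.2}, which proves \eqref{6.3}.

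\textbf{Main obstacle.} The only real difficulty is the exponent bookkeeping, in particular the cancellation of $B$ and the signs of the exponents. I would cross-check it by the direct route as well. Compute $\int|u_c|^2=\lambda^2\mu^{-N_\gamma}\int|Q|^2$ with $\lambda,\mu$ from \eqref{6.2}. Then use $W_{N_\gamma,p,q}(Q)=C_{N_\gamma,p,q}^{-1}$ with $\int|\nabla_\gamma Q|^2=\int|Q|^p=1$, which gives $\bigl(\int|Q|^2\bigr)^{2(q-p)/\theta}=C_{N_\gamma,p,q}^{-1}\int|Q|^q$. Eliminating $\int|Q|^q$ between the two expressions should reproduce the same formula for $c_\gamma$.
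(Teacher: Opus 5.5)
Your proof is correct. The energy identity is handled exactly as in the paper: substitute the ratios of Lemma \ref{Lemma6.3} and note that $N_\gamma(q-p)+4-N_\gamma(q-2)-[4-N_\gamma(p-2)]=0$.

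For the mass identity your main route differs from the paper's. The paper returns to the scaling parameters. It writes $\int|u_c|^2\,\mathrm{d}z=\lambda^2\mu^{-N_\gamma}\int|Q|^2\,\mathrm{d}z$ and substitutes \eqref{6.2}. It then tacitly uses $W_{N_\gamma,p,q}(Q)=C_{N_\gamma,p,q}^{-1}$ with $\int|\nabla_\gamma Q|^2\,\mathrm{d}z=\int|Q|^p\,\mathrm{d}z=1$, i.e. (with your $\theta=4-N_\gamma(p-2)$) $\bigl(\int|Q|^2\,\mathrm{d}z\bigr)^{2(q-p)/\theta}=C_{N_\gamma,p,q}^{-1}\int|Q|^q\,\mathrm{d}z$, to eliminate $\int|Q|^q\,\mathrm{d}z$ and $\int|Q|^2\,\mathrm{d}z$ together. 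This is precisely your proposed cross-check.

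You instead work only with $u_c$. You use $W_{N_\gamma,p,q}(u_c)=C_{N_\gamma,p,q}^{-1}$ from Lemma \ref{Lemma6.2} (scale invariance of $W_{N_\gamma,p,q}$) together with the two ratios. The quantity $B$ drops out because the $L^p$ and $D^{1,2}_\gamma$ exponents in $W_{N_\gamma,p,q}$ sum to $1$.

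What your version buys is a structural statement that never mentions $\lambda,\mu$. For any $u$ satisfying the normalization of Lemma \ref{Lemma6.3}, $W_{N_\gamma,p,q}(u)$ equals $\bigl(\int|u|^2\,\mathrm{d}z\bigr)^{2(q-p)/\theta}$ times an explicit positive constant. Hence $W_{N_\gamma,p,q}(u)=C_{N_\gamma,p,q}^{-1}$ if and only if $\int|u|^2\,\mathrm{d}z=c_\gamma$. This is the same identity the paper later uses, in the opposite direction, in Lemma \ref{Lemma7.1} and Lemma \ref{Lemma8.2}(iii), so your argument makes the link between these results visible. The paper's version is a shorter direct computation but hides where the optimality of $Q$ enters.

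Your sign bookkeeping also holds in both ranges of \eqref{H}. When $2+\frac{4}{N_\gamma}<q<p$, the quantities $\theta$, $4-N_\gamma(q-2)$ and $q-p$ are all negative. So all the bases and the exponent $\frac{2(q-p)}{\theta}$ are positive, and taking the $\frac{\theta}{2(q-p)}$-th power is legitimate.
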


\begin{proof}
It follows from Lemma \ref{Lemma6.3} that
\begin{equation*}
\begin{aligned}
&\frac{1}{2}
\int_{\mathbb{R}^{N}}
|\nabla_{\gamma}u_{c}|^{2}
\mathrm{d}z
+
\frac{1}{p}
\int_{\mathbb{R}^{N}}
|u_{c}|^{p}
\mathrm{d}z
-
\frac{1}{q}
\int_{\mathbb{R}^{N}}
|u_{c}|^{q}
\mathrm{d}z
\\=&
\frac{N_{\gamma}(q-p)}
{p[4-N_{\gamma}(q-2)]}
\int_{\mathbb{R}^{N}}
|u_{c}|^{p}
\mathrm{d}z
+
\frac{1}{p}
\int_{\mathbb{R}^{N}}
|u_{c}|^{p}
\mathrm{d}x
-
\frac{4-N_{\gamma}(p-2)}
{p[4-N_{\gamma}(q-2)]}
\int_{\mathbb{R}^{N}}
|u_{c}|^{p}
\mathrm{d}z
=0.
\end{aligned}
\end{equation*}
From \eqref{6.2}  and $\int_{\mathbb{R}^{N}}
|\nabla_{\gamma}Q|^{2}
\mathrm{d}z=\int_{\mathbb{R}^{N}}|Q|^{p}\mathrm{d}x=1$,
we know that
\begin{equation*}
\begin{aligned}
\int_{\mathbb{R}^{N}}|u_{c}|^{2}\mathrm{d}z
=&
\lambda^{2}
\mu^{-k-l(1+\gamma)}
\int_{\mathbb{R}^{N}}
|Q|^{2}
\mathrm{d}z\\
=&
C_{N_{\gamma},p,q}
^{-\frac{4-N_{\gamma}(p-2)}{2(q-p)}}
\left[
\frac{p}{q}
\frac{4-N_{\gamma}(q-2)}{4-N_{\gamma}(p-2)}
\right]
^{\frac{4-N_{\gamma}(p-2)}{2(p-q)}}
\left[
\frac{2}{p}
\frac{N_{\gamma}(q-p)}{4-N_{\gamma}(q-2)}
\right]
^{\frac{-N_{\gamma}}{2}}.
\end{aligned}
\end{equation*}
The proof is complete.
\end{proof}

\begin{lemma}\label{Lemma6.5}
Let $u_{c}$ be a solution in Lemma \ref{Lemma6.2}.
Then $u_{c}$ is a $x$-radial action GSS of \eqref{G} for $\omega=\omega_{*}$.
\end{lemma}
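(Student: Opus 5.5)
The plan is to show that every nontrivial $x$-radial weak solution $v$ of \eqref{6.1} with $\omega=\omega_*$ satisfies $I_{\omega_*}(v)\geqslant I_{\omega_*}(u_c)$. Since $u_c$ is itself such a solution by Lemma \ref{Lemma6.2}, this gives $I_{\omega_*}(u_c)=\tilde e_{\omega_*}$. As in Theorem \ref{Theorem1.4}(iii), I work in the range $2<p<q<2+\frac{4}{N_\gamma}$, where Lemma \ref{Lemma2.6} is available.

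\textbf{Step 1: the value $I_{\omega_*}(u_c)$.} By Lemma \ref{Lemma6.4} we have $E_\gamma(u_c)=0$ and $\|u_c\|_{L^2}^2=c_\gamma$. Hence
\begin{equation*}
I_{\omega_*}(u_c)=\frac{\omega_*}{2}c_\gamma .
\end{equation*}
Lemma \ref{Lemma6.3} expresses $\|u_c\|_{D^{1,2}_\gamma}^2$, $\|u_c\|_{L^p}^p$ and $\|u_c\|_{L^q}^q$ through a single quantity. Combined with the Nehari identity of Lemma \ref{Lemma2.5}, this also expresses $\omega_* c_\gamma$ through that quantity.

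\textbf{Step 2: rewriting $I_{\omega_*}(v)$ for a competitor.} Let $v$ be any nontrivial $x$-radial solution. Write $A=\|v\|_{D^{1,2}_\gamma}^2$, $B=\|v\|_{L^p}^p$, $D=\|v\|_{L^q}^q$ and $m=\|v\|_{L^2}^2$. Lemma \ref{Lemma2.5} gives $N(v)=P(v)=0$. These are two linear relations, which I use to eliminate $\omega_* m$ and $A$, writing $I_{\omega_*}(v)$ as a function of $(B,D)$, or equivalently of $(m,D)$.

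I then impose the sharp inequality \eqref{1.2}:
\begin{equation*}
D\leqslant C_{N_\gamma,p,q}\, m^{\frac{2(q-p)}{4-N_\gamma(p-2)}}B^{\frac{4-N_\gamma(q-2)}{4-N_\gamma(p-2)}}A^{\frac{N_\gamma(q-p)}{4-N_\gamma(p-2)}} .
\end{equation*}
After substituting the Pohozaev relations, this becomes a single constraint among the remaining variables with $\omega_*$ fixed. Minimizing the resulting expression for $I_{\omega_*}$ under this constraint is then an elementary calculus problem in one or two variables. The claim is that the minimum equals $\frac{\omega_*}{2}c_\gamma$ and is attained exactly on the ratios of Lemma \ref{Lemma6.3}, because $u_c$ is a GN optimizer with those ratios.

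\textbf{Step 3: fallback via Lemma \ref{Lemma2.6}.} If the direct minimization in Step 2 is awkward, I use Lemma \ref{Lemma2.6} instead. It gives $I_{\omega_*}(v)\geqslant I_{\omega_*}(v^t)$ for all $t>0$. Combining this with the mass-preserving dilation $v_s=s^{N_\gamma/2}v(sx,s^{1+\gamma}y)$ from Lemma \ref{Lemma4.1}, I aim to compare $I_{\omega_*}(v)$ with
\begin{equation*}
\frac{\omega_*}{2}\|v^{t}\|_{L^2}^2+\tilde m_{\|v^t\|_{L^2}^2,\gamma}.
\end{equation*}
The parameter $t$ is chosen so that $\|v^t\|_{L^2}^2=c_\gamma$. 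Since $\tilde m_{c_\gamma,\gamma}=0$ by Theorem \ref{Theorem1.2}, and $E_\gamma(v^t)\geqslant\tilde m_{c_\gamma,\gamma}$, this yields $I_{\omega_*}(v)\geqslant\frac{\omega_*}{2}c_\gamma=I_{\omega_*}(u_c)$.

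\textbf{Main obstacle.} The hard step is verifying that the extremum in Step 2 is exactly $\frac{\omega_*}{2}c_\gamma$, rather than merely bounded below by a constant. This requires careful exponent bookkeeping, matching the formula for $c_\gamma$ with the parameters $\lambda,\mu$ in \eqref{6.2}. Similarly, in the fallback of Step 3 one must check that the sign conditions coming from $f_1,f_2\geqslant0$ in Lemma \ref{Lemma2.6} point in the right direction.
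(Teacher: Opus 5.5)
Your Step 3 is correct and is essentially the paper's proof. You use Lemma \ref{Lemma2.6} to pass from a solution $v$ to $v^{t}$ with $\|v^{t}\|_{L^2}^2=c_\gamma$, and then use $E_\gamma(v^{t})\geqslant \tilde m_{c_\gamma,\gamma}=0=E_\gamma(u_c)$, so that $I_{\omega_*}(v)\geqslant I_{\omega_*}(v^{t})\geqslant \frac{\omega_*}{2}c_\gamma=I_{\omega_*}(u_c)$.

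Comparing with an arbitrary nontrivial $x$-radial solution $v$ is in fact slightly cleaner than the paper. The paper compares with an $x$-radial action GSS $v_c$, whose existence is only asserted later, in Lemma \ref{Lemma8.1}.

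Your sign worry is unfounded. Both $f_1$ and $f_2$ are nonnegative with minimum $0$ at $t=1$, and $\omega_*>0$, so Lemma \ref{Lemma2.6} gives exactly the inequality you need. The required $t$ exists because $\|v^{t}\|_{L^2}^2=t^{4-N_\gamma(p-2)}\|v\|_{L^2}^2$, and the exponent is positive for $p<2+\frac{4}{N_\gamma}$. The mass-preserving dilation $v_s$ plays no role.

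Step 2, by contrast, is not a proof as written. That $u_c$ is a Gagliardo--Nirenberg optimizer only says the constraint is active at the ratios of Lemma \ref{Lemma6.3}. It does not say those ratios minimize the reduced function of $B/\|v\|_{L^q}^q$, so your stated justification is a non sequitur.

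The claim in Step 2 is nevertheless true, and the natural way to prove it is the Step 3 computation. Since $J=2N-(p-2)P$, the set cut out by $N(v)=P(v)=0$ together with \eqref{1.2} lies inside the set handled by Step 3. I would drop Step 2 and present Step 3 as the proof.
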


\begin{proof}
Since $u_{c}$ satisfies \eqref{6.1}, it follows that
\begin{equation}\label{6.4}
\begin{aligned}
\tilde{e}_{\omega_{*}}\leqslant
I_{\omega_{*}}(u_{c}).
\end{aligned}
\end{equation}
Our goal is to prove
$I_{\omega_{*}}(u_{c})= \tilde{e}_{\omega_{*}}$.

Now let $v_{c}\in H_{\gamma,rad_{x}}^{1,2}(\mathbb{R}^N)\backslash\{0\}$ be any $x$-radial action GSS of equation \eqref{G} for $\omega=\omega_{*}$. Then by Lemma \ref{Lemma2.6}, one has, for any $t\in(0,\infty)$,
\begin{equation}\label{6.5}
\begin{aligned}
I_{\omega_{*}}(v_{c})-I_{\omega_{*}}(v_{c}^{t})\geqslant 0,
\end{aligned}
\end{equation} 
where $v_{c}^{t}(z)=t^{2}v_{c}(t^{p-2}x,t^{(p-2)(1+\gamma)}y)$. Choosing 
$t_{*}:=\left(
\frac{c_{\gamma}}{\|v_{c}\|^{2}_{L^{2}(\mathbb{R}^N)}}
\right)^{\frac{1}{4-N_{\gamma}(p-2)}}$, one finds that
\begin{equation*}
\begin{aligned}
\|v_{c}^{t_*}\|^{2}_{L^{2}(\mathbb{R}^N)}=c_{\gamma}.
\end{aligned}
\end{equation*}
Thus, we have
\begin{equation*}
\begin{aligned}
E_{\gamma}(u_c)\leqslant E_{\gamma}(v^{t_{*}}_{c}).
\end{aligned}
\end{equation*}
It follows from \eqref{6.4} and \eqref{6.5} that
\begin{equation*}
\begin{aligned}
\tilde{e}_{\omega_{*}}\leqslant
I_{\omega_{*}}(u_{c})
=&
E_{\gamma}(u_{c})+\frac{\omega_{*}c_{\gamma}}{2}
\\\leqslant&
E_{\gamma}(v^{t_{*}}_{c})+\frac{\omega_{*}}{2}\|v_{c}^{t_*}\|^{2}_{L^{2}(\mathbb{R}^N)}
\\=&
I_{\omega_{*}}(v_{c}^{t_*})
\leqslant
I_{\omega_{*}}(v_{c})
=
\tilde{e}_{\omega_{*}}.
\end{aligned}
\end{equation*}
Obviously, one has
$I_{\omega_{*}}(u_{c})=\tilde{e}_{\omega_{*}}$.
The proof is complete.
\end{proof}

\begin{proof}[Proof of Theorem \ref{Theorem1.4}]
Theorem \ref{Theorem1.4} follows by Lemmas \ref{Lemma6.1}-\ref{Lemma6.5}.
\end{proof}

\section{Proof of Theorem \ref{Theorem1.5}}
In this section,
we give the proof of Theorem \ref{Theorem1.5}.

\begin{lemma}
\label{Lemma7.1}
All $x$-radial energy GSS with critical mass $c_{\gamma}$ are optimizers of \eqref{1.2}.
\end{lemma}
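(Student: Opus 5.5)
Let $U\in\tilde S_{c_\gamma}$ be an $x$-radial energy GSS, so $E_\gamma(U)=\tilde m_{c_\gamma,\gamma}=0$ by Theorem \ref{Theorem1.2}. The plan is to show that $U$ turns the inequality of Lemma \ref{Lemma4.1} (equivalently, \eqref{4.1} at $c=c_\gamma$) into an equality. By the definition of $c_\gamma$, that equality is exactly $W_{N_\gamma,p,q}(U)=C_{N_\gamma,p,q}^{-1}$.

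\textbf{Step 1 (no strict inequality).} Apply \eqref{1.2} to $U$ and use $\|U\|_{L^2}^2=c_\gamma$. As in the proof of Lemma \ref{Lemma4.2}, the constant $c_\gamma^{\frac{2(q-p)}{4-N_\gamma(p-2)}}C_{N_\gamma,p,q}$ then equals exactly the threshold constant appearing in Lemma \ref{Lemma4.1}. Hence
\begin{equation*}
A\|U\|_{L^q}^q\leqslant \|U\|_{L^p}^{\frac{p[4-N_\gamma(q-2)]}{4-N_\gamma(p-2)}}\|U\|_{D^{1,2}_\gamma}^{\frac{2N_\gamma(q-p)}{4-N_\gamma(p-2)}},
\end{equation*}
where $A$ is the constant in Lemma \ref{Lemma4.1}. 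Moreover, equality holds if and only if $W_{N_\gamma,p,q}(U)=C_{N_\gamma,p,q}^{-1}$.

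\textbf{Step 2 (equality forced by the fibre minimum).} Consider the mass-preserving fibre $U_t=t^{N_\gamma/2}U(tx,t^{1+\gamma}y)\in\tilde S_{c_\gamma}$. Since $\tilde m_{c_\gamma,\gamma}=0$ and $E_\gamma(U)=0$, the map $t\mapsto E_\gamma(U_t)$ is nonnegative and attains the value $0$ at $t=1$. Take the relevant normalized function, $h_U(t)=E_\gamma(U_t)/t^2$ in the case $p<q$, or $g_U(t)=E_\gamma(U_t)/t^{N_\gamma(p-2)/2}$ in the case $q<p$. It is then $\geqslant0$ with $h_U(1)=0$ (respectively $g_U(1)=0$), so its minimum over $t>0$ is $0$. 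That minimum is attained at the unique critical point $t_U$ (respectively $\tilde t_U$) computed in Lemma \ref{Lemma4.1}, so $t_U=1$ and the minimal value is zero. The proof of Lemma \ref{Lemma4.1} shows that this minimal value is negative exactly when the displayed strict inequality holds. Tracking the same computation with $=$ in place of $<$, a zero minimal value is equivalent to equality in that relation.

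\textbf{Step 3 (conclusion).} Combining Steps 1 and 2 gives equality in \eqref{1.2} for $U$, that is, $W_{N_\gamma,p,q}(U)=C_{N_\gamma,p,q}^{-1}$. So $U$ is an optimizer.

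\textbf{Main obstacle.} The only delicate point is the bookkeeping in Step 2. One must confirm that the explicit minimum formulas for $h_U(t_U)$ and $g_U(\tilde t_U)$ in Lemma \ref{Lemma4.1} vanish exactly when the stated inequality becomes an equality. This requires checking that the exponents and constants match those in the definition of $c_\gamma$. In particular, in case (2) the expression in Lemma \ref{Lemma4.1} is written with mixed sign conventions, so I would recompute the minimum directly. An alternative is to bypass the formulas: use the Pohozaev identity \eqref{5.6} together with $E_\gamma(U)=0$ to obtain the ratios of Lemma \ref{Lemma6.3} for $U$, and then substitute these into $W_{N_\gamma,p,q}$ and compare with the value at $u_c$ from Lemma \ref{Lemma6.4}.
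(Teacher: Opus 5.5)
Your proposal is correct and follows essentially the paper's route. The paper also takes the mass-preserving fibre $h_U$ from Lemma \ref{Lemma4.1} and uses $E_\gamma(U)=0=\tilde m_{c_\gamma,\gamma}$ to force the unique critical point to be $t_U=1$. It then reads off the ratios $\|U\|_{L^p}^p/\|U\|_{L^q}^q$ and $\|U\|_{D^{1,2}_\gamma}^2/\|U\|_{L^q}^q$ and substitutes them into $W_{N_\gamma,p,q}$, obtaining $C_{N_\gamma,p,q}^{-1}$ from the definition of $c_\gamma$; your formulation as the equality case of \eqref{1.2} is the same computation, and your fibre argument also covers the case $2+\frac{4}{N_\gamma}<q<p<2_\gamma^*$, which the paper omits.
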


\begin{proof}
Let $(u_{c_{\gamma}},\omega_{c_{\gamma}})\in H^{1,2}_{\gamma,rad_{x}}(\mathbb{R}^{N})\times \mathbb{R}$ be a couple of solution for equation \eqref{G} with critical mass $ c_{\gamma}$.
From Theorem \ref{Theorem1.3}, we know
\begin{equation*}
\begin{aligned}
0=E_{\gamma}(u_{c_{\gamma}})
=
\tilde{m}_{c_{\gamma}}
=
\inf_{u\in S_{c_{\gamma}}} E_{\gamma}(u).
\end{aligned}
\end{equation*}
Set \begin{equation*}
\begin{aligned}
u_{c_{\gamma},t}(x)=t^{\frac{N_{\gamma}}{2}}u_{c_{\gamma}}(tx,t^{1+\gamma}y).
\end{aligned}
\end{equation*}

For $2<p<q<2+\frac{4}{N_{\gamma}}$,
it follows from Lemma \ref{Lemma4.1} that 
\begin{equation*}
\begin{aligned}
\min\limits_{t>0}
E_{\gamma}(u_{c_{\gamma},t})
=\min\limits_{t>0}
h_{u_{c_{\gamma}}}(t)=0
=h_{u_{c_{\gamma}}}(t_{u_{c_{\gamma}}}),
\end{aligned}
\end{equation*}
where $t_{u_{c_{\gamma}}}$ is the unique critical point of $h_{u_{c_{\gamma}}}(\cdot)$.
From $E_{\gamma}(u_{c_{\gamma}})=0$, we further know
\begin{equation*}
\begin{aligned}
1=t_{u_{c_{\gamma}}}
=
\left\{
\frac{q[4-N_{\gamma}(p-2)]}{p[4-N_{\gamma}(q-2)]}
\frac{
\int_{\mathbb{R}^{N}}
|u_{c_{\gamma}}|^{p}
\mathrm{d}z
}
{
\int_{\mathbb{R}^{N}}
|u_{c_{\gamma}}|^{q}
\mathrm{d}z
}
\right\}
^{\frac{2}{(q-p)N_{\gamma}}},
\end{aligned}
\end{equation*}
which gives
\begin{equation}\label{7.1}
\begin{aligned}
\int_{\mathbb{R}^{N}}
|u_{c_{\gamma}}|^{p}
\mathrm{d}z
=
\frac{p[4-N_{\gamma}(q-2)]}{q[4-N_{\gamma}(p-2)]}
\int_{\mathbb{R}^{N}}
|u_{c_{\gamma}}|^{q}
\mathrm{d}z.
\end{aligned}
\end{equation}
From $E_{\gamma}(u_{c_{\gamma}})=0$ and \eqref{7.1}, 
we have
\begin{equation}\label{7.2}
\begin{aligned}
\int_{\mathbb{R}^{N}}
|\nabla_{\gamma}u_{c_{\gamma}}|^{2}
\mathrm{d}z
=&
\frac{2}{q}
\int_{\mathbb{R}^{N}}
|u_{c_{\gamma}}|^{q}
\mathrm{d}z
-
\frac{2}{p}
\int_{\mathbb{R}^{N}}
|u_{c_{\gamma}}|^{p}
\mathrm{d}z\\
=&
\frac{2}{q}
\frac{N_{\gamma}(q-p)}{4-N_{\gamma}(p-2)}
\int_{\mathbb{R}^{N}}
|u_{c_{\gamma}}|^{q}
\mathrm{d}z.
\end{aligned}
\end{equation}
Combining \eqref{7.1} and \eqref{7.2}, one has
\begin{equation*}
\begin{aligned}
&\left(
\int_{\mathbb{R}^{N}}
|u_{c_{\gamma}}|^{p}
\mathrm{d}z
\right)
^{\frac{2N_{\gamma}+4-qN_{\gamma}}{2N_{\gamma}+4-pN_{\gamma}}}
\left(
\int_{\mathbb{R}^{N}}
|\nabla_{\gamma}u_{c_{\gamma}}|^{2}
\mathrm{d}z
\right)
^{\frac{N_{\gamma}(q-p)}{2N_{\gamma}+4-pN_{\gamma}}}\\
=&
\left[
\frac{p}{q}
\frac{N_{\gamma}(q-2)-4}{N_{\gamma}(p-2)-4}
\right]
^{\frac{2N_{\gamma}+4-qN_{\gamma}}{2N_{\gamma}+4-pN_{\gamma}}}
\left[
\frac{2}{q}
\frac{(p-q)N_{\gamma}}{N_{\gamma}(p-2)-4}
\right]
^{\frac{N_{\gamma}(q-p)}{2N_{\gamma}+4-pN_{\gamma}}}
\int_{\mathbb{R}^{N}}
|u_{c_{\gamma}}|^{q}
\mathrm{d}z\\
=&
c_{\gamma}^{-\frac{2(q-p)}{2N_{\gamma}+4-pN_{\gamma}}}
C_{N_{\gamma},p,q}^{-1}
\int_{\mathbb{R}^{N}}
|u_{c_{\gamma}}|^{q}
\mathrm{d}z\\
=&
C_{N_{\gamma},p,q}^{-1}
\left[
\int_{\mathbb{R}^{N}}
|u_{c_{\gamma}}|^{2}
\mathrm{d}z
\right]^{-\frac{2(q-p)}{2N_{\gamma}+4-pN_{\gamma}}}
\int_{\mathbb{R}^{N}}
|u_{c_{\gamma}}|^{q}
\mathrm{d}z,
\end{aligned}
\end{equation*}
since 
\begin{equation*}
\begin{aligned}
c_{\gamma}^{-\frac{2(q-p)}{2N_{\gamma}+4-pN_{\gamma}}}
C_{N_{\gamma},p,q}^{-1}
=
\left[
\frac{p}{q}
\frac{N_{\gamma}(q-2)-4}{N_{\gamma}(p-2)-4}
\right]
^{\frac{2N_{\gamma}+4-qN_{\gamma}}{2N_{\gamma}+4-pN_{\gamma}}}
\left[
\frac{2}{q}
\frac{(p-q)N_{\gamma}}{N_{\gamma}(p-2)-4}
\right]
^{\frac{N_{\gamma}(q-p)}{2N_{\gamma}+4-pN_{\gamma}}}
\end{aligned}
\end{equation*} and $\|u_{c_{\gamma}}\|_{L^{2}(\mathbb{R}^{N})}^{2}=c_{\gamma}$.

The case of $2+\frac{4}{N_{\gamma}}<q<p<2_{\gamma}^{*}$ is similar,
we omit the proof.
\end{proof}

\begin{proof}
[Proof of Theorem \ref{Theorem1.5}]
(i) The claim follows by Lemma \ref{Lemma7.1}.

(ii)
The proof is same as Lemma \ref{Lemma6.5} and thus we omit it here.
\end{proof}

\section{Proof of Theorem \ref{Theorem1.6}}
\label{section8}
In this section, we prove Theorem \ref{Theorem1.6}.

\begin{lemma}
\label{Lemma8.1}
Let $N=k+l$,
$k,l\geqslant 1$,
$\gamma\geqslant0$,
$\alpha=1$
and 
$2<p<q<2+\frac{4}{N_{\gamma}}$.
Then equation \eqref{G} has a $x$-radial action GSS.  
\end{lemma}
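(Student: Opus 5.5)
The lemma only asks for existence of an $x$-radial action GSS, and the statement does not fix $\omega$. The natural plan is to take $\omega=\omega_{*}$ and reduce everything to Lemma \ref{Lemma6.5}.

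\textbf{Step 1: exhibit a solution.} By Lemma \ref{Lemma6.2}, the function $u_{c}(x,y)=\lambda Q(\mu x,\mu^{1+\gamma}y)$ lies in $H^{1,2}_{\gamma,rad_{x}}(\mathbb{R}^{N})\setminus\{0\}$. It solves \eqref{G} with $\alpha=1$ and $\omega=\omega_{*}$. Hence the set over which $\tilde e_{\omega_{*}}$ is defined is nonempty, and $\tilde e_{\omega_{*}}\leqslant I_{\omega_{*}}(u_{c})<\infty$.

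\textbf{Step 2: $u_{c}$ attains $\tilde e_{\omega_{*}}$.} Lemma \ref{Lemma6.5} asserts this, but as written its proof starts from an action GSS $v_{c}$. To avoid circularity I would rerun that argument with an arbitrary nontrivial $x$-radial critical point $v$ of $I_{\omega_{*}}$.

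\begin{enumerate}
\item Since $2<p<q<2+\frac4{N_\gamma}$, Lemma \ref{Lemma2.6} gives $I_{\omega_{*}}(v^{t})\leqslant I_{\omega_{*}}(v)$ for every $t>0$. This uses $\omega_{*}>0$, which is clear from the explicit formula for $\omega_{*}$ because $4-N_\gamma(q-2)>0$.
\item The rescaling $v^{t}$ multiplies the $L^2$ mass by $t^{4-N_\gamma(p-2)}$, and this exponent is positive. So one can choose $t_{*}$ with $\|v^{t_{*}}\|_{L^2}^2=c_\gamma$.
\item Since $v^{t_{*}}$ is $x$-radial, $E_\gamma(v^{t_{*}})\geqslant\tilde m_{c_\gamma,\gamma}=0$ by Theorem \ref{Theorem1.2}.
\item By Lemma \ref{Lemma6.4}, $E_\gamma(u_c)=0$ and $\|u_c\|_{L^2}^2=c_\gamma$.
\end{enumerate}

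Combining these,
\begin{equation*}
I_{\omega_{*}}(u_c)=\tfrac{\omega_{*}}{2}c_\gamma\leqslant E_\gamma(v^{t_*})+\tfrac{\omega_*}{2}c_\gamma=I_{\omega_*}(v^{t_*})\leqslant I_{\omega_*}(v).
\end{equation*}
Taking the infimum over all such $v$ gives $I_{\omega_*}(u_c)\leqslant\tilde e_{\omega_*}$. So $u_c$ attains $\tilde e_{\omega_*}$ and is an $x$-radial action GSS.

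\textbf{Expected obstacle.} The only delicate point is confirming that Lemma \ref{Lemma2.6} applies to an arbitrary weak solution in the $x$-radial class, not just to a minimizer. This holds because Lemma \ref{Lemma2.6} only needs $J(v)=0$, which follows from the Nehari and Pohozaev identities of Lemma \ref{Lemma2.5}.

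If one also wants to state that the infimum is finite without reference to $u_c$, coercivity of $I_{\omega_*}$ restricted to the Pohozaev–Nehari set would do. That is unnecessary here, since the comparison above already gives the lower bound directly.
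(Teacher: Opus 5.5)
Your proof is correct, but it takes a different route from the paper. For Lemma~\ref{Lemma8.1} the paper only points to a direct variational construction (the Nehari-manifold/mountain-pass argument of Willem's book, Theorems 4.2 and 4.3) and omits the details. Carried out in $H^{1,2}_{\gamma,rad_{x}}(\mathbb{R}^{N})$, with the loss of compactness in the $y$-direction handled by translations as in Section~\ref{section3}, such an argument would produce an $x$-radial action GSS for every $\omega>0$, independently of the Gagliardo--Nirenberg optimizer.

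You instead fix $\omega=\omega_{*}$, which is the only value used in Theorem~\ref{Theorem1.6}. You then show that the explicit function $u_{c}$ of Lemma~\ref{Lemma6.2} attains $\tilde e_{\omega_{*}}$, by comparing it with an arbitrary nontrivial $x$-radial critical point $v$. The comparison uses the scaling inequality of Lemma~\ref{Lemma2.6} together with the identity $\tilde m_{c_{\gamma},\gamma}=0$ from Theorem~\ref{Theorem1.2}.

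What your route buys:
\begin{itemize}
\item It needs no new compactness argument.
\item It identifies the ground state explicitly, together with its level $\tilde e_{\omega_{*}}=\frac{\omega_{*}}{2}c_{\gamma}$.
\item It quantifies over all critical points rather than over a presupposed action GSS, as the proof of Lemma~\ref{Lemma6.5} does. So it proves Lemma~\ref{Lemma8.1} without circularity, and it also makes Lemma~\ref{Lemma6.5} independent of Section~\ref{section8}.
\end{itemize}
The price is that it says nothing for $\omega\neq\omega_{*}$.

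One detail is worth stating explicitly. Lemma~\ref{Lemma6.1} only tests against $x$-radial functions. Treating $u_{c}$ (and $v$, when you invoke Lemma~\ref{Lemma2.5}) as a weak solution on all of $H^{1,2}_{\gamma}(\mathbb{R}^{N})$ therefore uses the invariance of $I_{\omega_{*}}$ under rotations in $x$, i.e.\ Palais' principle of symmetric criticality.
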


\begin{proof}
The method is similar to \cite[Theorems 4.2 and 4.3]{Willem1996book}, so we skip the details.
\end{proof}

\begin{lemma}
\label{Lemma8.2}
Let $V\in H_{\gamma,rad_{x}}^{1,2}(\mathbb{R}^N)\backslash\{0\}$ be a $x$-radial action GSS of \eqref{G} for $\omega=\omega_{*}$. Then
\begin{enumerate}
\item [(i)]
$V$ is a $x$-radial energy GSS of \eqref{G} for the mass $c_{\gamma}$ with the Lagrange multiplier $\omega_{*}$;

\item [(ii)]
$V$ satisfies the Pohozaev normalization condition
\begin{equation}\label{8.1}
\begin{aligned}
\int_{\mathbb{R}^{N}}
|\nabla_{\gamma}V|^{2}
\mathrm{d}z
=&
\frac{2N_{\gamma}(q-p)}
{q[4-N_{\gamma}(p-2)]}
\int_{\mathbb{R}^{N}}
|V|^{q}
\mathrm{d}z,
\\
\int_{\mathbb{R}^{N}}
|V|^{p}
\mathrm{d}z
=&
\frac{p[4-N_{\gamma}(q-2)]}
{q[4-N_{\gamma}(p-2)]}
\int_{\mathbb{R}^{N}}
|V|^{q}
\mathrm{d}z;
\end{aligned}
\end{equation}

\item [(iii)]
It holds that
\begin{equation*}
\begin{aligned}
W_{N_{\gamma,p,q}}(V)=
C_{N_{\gamma},p,q}^{-1}.
\end{aligned}
\end{equation*}
\end{enumerate}
\end{lemma}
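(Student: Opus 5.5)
The plan is to compare $V$ with the explicit critical-mass solution $u_c$ of Lemma \ref{Lemma6.2}, reusing the scaling argument of Lemma \ref{Lemma6.5} in reverse. By Lemma \ref{Lemma6.5}, $u_c$ is itself an $x$-radial action GSS for $\omega_*$. Hence
\[
I_{\omega_*}(V)=\tilde e_{\omega_*}=I_{\omega_*}(u_c)=E_\gamma(u_c)+\tfrac{\omega_*}{2}c_\gamma=\tfrac{\omega_*}{2}c_\gamma,
\]
using $E_\gamma(u_c)=0$ and $\|u_c\|_2^2=c_\gamma$ from Lemma \ref{Lemma6.4}.

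The key step is to show that $\|V\|_{L^2}^2=c_\gamma$ and $E_\gamma(V)=0$. Set $t_*=(c_\gamma/\|V\|_2^2)^{1/(4-N_\gamma(p-2))}$, so that $V^{t_*}\in\tilde S_{c_\gamma}$ with $V^t$ as in Lemma \ref{Lemma2.6}. Then
\[
\tfrac{\omega_*}{2}c_\gamma=I_{\omega_*}(V)\ \geqslant\ I_{\omega_*}(V^{t_*})=E_\gamma(V^{t_*})+\tfrac{\omega_*}{2}c_\gamma\ \geqslant\ \tilde m_{c_\gamma,\gamma}+\tfrac{\omega_*}{2}c_\gamma=\tfrac{\omega_*}{2}c_\gamma .
\]
All the inequalities in this chain are therefore equalities. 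In particular, Lemma \ref{Lemma2.6} gives
\[
0=\frac{f_1(t_*)}{2[2p-N_\gamma(p-2)]}\,\omega_*\|V\|_2^2+\frac{f_2(t_*)}{q[2p-N_\gamma(p-2)]}\,\|V\|_q^q .
\]
Both terms are nonnegative, because $\omega_*>0$ and $f_1,f_2\geqslant0$. Since $f_2$ (equally $f_1$) has $t=1$ as its strict unique minimizer with value $0$ (its derivative changes sign only at $1$), we get $t_*=1$. So $\|V\|_2^2=c_\gamma$ and $E_\gamma(V)=0=\tilde m_{c_\gamma,\gamma}$, which means $V$ is an $x$-radial energy GSS of mass $c_\gamma$. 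Its Lagrange multiplier is $\omega_*$, because $V$ solves \eqref{G} with $\omega=\omega_*$. This proves (i).

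For (ii), I will use Lemma \ref{Lemma2.5}: $V$ satisfies $N(V)=0$ and $P(V)=0$. I will also use $E_\gamma(V)=0$. Together with $\|V\|_2^2=c_\gamma$, these give the relations
\[
\|V\|_D^2=\tfrac{N_\gamma(q-2)}{2q}\|V\|_q^q-\tfrac{N_\gamma(p-2)}{2p}\|V\|_p^p
\quad\text{and}\quad
\tfrac12\|V\|_D^2+\tfrac1p\|V\|_p^p-\tfrac1q\|V\|_q^q=0 .
\]
The first relation is the combination $N-\frac{2}{N_\gamma}P$, as in \eqref{5.6}. Solving this $2\times2$ linear system for $\|V\|_D^2$ and $\|V\|_p^p$ in terms of $\|V\|_q^q$ yields exactly \eqref{8.1}; this is routine algebra. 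Alternatively, I can argue as in Lemma \ref{Lemma7.1}: the fibre minimum $t_{V}=1$ gives the $p$–$q$ relation, and $E_\gamma(V)=0$ then gives the gradient relation.

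For (iii), Lemma \ref{Lemma7.1} applies directly, since $V$ is an $x$-radial energy GSS with critical mass $c_\gamma$. Concretely, I will substitute \eqref{8.1} and $\|V\|_2^2=c_\gamma$ into $W_{N_\gamma,p,q}(V)$. By the formula for $c_\gamma$ in Theorem \ref{Theorem1.2}, the result equals $C_{N_\gamma,p,q}^{-1}$.

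The main obstacle is the equality step forcing $t_*=1$. It relies on the strict positivity of $f_2(t)$ for $t\neq1$ together with $\|V\|_q>0$, and on $\omega_*>0$, which is evident from its formula.
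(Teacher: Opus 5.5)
Your proposal is correct and follows the paper's proof essentially step for step. In (i) you rescale to $V^{t_*}\in\tilde S_{c_\gamma}$ and use Lemma \ref{Lemma2.6} to force $t_*=1$. In (ii) you combine $E_\gamma(V)=0$ with the Nehari--Pohozaev identity. In (iii) you substitute directly into $W_{N_\gamma,p,q}$ and use the formula for $c_\gamma$.

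The only cosmetic difference is in the chain of inequalities for (i). You anchor it on the explicit solution $u_c$, via Lemma \ref{Lemma6.5} and $\tilde m_{c_\gamma,\gamma}=0$. The paper instead uses an arbitrary $x$-radial energy GSS $w_c$ of mass $c_\gamma$ with multiplier $\omega_*$, and the existence of such a $w_c$ is in fact supplied by $u_c$.
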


\begin{proof}
(i) Since $V$ satisfies equation \eqref{G} with $\omega=\omega_{*}$, Lemma \ref{Lemma2.6} implies that
\begin{equation}\label{8.2}
\begin{aligned}
&I_{\omega_{*}}(V)-I_{\omega_{*}}(V^{t})\\
\geqslant&
\frac{f_{1}(t)\omega_{*}}{2[2p-N_{\gamma}(p-2)]}
\int_{\mathbb{R}^{N}}
|V|^{2}
\mathrm{d}z
+
\frac{f_{2}(t)}{q[2p-N_{\gamma}(p-2)]}
\int_{\mathbb{R}^{N}}
|V|^{q}
\mathrm{d}z.
\end{aligned}
\end{equation}
Now let $w_{c}$ be any $x$-radial energy GSS of equation \eqref{G} for the mass $c_{\gamma}$, whose Lagrange multiplier coincides with 
$\omega_{*}$. Then one has
\begin{equation*}
\begin{aligned}
\tilde{e}_{\omega_{*}}\leqslant
I_{\omega_{*}}(w_{c})
\ \
\mathrm{and}
\ \
E_{\gamma}(w_{c})\leqslant E_{\gamma}(u),
\ \
\mathrm{for~all}~u\in \tilde{S}_{c_{\gamma}}.
\end{aligned}
\end{equation*}
Especially taking
$\tilde{t}_{*}:=\left(
\frac{c_{\gamma}}{\|V\|^{2}_{L^{2}(\mathbb{R}^N)}}
\right)^{\frac{1}{4-N_{\gamma}(p-2)}}$, we find that
\begin{equation}\label{8.3}
\begin{aligned}
\|V^{\tilde{t}_{*}}\|^{2}_{L^{2}(\mathbb{R}^N)}=
c_{\gamma}
\Rightarrow
E_{\gamma}(w_{c})\leqslant
E_{\gamma}(V^{\tilde{t}_{*}}).
\end{aligned}
\end{equation}
Then from \eqref{8.2} and \eqref{8.3}, it follows that
\begin{equation}\label{8.4}
\begin{aligned}
\tilde{e}_{\omega_{*}}
\leqslant
I_{\omega_{*}}(w_{c})
=&
E_{\gamma}(w_{c})+\frac{\omega_{*}c_{\gamma}}{2}\\
\leqslant&
E_{\gamma}(V^{\tilde{t}_{*}})+
\frac{\omega_{*}}{2}
\|V^{\tilde{t}_{*}}\|_{L^{2}(\mathbb{R}^N)}^{2}\\
=&
I_{\omega_{*}}(V^{\tilde{t}_{*}})
\leqslant
I_{\omega_{*}}(V)
=
\tilde{e}_{\omega_{*}}.
\end{aligned}
\end{equation}
Hence, we get 
$I_{\omega_{*}}(V^{\tilde{t}_{*}})=
I_{\omega_{*}}(V)$. Going back to \eqref{8.2}, we deduce that
\begin{equation*}
\begin{aligned}
0\geqslant
\frac{f_{1}(\tilde{t}_{*})\omega_{*}}{2[2p-N_{\gamma}(p-2)]}
\int_{\mathbb{R}^{N}}
|V|^{2}
\mathrm{d}z
+
\frac{f_{2}(\tilde{t}_{*})}{q[2p-N_{\gamma}(p-2)]}
\int_{\mathbb{R}^{N}}
|V|^{q}
\mathrm{d}z
\geqslant 0,
\end{aligned}
\end{equation*}
from which we conclude that
$\tilde{t}_{*}=1$. Then \eqref{8.4} implies that
\begin{equation}\label{8.5}
\begin{aligned}
\|V\|_{L^{2}(\mathbb{R}^N)}^{2}=c_{\gamma}
\ \
\mathrm{and}
\ \
E_{\gamma}(V)=E_{\gamma}(w_{c})=\tilde{m}_{c_{\gamma}}=0.
\end{aligned}
\end{equation}
This implies that $V$ is a $x$-radial energy GSS for the mass $c_{\gamma}$ with the Lagrange multiplier $\omega_{*}$.

\noindent
(ii) We observe from \eqref{8.5} that
\begin{equation*}
\begin{aligned}
I_{\omega_{*}}(V)=
E_{\gamma}(V)
+
\frac{\omega_{*}}{2}
\|V\|_{L^{2}(\mathbb{R}^N)}^{2}=
\frac{\omega_{*}}{2}c_{\gamma}.
\end{aligned}
\end{equation*}
Moreover, by Lemma \ref{Lemma2.5}, we also have
\begin{equation*}
\begin{aligned}
0=E_{\gamma}(V)=
\frac{1}{2}
\int_{\mathbb{R}^{N}}
|\nabla_{\gamma}V|^{2}
\mathrm{d}z
+
\frac{1}{p}
\int_{\mathbb{R}^{N}}
|V|^{p}
\mathrm{d}z
-
\frac{1}{q}
\int_{\mathbb{R}^{N}}
|V|^{q}
\mathrm{d}z
\end{aligned}
\end{equation*}
and
\begin{equation*}
\begin{aligned}
0=&\frac{N_{\gamma}}{2}N(V)-P(V)
\\=&
\int_{\mathbb{R}^{N}}
|\nabla_{\gamma}V|^{2}
\mathrm{d}z
+
\frac{N_{\gamma}(p-2)}{2p}
\int_{\mathbb{R}^{N}}
|V|^{p}
\mathrm{d}z
-
\frac{N_{\gamma}(q-2)}{2q}
\int_{\mathbb{R}^{N}}
|V|^{q}
\mathrm{d}z,
\end{aligned}
\end{equation*}
from which we obtain \eqref{8.1}.

\noindent
(iii) From \eqref{6.3} and \eqref{6.5}, we find that
\begin{equation*}
\begin{aligned}
C_{N_{\gamma},p,q}^{-1}
\leqslant&
W_{N_{\gamma,p,q}}(V)\\
=&
c_{\gamma}
^{\frac{2(q-p)}{4-N_{\gamma}(p-2)}}
\frac{\big\{
p[4-N_{\gamma}(q-2)]
\big\}
^{\frac{4-N_{\gamma}(q-2)}{4-N_{\gamma}(p-2)}}
\big[
2N_{\gamma}(q-p)
\big]
^{\frac{N_{\gamma}(q-p)}{4-N_{\gamma}(p-2)}}}
{q[4-N_{\gamma}(p-2)]}
\\=&
C_{N_{\gamma},p,q}^{-1},
\end{aligned}
\end{equation*}
and hence $W_{N_{\gamma,p,q}}(V)=
C_{N_{\gamma},p,q}^{-1}$.
This completes the proof.
\end{proof}

\begin{proof}[Proof of Theorem \ref{Theorem1.6}]
From Lemmas \ref{Lemma8.1} and \ref{Lemma8.2}, we get our desired result.
\end{proof}

\section*{Acknowledgements}
This work is supported by National Natural Science Foundation of China (12101006, 12401200), and Natural Science Foundation of Hunan Province (2025JJ60066). A.J. is member of the INDAM Research Group GNAMPA.

\section*{Conflict of interest}
The author declares that he has no known competing financial interests or personal relationships that could have appeared to influence the work reported in
this article.

\section*{Data availability statement}
No data were used for the research described in the article.

\end{document}